\renewcommand{\MR}[1]{}
\newtheorem{theorem}{Theorem}[section]
\newtheorem{lemma}[theorem]{Lemma}
\newtheorem{proposition}[theorem]{Proposition}
\newtheorem{corollary}[theorem]{Corollary}
\theoremstyle{definition}
\newtheorem{definition}[theorem]{Definition}
\newtheorem{question}[theorem]{Question}
\newtheorem{remark}[theorem]{Remark}
\newtheorem{example}[theorem]{Example}
\newtheorem{conjecture}[theorem]{Conjecture}
\newtheorem*{remark*}{Remark}
\title{An invitation to the Drury--Arveson space}
\author[M. Hartz]{Michael Hartz}
\address{Fachrichtung Mathematik, Universit\"at des Saarlandes, 66123 Saarbr\"ucken, Germany}
\email{hartz@math.uni-sb.de}
\thanks{The author was partially supported by a GIF grant and by the Emmy Noether Program
of the German Research Foundation (DFG Grant 466012782).}
\dedicatory{Dedicated to the memory of J\"org Eschmeier}
\date{\today}
\subjclass[2010]{46E22,47A13,47A20,47B32,47L30}
\keywords{Drury--Arveson space, Fock space, reproducing kernel, multivariable operator theory, dilation, Nevanlinna--Pick interpolation, complete Pick space}
\newcommand{\mcc}{M\textsuperscript{c}Carthy}
\newcommand{\diag}{\operatorname{diag}}
\newcommand{\Mult}{\operatorname{Mult}}
\newcommand{\ran}{\operatorname{ran}}
\newcommand{\Han}{\operatorname{Han}}
\newcommand{\Alg}{\operatorname{Alg}}
\newcommand{\Lat}{\operatorname{Lat}}
\renewcommand{\Re}{\operatorname{Re}}
\newcommand{\spa}{\operatorname{span}}
\begin{document}

\begin{abstract}
  This is an extended version of a three part mini course on the Drury--Arveson space given as part of the Focus Program on Analytic Function Spaces and their Applications, hosted by the Fields Institute and held remotely.
  The Drury--Arveson space, also known as symmetric Fock space, is a natural generalization of the classical Hardy space on the unit disc to the unit ball
  in higher dimensions. It plays a universal role both in operator theory and in function theory.
  These notes give an introduction to the Drury--Arveson space.
  They are not intended to give a comprehensive overview of the entire subject, but rather aim to explain some of the contexts in which the Drury--Arveson space makes an appearance and to showcase the different approaches to this space.
\end{abstract}

\maketitle

\tableofcontents

\section{Introduction}

    Let $\mathbb{B}_d = \{z \in \mathbb{C}^d: \|z\|_2 < 1 \}$ denote the Euclidean unit ball in $\mathbb{C}^d$.
    The Drury--Arveson space $H^2_d$ is a space of holomorphic functions on $\mathbb{B}_d$
    that generalizes the classical Hardy space $H^2$ on the unit disc to several variables.
    Given the remarkable success of the theory of $H^2$,
    it is natural to study generalizations to several variables.
    In fact, there are deeper reasons that lead to the Drury--Arveson space.
 In these notes, we focus on two particular prominent ones:
      \begin{enumerate}
        \item Operator theory: $H^2_d$ hosts an analogue of the unilateral shift that is universal for commuting row contractions.
        \item Function theory: $H^2_d$ is a universal space among certain Hilbert spaces of functions (complete Pick spaces).
      \end{enumerate}
      Thus, $H^2_d$ is a universal object in two separate areas.

      Just as in the case of $H^2$, the theory of $H^2_d$
      turns out to be very rich because of several different viewpoints on this object.
      We will see that $H^2_d$ can be understood as
      \begin{enumerate}[label=\normalfont{(\alph*)}]
        \item a space of power series,
        \item a reproducing kernel Hilbert space with a particularly simple kernel,
        \item a particular Besov--Sobolev space, and
        \item symmetric Fock space.
      \end{enumerate}
      Different points of view reveal different aspects of $H^2_d$, and often
      a given problem becomes more tractable by choosing the appropriate point of view.

      The target audience of these notes includes graduate students and newcomers to the subject.
      In particular, these notes are intended to be an introduction to the Drury--Arveson space
      and do not aim to give a comprehensive overview.
      For further information and somewhat different points of view, the reader is referred to the survey articles
   by Shalit \cite{Shalit13} and by Fang and Xia \cite{FX19}. Material regarding the operator theoretic and operator algebraic aspects of the Drury--Arveson space can be found in Arveson's original article \cite{Arveson98} and in the book by Chen and Guo \cite{CG03}.
   For the appearance of the Drury--Arveson space in the context of complete Pick spaces, the reader is referred to the book by Agler and \mcc\ \cite{AM02}.
   For a harmonic analysis point of view, see the book by Arcozzi, Rochberg, Sawyer and Wick \cite{ARS+19}.

      In an attempt to make these notes more user friendly,
      a few deliberate choices were made.
      Firstly, the presentation is often not the most economical one;
      rather, the goal is to give an idea of how one might come across certain ideas
      and how various results are connected.
      Secondly, instead of discussing results in their most general form,
      we often restrict attention to instructive special cases.
      In the same vein, simplifying assumptions are made whenever convenient.

  We end this introduction by mentioning a few early appearances of the Drury--Arveson space $H^2_d$.
  In comparison to some of the other function spaces covered during the Focus Program, $H^2_d$ was systematically studied only much later.
  The name ``Drury--Arveson space'' comes from a 1978 paper of Drury \cite{Drury78}
  and a 1998 paper of Arveson \cite{Arveson98}, both of which were motivated by operator theoretic questions.
  Arveson's paper brought $H^2_d$ to prominence, explicitly realized $H^2_d$ as a natural
  reproducing kernel Hilbert space, and connected it to symmetric Fock space.
  Early appearances of $H^2_d$ can also be found in papers of Lubin \cite{Lubin76,Lubin77}
  and work of M\"uller and Vasilescu \cite{MV93}. Around the turn of the millennium,
  the Drury--Arveson space and related non-commutative objects were also studied by Davidson and Pitts \cite{DP98}
  and by Arias and Popescu \cite{AP00}, and the universal role of $H^2_d$ in the context of interpolation problems
  was established by Agler and \mcc\ \cite{AM00,AM00a}.
  After all these developments, the subject gained a lot of momentum, and has been very active to this day.

  \section{Several definitions of \texorpdfstring{$H^2_d$}{Drury--Arveson space}}

  \subsection{Hardy space preliminaries}
  \label{ss:Hardy}
  As the Drury--Arveson space is a generalization of the classical Hardy space $H^2$,
  let us recall a few basic facts about $H^2$.
  At the Focus Program, a mini course on the Hardy space was given by Javad Mashreghi.
  For more background, see also the books \cite{Duren70,Garnett07,Hoffman62,Koosis98,Nikolski19}.

  Let $\mathbb{D} = \{z \in \mathbb{C}: |z| < 1 \}$ be the open unit disc and let
  $\mathcal{O}(\mathbb{D})$ be the algebra of all holomorphic functions on $\mathbb{D}$.
  A common definition of the Hardy space is
  \begin{equation}
    \label{eqn:defn}
    H^2 = \Big\{f \in \mathcal{O}(\mathbb{D}): \|f\|^2_{H^2} = \sup_{0 \le r < 1} \int_{0}^{2 \pi} |f(r e^{ it})|^2 \frac{dt}{2 \pi} < \infty \Big\}.
  \end{equation}
  If $f(z) = \sum_{n=0}^\infty a_n z^n$ is the Taylor expansion
  of a holomorphic function $f$ on $\mathbb{D}$, then
  \begin{equation*}
    \|f\|^2_{H^2} = \sum_{n=0}^\infty |a_n|^2.
  \end{equation*}

  Recall that a \emph{reproducing kernel Hilbert space (RKHS)} is a Hilbert space $\mathcal{H}$ of functions
  on a set $X$ with the property that evaluation at each $x \in X$ is a bounded linear functional
  on $\mathcal{H}$. By the Riesz representation theorem, there exists for each $x \in X$
  a function $k_x \in \mathcal{H}$ such that
  \begin{equation*}
    \langle f, k_x \rangle  = f(x) \quad \text{ for all }  f \in \mathcal{H}.
  \end{equation*}
  The function $k:X \times X \to \mathbb{C}, k(x,y) = k_y(x),$ is called the \emph{reproducing kernel} or simply
  \emph{kernel} of $\mathcal{H}$.
  For background material on RKHS, see the classic paper \cite{Aronszajn50} and the books \cite{AM02,PR16}.

  The Hardy space $H^2$ is an RKHS of functions on $\mathbb{D}$; the reproducing kernel is the Szeg\H{o} kernel
  \begin{equation*}
    K(z,w) = \frac{1}{1 - z \overline{w}} \quad (z,w \in \mathbb{D}).
  \end{equation*}
  
  \subsection{Discovering the Drury--Arveson space}
  \label{ss:discover}
  There is a classical generalization of the Hardy space $H^2$ to the unit ball, given by
  mimicking \eqref{eqn:defn} above.
  Let $\sigma$ be the normalized surface measure on $\partial \mathbb{B}_d$. The \emph{Hardy space on $\mathbb{B}_d$} is defined to be
    \begin{equation*}
      H^2(\mathbb{B}_d) =
      \Big\{ f \in \mathcal{O}(\mathbb{B}_d): \|f\|^2_{H^2(\mathbb{B}_d)}
        = \sup_{0 \le r < 1} \int_{\partial \mathbb{B}_d} |f( r z)|^2 d \sigma(z)
      < \infty \Big\},
    \end{equation*}
    see for instance \cite[Section 5.6]{Rudin08} or \cite[Chapter 4]{Zhu05} for background on $H^2(\mathbb{B}_d)$.

  It turns out that for the purposes of multivariable operator theory, $H^2(\mathbb{B}_d)$ is not
  as close an analogue of $H^2$ as one might like.
  We will later see this more concretely; see Subsection \ref{ss:dilation} below. For now, recall
  that one of the key aspects of $H^2$ is that $\|z^n\|_{H^2} = 1$ for all $n \in \mathbb{N}$.
  However, if $d \ge 2$, then
  \begin{equation}
    \label{eqn:Hardy_norm}
    \|z_1^n\|_{H^2(\mathbb{B}_d)} = \int_{\partial \mathbb{B}_d} |z_1|^{2 n} d \sigma(z) \xrightarrow{n \to \infty} 0,
  \end{equation}
  which for instance follows from the dominated convergence theorem.
  (The integral can be computed explicitly; see for instance \cite[Proposition 1.4.9]{Rudin08}.)
  In particular, we do not recover $H^2$ from $H^2(\mathbb{B}_d)$ when considering functions of the single variable $z_1$.

  Taking a cue from this simple observation, we ask:

  \begin{question}
    \label{goal}
    Does there exist a reproducing kernel Hilbert space $\mathcal{H}$ of holomorphic functions on $\mathbb{B}_d$ satisfying
    all of the following properties:
    \begin{enumerate}[label=\normalfont{(\alph*)}]
      \item if $f(z) = g(z_1)$ for some $g \in H^2$, then $f \in \mathcal{H}$ and $\|f\| = \|g\|_{H^2}$;
      \item if $U$ is a $d \times d$ unitary and $f \in \mathcal{H}$, then $f \circ U \in \mathcal{H}$ and $\|f \circ U\| = \|f\|$;
      \item the polynomials are dense in $\mathcal{H}$?
    \end{enumerate}
  \end{question}

  We will see that there is a unique space $\mathcal{H}$ satisfying the conditions
  in Question \ref{goal}, and this space is the Drury--Arveson space.
  Note that $H^2(\mathbb{B}_d)$ satisfies the last two conditions, but by Equation \eqref{eqn:Hardy_norm},
  it fails Condition (a).

  \begin{remark}
    Clearly, by unitary invariance (Condition (b)), $z_1$ can be replaced by any $z_j$,
    or more generally by $\langle z,\zeta \rangle$ for $\zeta \in \partial \mathbb{B}_d$
    in Condition (a).
  \end{remark}

  Let us see what a space satisfying the Conditions in Question \ref{goal} must look like.
  To this end, we recall multi-index notation.
  If $\alpha = (\alpha_1,\ldots,\alpha_d) \in \mathbb{N}^d$, we write
  \begin{equation*}
    z^\alpha = z_1^{\alpha_1} \ldots z_d^{\alpha_d}, \quad \alpha! = \alpha_1! \cdot \ldots \cdot \alpha_d!, \quad |\alpha| = \alpha_1 + \ldots + \alpha_d.
  \end{equation*}

  \begin{lemma}
    If $\mathcal{H}$ satisfies the conditions of Question \ref{goal}, then the monomials $\{z^\alpha: \alpha \in \mathbb{N}^d\}$ form an orthogonal basis of $\mathcal{H}$.
  \end{lemma}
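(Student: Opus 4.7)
My plan is to exploit the torus action contained in condition (b). Condition (c) already asserts that the polynomials form a dense subspace of $\mathcal{H}$, so in particular every monomial $z^\alpha$ belongs to $\mathcal{H}$ and the linear span of the monomials is dense. Hence it suffices to prove that the family $\{z^\alpha : \alpha \in \mathbb{N}^d\}$ consists of mutually orthogonal nonzero vectors.

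For orthogonality, I would restrict (b) to the diagonal subgroup of unitaries $U_\theta = \diag(e^{i\theta_1}, \ldots, e^{i\theta_d})$ with $\theta = (\theta_1, \ldots, \theta_d) \in \mathbb{R}^d$. Since
\[
(z^\alpha) \circ U_\theta = e^{i \theta \cdot \alpha} z^\alpha,
\]
condition (b) yields
\[
\langle z^\alpha, z^\beta \rangle = \langle (z^\alpha) \circ U_\theta, (z^\beta) \circ U_\theta \rangle = e^{i \theta \cdot (\alpha - \beta)} \langle z^\alpha, z^\beta \rangle
\]
for every $\theta \in \mathbb{R}^d$. If $\alpha \neq \beta$, one can pick $\theta$ with $\theta \cdot (\alpha - \beta) \notin 2\pi \mathbb{Z}$, forcing $\langle z^\alpha, z^\beta \rangle = 0$.

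For nontriviality, each $z^\alpha$ is a nonzero function on $\mathbb{B}_d$ and therefore a nonzero element of the RKHS $\mathcal{H}$, since the only zero element of an RKHS is the zero function. Combined with the orthogonality just established and the density from (c), this shows that the monomials form an orthogonal basis.

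The argument is quite direct, and the only real \emph{insight} is the initial observation: condition (b), formulated a priori for the full unitary group, already yields orthogonality through its compact diagonal subgroup, via the fact that the characters $\theta \mapsto e^{i \theta \cdot \gamma}$ separate multi-indices $\gamma \in \mathbb{Z}^d$. Note that condition (a) is not needed here; I would expect it to enter in the subsequent step, where the actual norms $\|z^\alpha\|_{\mathcal{H}}$ are pinned down by comparing powers of $\langle z, \zeta \rangle$ for $\zeta \in \partial \mathbb{B}_d$ to the corresponding Hardy space norms.
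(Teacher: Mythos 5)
Your proof is correct and follows essentially the same route as the paper: both exploit condition (b) applied to diagonal unitaries to show the monomials are mutually orthogonal, and then invoke the density of polynomials from condition (c). The only cosmetic difference is that the paper uses a one-parameter subgroup $\diag(e^{it},1,\ldots,1)$ in a coordinate where $\alpha$ and $\beta$ differ and integrates over $t$, whereas you use the full diagonal torus and a single well-chosen $\theta$.
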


  \begin{proof}
  Let $\alpha \neq \beta$, say $\alpha_1 \neq \beta_1$, and let
  $U_t = \diag(e^{ it} , 1 ,\ldots ,1)$ for $t \in \mathbb{R}$.
  Condition (b) shows that $f \mapsto f \circ U_t$ defines an isometry on $\mathcal{H}$.
  Hence
  \begin{equation*}
    \langle z^\alpha, z^\beta \rangle = \langle z^\alpha \circ U_t, z^\beta \circ U_t \rangle = e^{i (\alpha_1 - \beta_1) t} \langle z^\alpha, z^\beta \rangle .
  \end{equation*}
  Integrating in $t$ gives $\langle z^\alpha, z^\beta \rangle = 0$,
  so the monomials form an orthogonal system.
  Since the polynomials are dense by Condition (c), they are in fact an orthogonal basis.
  \end{proof}

  In order to specify $\mathcal{H}$, it remains to determine the norm of the monomials.
  Note that Condition (a) forces $\|z_1^n\| = 1$ for all $n \in \mathbb{N}$.
  In combination with Condition (b), this implies that
  \begin{equation*}
    \| \langle z, \zeta \rangle^n\| = 1
  \end{equation*}
  for all $\zeta \in \partial \mathbb{B}_d$, from which one can compute the norm of each monomial.
  For instance, to compute the norm of $z_1 z_2$, we set $\zeta = (1/\sqrt{2},1/\sqrt{2},0,\ldots)$
  and $n=2$ above to find that
  \begin{equation*}
    1 = \| \tfrac{1}{2} (z_1 + z_2)^2 \|^2 = \tfrac{1}{4} \|z_1^2\|^2 + \|z_1 z_2\|^2 + \tfrac{1}{4} \|z_2^2\|^2,
  \end{equation*}
  so using that $\|z_j^2\|^2 = 1$, it follows that $\|z_1 z_2\|^2 = \tfrac{1}{2}$.

  In the general case, we will argue slightly differently, using the Szeg\H{o} kernel of $H^2$.
  Let $\lambda \in \mathbb{D}$ and $f = \sum_{\alpha \in \mathbb{N}^d} a_\alpha z^\alpha \in \mathcal{H}$.
  Condition (a)
  implies that 
  \begin{equation*}
    \frac{1}{1 - \overline{\lambda} z_1} = \sum_{n=0}^\infty \overline{\lambda}^n z_1^n \in \mathcal{H},
  \end{equation*}
  that the sum converges in $\mathcal{H}$, and that
  $\|z_1^n\|=1$ for all $n \in \mathbb{N}$. Hence,
  letting $e_1 = (1,0,\ldots, 0) \in \mathbb{C}^d$, we have
\begin{equation*}
  \Big\langle f, \frac{1}{1 - \overline{\lambda} z_1} \Big\rangle
  = \sum_{n=0}^\infty a_{(n,0,\ldots,0)} \lambda^n \|z_1^n\|^2
  = f(\lambda e_1).
\end{equation*}
If $w \in \mathbb{B}_d$, let $U$ be a $d \times d$ unitary and $\lambda \in \mathbb{D}$ with $U^* w = \lambda e_1$.
Then by unitary invariance,
\begin{equation}
  \label{eqn:kernel}
f(w) = \Big \langle f \circ U, \frac{1}{1 - \overline{\lambda} z_1} \Big\rangle 
  = \Big \langle f, \frac{1}{1 - \overline{\lambda} z_1} \circ U^* \Big\rangle 
  = \Big \langle f, \frac{1}{1 - \langle z,w \rangle_{\mathbb{C}^d} } \Big\rangle.
\end{equation}
Now the multinomial expansion shows that
\begin{equation*}
  \frac{1}{1 - \langle z,w \rangle_{\mathbb{C}^d} }
  = \sum_{n=0}^\infty \langle z,w \rangle^n_{\mathbb{C}^d} = \sum_{n=0}^\infty \sum_{|\alpha| = n} \frac{|\alpha|!}{\alpha!}
  \overline{z}^\alpha \overline{w}^\alpha,
\end{equation*}
and the sum converges in $\mathcal{H}$ by unitary invariance and the earlier case $w \in \mathbb{D} e_1$.
Thus, applying \eqref{eqn:kernel} to $f= z^\alpha$, we see that
\begin{align}
  \label{eqn:kernel_2}
  w^\alpha &= \Big \langle z^\alpha, \frac{1}{1 - \langle z,w \rangle_{\mathbb{C}^d} } \Big\rangle
  = w^\alpha \|z^\alpha\|^2 \frac{|\alpha|!}{\alpha!}
 \end{align}
 for all $w \in \mathbb{B}_d$.
  Therefore, we arrive at the following conclusion:
  \begin{proposition}
    \label{prop:power_series_nec}
    If $\mathcal{H}$ is a space as in Question \ref{goal}, then the monomials $z^\alpha$
    form an orthogonal basis of $\mathcal{H}$, and
    \begin{equation*}
      \|z^\alpha\|^2 = \frac{\alpha!}{|\alpha|!}.
    \end{equation*}
  \end{proposition}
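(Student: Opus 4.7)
The plan is to reuse the preceding lemma for the orthogonality statement and then read off the norm formula from the reproducing-kernel computation already laid out in the text. Since the lemma gives that $\{z^\alpha\}_{\alpha \in \mathbb{N}^d}$ is an orthogonal basis, the only remaining task is to establish the identity $\|z^\alpha\|^2 = \alpha!/|\alpha|!$.

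The strategy for the norm is a two-step upgrade of the Szeg\H{o} kernel. First, condition (a) forces $\sum_{n=0}^\infty \overline{\lambda}^n z_1^n = \frac{1}{1 - \overline{\lambda} z_1}$ to lie in $\mathcal{H}$ for every $\lambda \in \mathbb{D}$, with $\|z_1^n\| = 1$ for all $n$; pairing against it reproduces $f(\lambda e_1)$, so it plays the role of the reproducing kernel along the first coordinate axis. Second, condition (b) lets one conjugate by a unitary that rotates $\lambda e_1$ to any prescribed $w \in \mathbb{B}_d$, producing the candidate reproducing kernel $k_w(z) = \frac{1}{1 - \langle z, w \rangle}$ at an arbitrary point. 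This is exactly the calculation in \eqref{eqn:kernel}.

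Finally, I would expand $k_w$ via the multinomial theorem to obtain $k_w = \sum_{\alpha} \frac{|\alpha|!}{\alpha!} \overline{w}^\alpha z^\alpha$ and pair this against the monomial $z^\alpha$. Orthogonality of distinct monomials collapses the sum and yields $w^\alpha = \|z^\alpha\|^2 \frac{|\alpha|!}{\alpha!} w^\alpha$ for every $w \in \mathbb{B}_d$, as recorded in \eqref{eqn:kernel_2}; choosing $w$ so that $w^\alpha \neq 0$ and dividing through gives the claimed norm formula.

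The only genuinely delicate point along the way is the convergence of the multinomial expansion inside $\mathcal{H}$. I would handle this exactly as in the text preceding the statement: for $w = \lambda e_1$ it comes directly from the convergence of $\sum \overline{\lambda}^n z_1^n$ furnished by condition (a), and for general $w$ it is transferred by the unitary $U$ via condition (b). Once this convergence is in hand, the remainder of the argument is simply a comparison of Taylor coefficients and presents no further obstacle.
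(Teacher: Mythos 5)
Your proposal is correct and follows essentially the same route as the paper: invoke the preceding lemma for orthogonality, use condition (a) to obtain the Szeg\H{o} kernel along $\mathbb{D}e_1$, transport it to a general $w\in\mathbb{B}_d$ by a unitary via condition (b), then expand by the multinomial theorem and pair against $z^\alpha$ to read off $\|z^\alpha\|^2 = \alpha!/|\alpha|!$. You also correctly flag the convergence of the multinomial expansion in $\mathcal{H}$ as the one delicate point and handle it exactly as the text does.
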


  Equation \eqref{eqn:kernel} shows another important feature of $\mathcal{H}$ (if it exists), namely
  that the reproducing kernel of $\mathcal{H}$ is given by
  \begin{equation*}
    K(z,w) = \frac{1}{1 - \langle z,w \rangle_{\mathbb{C}^d}}
  \end{equation*}

  \subsection{Power series description of the Drury--Arveson space}
  We now turn the argument in the last section around and take Proposition \ref{prop:power_series_nec}
  as our definition.

  \begin{definition}
    The Drury--Arveson space is defined to be
    \begin{equation*}
      H^2_d = \Big\{ f = \sum_{\alpha \in \mathbb{N}^d} a_\alpha z^\alpha: \|f\|^2 = \sum_{\alpha \in \mathbb{N}^d} |a_\alpha|^2 \frac{\alpha!}{|\alpha|!} < \infty \Big\}.
    \end{equation*}
  \end{definition}

  The weights $\frac{\alpha!}{|\alpha|!}$ appearing in the definition may look strange at first.
  However, the argument in the last section shows that their appearance may be explained by unitary
  invariance of the space and hence by the symmetry of the unit ball.

  Strictly speaking, the above definition exhibits $H^2_d$ as a space of formal power series.
  But just as in the case of $H^2_d$, the power series converge and define bona fide holomorphic
  functions on the open ball.

  \begin{proposition}
    \label{prop:DA_point_estimate}
    If $f = \sum_{\alpha \in \mathbb{N}^d} a_\alpha z^\alpha \in H^2_d$,
    then the series converges absolutely and uniformly on every compact subset of $\mathbb{B}_d$.
    Moreover,
    \begin{equation}
      \label{eqn:pointwise}
      |f(w)|^2 \le \frac{1}{1 - \|w\|^2} \|f\|^2 \quad \text{ for all } w \in \mathbb{B}_d.
    \end{equation}
  \end{proposition}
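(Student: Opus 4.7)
The plan is to prove both claims by a single Cauchy--Schwarz argument coupled with the multinomial identity. The key observation is that if we split the typical term as
\[
  a_\alpha w^\alpha = \Big(a_\alpha \sqrt{\tfrac{\alpha!}{|\alpha|!}}\Big) \cdot \Big( w^\alpha \sqrt{\tfrac{|\alpha|!}{\alpha!}}\Big),
\]
then summing the squares of the first factors over all $\alpha$ gives exactly $\|f\|^2$, while summing the squares of the second factors gives a quantity that we can evaluate in closed form.

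Concretely, first I would fix $w \in \mathbb{B}_d$ and any finite set $F \subset \mathbb{N}^d$, and apply Cauchy--Schwarz to obtain
\[
  \Big(\sum_{\alpha \in F} |a_\alpha w^\alpha|\Big)^2 \le \Big(\sum_{\alpha \in F} |a_\alpha|^2 \tfrac{\alpha!}{|\alpha|!}\Big) \cdot \Big(\sum_{\alpha \in F} \tfrac{|\alpha|!}{\alpha!} |w^\alpha|^2\Big).
\]
The first factor is bounded by $\|f\|^2$. For the second factor, grouping terms by total degree and using the multinomial theorem gives
\[
  \sum_{\alpha \in \mathbb{N}^d} \tfrac{|\alpha|!}{\alpha!} |w^\alpha|^2 = \sum_{n=0}^\infty \sum_{|\alpha|=n} \tfrac{n!}{\alpha!} |w_1|^{2 \alpha_1} \cdots |w_d|^{2 \alpha_d} = \sum_{n=0}^\infty \|w\|^{2n} = \frac{1}{1 - \|w\|^2}.
\]
Letting $F$ exhaust $\mathbb{N}^d$ yields both the absolute convergence of $\sum_\alpha a_\alpha w^\alpha$ and the pointwise estimate \eqref{eqn:pointwise}.

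For uniform convergence on compact subsets, I would observe that any compact $K \subset \mathbb{B}_d$ sits inside a closed ball of some radius $r < 1$, so the bound on $\sum_\alpha \frac{|\alpha|!}{\alpha!} |w^\alpha|^2$ above is uniform in $w \in K$, equal to at most $\frac{1}{1-r^2}$. Applying the same Cauchy--Schwarz inequality to a tail of the series over the complement of a large finite set $F_0$ bounds the tail uniformly on $K$ by $\frac{1}{\sqrt{1-r^2}} \big(\sum_{\alpha \notin F_0} |a_\alpha|^2 \tfrac{\alpha!}{|\alpha|!}\big)^{1/2}$, which tends to $0$ as $F_0$ grows since $\|f\|^2 < \infty$. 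This yields uniform absolute convergence on $K$, and in particular shows that the sum defines a holomorphic function on $\mathbb{B}_d$.

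There is no real obstacle here; the only step that requires a moment's thought is recognizing the multinomial identity that makes $\sum_\alpha \frac{|\alpha|!}{\alpha!} |w^\alpha|^2 = (1 - \|w\|^2)^{-1}$. This is of course the very computation that matches the reproducing kernel $K(z,w) = (1 - \langle z,w \rangle)^{-1}$ identified in the previous subsection, which is a reassuring sanity check.
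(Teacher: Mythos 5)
Your proof is correct and follows essentially the same route as the paper: Cauchy--Schwarz with the weight splitting, the multinomial identity $\sum_\alpha \frac{|\alpha|!}{\alpha!}|w^\alpha|^2 = (1-\|w\|^2)^{-1}$, and the tail estimate for uniform convergence on compact sets. No gaps.
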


  \begin{proof}
    Let $w \in \mathbb{B}_d$.
    By the Cauchy--Schwarz inequality and the binomial theorem,
    \begin{align*}
      \sum_{\alpha \in \mathbb{N}^d} |a_\alpha w^\alpha|
      &\le
      \Big( \sum_\alpha |w^\alpha|^2 \frac{|\alpha|!}{\alpha!} \Big)^{1/2}
      \Big( \sum_\alpha |a_\alpha|^2 \frac{\alpha!}{|\alpha|!} \Big)^{1/2} \\
      &= \frac{1}{(1 - \|w\|^2)^{1/2}} \|f\|.
    \end{align*}
    This shows absolute convergence of the series and the pointwise estimate \eqref{eqn:pointwise}.
    Applying the pointwise estimate to the tail of the series,
    we see that the convergence
    is uniform on $r \overline{\mathbb{B}_d}$ for each $r < 1$.
  \end{proof}

  It also follows that the expansion of $f$ in the orthogonal basis of monomials
  agrees with the Taylor series of $f$ at the origin.

  \subsection{RKHS description of the Drury--Arveson space}

  We already came across the reproducing kernel of the Drury--Arveson space
  in our discussion in Subsection \ref{ss:discover}.
  Let us check that with our formal definition of $H^2_d$, we really obtain the same
  reproducing kernel.

  \begin{proposition}
    \label{prop:kernel}
    The Drury--Arveson space is a reproducing kernel Hilbert space
    of functions on $\mathbb{B}_d$ with reproducing kernel
    \begin{equation*}
      K(z,w) = \frac{1}{1 - \langle z,w \rangle } \quad (z,w \in \mathbb{B}_d).
    \end{equation*}
  \end{proposition}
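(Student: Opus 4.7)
The plan is to verify directly, from the power series definition of $H^2_d$, that (i) point evaluation at every $w \in \mathbb{B}_d$ is a bounded linear functional and (ii) the candidate kernel $K_w(z) = \frac{1}{1 - \langle z,w\rangle}$ actually lies in $H^2_d$ and reproduces values.

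Part (i) is essentially already done. Proposition \ref{prop:DA_point_estimate} gives the estimate $|f(w)| \le (1-\|w\|^2)^{-1/2} \|f\|$ for every $w \in \mathbb{B}_d$, so the evaluation functional $\mathrm{ev}_w \colon f \mapsto f(w)$ is bounded, which means $H^2_d$ is a RKHS on $\mathbb{B}_d$. It then remains to identify the reproducing kernel.

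For part (ii), I would fix $w \in \mathbb{B}_d$ and use the multinomial expansion
\begin{equation*}
  K_w(z) = \frac{1}{1-\langle z,w\rangle} = \sum_{n=0}^\infty \langle z,w\rangle^n = \sum_{\alpha \in \mathbb{N}^d} \frac{|\alpha|!}{\alpha!}\, \overline{w}^\alpha z^\alpha,
\end{equation*}
which already appeared in Subsection \ref{ss:discover}. Plugging these Taylor coefficients $a_\alpha = \frac{|\alpha|!}{\alpha!} \overline{w}^\alpha$ into the defining norm gives
\begin{equation*}
  \|K_w\|^2 = \sum_{\alpha} \Big(\frac{|\alpha|!}{\alpha!}\Big)^{\!2} |w^\alpha|^2 \cdot \frac{\alpha!}{|\alpha|!} = \sum_{\alpha} \frac{|\alpha|!}{\alpha!} |w^\alpha|^2 = \sum_{n=0}^\infty \|w\|^{2n} = \frac{1}{1-\|w\|^2},
\end{equation*}
so $K_w \in H^2_d$. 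For the reproducing property, given $f = \sum_\alpha a_\alpha z^\alpha \in H^2_d$, I compute
\begin{equation*}
  \langle f, K_w \rangle = \sum_\alpha a_\alpha \, \overline{\Big(\tfrac{|\alpha|!}{\alpha!} \overline{w}^\alpha\Big)} \cdot \|z^\alpha\|^2 = \sum_\alpha a_\alpha\, \tfrac{|\alpha|!}{\alpha!} w^\alpha \cdot \tfrac{\alpha!}{|\alpha|!} = \sum_\alpha a_\alpha w^\alpha = f(w),
\end{equation*}
where the last equality uses Proposition \ref{prop:DA_point_estimate} to legitimize pointwise evaluation as summation of the Taylor series.

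There is no real obstacle here; the only thing to be careful about is the interchange of inner product and infinite sum, which is justified because the partial sums of $K_w$ converge in $H^2_d$-norm (the tail norms are a convergent geometric-type series, as the norm calculation above shows), so continuity of the inner product applies. Everything else is the bookkeeping of the weights $\alpha!/|\alpha|!$ canceling against the multinomial coefficients $|\alpha|!/\alpha!$ — precisely the cancellation that made Proposition \ref{prop:power_series_nec} work in reverse.
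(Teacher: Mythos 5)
Your proposal is correct and takes essentially the same route as the paper: expand $K_w$ in the monomial basis, check that $K_w\in H^2_d$ (you compute $\|K_w\|^2 = (1-\|w\|^2)^{-1}$ explicitly, where the paper simply asserts this membership is elementary), and then verify $\langle f, K_w\rangle = f(w)$ by pairing coefficients against the weights $\alpha!/|\alpha|!$. The paper phrases the last step as reading equation \eqref{eqn:kernel_2} backwards, but that is the same computation; your added remark on term-by-term interchange via norm convergence of the partial sums is a legitimate, if minor, tightening.
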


  \begin{proof}
    It is elementary to check that for each $w \in \mathbb{B}_d$, the function 
    \begin{equation*}
      K_w(z) := \frac{1}{1 - \langle z,w \rangle } = \sum_\alpha \frac{|\alpha|!}{\alpha!} z^\alpha \overline{w^\alpha}
    \end{equation*}
    belongs
    to $H^2_d$. Moreover, essentially reading the computation \eqref{eqn:kernel_2} backwards,
    we find that if $f = \sum_{\alpha} a_\alpha z^\alpha \in H^2_d$, then
    \begin{equation*}
      \langle f, K_w \rangle = \sum_\alpha a_\alpha \Big\langle z^\alpha, \frac{|\alpha|!}{\alpha!} z^\alpha \overline{w}^\alpha \Big\rangle  = \sum_\alpha a_\alpha w^\alpha = f(w). \qedhere
    \end{equation*}
  \end{proof}

  \begin{remark}
    Proposition \ref{prop:kernel} could also serve as the definition of $H^2_d$.
    Indeed,
    \begin{equation*}
      K(z,w) = \frac{1}{1 - \langle z,w \rangle} = \sum_{\alpha} \frac{|\alpha|!}{\alpha!} z^\alpha \overline{w^\alpha}
    \end{equation*}
    is positive semi-definite as a function of $(z,w)$ on $\mathbb{B}_d \times \mathbb{B}_d$.
    Thus, Moore's theorem (see \cite[Theorem 2.14]{PR16} or \cite[Theorem 2.23]{AM02})
    implies that there is a unique reproducing kernel Hilbert space of functions on $\mathbb{B}_d$
    with reproducing kernel $K$.
  \end{remark}

  The RKHS description of $H^2_d$ gives a strong indication that it is a natural space:
  the reproducing kernel in Proposition \ref{prop:kernel} is perhaps the simplest
  generalization of the Szeg\H{o} kernel to the unit ball that one can think of.

  We can now also see that $H^2_d$ indeed answers Question \ref{goal}.

  \begin{proposition}
    \label{prop:unit_invariance}
    The Drury--Arveson space $H^2_d$ satisfies all conditions in Questions \ref{goal}.
    In particular, $H^2_d$ is unitarily invariant in the sense that
    for any $d \times d$ unitary matrix $U$, the composition operator
    \begin{equation*}
      C_U: H^2_d \to H^2_d, \quad f \mapsto f \circ U,
    \end{equation*}
    is unitary.
  \end{proposition}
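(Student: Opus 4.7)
The plan is to verify the three conditions of Question \ref{goal} in turn. Conditions (c) and (a) are immediate from the power series definition; condition (b) is the main content, and I would deduce it from the manifest unitary invariance of the reproducing kernel.

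For (c), the monomials $\{z^\alpha\}$ form an orthogonal basis of $H^2_d$ by construction (their span is dense by definition of the $H^2_d$ norm as a weighted $\ell^2$ sum), so the polynomials are dense. For (a), any $g \in H^2$ has Taylor expansion $g(z) = \sum_{n=0}^\infty a_n z^n$ with $\|g\|^2_{H^2} = \sum |a_n|^2$. Then $f(z) = g(z_1) = \sum_{n=0}^\infty a_n z_1^n$ corresponds to multi-indices $\alpha = (n,0,\ldots,0)$, for which the weight is $\alpha!/|\alpha|! = n!/n! = 1$. Hence $f \in H^2_d$ with $\|f\|_{H^2_d}^2 = \sum |a_n|^2 = \|g\|^2_{H^2}$.

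For (b), the main observation is that Proposition \ref{prop:kernel} gives
\[
K(Uz, Uw) = \frac{1}{1 - \langle Uz, Uw \rangle} = \frac{1}{1 - \langle z, w \rangle} = K(z,w)
\]
for any unitary $U$. I would define a map $V$ on the dense subspace $\spa \{K_w : w \in \mathbb{B}_d\}$ by $V K_w = K_{Uw}$, extended linearly, and then compute
\[
\langle K_{Uw}, K_{Uv} \rangle = K(Uv, Uw) = K(v,w) = \langle K_w, K_v \rangle.
\]
This identity simultaneously shows that $V$ is well defined on finite linear combinations (if $\sum c_j K_{w_j} = 0$ then $\sum c_j K_{U w_j}$ has zero norm) and that it is isometric there. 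Running the same argument with $U^*$ in place of $U$ produces a two-sided inverse, so $V$ extends to a unitary on $H^2_d$.

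Finally, I would identify $V^*$ with the composition operator: for $f \in H^2_d$ and $w \in \mathbb{B}_d$,
\[
(V^* f)(w) = \langle V^* f, K_w \rangle = \langle f, V K_w \rangle = \langle f, K_{Uw} \rangle = f(Uw) = (C_U f)(w).
\]
This shows at once that $C_U$ maps $H^2_d$ into itself and that it is unitary, completing (b). The only step that requires care is confirming that $V$ is well defined and bounded on $\spa\{K_w\}$; once the inner product computation above is in hand this is routine, so I expect no serious obstacle beyond bookkeeping with kernel functions.
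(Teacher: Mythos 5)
Your proposal is correct and follows essentially the same approach as the paper: verify (a) and (c) from the power series definition, then deduce (b) from the identity $K(Uz,Uw)=K(z,w)$ by building the isometry $V\colon K_w\mapsto K_{Uw}$ on the span of kernels and identifying $V^*$ with $C_U$. The only cosmetic difference is that the paper obtains surjectivity of $V$ directly from surjectivity of $U$, while you invoke the analogous map built from $U^*$ as a two-sided inverse; both are routine.
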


  \begin{proof}
    Conditions (a) and (c) hold by definition, and Proposition \ref{prop:DA_point_estimate}
    shows that $H^2_d$ is indeed a reproducing kernel Hilbert space of holomorphic functions on $\mathbb{B}_d$.

    To see unitary invariance, i.e.\ Condition (b), observe that the reproducing kernel $K$ of $H^2_d$
    is unitarily invariant in the sense that for all $d \times d$ unitaries $U$,
    we have
    \begin{equation}
      \label{eqn:kernel_unitary_invariance}
      K(U z, U w) = K(z,w) \quad \text{ for all } z,w \in \mathbb{B}_d
    \end{equation}
    A general RKHS argument then implies that $C_U$ is unitary.
    Indeed, writing $K_w(z) = K(z,w)$, Equation \eqref{eqn:kernel_unitary_invariance} implies that
    \begin{equation*}
      \langle K_{U w}, K_{U v} \rangle = \langle K_w, K_v \rangle 
      \quad \text{ for all } v,w \in \mathbb{B}_d,
    \end{equation*}
    so using that the linear span of the kernel functions is dense in $H^2_d$,
    we obtain a unique isometry $V: H^2_d \to H^2_d$ with
    \begin{equation*}
      V K_w = K_{U w} \quad \text{ for all } w \in \mathbb{B}_d.
    \end{equation*}
    Since $U$ is surjective, $V$ is in fact unitary, and
    \begin{equation*}
      (V^* f)(w) = \langle V^* f, K_w \rangle = \langle f, K_{U w} \rangle = f(U w),
    \end{equation*}
    so $V^* = C_U$.
  \end{proof}

  \subsection{Function theory description of the Drury--Arveson space}
  \label{ss:ft_dA}

  In classical Hardy space theory, the function theoretic point of view plays a central role.
  Therefore, it is desirable to have a function theoretic description
  of the Drury--Arveson space as well.

  To motivate the function theoretic description, let us first recall another
  classical function space on the ball.
  \begin{definition}
    Let $V$ be the normalized volume measure on $\mathbb{B}_d$.
    The Bergman space on $\mathbb{B}_d$ is
    \begin{equation*}
      L^2_a(\mathbb{B}_d) = \Big\{ f \in \mathcal{O}(\mathbb{B}_d): \|f\|^2_{L^2_a(\mathbb{B}_d)} = \int_{\mathbb{B}_d} |f|^2 d V < \infty \Big\}.
    \end{equation*}
  \end{definition}

  Background material on $L^2_a(\mathbb{B}_d)$ can for instance be found in \cite[Chapter 2]{Zhu05}.

  Let us now try to determine how the Drury--Arveson space compares to the Bergman space,
  and let us first assume that $d=2$.
  (One can also compare $H^2_d$ to the Hardy space $H^2(\mathbb{B}_d)$, but in the first
  non-trivial case $d=2$, the comparison with the Bergman space turns out to be cleaner.)

  A basic integral formula on the ball (Propositions 1.4.8 and 1.4.9 in \cite{Rudin08}) shows that the monomials $z^\alpha$ also form
  an orthogonal basis of $L^2_a(\mathbb{B}_2)$, but now with norm
  \begin{equation}
    \label{eqn:norm}
    \|z^\alpha\|^2_{L^2_a(\mathbb{B}_2)} = \frac{\alpha!}{|\alpha|!} \frac{2}{(|\alpha|+1)(|\alpha|+2)}
    \approx \frac{\alpha!}{|\alpha|!} \frac{1}{|\alpha|^2}
    = \|z^\alpha\|^2_{H^2_2} \frac{1}{|\alpha|^2}
  \end{equation}
  if $\alpha \in \mathbb{N}^d \setminus \{0\}$.
  In other words, the $H^2_2$-norm of $z^\alpha$ exceeds its $L^2_a$-norm by roughly
  a factor of $|\alpha|$.
  Crucially, this additional factor has a concrete function theoretic interpretation.

  If $f \in \mathcal{O}(\mathbb{B}_d)$, we define the \emph{radial derivative} of $f$ to be
  \begin{equation*}
    R f = \sum_{j=1}^d z_j \frac{\partial f}{\partial z_j}.
  \end{equation*}
  Observe that
  \begin{equation*}
    R z^\alpha = |\alpha| z^\alpha,
  \end{equation*}
  so Equation \eqref{eqn:norm} implies that
  \begin{equation}
    \label{eqn:H22}
    H^2_2 = \Big\{ f \in \mathcal{O}(\mathbb{B}_2): R f \in L^2_a(\mathbb{B}_2) \Big\}
  \end{equation}
  and that
  \begin{equation}
    \|f\|^2_{H^2_2} \approx |f(0)|^2 + \int_{\mathbb{B}_2} |R f|^2 \, d V.
  \end{equation}

  This idea can be generalized to higher dimensions, where one needs to take radial derivatives
  of higher order.
  The resulting statement is the following.

  \begin{theorem}
    \label{thm:DA_FT}
    Let $m \in \mathbb{N}$ with $2 m - d > -1$.
    Then
    \begin{equation*}
      H^2_d = \Big\{f \in \mathcal{O}(\mathbb{B}_d): \int_{\mathbb{B}_d} |R^m f(z)|^2 (1 - |z|^2)^{2 m - d} d V(z) < \infty \Big\}
    \end{equation*}
    and
    \begin{equation*}
      \|f\|^2_{H^2_d} \approx |f(0)|^2 + \int_{\mathbb{B}_d} |R^m f(z)|^2 (1 - |z|^2)^{2 m - d} d V(z).
    \end{equation*}
  \end{theorem}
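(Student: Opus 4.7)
The plan is to diagonalize both sides of the claimed equivalence against the orthogonal basis of monomials, and then pass from monomials to arbitrary $f$ via Taylor expansion.

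First, since $R^m z^\alpha = |\alpha|^m z^\alpha$ and the measure $(1-|z|^2)^{2m-d}\,dV$ is invariant under the torus action $z \mapsto (e^{i\theta_1}z_1,\ldots,e^{i\theta_d}z_d)$, the same argument that shows orthogonality of monomials in $H^2_d$ shows that $\{z^\alpha\}$ is an orthogonal system in $L^2((1-|z|^2)^{2m-d}\,dV)$. Using polar coordinates on $\mathbb{B}_d$ together with the surface-integral formula $\int_{\partial \mathbb{B}_d} |\zeta^\alpha|^2\,d\sigma = \frac{(d-1)!\,\alpha!}{(d-1+|\alpha|)!}$ from \cite{Rudin08} and a Beta integral in the radial variable, which converges precisely because $2m-d>-1$, I compute in closed form
\[
  I_\alpha := \int_{\mathbb{B}_d} |z^\alpha|^2 (1-|z|^2)^{2m-d}\,dV(z) = \frac{d!\,\Gamma(2m-d+1)\,\alpha!}{\Gamma(|\alpha|+2m+1)}.
\]

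Second, I compare the two weights coefficient-by-coefficient. For $\alpha \neq 0$, using that $\|z^\alpha\|^2_{H^2_d} = \alpha!/|\alpha|!$, the ratio
\[
  \frac{|\alpha|^{2m}\,I_\alpha}{\|z^\alpha\|^2_{H^2_d}} = d!\,\Gamma(2m-d+1)\,\frac{|\alpha|^{2m}\,\Gamma(|\alpha|+1)}{\Gamma(|\alpha|+2m+1)}
\]
tends to the positive constant $d!\,\Gamma(2m-d+1)$ as $|\alpha|\to\infty$ by the standard asymptotic $\Gamma(n+a)/\Gamma(n+1)\sim n^{a-1}$, and hence is bounded above and below by positive constants uniformly for $\alpha$ with $|\alpha|\geq 1$. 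The $\alpha=0$ case contributes nothing to the integral and is absorbed into the separate $|f(0)|^2$ term on the right-hand side.

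Third, I upgrade this monomial identity to arbitrary $f \in \mathcal{O}(\mathbb{B}_d)$. Write the Taylor expansion $f = \sum_\alpha a_\alpha z^\alpha$, which converges uniformly on compacta in $\mathbb{B}_d$; so does $R^m f = \sum_\alpha |\alpha|^m a_\alpha z^\alpha$. If the right-hand integral is finite, then orthogonality applied to the partial sums of $R^m f$ together with Fatou's lemma gives $\sum_\alpha |\alpha|^{2m}|a_\alpha|^2 I_\alpha \leq \int_{\mathbb{B}_d} |R^m f|^2 (1-|z|^2)^{2m-d}\,dV < \infty$, and hence $f \in H^2_d$ by the coefficient comparison of step two. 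Conversely, if $f \in H^2_d$ then the same comparison gives $\sum_{|\alpha|\geq 1} |\alpha|^{2m}|a_\alpha|^2 I_\alpha < \infty$; the partial sums of $R^m f$ are then Cauchy in $L^2((1-|z|^2)^{2m-d}\,dV)$ and must converge to $R^m f$ by uniqueness of the pointwise limit, yielding the reverse containment and the matching norm bound.

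The main computational obstacle is obtaining $I_\alpha$ in closed form and verifying the Gamma-function asymptotic of step two. The main conceptual point to handle carefully is that the Taylor series of a Besov--Sobolev function converges in the weighted $L^2$-norm; I avoid a separate polynomial density argument by invoking orthogonality plus Fatou's lemma directly.
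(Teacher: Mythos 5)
The paper does not actually prove Theorem~\ref{thm:DA_FT}: it sketches the case $d=2$, $m=1$ using the Bergman-space norms of monomials and then refers the reader to \cite{ZZ08}. Your proof is exactly the natural generalization of that sketch, and your computations in the first two steps are correct: the Beta-integral formula gives $I_\alpha = d!\,\Gamma(2m-d+1)\,\alpha!/\Gamma(|\alpha|+2m+1)$, and the ratio against $\|z^\alpha\|^2_{H^2_d}$ tends to the positive constant $d!\,\Gamma(2m-d+1)$, hence is bounded above and below for $|\alpha|\geq 1$.

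There is, however, a genuine slip in the third step. Fatou's lemma applied to the partial sums $S_N \to R^m f$ yields
\begin{equation*}
\int_{\mathbb{B}_d} |R^m f|^2 (1-|z|^2)^{2m-d}\,dV \;\le\; \liminf_N \int_{\mathbb{B}_d} |S_N|^2 (1-|z|^2)^{2m-d}\,dV \;=\; \sum_\alpha |\alpha|^{2m}|a_\alpha|^2 I_\alpha,
\end{equation*}
which is the \emph{opposite} of the inequality you claim it gives. If instead you want to invoke Bessel's inequality in the weighted $L^2$ space (which would indeed give $\sum \le \int$), you would first have to identify the Fourier coefficients of $R^m f$ against the orthogonal system $\{z^\alpha\}$ with the numbers $|\alpha|^m a_\alpha$, and that identification itself needs an argument; it does not follow merely from locally uniform convergence of the Taylor series.

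The clean repair is to prove the exact identity $\int_{\mathbb{B}_d} |R^m f|^2 (1-|z|^2)^{2m-d}\,dV = \sum_\alpha |\alpha|^{2m}|a_\alpha|^2 I_\alpha$ (both sides possibly infinite) by exhaustion: for each $r<1$, the Taylor series of $R^m f$ converges uniformly on $r\overline{\mathbb{B}_d}$, so by orthogonality of the monomials under the torus action one has
\begin{equation*}
\int_{r\mathbb{B}_d} |R^m f|^2 (1-|z|^2)^{2m-d}\,dV = \sum_\alpha |\alpha|^{2m}|a_\alpha|^2 \int_{r\mathbb{B}_d} |z^\alpha|^2 (1-|z|^2)^{2m-d}\,dV,
\end{equation*}
and letting $r\nearrow 1$, both sides increase to the corresponding quantities over $\mathbb{B}_d$ by monotone convergence (for the integral on the left, and for the series of increasing terms on the right). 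With this identity in hand, both set containments and the norm comparability follow immediately from your step two, and the separate Cauchy-sequence argument in the converse direction becomes unnecessary.
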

  Note that if $d$ is even, then we may choose $m = d/2$ above, so in this case, 
  \begin{equation*}
    \|f\|^2_{H^2_d} \approx |f(0)|^2 + \int_{\mathbb{B}_d} |R^{d/2} f|^2 \,d V.
  \end{equation*}

  The idea behind the proof of Theorem \ref{thm:DA_FT} is similar to one in the special case $d=2$ and $m=1$ outlined above.
  The details can be found, for instance, in \cite{ZZ08}, see in particular Corollary 11 and Theorem 41 there.

  \begin{remark}
  The formula for the norm in Theorem \ref{thm:DA_FT} suffers from the defect that it
  is not exact.
  Explicitly, the right-hand side in Theorem \ref{thm:DA_FT} only defines an equivalent norm on $H^2_d$.
  Moreover, the implied constants depend on the dimension $d$,
  which poses an issue if one tries to use Theorem \ref{thm:DA_FT} in order to analyze functions in $H^2_\infty$;
  see Definition \ref{def:DA_infinite} below.
  However, there is an exact function theoretic formula for $\|f\|_{H^2_d}$ using more complicated differential operators by Arcozzi, Monguzzi, Peloso and Salvatori; see \cite[Theorem 6.1]{AMP+19} for the precise statement.
  Furthermore, Greene and Richter showed that one can compute the norm of a function $f \in H^2_d$ by integrating the norms of the slice
  functions $f_\zeta: \mathbb{D} \to \mathbb{C}, f_\zeta(z) = f(z \zeta)$, in suitable spaces on the disc over $\zeta \in \partial \mathbb{B}_d$; see \cite[Theorem 4.2]{GR06} for the precise statement.
  \end{remark}

  It is worth comparing Theorem \ref{thm:DA_FT}, and even the special case $d=2$ in \eqref{eqn:H22},
  with the definition of the Dirichlet space on the unit disc, namely
  \begin{equation*}
    \mathcal{D} = \{f \in \mathcal{O}(\mathbb{D}): R f \in L^2_a(\mathbb{D}) \}.
  \end{equation*}
  This leads to a useful heuristic: If $d \ge 2$, then from a function theoretic
  point of view, the Drury--Arveson space behaves more like the Dirichlet space than like the Hardy space.
  We will see concrete manifestations of this principle later.
  At the Focus Program, a mini course on the Dirichlet space was given by Thomas Ransford.
  Background material on the Dirichlet space can be found in the books \cite{ARS+19,EKM+14}.

\subsection{The Drury--Arveson space as a member of a scale of spaces}
\label{ss:scale}

The Drury--Arveson space can be regarded as one point in an important scale of spaces, which we denote by $\mathcal{H}_a$ for $a \in \mathbb{R}$. These spaces are sometimes called Besov--Sobolev or Hardy--Sobolev spaces,
see for instance \cite{ZZ08} for background.

Pictorially, a few important members of the scale can be identified as follows:
  \begin{center}
  \begin{tikzpicture}[scale=1.2]
    \node at (0.5,0.4) {$a$};
    \node at (.5,0) [below]{$\mathcal H_a$};
    \draw (1,0) -- (10,0);
    \draw (2,0.1) -- (2,-0.1);
   \node at (2,0) [below]{Dirichlet};
    \node at (2,.4) {$0$};
    \draw (4,0.1) -- (4,-0.1);
   \node at (4,0) [below]{Drury--Arveson};
    \node at (4,.4) {$1$};
    \draw (7,0.1) -- (7,-0.1);
   \node at (7,0) [below]{Hardy};
    \node at (7,.4) {$d$};
    \draw (9,0.1) -- (9,-0.1);
   \node at (9,0) [below]{Bergman};
    \node at (9,.4) {$d+1$};
  \end{tikzpicture}
  \end{center}

  Just as for the Drury--Arveson space, there are several descriptions of $\mathcal{H}_a$.
  Using power series, one can define
  \begin{equation*}
    \mathcal{H}_a = \Big\{ f(z) = \sum_{\alpha} a_\alpha z^\alpha \in \mathcal{O}(\mathbb{B}_d): \sum_{\alpha} |a_\alpha|^2 \frac{\alpha!}{|\alpha|!} (|\alpha| + 1)^{1-a} < \infty \Big\}.
  \end{equation*}
  If $a > 0$, then there is an equivalent norm on $\mathcal{H}_a$ so that the reproducing kernel of $\mathcal{H}_a$
  is given by
  \begin{equation*}
    \frac{1}{(1 - \langle z,w \rangle)^a }.
  \end{equation*}
  A function theoretic definition can be obtained as follows.
  If $m \in \mathbb{N}$ with $2 m + a -d > 0$, then
  \begin{equation*}
    \mathcal{H}_a = \Big\{f \in \mathcal{O}(\mathbb{B}_d): \int_{\mathbb{B}_d} |R^m f (z)|^2 (1 - |z|^2)^{ 2m + a - d - 1} d V(z) < \infty \Big\}.
  \end{equation*}
  The proof of the equivalence of these descriptions is similar to the one in the case
  of the Drury--Arveson space. For details, see \cite{ZZ08}, especially Corollary 11, Theorem 40 and Theorem 41 there.

  In the literature, one can find many ways of indexing the spaces $\mathcal{H}_a$.
  The choice of indexing taken here is motivated by the formula for the reproducing kernel.

  \subsection{The non-commutative approach}
  \label{ss:nc}

  The Drury--Arveson space can also be naturally obtained from a space of non-commutative holomorphic
  functions. Remarkably, this point of view has proved to be very useful, even when one is only interested
  in classical (commutative) holomorphic functions.
  Non-commutative function theory and spaces of non-commutative functions were the topic of the mini course given by Michael Jury at the Focus Program.

  Let $x = (x_1,\ldots,x_d)$ be non-commuting indeterminates and
  let $F_d^+$ be the set of all finite words in $\{1,\ldots,d\}$,
  including the empty word.
  If $w = w_1 \ldots w_r \in F_d^+$, set
  $x^w = x_{w_1} x_{w_2} \ldots x_{w_r}$.
  We also write $|w| = r$ for the length of the word $w$.
  \begin{definition}
    The noncommutative Hardy space is
    \begin{equation*}
      H^2_{nc} = \Big \{ F = \sum_{w \in F_d^+} a_w x^w : \|F\|^2 = \sum_{w \in F_d^+} |a_w|^2 < \infty \Big\}.
    \end{equation*}
  \end{definition}

  The elements of $H^2_{nc}$ are formal non-commutative power series.
  If $d=1$, then the power series converge on the unit disc.
  For $d \ge 1$, it makes sense to evaluate the power series not only on scalar tuples,
  but on certain matrix tuples.  If $n \in \mathbb{N}, n\ge 1$
  and $X =
  \begin{bmatrix}
    X_1 & \cdots & X_d
  \end{bmatrix}$ 
  is a $d$ tuple of $n \times n$ matrices, then we can evaluate a monomial $x^w$ at $X$
  by defining $X^w = X_{w_1} \ldots X_{w_r}$. We view $X$ as a linear map
  \begin{equation*}
    X: (\mathbb{C}^n)^d \mapsto \mathbb{C}^n \quad
    \begin{bmatrix}
      v_1 \\ \vdots \\ v_n
    \end{bmatrix} \mapsto \sum_{i=1}^n X_i v_i
  \end{equation*}
  and let $\|X\|$ denote the operator norm of this linear map.
  Let
  \begin{equation*}
    \mathbb{B}_d^{n \times n} = \{ X \in M_n(\mathbb{C})^d: \|X\| < 1\}
  \end{equation*}
  and $\mathbb{B}_d^{nc} = \bigcup_{n=1}^\infty \mathbb{B}_d^{n \times n}$.

  \begin{lemma}
    \label{lem:nc_power_series}
    If $F = \sum_{w \in F_d^+} a_w x^w \in H^2_{nc}$ and $X \in \mathbb{B}_d^{n \times n}$, then
    \begin{equation*}
      \sum_{k=0}^\infty \Big\| \sum_{|w|=k} a_w X^w \Big\| < \infty.
    \end{equation*}
    Hence the series $F(X) := \sum_{k=0}^\infty \sum_{|w|=k} a_w X^w$ converges in $M_n(\mathbb{C})$.
    Moreover,
    \begin{equation*}
      \|F(X)\|^2 \le \frac{1}{1 - \|X\|^2} \|F\|_{H^2_{nc}}^2.
    \end{equation*}
  \end{lemma}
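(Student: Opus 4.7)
The plan is to mimic the proof of Proposition~\ref{prop:DA_point_estimate}, breaking the non-commutative power series into its homogeneous pieces of length $k$ and bounding each by a non-commutative Cauchy--Schwarz estimate, then summing in $k$.

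The first step is to control the row operator
$\mathbf{X}^{(k)} : \bigoplus_{|w| = k} \mathbb{C}^n \to \mathbb{C}^n, \quad (v_w)_{|w|=k} \mapsto \sum_{|w|=k} X^w v_w.$
I claim that $\|\mathbf{X}^{(k)}\| \le \|X\|^k$, or equivalently,
$\sum_{|w|=k} X^w (X^w)^* \le \|X\|^{2k} I_n.$
This is proved by induction on $k$: writing any word $w$ of length $k$ as $w' i$ with $|w'| = k-1$, one has
$\sum_{|w|=k} X^w (X^w)^* = \sum_{|w'| = k-1} X^{w'} \Big( \sum_{i=1}^d X_i X_i^* \Big) (X^{w'})^*.$
The inner sum is exactly $X X^*$, whose norm equals $\|X\|^2$ by definition of the row operator norm; hence $\sum_i X_i X_i^* \le \|X\|^2 I_n$, and one may pull this scalar past the $X^{w'}$'s and invoke the induction hypothesis.

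Given this estimate, the homogeneous piece $\sum_{|w|=k} a_w X^w$ factors as $\mathbf{X}^{(k)}$ applied to the column of scalar matrices $(a_w I_n)_{|w|=k}$, whose operator norm is $\bigl(\sum_{|w|=k} |a_w|^2\bigr)^{1/2}$. Therefore
$\Big\| \sum_{|w|=k} a_w X^w \Big\| \le \|X\|^k \Big( \sum_{|w|=k} |a_w|^2 \Big)^{1/2}.$
Summing over $k$ and applying the ordinary scalar Cauchy--Schwarz inequality in $k$ yields
$\sum_{k=0}^\infty \Big\| \sum_{|w|=k} a_w X^w \Big\| \le \Big( \sum_{k=0}^\infty \|X\|^{2k} \Big)^{1/2} \|F\|_{H^2_{nc}} = \frac{1}{\sqrt{1 - \|X\|^2}} \|F\|_{H^2_{nc}},$
which simultaneously gives the absolute convergence asserted and, after squaring, the pointwise estimate.

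The only genuine subtlety is the very first step: one must recognise that the correct object to control is the \emph{row} expression $\sum_{|w|=k} X^w (X^w)^*$ and not its column counterpart $\sum (X^w)^* X^w$, because the hypothesis $\|X\| < 1$ is a bound on $\|X X^*\| = \|\sum_i X_i X_i^*\|$ rather than on $\|\sum_i X_i^* X_i\|$, and the telescoping identity above aligns exactly with the row side. Once this alignment is made, everything reduces to bookkeeping and the argument proceeds in direct analogy with the commutative proof.
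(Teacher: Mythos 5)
Your argument is correct and is essentially the paper's proof: the paper likewise bounds the row $(X^w)_{|w|=k}$ by $\|X\|^k$ via "a little induction argument," views $\sum_{|w|=k} a_w X^w$ as that row composed with the column of the $a_w$, and finishes with Cauchy--Schwarz in $k$. You have merely written out the induction (the telescoping identity $\sum_{|w|=k} X^w (X^w)^* = \sum_{|w'|=k-1} X^{w'} \bigl(\sum_i X_i X_i^*\bigr) (X^{w'})^*$) that the paper leaves to the reader, and your closing remark about why the row, not the column, is the right object is accurate given the definition of $\|X\|$.
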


  \begin{proof}
    Let $r = \|X\| < 1$.
    For $k \in \mathbb{N}$, consider the tuple $(X^w)_{|w| = k}$.
    A little induction argument shows that
    for all $k \in \mathbb{N}$, the norm of this tuple (as an operator from $(\mathbb{C}^n)^{d^k}$ to $\mathbb{C}^n)$
    is at most $r^k$. We conclude that
    \begin{equation*}
      \Big\|\sum_{|w| = k} a_w X^w \Big\| \le \Big( \sum_{|w|=k} |a_w|^2 \Big)^{1/2} r^k,
    \end{equation*}
    since we may regard the sum on the left as a product of the row of  the $X^w$ and the column of the $a_w$.
    An application of the Cauchy--Schwarz inequality then completes the proof.
  \end{proof}

  Lemma \ref{lem:nc_power_series} shows that each $F \in H^2_{nc}$ defines a function
  on $\mathbb{B}_d^{nc}$ that takes $\mathbb{B}_d^{n \times n}$ into $M_n(\mathbb{C})$ for all $n \ge 1$.
  This function is a non-commutative function in the sense that is respects direct sums and similarities.
  Moreover, one can show that the coefficients of $F$ are uniquely determined by the values of $F$
  on $\mathbb{B}_d^{nc}$, so we can really think of $H^2_{nc}$ as a space of non-commutative functions
  on $\mathbb{B}_d^{nc}$.
  The study of non-commutative functions goes back to work of Taylor \cite{Taylor72,Taylor73}.
  For more background, see Part 2 of \cite{AMY20} and \cite{KV14}.

  The theory of non-commutative holomorphic functions on the ball and their relation to $H^2_{nc}$
  was in particular developed by Popescu \cite{Popescu06a}.
  It is also possible to regard $H^2_{nc}$ as a non-commutative RKHS in the sense of Ball, Marx and Vinnikov
  \cite{BMV16,BMV18}; see also Sections 2 and 3 and especially Proposition 3.2 in \cite{SSS18}.
  
  In particular, Lemma \ref{lem:nc_power_series} shows that if $F \in H^2_{nc}$, then $F(z)$ converges for each $z \in \mathbb{B}_d$.
  The connection between $H^2_{nc}$ and the Drury--Arveson space is given by the following result.
  \begin{proposition}
    \label{prop:NC_DA}
    The map
    \begin{equation*}
      R: H^2_{nc} \to H^2_d, \quad F \mapsto F \big|_{\mathbb{B}_d},
    \end{equation*}
    is a co-isometry.
  \end{proposition}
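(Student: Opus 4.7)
The plan is to verify that $R$ is surjective and that its adjoint $R^*$ is an isometry, which is equivalent to $R$ being a co-isometry. The key data are: the orthonormal basis $\{x^w : w \in F_d^+\}$ of $H^2_{nc}$ (by definition of the norm), and the orthogonal basis $\{z^\alpha : \alpha \in \mathbb{N}^d\}$ of $H^2_d$ with $\|z^\alpha\|^2 = \alpha!/|\alpha|!$ (Proposition \ref{prop:power_series_nec}).

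First I would unpack what $R$ does on the basis. For a word $w \in F_d^+$, evaluating $x^w$ on commuting scalars gives $z^{\alpha(w)}$, where $\alpha(w) \in \mathbb{N}^d$ is the \emph{abelianization} of $w$, i.e.\ the multi-index recording how many times each letter appears. The number of words $w$ with $\alpha(w) = \alpha$ equals the multinomial coefficient $|\alpha|!/\alpha!$. So $R$ sends the orthonormal basis $\{x^w\}$ onto the highly non-injective family $\{z^{\alpha(w)}\}$, and this combinatorial fact is the source of everything.

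Next I would compute $R^*$ directly using the definition of the adjoint together with the pointwise convergence of the power series. For $\alpha \in \mathbb{N}^d$, the identity
\begin{equation*}
  \langle x^w, R^* z^\alpha \rangle_{H^2_{nc}} = \langle z^{\alpha(w)}, z^\alpha \rangle_{H^2_d}
  = \begin{cases} \alpha!/|\alpha|! & \text{if } \alpha(w) = \alpha, \\ 0 & \text{otherwise,} \end{cases}
\end{equation*}
forces
\begin{equation*}
  R^* z^\alpha = \frac{\alpha!}{|\alpha|!} \sum_{\alpha(w) = \alpha} x^w.
\end{equation*}
Since the words $w$ with $\alpha(w) = \alpha$ are orthonormal in $H^2_{nc}$ and there are $|\alpha|!/\alpha!$ of them,
\begin{equation*}
  \|R^* z^\alpha\|^2_{H^2_{nc}} = \left(\frac{\alpha!}{|\alpha|!}\right)^2 \cdot \frac{|\alpha|!}{\alpha!} = \frac{\alpha!}{|\alpha|!} = \|z^\alpha\|^2_{H^2_d}.
\end{equation*}
Moreover, the supports of $R^* z^\alpha$ and $R^* z^\beta$ are disjoint for $\alpha \neq \beta$, so $R^*$ carries the orthogonal basis $\{z^\alpha\}$ of $H^2_d$ to a norm-preserving orthogonal family in $H^2_{nc}$. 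Extending by linearity and density, $R^*$ is an isometry, hence $R = (R^*)^*$ is a co-isometry.

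The argument is essentially bookkeeping; no step looks genuinely hard. The only point requiring a small amount of care is the exchange of summation and inner product when going from basis vectors to arbitrary elements of $H^2_{nc}$ and $H^2_d$, which is justified by the fact that $\{x^w\}$ is an orthonormal basis of $H^2_{nc}$ and $\{z^\alpha\}$ is an orthogonal basis of $H^2_d$, together with continuity of evaluation (Lemma \ref{lem:nc_power_series} and Proposition \ref{prop:DA_point_estimate}) to ensure $R$ really is defined on all of $H^2_{nc}$.
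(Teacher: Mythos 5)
Your argument is correct, but it takes a genuinely different route from the paper's. The paper works with kernel functions rather than monomials: it introduces the non-commutative kernels $k_\lambda = \sum_{w \in F_d^+} x^w \overline{\lambda}^w$, verifies $F(\lambda) = \langle F, k_\lambda \rangle$, computes $\langle k_\lambda, k_\mu \rangle_{H^2_{nc}} = \frac{1}{1 - \langle \mu,\lambda\rangle} = \langle K_\lambda, K_\mu\rangle_{H^2_d}$ via the geometric series $\sum_n \langle \mu,\lambda\rangle^n$ (this is where the work done by your count $|\{w : \alpha(w)=\alpha\}| = |\alpha|!/\alpha!$ happens, without any multinomial bookkeeping), obtains an isometry $V: K_\lambda \mapsto k_\lambda$ defined on the dense span of kernels, and then identifies $R = V^*$ from $F(\lambda) = \langle F, k_\lambda\rangle = \langle V^*F, K_\lambda\rangle$. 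Your computation is exactly the coefficient-level version of that isometry: the paper derives your formula $V z^\alpha = \frac{\alpha!}{|\alpha|!} \sum_{\alpha(w)=\alpha} x^w$ in the remark immediately following the proposition. The kernel route handles convergence more smoothly and sets up the later multiplier theory; your route makes the combinatorial origin of the weights $\alpha!/|\alpha|!$ completely explicit.

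One point you should repair. You compute $R^*z^\alpha$ ``using the definition of the adjoint,'' which presupposes that $R$ is already known to be a bounded operator from $H^2_{nc}$ into $H^2_d$. Continuity of point evaluations (Lemma \ref{lem:nc_power_series}) only tells you that $F\big|_{\mathbb{B}_d}$ is a well-defined function on $\mathbb{B}_d$; it does not tell you that this function lies in $H^2_d$, let alone that $R$ is bounded. Two fixes are available. Either prove $\|R\| \le 1$ first: writing $F = \sum_w a_w x^w$, the restriction has Taylor coefficients $c_\alpha = \sum_{\alpha(w)=\alpha} a_w$, and Cauchy--Schwarz gives $|c_\alpha|^2 \le \frac{|\alpha|!}{\alpha!} \sum_{\alpha(w)=\alpha} |a_w|^2$, hence $\sum_\alpha |c_\alpha|^2 \frac{\alpha!}{|\alpha|!} \le \|F\|^2_{H^2_{nc}}$. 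Or reverse the logic as the paper does: define the map $V$ on the orthogonal basis (or on the kernels), check directly that it is an isometry, and only then verify that $V^* F = F\big|_{\mathbb{B}_d}$, so that boundedness of $R$ comes for free from $R = V^*$. With either repair your proof is complete.
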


  \begin{proof}
  If $\lambda \in \mathbb{B}_d$, define
  \begin{equation*}
    k_\lambda = \sum_{w \in F_d^+} x^w \overline{\lambda}^w \in H^2_{nc}.
  \end{equation*}
  If $F \in H^2_{nc}$, then $F(\lambda) = \langle F,k_\lambda \rangle$.
  Moreover,
  \begin{align*}
    \langle k_\lambda, k_\mu \rangle_{H^2_{nc}}
    &= \sum_{n=0}^\infty \sum_{\substack{w \in F_d^+ \\ |w| = n}} \mu^w \overline{\lambda}^w = \sum_{n=0}^\infty \langle \mu,\lambda \rangle_{\mathbb{C}^d}^n \\
    &= \frac{1}{1 - \langle \mu,\lambda \rangle_{\mathbb{C}^d} }
    = \langle K_\lambda, K_\mu \rangle_{H^2_d},
  \end{align*}
  where $K_\lambda(z) = K(z,\lambda) = \frac{1}{1 - \langle z,\lambda \rangle }$ is the reproducing kernel of $H^2_d$.
  Hence, there exists an isometry 
  \begin{equation*}
    V: H^2_d \to H^2_{nc}, \quad V K_\lambda = k_\lambda \quad \text{ for all } \lambda \in \mathbb{B}_d.
  \end{equation*}
  If $F \in H^2_{nc}$, then
  \begin{equation*}
    F(\lambda) = \langle F,k_\lambda \rangle = \langle V^* F, K_\lambda \rangle = (V^* F)(\lambda).
  \end{equation*}
  So $R = V^*$ is a co-isometry.
  \end{proof}

  \begin{remark}
    \begin{enumerate}[label=\normalfont{(\alph*)},wide]
      \item 
  The last proof shows one very basic instance of how computations in the non-commutative
  setting may be easier than in the commutative setting.
  The non-commutative expansion of the reproducing kernel of $H^2_d$ as
  \begin{equation*}
    K(\mu,\lambda) = \sum_{w \in F_d^+} \mu^w \overline{\lambda}^w
  \end{equation*}
  is essentially trivial, whereas the commutative expansion as
  \begin{equation*}
    K(\mu,\lambda) = \sum_{\alpha \in \mathbb{N}^d} \frac{|\alpha|!}{\alpha!} \mu^\alpha \overline{\lambda}^\alpha
  \end{equation*}
  requires the multinomial theorem. Going from the non-commutative expansion
  to the commutative one amounts to counting how many words $w \in F_d^+$ correspond
  to a given multi-index $\alpha \in \mathbb{N}^d$.
\item
    One can also take Proposition \ref{prop:NC_DA} as the definition of $H^2_d$ by declaring that
    \begin{equation*}
      H^2_d = \{f: \mathbb{B}_d \to \mathbb{C}: \text{ there exists } F \in H^2_{nc} \text{ with } F \big|_{\mathbb{B}_d} = f\}
    \end{equation*}
    and
    \begin{equation*}
      \|f\|_{H^2_d} = \inf\{ \|F\|_{H^2_{nc}}: F \big|_{\mathbb{B}_d} = f \}.
    \end{equation*}
    \end{enumerate}
  \end{remark}
  
  Proposition \ref{prop:NC_DA} implies that we may identify $H^2_d$ with the image of $R^*$,
  which is a subspace of $H^2_{nc}$. What is this subspace? As we saw in the proof of Proposition \ref{prop:NC_DA},
  the operator $R^* = V$ is given by
  \begin{equation*}
    V: H^2_d \to H^2_{nc}, \quad V K_\lambda = k_\lambda =
    \sum_{w \in F_d^+} x^w \overline{\lambda}^w.
  \end{equation*}
  
  \begin{definition}
    A power series $F = \sum_{w \in F_d^+} a_w x^w$ is symmetric if $a_w = a_{w'}$
    whenever $w'$ is a permutation of $w$.
  \end{definition}

  \begin{example}
    If $d=2$, then $x_1 x_2$ is not symmetric, but $\frac{x_1 x_2 + x_2 x_1}{2}$ is.
    Each $k_\lambda$ for $\lambda \in \mathbb{B}_d$ is a symmetric power series.
    The element $x_1^2 x_2 + x_2^2 x_1$ it not symmetric.
  \end{example}

  \begin{proposition}
    $V$ defines a unitary from $H^2_d$ onto the space of symmetric power series in $H^2_{nc}$.
  \end{proposition}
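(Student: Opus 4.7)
The plan is to leverage what was already established in the proof of Proposition \ref{prop:NC_DA}: $V$ is an isometry, so its range is a closed subspace of $H^2_{nc}$. Since the kernel functions $\{K_\lambda : \lambda \in \mathbb{B}_d\}$ have dense linear span in $H^2_d$, the range of $V$ coincides with the closed linear span of $\{k_\lambda : \lambda \in \mathbb{B}_d\}$ in $H^2_{nc}$. Thus the task reduces to showing that this closed linear span equals precisely the space of symmetric power series in $H^2_{nc}$.

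For the inclusion $V(H^2_d) \subseteq \{\text{symmetric power series}\}$, the key observation is that each $k_\lambda$ is itself symmetric: writing out $k_\lambda = \sum_{w \in F_d^+} x^w \overline{\lambda}^w$, the coefficient of $x^w$ is $\overline{\lambda}_{w_1} \cdots \overline{\lambda}_{w_r}$, which, because the scalars $\overline{\lambda}_i$ commute, depends only on the multi-index of $w$. I would then note that the symmetry conditions $a_w = a_{w'}$ cut out a closed subspace of $H^2_{nc}$ (each individual condition is closed, being the kernel of a bounded linear functional given by a difference of two coordinate projections), so the closed span of the $k_\lambda$'s stays inside it.

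For the reverse inclusion, I would use an orthogonality argument. Suppose $F = \sum_w a_w x^w$ is symmetric and orthogonal to every $k_\lambda$. Then $F(\lambda) = \langle F, k_\lambda \rangle = 0$ for every $\lambda \in \mathbb{B}_d$. Writing symmetry as $a_w = b_{\alpha(w)}$, where $\alpha(w) \in \mathbb{N}^d$ denotes the multi-index recording how often each letter appears in $w$, one computes
\begin{equation*}
F(\lambda) = \sum_{w \in F_d^+} b_{\alpha(w)} \lambda^w = \sum_{\alpha \in \mathbb{N}^d} \frac{|\alpha|!}{\alpha!} b_\alpha \lambda^\alpha,
\end{equation*}
using that the number of words $w$ with multi-index $\alpha$ equals the multinomial coefficient $|\alpha|!/\alpha!$. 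Since this ordinary power series vanishes on the open set $\mathbb{B}_d$, all $b_\alpha$ are zero, whence $F = 0$. Therefore the closed span of $\{k_\lambda\}$ inside the closed symmetric subspace is everything, which completes the proof.

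The only real subtlety, and the step I would handle most carefully, is the combinatorial bookkeeping identifying the number of words with a given multi-index; everything else is soft functional-analytic reasoning once $V$ is known to be an isometry.
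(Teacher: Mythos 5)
Your proof is correct and follows essentially the same route as the paper's: both reduce the statement to showing that the closed span of the symmetric kernels $k_\lambda$ exhausts the symmetric power series, and both do so by taking a symmetric $F$ orthogonal to all $k_\lambda$ and grouping the coefficients by multi-index to conclude $F=0$. The only difference is that you spell out the (easy) forward inclusion and the multinomial count $|\alpha|!/\alpha!$ explicitly, which the paper leaves implicit.
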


  \begin{proof}
    It only remains to see that every symmetric power series belongs to the range of $V$.
    If $w \in F_d^+$, let $\alpha(w) \in \mathbb{N}^d$ be the multi-index
    obtained by counting the number of occurrences if each letter in $w$.
    Thus, $\alpha(w) =(\alpha_1,\ldots,\alpha_d)$, where $\alpha_i$ is the number of times the letter $i$ occurs in $w$. 
    
    Let $F = \sum_{w} a_w x^w \in H^2_{nc}$ be symmetric
    and suppose that $F$ is orthogonal to the range of $V$. We have to show that $F=0$.
    For all $\lambda \in \mathbb{B}_d$, we have
    \begin{equation*}
      0 = \langle F, k_\lambda \rangle = \sum_{w \in F_d^+} a_w \lambda^w = \sum_{\alpha \in \mathbb{N}^d} \Big( \sum_{w: \alpha(w) = \alpha} a_w \Big) \lambda^{\alpha(w)}.
    \end{equation*}
    It follows that $\sum_{w: \alpha(w) = \alpha} a_w = 0$ for all $\alpha \in \mathbb{N}^d$,
    and hence that $a_w = 0$ for all $w$ since $F$ is symmetric.
  \end{proof}

  \begin{remark}
    Using notation as in the last proof, note that
    \begin{equation*}
      k_\lambda = \sum_{w \in F_d^+} x^w \overline{\lambda}^w = \sum_{\alpha \in \mathbb{N}^d} \overline{\lambda}^{\alpha} \sum_{w: \alpha(w) = a} x^w
    \end{equation*}
    and
    \begin{equation*}
      K_\lambda = \sum_{w \in F_d^+} z^w \overline{\lambda}^w =  \sum_{\alpha \in \mathbb{N}^d} \frac{|\alpha|!}{\alpha!} \overline{\lambda}^\alpha z^\alpha,
    \end{equation*}
    since $| \{ w \in F_d^+: \alpha(w) = \alpha\}| = \frac{|\alpha|!}{\alpha!}$.
    Thus, using the identity $V K_\lambda = k_\lambda$ for $\lambda \in \mathbb{B}_d$ and comparing
    coefficients, we find that
  \begin{equation*}
    V z^\alpha = \frac{\alpha!}{|\alpha|!} \sum_{w: \alpha(w) = \alpha} x^w.
  \end{equation*}
  In other words, the image of a commutative monomial $z^\alpha$ is the average over all non-commutative
  monomials agreeing with $z^\alpha$ on scalars.
  This once again explains the appearance of the weights in the definition of the Drury--Arveson space.
  \end{remark}

  There is another closely related non-commutative point of view, which for instance appeared in Arveson's original paper \cite{Arveson98}.
  Let $E = \mathbb{C}^d$. The full Fock space over $E$ is
  \begin{equation*}
    \mathcal{F}(E) = \bigoplus_{n=0}^\infty E^{\otimes n}.
  \end{equation*}
  Denoting the standard orthonormal basis vectors of $E$ by $e_1,\ldots,e_n$, the following proposition
  is immediate from the definitions.

  \begin{proposition}
    There is a unique anti-unitary
    \begin{equation*}
      J: H^2_{nc} \to \mathcal{F}(E) \quad \text{ with } J( x_{i_1} x_{i_2} \ldots x_{i_r}) = e_{i_1} \otimes \ldots \otimes e_{i_r}.
    \end{equation*}
  \end{proposition}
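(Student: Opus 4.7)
The plan is to use the fact that $H^2_{nc}$ and $\mathcal{F}(E)$ both come with canonical orthonormal bases indexed by the same set, and to construct $J$ as the conjugate-linear extension of the obvious bijection between these bases.

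First, observe that by the very definition of $H^2_{nc}$, the family $\{x^w : w \in F_d^+\}$ is an orthonormal basis of $H^2_{nc}$. On the other hand, $\mathcal{F}(E) = \bigoplus_{n=0}^\infty E^{\otimes n}$ carries an orthonormal basis consisting of the elementary tensors $e_{i_1} \otimes \cdots \otimes e_{i_r}$ as $r$ ranges over $\mathbb{N}$ and $(i_1,\ldots,i_r)$ over $\{1,\ldots,d\}^r$ (with the convention that the empty tensor spans $E^{\otimes 0} = \mathbb{C}$ and corresponds to the empty word). The map $w = w_1 \ldots w_r \mapsto e_{w_1} \otimes \cdots \otimes e_{w_r}$ is a bijection between these two indexing sets.

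Next, I would define $J$ on the dense subspace of finite linear combinations by the conjugate-linear formula
\begin{equation*}
  J\Big( \sum_{w \in F_d^+} a_w x^w \Big) = \sum_{w \in F_d^+} \overline{a_w}\, e_{w_1} \otimes \cdots \otimes e_{w_r},
\end{equation*}
where the sum on the left is finite. Since the two bases are orthonormal, this map preserves norms and satisfies $\langle J\xi, J\eta \rangle_{\mathcal{F}(E)} = \overline{\langle \xi, \eta \rangle}_{H^2_{nc}}$ on the dense subspace. Hence $J$ extends by continuity to a conjugate-linear isometry from $H^2_{nc}$ to $\mathcal{F}(E)$; since it sends an orthonormal basis onto an orthonormal basis, the extension is surjective and therefore anti-unitary.

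Uniqueness is immediate: any anti-unitary $J'$ with the prescribed values on the monomials agrees with $J$ on the dense linear span of $\{x^w\}$ by conjugate-linearity, and hence on all of $H^2_{nc}$ by continuity. There is no real obstacle here; the entire content of the proposition is bookkeeping about orthonormal bases, and the only point requiring even momentary care is the conjugate-linearity convention built into the notion of anti-unitary operator.
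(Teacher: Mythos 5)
Your argument is correct and is precisely the bookkeeping the paper has in mind; the paper does not even write out a proof, remarking only that the proposition "is immediate from the definitions," and your careful verification via orthonormal bases and conjugate-linear extension is exactly how one would fill in that remark.
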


  \begin{remark}
    One might also define $J$ to be unitary instead of anti-unitary.
    However, an argument can be made that it is more natural to identify $H^2_{nc}$ with the dual space of
    $\mathcal{F}(E)$, rather than with $\mathcal{F}(E)$ itself.
    For instance, in degree $n=1$, elements of $H^2_{nc}$ are homogeneous polynomials
    of degree one, i.e.\ linear forms on $E$.
  \end{remark}

  Symmetric elements of $H^2_{nc}$ correspond to symmetric tensors in $\mathcal{F}(E)$.
  This gives an identification of $H^2_d$ with the \emph{symmetric Fock space} $\mathcal{F}_+(E)$.
  
  \section{Multipliers and operator theory}

  \subsection{Multipliers}
  Whenever one considers a reproducing kernel Hilbert space, one is naturally led to studying its multipliers.
  In the case of the Drury--Arveson space, one might even say that
  the multipliers are really what give the space its place in the theory.

  \begin{definition}
    The \emph{multiplier algebra} of an RKHS $\mathcal{H}$ of functions on $X$ is
    \begin{equation*}
      \Mult(\mathcal{H}) = \Big\{ \varphi: X \to \mathbb{C}: \varphi \cdot f \in \mathcal{H} \text{ whenever } f \in \mathcal{H} \Big\}.
    \end{equation*}
    Elements of $\Mult(\mathcal{H})$ are called \emph{multipliers}.
    The \emph{multiplier norm} of a multiplier $\varphi$ is
    \begin{equation*}
      \|\varphi\|_{\Mult(\mathcal{H})} = \|M_\varphi: f \mapsto \varphi \cdot f\|_{B(\mathcal{H})}.
    \end{equation*}
  \end{definition}

  Since point evaluations are bounded, each multiplication operator $M_\varphi$
  has closed graph and is hence bounded by the closed graph theorem. Thus, $\|\varphi\|_{\Mult(\mathcal{H})} < \infty$
  for all $\varphi \in \Mult(\mathcal{H})$.
  For the Hardy space on $\mathbb{D}$, we have $\Mult(H^2) = H^\infty$, the algebra
  of all bounded holomorphic functions on $\mathbb{D}$, and $\|\varphi\|_{\Mult(H^2)} = \|\varphi\|_\infty$.

  \begin{example}
    \label{exa:z_i_mult}
    Arguably the most important multipliers of $H^2_d$ are the coordinate functions $z_1,\ldots,z_d$.
    To see that they are multipliers, we use the power series representation of $H^2_d$ to find that
    if $f = \sum_\alpha a_\alpha z^\alpha \in H^2_d$, then
    \begin{equation*}
      \|z_i f\|^2 = \sum_{\alpha \in \mathbb{N}^d} |a_\alpha|^2 \|z_i z^\alpha\|^2
      = \sum_{\alpha \in \mathbb{N}^d} |a_\alpha|^2 \frac{(\alpha + e_i)!}{(|\alpha| +1)!}
      \le \sum_{\alpha \in \mathbb{N}^d} |a_\alpha|^2 \frac{\alpha!}{|\alpha|!}
      = \|f\|^2,
    \end{equation*}
    so $z_i \in \Mult(H^2_d)$ with $\|z_i\|_{\Mult(H^2_d)} \le 1$.
    
  In fact, a small computation reveals that
  \begin{equation*}
    M_{z_i}^* z^\alpha =
    \begin{cases}
      \frac{\alpha_i}{|\alpha|} z^{\alpha - e_i} & \text{ if } \alpha_i \neq 0 \\
      0 & \text{ otherwise},
    \end{cases}
  \end{equation*}
  from which it follows that
  \begin{equation*}
    \sum_{i=1}^d M_{z_i} M_{z_i}^* = I - P_0,
  \end{equation*}
  where $P_0$ is the orthogonal projection onto $\mathbb{C}1$.
  Hence the row operator
  \begin{equation*}
    \begin{bmatrix}
      M_{z_1} & \cdots & M_{z_d}
    \end{bmatrix}: (H^2_d)^d \to H^2_d, \quad
    \begin{bmatrix}
      f_1 \\ \vdots \\ f_d
    \end{bmatrix}
    \mapsto \sum_{i=1}^n z_i f_i
  \end{equation*}
  is a contraction.
  \end{example}

  The last observation in the preceding example says that the tuple $M_z = (M_{z_1},\ldots,M_{z_d})$ is a \emph{row contraction}.
  We will see later that $M_z$ is even a universal row contraction in a suitable sense.

  We collect a few basic operator algebraic properties of $\Mult(H^2_d)$.
  It turns out to be convenient to work in the greater generality of RKHS.

  \begin{proposition}
    \label{prop:multiplier_basic}
    Let $\mathcal{H}$ be an RKHS on $X$ with $1 \in \mathcal{H}$.
    The map
    \begin{equation*}
      \Mult(\mathcal{H}) \to B(\mathcal{H}), \quad \varphi \mapsto M_\varphi,
    \end{equation*}
    is an injective unital homomorphism onto a weak operator topology closed subalgebra of $B(\mathcal{H})$.
    In particular, $\Mult(\mathcal{H})$ is a unital commutative Banach algebra.
  \end{proposition}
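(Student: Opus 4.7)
The easy parts of the proposition are the algebraic ones. Injectivity is immediate from the hypothesis $1 \in \mathcal{H}$: if $M_\varphi = 0$, then $\varphi = \varphi \cdot 1 = M_\varphi 1 = 0$. The map is unital since $M_1 = I$, and multiplicativity follows from $(M_\varphi M_\psi f)(x) = \varphi(x) \psi(x) f(x) = (M_{\varphi \psi} f)(x)$ for each $f \in \mathcal{H}$ and $x \in X$. These give that the image is a unital subalgebra of $B(\mathcal{H})$.

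The substantive step is to show that the image is closed in the weak operator topology. The main tool I would use is the standard fact, verified by a one-line computation using the reproducing property, that multipliers act on kernel functions via
\begin{equation*}
M_\varphi^* k_x = \overline{\varphi(x)} k_x \quad (x \in X).
\end{equation*}
Suppose $(M_{\varphi_i})$ is a net of multiplication operators converging WOT to some $T \in B(\mathcal{H})$. Then $M_{\varphi_i}^* \to T^*$ in WOT as well. The plan is to show that each kernel function $k_x$ remains an eigenvector of $T^*$. Fix $x \in X$; note that $k_x \neq 0$, since $\langle 1, k_x \rangle = 1$. For any $f \in \mathcal{H}$ orthogonal to $k_x$, we have $\langle T^* k_x, f \rangle = \lim_i \overline{\varphi_i(x)} \langle k_x, f \rangle = 0$, so $T^* k_x$ lies in the one-dimensional span of $k_x$. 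Define $\varphi : X \to \mathbb{C}$ by $T^* k_x = \overline{\varphi(x)} k_x$.

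It remains to identify $T$ with $M_\varphi$. Using the reproducing property, for each $f \in \mathcal{H}$ and $x \in X$,
\begin{equation*}
(Tf)(x) = \langle Tf, k_x \rangle = \langle f, T^* k_x \rangle = \varphi(x) \langle f, k_x \rangle = \varphi(x) f(x),
\end{equation*}
so $Tf = \varphi \cdot f \in \mathcal{H}$ for every $f \in \mathcal{H}$. Hence $\varphi \in \Mult(\mathcal{H})$ and $T = M_\varphi$, showing that the image is WOT-closed. The in-particular statement then follows: commutativity of $\Mult(\mathcal{H})$ is inherited from pointwise multiplication (and also from the fact that, by injectivity, $M_\varphi M_\psi = M_{\varphi \psi} = M_{\psi \varphi} = M_\psi M_\varphi$), and completeness of $\Mult(\mathcal{H})$ under $\|\cdot\|_{\Mult(\mathcal{H})}$ follows from the fact that a WOT-closed subalgebra of $B(\mathcal{H})$ is in particular norm-closed. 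The only point requiring any thought is the eigenvector-passing-to-the-limit step above, and even that is essentially immediate once one writes it down; I do not expect a serious obstacle.
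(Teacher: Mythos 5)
Your proposal is correct and follows essentially the same route as the paper: both rest on the identity $M_\varphi^* k_x = \overline{\varphi(x)} k_x$ and the resulting characterization of multiplication operators as those $T$ for which every kernel vector is an eigenvector of $T^*$, from which WOT-closedness follows. The only cosmetic difference is that the paper isolates the characterization as a stand-alone equivalence and leaves the net argument implicit, whereas you carry the net through explicitly; the mathematical content is identical.
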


  \begin{proof}
    It is obvious that $\varphi \mapsto M_\varphi$ is a unital homomorphism;
    it is injective since $1 \in \mathcal{H}$.
    Let $k$ be the reproducing kernel of $\mathcal{H}$.
    WOT-closedness of
    \begin{equation*}
      \{M_\varphi: \varphi \in \Mult(\mathcal{H})\} \subset B(\mathcal{H})
    \end{equation*}
    will follow once we show that $T \in B(\mathcal{H})$ is a multiplication operator
    if and only if each kernel vector $k_w = k(\cdot,w)$ is an eigenvector of $T^*$.

    To see this fact, note that
    if $\varphi \in \Mult(\mathcal{H})$, then for all
    $w \in X$ and $f \in \mathcal{H}$, we have
    \begin{equation*}
      \langle f, M_\varphi^* k_w \rangle =
      \langle \varphi f, k_w \rangle = \varphi(w) f(w)
      = \langle f, \overline{\varphi(w)} k_w \rangle.
    \end{equation*}
    Hence $M_\varphi^* k_w = \overline{\varphi(w)} k_w$.

    Conversely, if $T \in B(\mathcal{H})$ has the property
    that $k_w$ is an eigenvector of $T^*$ for all $w \in X$,
    we define $\varphi: X \to \mathbb{C}$ by the identity
    \begin{equation*}
      T^* k_w = \overline{\varphi(w)} k_w \quad (w \in X).
    \end{equation*}
    (Note that $k_w \neq 0$ as $\langle 1, k_w \rangle = 1$.)
    Then
    \begin{equation*}
      (T f)(w) = \langle T f, k_w \rangle = \langle f, T^* k_w \rangle = \varphi(w) \langle f, k_w \rangle
      = \varphi(w) f(w)
    \end{equation*}
    for all $w \in X$ and $f \in \mathcal{H}$, so $T = M_\varphi$.
  \end{proof}

  Because of the proposition above, one usually identifies a multiplier with its multiplication operator.
  Thus, $\Mult(\mathcal{H})$ becomes a unital weak operator topology closed non-selfadjoint operator algebra
  in this way.

  \begin{remark}
    \phantomsection
    \label{rem:multipliers}
    \begin{enumerate}[label=\normalfont{(\alph*)},wide]
      \item The basic identity
        \begin{equation*}
          M_\varphi^* k_w = \overline{\varphi(w)} k_w,
        \end{equation*}
        established in the proof of Proposition \ref{prop:multiplier_basic}, forms
        the basis for many operator theoretic approaches to multipliers.
        We will discuss in Section \ref{sec:Pick} how it leads to a characterization
        of multipliers.
        Moreover, it shows how to recover the multiplier $\varphi$ from the multiplication
        operator $M_\varphi$, for instance as
        \begin{equation}
          \label{eqn:Berezin}
          \varphi(w) = \frac{\langle M_\varphi k_w,k_w \rangle }{\|k_w\|^2}.
        \end{equation}
        The expression on the right makes sense for more general operators $T \in B(\mathcal{H})$
        in place of $M_\varphi$, and is then called the \emph{Berezin transform} of $T$.

      \item
        If $\mathcal{A}$ is a subalgebra of some $B(\mathcal{H})$,
        one defines $\Lat(\mathcal{A})$ to be the lattice of all closed
        subspaces of $\mathcal{H}$ that are invariant under each $T \in \mathcal{A}$.
        If $\mathcal{N}$ is a collection of closed subspaces of $\mathcal{H}$,
        let $\Alg(\mathcal{N})$ be the algebra of all $T \in B(\mathcal{H})$ leaving
        each $M \in \mathcal{N}$ invariant. Then $\mathcal{A} \subset \Alg(\Lat(\mathcal{A}))$,
        and one says that $\mathcal{A}$ is \emph{reflexive} if $\mathcal{A} = \Alg(\Lat(\mathcal{A}))$;
        see \cite{Halmos71}.

        The proof of Proposition \ref{prop:multiplier_basic} shows that $\Mult(\mathcal{H})$
        is a reflexive operator algebra, since $T$ is a multiplication operator
        if and only if $T^*$ leaves $\mathbb{C} k_w$ invariant for every $w \in X$.
    \end{enumerate}
  \end{remark}

  Since $B(\mathcal{H})$ is the dual space of the trace class $T(\mathcal{H})$,
  it follows that $\Mult(\mathcal{H})$ is the dual space of $T(\mathcal{H}) / \Mult(\mathcal{H})_{\bot}$.
  In particular, this equips $\Mult(\mathcal{H})$ with a weak-$*$ topology.

  \begin{proposition}
    \label{prop:weak-star}
    On bounded subsets of $\Mult(\mathcal{H})$, the weak-$*$ topology agrees with the topology
    of pointwise convergence on $X$.
  \end{proposition}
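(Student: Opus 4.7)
The plan is to relate the weak-$*$ topology on $\Mult(\mathcal{H})$ to the weak operator topology on $B(\mathcal{H})$, and then to use the eigenvalue identity $M_\varphi^* k_w = \overline{\varphi(w)} k_w$ from the proof of Proposition \ref{prop:multiplier_basic} to translate between WOT convergence of multiplication operators and pointwise convergence of the multiplier functions themselves.

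First, I would recall that on norm-bounded subsets of $B(\mathcal{H})$ the weak-$*$ topology arising from the predual $T(\mathcal{H})$ (i.e.\ the ultraweak topology) coincides with the WOT. Since $\Mult(\mathcal{H})$ is WOT-closed by Proposition \ref{prop:multiplier_basic} and convex, it is also weak-$*$ closed in $B(\mathcal{H})$, so the weak-$*$ topology it inherits from the quotient predual $T(\mathcal{H})/\Mult(\mathcal{H})_\bot$ is exactly the restriction of the weak-$*$ topology of $B(\mathcal{H})$. Consequently, on bounded subsets of $\Mult(\mathcal{H})$ the weak-$*$ topology agrees with the WOT, and the proposition reduces to showing that on such bounded subsets, WOT convergence $M_{\varphi_\lambda} \to M_\varphi$ is equivalent to pointwise convergence $\varphi_\lambda \to \varphi$ on $X$.

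For the forward direction, if $M_{\varphi_\lambda} \to M_\varphi$ in WOT, then the identity $M_\psi^* k_w = \overline{\psi(w)} k_w$ gives
\begin{equation*}
\varphi_\lambda(w) \|k_w\|^2 = \langle M_{\varphi_\lambda} k_w, k_w \rangle \longrightarrow \langle M_\varphi k_w, k_w \rangle = \varphi(w) \|k_w\|^2
\end{equation*}
for each $w \in X$, yielding $\varphi_\lambda(w) \to \varphi(w)$ at every $w$ with $k_w \neq 0$ (points where $k_w = 0$ place no constraint on a multiplier, since every element of $\mathcal{H}$ vanishes there). Conversely, assume $\varphi_\lambda \to \varphi$ pointwise with $\sup_\lambda \|M_{\varphi_\lambda}\| < \infty$. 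The same identity shows that $M_{\varphi_\lambda}^* k_w = \overline{\varphi_\lambda(w)} k_w \to \overline{\varphi(w)} k_w = M_\varphi^* k_w$ in norm for each $w$, so $M_{\varphi_\lambda}^* \to M_\varphi^*$ in SOT on the dense subspace $\spa\{k_w : w \in X\}$; the uniform bound on $\|M_{\varphi_\lambda}^*\|$ then propagates SOT convergence to all of $\mathcal{H}$ by a standard $\varepsilon/3$ argument, and SOT convergence of the adjoints implies WOT convergence of the operators themselves.

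The main obstacle is the initial identification of the weak-$*$ topology on $\Mult(\mathcal{H})$ with the restriction of the ultraweak topology of $B(\mathcal{H})$, together with its coincidence with the WOT on bounded sets; once those standard but slightly technical facts are in hand, the equivalence with pointwise convergence is almost mechanical via the eigenvector identity for $M_\varphi^*$.
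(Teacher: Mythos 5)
Your proof is correct and follows essentially the same route as the paper's: the forward direction via the Berezin transform identity \eqref{eqn:Berezin}, and the converse via the eigenvector identity $M_\varphi^* k_w = \overline{\varphi(w)}k_w$ together with boundedness and density of the span of kernel functions. The only cosmetic difference is that for the converse you pass through SOT convergence of the adjoints rather than verifying WOT convergence directly on pairs of kernel functions (as the paper does via $\langle M_\varphi k_w, k_z\rangle = \varphi(z)k_w(z)$); your version also makes explicit the identification of the weak-$*$ topology with WOT on bounded sets, which the paper leaves implicit.
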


  \begin{proof}
    Equation \eqref{eqn:Berezin} shows that weak-$*$ convergence implies pointwise convergence on $\mathcal{H}$.
    The converse follows from the identity
    \begin{equation*}
      \langle M_\varphi k_w, k_z \rangle = \varphi(z) k_w(z),
    \end{equation*}
    for $z,w \in \mathcal{H}$ and the fact that the linear span of the kernel functions in dense in $\mathcal{H}$.
  \end{proof}

  Specializing again to $H^2_d$, the following result of Davidson and Hamilton \cite[Corollary 5.3]{DH11} shows
  that weak-$*$ continuous functionals on $\Mult(H^2_d)$ admit a particularly simple representation.

  \begin{theorem}
    The algebra $\Mult(H^2_d)$ has property $\mathbb{A}_1(1)$, meaning that
    for every weak-$*$ continuous linear functional $L$ on $\Mult(H^2_d)$ with
    $\|L\| < 1$, there exist $f,g \in H^2_d$ with $\|f\| \|g\| < 1$
    and
    \begin{equation*}
      L(\varphi) = \langle \varphi f,g \rangle \quad \text{ for all } \varphi \in \Mult(H^2_d).
    \end{equation*}
    In particular, the weak-$*$ topology and the weak operator topology on $\Mult(H^2_d)$ agree.
  \end{theorem}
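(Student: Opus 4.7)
The plan is to first represent any weak-$*$ continuous functional as a countable series of rank-one functionals with good norm control, and then collapse this series to a single rank-one functional by exploiting asymptotic orthogonality of boundary-concentrated vectors.

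Since $\Mult(H^2_d)$ is a weak-$*$ closed subspace of $B(H^2_d)$, and $B(H^2_d)$ is the dual of the trace class, Hahn--Banach produces a trace-class operator $S$ with trace norm $\|S\|_1 < 1$ satisfying $L(\varphi) = \operatorname{tr}(M_\varphi S)$. Applying the singular value decomposition to $S$ yields the preliminary representation
\begin{equation*}
L(\varphi) = \sum_{n=1}^{\infty} \langle M_\varphi f_n, g_n \rangle, \qquad \sum_n \|f_n\|^2 < 1, \quad \sum_n \|g_n\|^2 < 1.
\end{equation*}
The task reduces to collapsing this countable sum into a single term $L(\varphi) = \langle M_\varphi f, g \rangle$ with $\|f\|\|g\| < 1$.

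The heart of the argument is to construct, for any finite truncation and any $\epsilon > 0$, multipliers $\psi_1, \ldots, \psi_N \in \Mult(H^2_d)$ of multiplier norm at most $1$ with two properties: each $M_{\psi_n}$ acts nearly isometrically on the specific vectors $f_n$ and $g_n$, and the families $\{M_{\psi_n} f_n\}$ and $\{M_{\psi_n} g_n\}$ are pairwise almost orthogonal, even after one inserts any $\varphi \in \Mult(H^2_d)$ of norm at most $1$ between them. Setting $F = \sum_n M_{\psi_n} f_n$ and $G = \sum_n M_{\psi_n} g_n$, the orthogonality estimate gives $\|F\|^2 \le \sum \|f_n\|^2 + O(\epsilon)$ and similarly for $G$, while $\langle M_\varphi F, G \rangle$ approximates the $N$-truncation of $L(\varphi)$ uniformly over the unit multiplier ball. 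My candidates for the $\psi_n$ would be built from boundary-concentrated kernels: choosing points $w_n \in \mathbb{B}_d$ tending to well-separated directions on $\partial \mathbb{B}_d$, the normalized kernel vectors $K_{w_n}/\|K_{w_n}\|$ are asymptotically orthonormal in $H^2_d$, and the complete Pick property of $K(z,w) = 1/(1 - \langle z,w \rangle)$ would allow one to upgrade this Hilbert-space orthogonality to the multiplier level.

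The main obstacle is precisely this construction: unlike the disc Hardy space, $H^2_d$ has very few inner multipliers, so Blaschke-product techniques are not directly available, and one must work harder to produce approximately orthogonal, approximately isometric multiplier pieces. Once the finite-truncation orthogonalization is in place, a standard Bercovici--Foias--Pearcy iteration---peeling off rank-one approximations with geometrically decaying error---assembles the single pair $f, g$ with $\|f\|\|g\| < 1$ representing $L$. The final assertion is then immediate: every weak-$*$ continuous functional is of the form $\varphi \mapsto \langle M_\varphi f, g \rangle$ and hence WOT continuous, while the reverse inclusion WOT $\subseteq$ weak-$*$ holds automatically, so the two topologies agree on $\Mult(H^2_d)$.
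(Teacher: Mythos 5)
There is a genuine gap, and it sits exactly at the step you yourself flag as ``the main obstacle.'' Your framework (Hahn--Banach lift to a trace-class operator of norm $<1$, singular value decomposition, then collapsing the resulting series $\sum_n \langle \varphi f_n, g_n\rangle$ to a single rank-one term) is the standard one, and the final paragraph on the topologies is fine. But the collapsing step requires much more than you ask of your multipliers $\psi_n$. For the diagonal terms you need $\langle M_{\psi_n} M_\varphi f_n, M_{\psi_n} g_n\rangle \approx \langle M_\varphi f_n, g_n\rangle$ \emph{uniformly over all $\varphi$ in the unit multiplier ball}, and for the off-diagonal terms you need $\langle M_{\psi_n} M_\varphi f_n, M_{\psi_m} g_m\rangle \approx 0$, again uniformly in $\varphi$. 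Since $M_\varphi f_n$ sweeps out a dense subset of the cyclic subspace generated by $f_n$, this amounts to requiring $M_{\psi_n}^* M_{\psi_m} \approx \delta_{nm} I$ on entire cyclic subspaces, i.e.\ the $\psi_n$ must behave like isometric multipliers with pairwise orthogonal ranges. For $d \ge 2$ the space $H^2_d$ has \emph{no} non-constant scalar multipliers $\psi$ with $M_\psi$ isometric (e.g.\ $\|z_1^n z_2\|^2 = 1/(n+1) \to 0$ while $\|z_2\| = 1$), and your proposed candidates --- normalized kernels $K_{w_n}/\|K_{w_n}\|$ at separated boundary directions --- are asymptotically orthogonal only as \emph{vectors}; multiplication by them is nowhere near isometric on cyclic subspaces, and the complete Pick property does not upgrade vector-level orthogonality to the required operator-level statement. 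So the construction at the heart of your argument is not available in the commutative setting.

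This is precisely why the paper remarks that the proof of Davidson and Hamilton crucially uses the non-commutative approach. In the free setting one works with the non-commutative analytic Toeplitz algebra $\mathcal{L}_d$ on $H^2_{nc}$, whose commutant contains the right multiplications $R_w$ by words $w$: these are genuine isometries with pairwise orthogonal ranges (for distinct words of equal length) that commute with every element of $\mathcal{L}_d$, so setting $F = \sum_n R_{w_n} f_n$ and $G = \sum_n R_{w_n} g_n$ makes your orthogonalization identity exact rather than approximate. The real work in \cite{DH11} is then not the orthogonalization but the descent: transferring the factorization from $\mathcal{L}_d$ to its compression $\Mult(H^2_d)$ on the co-invariant symmetric Fock space, using the quotient structure of Theorem \ref{thm:NC_DA_mult} and a Beurling-type analysis of invariant subspaces. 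If you want to complete your proof, the path of least resistance is to lift $L$ to a weak-$*$ continuous functional on $\mathcal{L}_d$ of norm $<1$, factor it there using the $R_w$, and then address the (non-trivial) problem of pushing the factorization back down to $H^2_d$.
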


  It is worth remarking that the proof of this theorem crucially uses the non-commutative approach to the
  Drury--Arveson space; see Subsection \ref{ss:nc} and Subsection \ref{ss:nc_mult} below.

\subsection{Function theory of multipliers}
\label{ss:ft_mult}
  One of the key new features of the Drury--Arveson space in higher dimensions is that if $d \ge 2$,
  then the multiplier norm is no longer equal to the supremum norm on the ball.
  The function theoretic point of view taken in Subsection \ref{ss:ft_dA}, and especially the heuristic
  comparison to the Dirichlet space, give some indication
  that this might be the case.

  \begin{proposition}
    \label{prop:stric_containment}
    If $d \ge 2$, then $\Mult(H^2_d) \subsetneq H^\infty(\mathbb{B}_d)$,
    the inclusion is contractive,
    and the multiplier norm and the supremum norm on the ball are not equivalent.
  \end{proposition}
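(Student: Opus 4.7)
The plan is to establish the three assertions in order: the contractive inclusion, the non-equivalence of norms, and then strict containment as an automatic consequence.

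For the contractive inclusion, the key ingredient is the eigenvector identity $M_\varphi^* K_w = \overline{\varphi(w)} K_w$ recorded in Remark \ref{rem:multipliers}(a). Since $K_w \neq 0$, taking norms gives $|\varphi(w)| \le \|M_\varphi^*\| = \|\varphi\|_{\Mult(H^2_d)}$ for every $w \in \mathbb{B}_d$, so $\|\varphi\|_\infty \le \|\varphi\|_{\Mult(H^2_d)}$. Moreover, because $1 \in H^2_d$, any multiplier is given by $\varphi = M_\varphi 1 \in H^2_d \subset \mathcal O(\mathbb{B}_d)$, so $\varphi$ is holomorphic on $\mathbb{B}_d$ and bounded by the previous inequality, placing it in $H^\infty(\mathbb{B}_d)$.

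For non-equivalence, the plan is to exhibit a sequence of polynomials whose supremum norm is controlled while their multiplier norm diverges. The candidate is $\varphi_k := (z_1 z_2 \cdots z_d)^k$. By AM--GM on the ball, $|z_1 \cdots z_d|^{2} \le \big(\tfrac{1}{d}\sum_i |z_i|^2\big)^{d} \le d^{-d}$, so $\|\varphi_k\|_\infty = d^{-dk/2}$. For the multiplier norm, use the crude but sharp lower bound $\|M_{\varphi_k}\| \ge \|M_{\varphi_k} 1\|_{H^2_d} = \|\varphi_k\|_{H^2_d}$, and the norm formula $\|z^\alpha\|_{H^2_d}^2 = \alpha!/|\alpha|!$ from Proposition \ref{prop:power_series_nec} to get $\|\varphi_k\|_{H^2_d}^2 = (k!)^d / (dk)!$. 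Stirling's formula then yields $(k!)^d/(dk)! \sim C_d \, k^{(d-1)/2} \, d^{-dk}$ for some $C_d > 0$, and hence
\begin{equation*}
\frac{\|\varphi_k\|_{\Mult(H^2_d)}}{\|\varphi_k\|_\infty} \ge \frac{\|\varphi_k\|_{H^2_d}}{\|\varphi_k\|_\infty} \sim C_d^{1/2} \, k^{(d-1)/4},
\end{equation*}
which is unbounded as $k \to \infty$ precisely when $d \ge 2$.

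Strict containment is now automatic: if $\Mult(H^2_d)$ equalled $H^\infty(\mathbb{B}_d)$ as sets, the bounded inclusion between Banach spaces would have bounded inverse by the open mapping theorem, forcing equivalence of norms and contradicting the previous step. The main obstacle in this approach is really just identifying a good test sequence; once $(z_1 \cdots z_d)^k$ is selected, the sup-norm estimate and the $H^2_d$-norm computation are elementary, and Stirling does the rest. Conceptually, the ratio is forced to grow because of the heuristic highlighted in Subsection \ref{ss:ft_dA}: in several variables, $H^2_d$ is a Dirichlet-type rather than Hardy-type space, and its multiplier algebra is correspondingly a proper subspace of $H^\infty(\mathbb{B}_d)$.
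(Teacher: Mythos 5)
Your proof is correct and follows essentially the same strategy as the paper's: establish contractivity of the inclusion via the eigenvector identity $M_\varphi^* K_w = \overline{\varphi(w)}K_w$, exhibit polynomials whose multiplier-to-sup-norm ratio grows via Stirling, and deduce strict containment from the open mapping theorem. The only difference is cosmetic: the paper tests on $(2z_1 z_2)^n$, which is normalized so that $\|(2z_1z_2)^n\|_\infty = 1$ and only the $H^2_d$-norm needs estimating (yielding $\sim \sqrt{\pi n}$), whereas you use $(z_1\cdots z_d)^k$ and must compute both norms, arriving at the ratio $\sim C_d^{1/2} k^{(d-1)/4}$. Both computations are sound; the paper's choice slightly shortens the bookkeeping, while yours has the aesthetic feature of using all $d$ variables and of manifestly degenerating when $d=1$.
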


  \begin{proof}
    The contractive containment is a standard argument in the theory of reproducing kernels Hilbert spaces.
    If $\varphi \in \Mult(H^2_d)$, then $\varphi \in H^2_d \subset \mathcal{O}(\mathbb{B}_d)$. Moreover, 
    if $w \in \mathbb{B}_d$, then Remark \ref{rem:multipliers} shows that
    \begin{equation*}
      M_\varphi^* K_w = \overline{\varphi(w)} K_w,
    \end{equation*}
    where $K_w(z) = K(z,w)$ is the reproducing kernel of $H^2_d$, so
    $\overline{\varphi(w)}$ is an eigenvalue of $M_\varphi^*$.
    Hence $\|\varphi\|_\infty \le \|\varphi\|_{\Mult(H^2_d)}$.

    To see the strict containment, observe that by the arithmetic mean--geometric mean
    inequality, $\|(2 z_1 z_2)^n\|_{\infty} = 1$ for all $n \in \mathbb{N}$.
    On the other hand, the explicit formula for the norm of a monomial in $H^2_d$
    and Stirling's approximation show that
    \begin{equation}
      \label{eqn:Stirling}
      \|(2 z_1 z_2)^n\|^2_{\Mult(H^2_d)} \ge \|(2 z_1 z_2)^n\|_{H^2_d}^2
      = 4^n \frac{(n!)^2}{(2 n)!} \sim \sqrt{\pi n} \xrightarrow{n \to \infty} \infty.
    \end{equation}
    This shows that the multiplier norm and the supremum norm are not comparable.
    Moreover, the continuous inclusion $\Mult(H^2_d) \subset H^\infty(\mathbb{B}_d)$ is not
    bounded below, so it is not surjective by the open mapping theorem.
  \end{proof}

  Proposition \ref{prop:stric_containment} shows in particular that for $d \ge 2$, the multiplier algebra
  of the Drury--Arveson space is different from that of the Hardy space on the ball,
  since the latter algebra is $H^\infty(\mathbb{B}_d)$.

  \begin{remark}
    \begin{enumerate}[label=\normalfont{(\alph*)},wide]
      \item 
    The use of the open mapping theorem (or of one its variants) at the end of the last
    proof can easily be avoided, which then gives a fairly explicit example of a function in
    $H^\infty(\mathbb{B}_d) \setminus \Mult(H^2_d)$.
    For instance, following Arveson \cite[Theorem 3.3]{Arveson98}, we may define
    \begin{equation*}
      f(z) = \sum_{k=0}^\infty 2^{-k/4} (2 z_1 z_2)^{2^k}.
    \end{equation*}
    This sum converges absolutely in the supremum norm and hence
    $f$ even belongs to the ball algebra
    \begin{equation*}
      A(\mathbb{B}_d) = \{f \in C( \overline{\mathbb{B}_d}): f \big|_{\mathbb{B}_d} \text{ is holomorphic}\},
    \end{equation*}
    but $f \notin H^2_d$ by \eqref{eqn:Stirling}. Thus, we even find that $A(\mathbb{B}_d) \not\subset H^2_d$.
    
  \item The observation in (a) can be strengthened to the statement that there exists a function
    in $A(\mathbb{B}_d)$ that is not even the ratio of two functions in $H^2_d$;
    see \cite[Theorem 1.19]{AHM+ar}.

  \item
    It was pointed out by Fang and Xia \cite{FX19} that the existence of non-constant inner functions,
    due to Aleksandrov \cite{Aleksandrov82},
    gives another proof of $H^\infty(\mathbb{B}_d) \not\subset \Mult(H^2_d)$.
    Indeed, the oscillatory behaviour of non-constant inner functions near the boundary of the ball
    (which was known before their existence was known) implies that the gradient
    of a non-constant inner function does not belong to the Bergman space (see for instance \cite[19.1.4]{Rudin08}),
    hence neither does the radial derivative (see for instance \cite[Theorem 13]{ZZ08}).
    In combination with the function theoretical description of $H^2_d$ in Theorem \ref{thm:DA_FT},
    this gives that $H^2_d$ does not contain  any non-constant inner functions for $d \ge 2$.
    \end{enumerate}
  \end{remark}

  We will discuss in Section \ref{sec:Pick} a characterization of multipliers
  in terms of positivity of certain matrices.
  Nonetheless, it is natural to ask
  how to characterize multipliers of $H^2_d$ in function theoretic terms.
  Since the derivative appears in the function theoretic description of $H^2_d$,
  Carleson measure conditions for derivates will play a role, just as they do in the case of the Dirichlet space.
  To get a feeling for what is going on, it is again helpful to consider the case $d=2$.

  \begin{example}[Multipliers for $d=2$]
  Recall from Subsection \ref{ss:ft_dA} that $f \in H^2_2$ if and only if $R f \in L^2_a(\mathbb{B}_2)$
  and that
  \begin{equation}
    \label{eqn:norm_2}
    \|f\|^2_{H^2_2} \approx |f(0)|^2 + \int_{\mathbb{B}_2} |R f|^2 \, dV.
  \end{equation}
  We know from Proposition \ref{prop:stric_containment} that membership in $H^\infty(\mathbb{B}_2)$
  is a necessary condition for being a multiplier.
  Which other condition do we have to impose?
  
  Let $\varphi \in H^\infty(\mathbb{B}_2)$.
  Then $\varphi \in \Mult(H^2_2)$ if and only if $\varphi f \in H^2_2$ and
  \begin{equation*}
    \|\varphi f\|_{H^2_2}^2 \lesssim \|f\|_{H^2_2}^2 \quad \text{ for all } f \in H^2_2,
  \end{equation*}
  where the implied constant is independent of $f$.
  Since $\varphi \in H^\infty(\mathbb{B}_2)$, Equation \eqref{eqn:norm_2} implies that this holds if and only if
  $R(\varphi f)$ admits the estimate
  \begin{equation}
    \label{eqn:norm_2_2}
    \int_{\mathbb{B}_2} |R (\varphi f)|^2 \, d V \lesssim \|f\|^2_{H^2_2} \quad \text{ for all } f \in H^2_2.
  \end{equation}
  Now, the product rule shows that $R (\varphi f) = (R \varphi) f + \varphi R f$.
  Moreover, since $\varphi \in H^\infty(\mathbb{B}_2)$, the second summand always admits
  an estimate of the form
  \begin{equation*}
    \int_{\mathbb{B}_2} |\varphi R f|^2 d V \lesssim \int_{\mathbb{B}_2} |R f|^2 \,d V \lesssim
    \|f\|_{H^2_2}^2.
  \end{equation*}
  Therefore, by the triangle inequality in $L^2_a(\mathbb{B}_2)$, we see that
  $R(\varphi f)$ obeys the bound \eqref{eqn:norm_2_2} if and only if
  \begin{equation}
    \label{eqn:Carleson_measure}
    \int_{\mathbb{B}_2} |f|^2 |R \varphi|^2 \, d V \lesssim \|f\|^2_{H^2_2} \quad \text{ for all } f \in H^2_2.
  \end{equation}
  In the language of harmonic analysis, this means that $|R \varphi|^2 \, dV$ is a Carleson
  measure for $H^2_2$. Equivalently, $R \varphi$ is a multiplier from $H^2_2$ into $L^2_a(\mathbb{B}_2)$.
  This is precisely the additional condition we need to impose.

  To summarize, $\varphi \in \Mult(H^2_2)$ if and only if $\varphi \in H^\infty(\mathbb{B}_2)$
  and the Carleson measure condition \eqref{eqn:Carleson_measure} holds.
  \end{example}

  The reasoning above can be generalized to higher dimensions.
  There are additional complications because the product rule for higher derivatives
  involves more summands. It turns out that it is enough to control the first and the last
  summand in the expansion of $R^m(\varphi f)$, which leads again to a condition
  of the form ``$H^\infty$ + Carleson measure''.
  The precise result, due to Ortega and F\`abrega \cite{OF00}, is the following.
  \begin{theorem}
    \label{thm:ortega_fabrega}
    Let $\varphi: \mathbb{B}_d \to \mathbb{C}$. The following are equivalent:
    \begin{enumerate}[label=\normalfont{(\roman*)}]
      \item $\varphi \in \Mult(H^2_d)$;
      \item $\varphi \in H^\infty(\mathbb{B}_d)$ and for some (equivalently all) $m \in \mathbb{N}$  with $2 m -d > - 1$, there
        exists $C \ge 0$ such that
        \begin{equation*}
          \int_{\mathbb{B}_d} |f|^2 |R^m \varphi|^2 (1 - |z|^2)^{2 m -d} \, d V \le C \|f\|^2_{H^2_d} \quad \text{ for all } f \in H^2_d.
        \end{equation*}
    \end{enumerate}
  \end{theorem}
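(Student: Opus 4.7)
The plan is to use the function theoretic description from Theorem \ref{thm:DA_FT} as the bridge between the multiplier bound and the integral condition. By that theorem, $\|\varphi f\|^2_{H^2_d}$ is equivalent to $|\varphi(0) f(0)|^2 + \int_{\mathbb{B}_d} |R^m(\varphi f)|^2 (1 - |z|^2)^{2m - d}\, dV$, and the Leibniz rule for the first order derivation $R$ gives
\begin{equation*}
  R^m(\varphi f) = \sum_{k=0}^m \binom{m}{k} (R^{m-k} \varphi)(R^k f).
\end{equation*}
So in both directions the proof reduces to controlling each summand $(R^{m-k}\varphi)(R^k f)$ against a weighted $L^2$ norm on the ball. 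The necessity of $\varphi \in H^\infty(\mathbb{B}_d)$ is already recorded in Proposition \ref{prop:stric_containment}, so in what follows I assume this and focus on the Carleson measure condition.

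For (ii) $\Rightarrow$ (i), I would bound each Leibniz term separately. The extreme cases are direct: the $k = m$ term is $\varphi R^m f$, which is controlled by $\|\varphi\|_\infty^2 \int |R^m f|^2 (1-|z|^2)^{2m - d}\,dV \lesssim \|\varphi\|_\infty^2 \|f\|^2_{H^2_d}$, and the $k = 0$ term is exactly the hypothesis. The intermediate terms $1 \le k \le m-1$ are the main obstacle; they do not appear in the $d=2$ warm-up because there $m=1$. To handle them I would interpolate: $R^{m-k}\varphi$ should be viewed as a multiplier from a Besov--Sobolev space $\mathcal{H}_a$ into another one (see Subsection \ref{ss:scale}), and the endpoint multiplier bounds $k=0$ and $k=m$ together with the ``some (equivalently all) $m$'' statement in Theorem \ref{thm:DA_FT} give, via complex interpolation on the scale $\mathcal{H}_a$, the required bound on each intermediate term. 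This also explains why the validity of condition (ii) is independent of the choice of admissible $m$: changing $m$ amounts to integrating different powers of the derivative of the same function against the same scale of spaces.

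For (i) $\Rightarrow$ (ii), I assume $\varphi \in \Mult(H^2_d) \cap H^\infty(\mathbb{B}_d)$ and apply the same Leibniz expansion in reverse. Using Theorem \ref{thm:DA_FT},
\begin{equation*}
  \int_{\mathbb{B}_d} |R^m(\varphi f)|^2 (1-|z|^2)^{2m - d}\, dV \lesssim \|\varphi f\|^2_{H^2_d} \le \|\varphi\|^2_{\Mult} \|f\|^2_{H^2_d}.
\end{equation*}
Expanding $R^m(\varphi f)$ and moving the $k = m$ term (controlled by $\|\varphi\|_\infty$) to the right hand side via the triangle inequality in $L^2((1-|z|^2)^{2m-d}\,dV)$, one isolates $\sum_{k=0}^{m-1} \binom{m}{k}(R^{m-k}\varphi)(R^k f)$ on the left. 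To extract the $k = 0$ term $(R^m \varphi) f$, I would again control the intermediate summands $1 \le k \le m-1$ by interpolation along the Besov--Sobolev scale, using the endpoint information that $\varphi$ is both a multiplier of $H^2_d$ and bounded. The resulting bound on $\int |f|^2 |R^m \varphi|^2 (1 - |z|^2)^{2m - d}\, dV$ by $\|f\|^2_{H^2_d}$ is the Carleson condition sought.

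The hard part, in either direction, will be rigorously controlling the intermediate Leibniz terms. The cleanest way I know is via the complex interpolation of the spaces $\mathcal{H}_a$ together with boundedness of derivations $R^j \colon \mathcal{H}_a \to \mathcal{H}_{a + 2j}$ with equivalent norms (which is essentially built into the function theoretic definitions in Subsection \ref{ss:scale}); I would rely on the results of \cite{ZZ08} for these ingredients rather than reproving them, since this is where Ortega and F\`abrega's original argument does its real work.
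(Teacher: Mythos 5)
Your outline follows exactly the route the paper itself sketches: establish the $H^\infty$ bound via Proposition \ref{prop:stric_containment}, expand $R^m(\varphi f)$ by the Leibniz rule, note that the two extreme summands are handled by the hypothesis and by $\|\varphi\|_\infty$ respectively, and reduce everything to the intermediate terms. Be aware, though, that the paper does not actually prove this theorem; it works out only the case $d=2$, $m=1$ (where there are no intermediate terms) and cites \cite{OF00}, \cite{CFO10} and \cite{AHM+18a} for the general case. The paper's remark that ``it is enough to control the first and the last summand'' is precisely the nontrivial assertion you still owe a proof of, so your proposal and the paper are both deferring the same step.

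The genuine gap is in how you propose to close that step. You are right that the $k$-th Leibniz term amounts to the assertion that $|R^{m-k}\varphi|^2(1-|z|^2)^{2m-d}\,dV$ is a Carleson measure for $\mathcal{H}_{1+2k}$ (using $R^k:\mathcal{H}_1\to\mathcal{H}_{1+2k}$, which is correct with the paper's indexing), with the hypothesis giving the $k=0$ endpoint and $\varphi\in H^\infty$ giving the $k=m$ endpoint. But ``complex interpolation on the scale $\mathcal{H}_a$'' does not apply as stated: the two endpoint estimates are bounds for two \emph{different} multiplication operators ($M_{R^m\varphi}$ on $\mathcal{H}_1$ and $M_{\varphi}$ on $\mathcal{H}_{1+2m}$), and the intermediate terms involve yet other functions $R^{m-k}\varphi$. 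There is no single linear operator acting on the interpolation scale whose endpoint bounds are the two facts you know, so Stein--Weiss/Calder\'on interpolation gives you nothing directly. To make an interpolation argument work one must build an analytic family, e.g.\ $s\mapsto M_{R^{m-s}\varphi}\circ R^{s}$ using fractional radial derivatives, and verify admissibility --- this is essentially what Ortega and F\`abrega do, and it is the substantive content of the proof, not a formal consequence of the endpoints. The alternative (used in \cite{AHM+18a}) is to prove directly that $\varphi\in H^\infty$ together with the top-order Carleson condition forces the intermediate Carleson conditions, via pointwise estimates on $R^{j}\varphi$. Either way, the intermediate terms require a real argument that your proposal names but does not supply; the same gap affects both implications, and also the ``some $m$ implies all $m$'' clause, which you attribute to Theorem \ref{thm:DA_FT} but which does not follow from it (that theorem concerns norms of functions, not of products $\varphi f$).
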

  Once again, in the language of harmonic analysis, the second condition above means that
  \begin{equation*}
    |R^m \varphi|^2 (1 - |z|^2)^{2 m - d} d V
  \end{equation*}
  is a Carleson measure for $H^2_d$.
  Proofs of this result can be found in \cite[Theorem 3.7]{OF00} (see also the later paper \cite{CFO10})
  and in \cite[Theorem 6.3]{AHM+18a}.

  To illustrate how the characterization in Theorem \ref{thm:ortega_fabrega} can be used,
  we consider the following example.

  \begin{example}
    Let $d=2$ and let $\varphi \in \Mult(H^2_2)$ with
    \begin{equation*}
      |\varphi(z)| \ge \varepsilon > 0 \quad \text{ for all } z \in \mathbb{B}_2.
    \end{equation*}
    Is $\frac{1}{\varphi} \in \Mult(H^2_2)$? Note that the corresponding question for $d=1$
    is trivial, as the multiplier algebra is $H^\infty$ in this case.
    
    For $d=2$, we may use the characterization in Theorem \ref{thm:ortega_fabrega}
    with $m=1$.
    Indeed, it is clear that $\frac{1}{\varphi} \in H^\infty(\mathbb{B}_2)$. To check
    the Carleson measure condition for $\frac{1}{\varphi}$, note that
    \begin{equation*}
      \Big| R \Big( \frac{1}{\varphi} \Big) \Big| = \Big| \frac{R \varphi}{\varphi^2} \Big|
      \le \frac{1}{\varepsilon^2} |R \varphi|.
    \end{equation*}
    Thus, the Carleson measure condition for $\varphi$ implies the Carleson measure condition for $\frac{1}{\varphi}$,
    so $\frac{1}{\varphi} \in \Mult(H^2_2)$.
  \end{example}

  The reasoning in the previous example can be extended to higher dimensions.
  Once again, additional work is required since higher order derivatives complicate matters.

  \begin{theorem}
    If $\varphi \in \Mult(H^2_d)$ with $|\varphi| \ge \varepsilon > 0$ on $\mathbb{B}_d$, then $\frac{1}{\varphi} \in \Mult(H^2_d)$.
  \end{theorem}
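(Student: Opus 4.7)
By Theorem~\ref{thm:ortega_fabrega}, proving $1/\varphi \in \Mult(H^2_d)$ reduces to two conditions: membership in $H^\infty(\mathbb{B}_d)$, and a Carleson measure estimate for $|R^m(1/\varphi)|^2 (1-|z|^2)^{2m-d}\,dV$ at some $m$ with $2m-d > -1$. The first is immediate: $|\varphi| \ge \varepsilon$ gives $\|1/\varphi\|_\infty \le 1/\varepsilon$. Everything thus reduces to extracting the Carleson condition for $1/\varphi$ from the corresponding one for $\varphi$.

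The plan is to expand $R^m(1/\varphi)$ via Fa\`a di Bruno's formula (or equivalently, iterate the quotient rule $R(1/\varphi) = -R\varphi/\varphi^2$), yielding
\begin{equation*}
  R^m(1/\varphi) = \sum_\pi \frac{c_\pi}{\varphi^{|\pi|+1}} \prod_{B \in \pi} R^{|B|}\varphi,
\end{equation*}
where the sum runs over partitions $\pi$ of $\{1,\ldots,m\}$. The hypothesis $|\varphi| \ge \varepsilon$ bounds each prefactor $\varphi^{-(|\pi|+1)}$ by a constant, so it suffices to establish, for every partition with block sizes $m_1 + \cdots + m_k = m$, $m_i \ge 1$, an estimate of the form
\begin{equation*}
  \int_{\mathbb{B}_d} |f|^2 \prod_{i=1}^k |R^{m_i}\varphi|^2 (1-|z|^2)^{2m-d}\,dV \;\lesssim\; \|f\|_{H^2_d}^2.
\end{equation*}
The natural approach is to single out the factor with largest index $m_1$ and control the remaining $k-1$ factors pointwise using the Cauchy-type estimate $|R^j\varphi(z)| \lesssim \|\varphi\|_\infty (1-|z|^2)^{-j}$ valid for any $\varphi \in H^\infty(\mathbb{B}_d)$. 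The weight then collapses to $(1-|z|^2)^{2m_1 - d}$, and the integral is bounded by the Carleson hypothesis on $\varphi$ at order $m_1$ --- provided $2m_1 - d > -1$.

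The main obstacle is that this allocation fails for partitions composed entirely of short blocks, most pointedly the all-singletons partition when $d \ge 3$, where no $m_i$ exceeds $(d-1)/2$. To handle these bad partitions, I would choose $m$ large enough (exceeding a threshold depending on $d$) so that their block count $k$ is forced to be large, and then replace the generic $H^\infty$-derivative bound with the sharper Schwarz--Pick-type inequality $|\nabla \varphi(z)|^2 \lesssim \|\varphi\|_{\Mult}^2 (1-|z|^2)^{-1}$ available for Drury--Arveson multipliers, derived from the positive semi-definiteness of the kernel $(\|\varphi\|_{\Mult}^2 - \varphi(z)\overline{\varphi(w)}) K(z,w)$. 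Combined with the Drury--Arveson pointwise estimate $|f(z)|^2 \le \|f\|^2_{H^2_d}/(1-|z|^2)$ from Proposition~\ref{prop:DA_point_estimate}, this forces the residual integrals into the convergent regime $\int_{\mathbb{B}_d} (1-|z|^2)^{k-d-1}\,dV < \infty$. Iterating the Schwarz--Pick argument to higher-order derivatives in a way that beats the coarse $H^\infty$ bound is the technical heart of the proof, and corresponds to the additional work hinted at in the lead-in preceding the statement.
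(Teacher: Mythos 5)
First, some context on the comparison: the paper does not actually prove this theorem. The preceding example works out the case $d=2$ in full, but for general $d$ the text only states that ``additional work is required'' and cites Fang--Xia \cite{FX13} and Richter--Sunkes \cite{RS16} for direct proofs, and Costea--Sawyer--Wick \cite{CSW11} for the corona-theorem route. So your proposal cannot be matched against a paper proof; I can only assess whether it is a viable route on its own terms.

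Your skeleton is the natural generalization of the paper's $d=2$ example, and you correctly locate the obstruction: after expanding $R^m(1/\varphi)$ by iterating the quotient rule, using the Cauchy estimate $|R^j\varphi| \lesssim (1-|z|^2)^{-j}$ on all but the largest block, and invoking the order-$m_1$ Carleson condition on that block, one needs $2m_1-d>-1$, which fails when every block is small. That diagnosis is right.

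The repair you propose does not work, though, and the failure is already at the first step. The ``Schwarz--Pick-type'' inequality $|\nabla\varphi(z)|^2 \lesssim \|\varphi\|_{\Mult}^2 (1-|z|^2)^{-1}$ is false for multipliers of $H^2_d$. Take $\varphi_n = z_1^n$: a direct computation (compare Example~\ref{exa:z_i_mult}) shows
\begin{equation*}
  \frac{\|z_1^n z^\alpha\|^2_{H^2_d}}{\|z^\alpha\|^2_{H^2_d}}
  = \frac{(\alpha_1+1)\cdots(\alpha_1+n)}{(|\alpha|+1)\cdots(|\alpha|+n)} \le 1
  \quad \text{for all } \alpha,
\end{equation*}
so $\|z_1^n\|_{\Mult(H^2_d)} = 1$ for every $n$. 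But $R\varphi_n = n z_1^n$, and at $z=(r,0,\ldots,0)$ with $1-r^2 = 1/n$ one finds $|R\varphi_n(z)|^2 = n^2(1-1/n)^n \sim n^2/e$, while $(1-|z|^2)^{-1} = n$. The ratio is unbounded in $n$. In fact the same example rules out any pointwise bound $|R\varphi(z)|^2 \lesssim (1-|z|^2)^{-\beta}$ with $\beta<2$ uniformly over the unit ball of $\Mult(H^2_d)$, so the strategy ``take $m$ huge and invoke an improved pointwise derivative estimate'' cannot close the gap, no matter how one iterates. Note also that for $d=1$ the positivity of $(\|\varphi\|^2_{\Mult}-\varphi(z)\overline{\varphi(w)})K(z,w)$ reduces to the Pick criterion for $H^\infty(\mathbb{D})$, where the classical Schwarz--Pick lemma only gives $|\varphi'(z)| \lesssim (1-|z|^2)^{-1}$ --- i.e.\ $|\varphi'|^2 \lesssim (1-|z|^2)^{-2}$, exactly the Cauchy bound, not an improvement. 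The positivity does encode nontrivial information, but it manifests as integral (Carleson) improvements, not pointwise ones. The ``technical heart'' you flag is therefore a genuine gap, and the direct proofs of Fang--Xia and Richter--Sunkes necessarily do something structurally different from a pointwise derivative upgrade.
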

  This result can be seen as a very special case of the corona theorem for $H^2_d$
  due to Costea, Sawyer and Wick \cite{CSW11}, which we will discuss in Subsection \ref{ss:corona}.
  A direct proof of the result above was found by Fang and Xia \cite{FX13} and by Richter and Sunkes
  \cite{RS16}.

  To obtain a truly function theoretic characterization of multipliers from Theorem \ref{thm:ortega_fabrega},
  it remains to find a geometric characterization of Carleson measures.
  This was achieved by Arcozzi, Rochberg and Sawyer; see \cite{ARS08} for the precise statement.

  Since known function theoretic characterizations of multipliers can be difficult to work with in practice,
  it is also desirable to have good necessary and sufficient conditions.
  Fang and Xia considered the condition
  \begin{equation*}
    \sup_{w \in \mathbb{B}_d} \frac{\|\varphi K_w\|}{\|K_w\|} < \infty,
  \end{equation*}
  where $K_w(z) = \frac{1}{1 - \langle z,w \rangle }$ is the reproducing kernel of $H^2_d$,
  which is clearly necessary for $\varphi \in \Mult(H^2_d)$.
  In \cite{FX15}, they showed that it is not sufficient.

  The following sufficient condition was shown by Aleman, \mcc, Richter and the author.

  \begin{theorem}
    For $f \in H^2_d$, let
    \begin{equation*}
      V_f(z) = 2 \langle f, K_z f \rangle  - \|f\|^2 \quad (z \in \mathbb{B}_d).
    \end{equation*}
    If $\operatorname{Re} V_f$ is bounded in $\mathbb{B}_d$, then $f \in \Mult(H^2_d)$.
  \end{theorem}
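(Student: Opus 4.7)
Set $M^2 := \sup_{z \in \mathbb{B}_d} \operatorname{Re} V_f(z)$. I aim to show $f \in \Mult(H^2_d)$ with multiplier norm at most $M$ via an operator identity, pointwise consequences, and a bridge to the Pick--positivity condition guaranteed by the complete Nevanlinna--Pick property of $H^2_d$.

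Let $T_z := M_{\langle \cdot, z\rangle}$ on $H^2_d$, so $\|T_z\| \le |z| < 1$ for $z \in \mathbb{B}_d$; from $T_z K_z = K_z - \mathbf{1}$ pointwise we obtain $K_z \cdot f = (I - T_z)^{-1} f \in H^2_d$. Consequently
\begin{equation*}
V_f(z) = 2\langle f, (I - T_z)^{-1} f\rangle - \|f\|^2 = \bigl\langle f,\, (I + T_z)(I - T_z)^{-1} f\bigr\rangle,
\end{equation*}
and the algebraic identity $(I - T_z^*)(I + T_z) + (I + T_z^*)(I - T_z) = 2(I - T_z^* T_z)$ yields
\begin{equation*}
\operatorname{Re} V_f(z) = \bigl\|(I - T_z^* T_z)^{1/2} K_z f\bigr\|^2 = \|K_z f\|^2 - \|K_z f - f\|^2 \ge 0.
\end{equation*}
Since $\|T_z\| \le |z|$, the defect is at least $(1 - |z|^2)\|K_z f\|^2$, and Cauchy--Schwarz against $K_z/\|K_z\|$ gives $\|K_z f\|^2 \ge |f(z)|^2 \|K_z\|^2$; hence
\begin{equation*}
|f(z)|^2 \;\le\; (1 - |z|^2)\|K_z f\|^2 \;\le\; \operatorname{Re} V_f(z) \;\le\; M^2,
\end{equation*}
forcing $\|f\|_{H^\infty(\mathbb{B}_d)} \le M$ and, after rearranging, the Fang--Xia reproducing-kernel estimate $\|fK_z\|^2 \le M^2 \|K_z\|^2$.

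For the main step I aim to verify the Pick--positivity condition
\begin{equation*}
\sum_{i,j} \lambda_i \overline{\lambda_j} \bigl(M^2 - \overline{f(w_i)}\, f(w_j)\bigr) K(w_j, w_i) \;\ge\; 0
\end{equation*}
at arbitrary finite tuples in $\mathbb{B}_d$, which by the complete Pick property of $H^2_d$ is equivalent to $\|f\|_{\Mult(H^2_d)} \le M$. Writing $h = \sum_i \lambda_i K_{w_i}$, this unfolds to $\|fh\|^2 \le M^2 \|h\|^2$, i.e.\
\begin{equation*}
\sum_{i,j} \lambda_i \overline{\lambda_j} \bigl\langle (I - T_{w_j}^*)^{-1}(I - T_{w_i})^{-1} f,\; f\bigr\rangle \;\le\; M^2 \|h\|^2 .
\end{equation*}
I intend to derive this two-variable inequality from the scalar hypothesis by exploiting the mutual commutation $[T_v, T_w] = 0$ of the multiplication operators and the particular form of the defect $I - T_z^* T_z$ built into the Drury--Arveson kernel, with the goal of expressing the left-hand sesquilinear form as a positive integral of $\operatorname{Re} V_f$-evaluations at test points determined by the $w_i$.

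The crux is this final step. For $d = 1$ it is automatic: $H^2_1 = H^2$, so $\operatorname{Re} V_f(z)$ reduces to the Poisson integral of $|f|^2$ on $\partial \mathbb{D}$, and boundedness of $\operatorname{Re} V_f$ is equivalent to $f \in H^\infty = \Mult(H^2)$. For $d \ge 2$ there is no such boundary-integral representation of the $H^2_d$-norm, and the Fang--Xia example shows that the Berezin-type bound alone is insufficient, so the proof must genuinely use the defect form of $\operatorname{Re} V_f$ rather than merely its pointwise consequences. I expect this to require either passing to the non-commutative lift of Subsection~\ref{ss:nc}, where the defect operator decomposes more transparently, or invoking the function-theoretic description of Theorem~\ref{thm:DA_FT} to verify the Ortega--F\`abrega Carleson-measure condition of Theorem~\ref{thm:ortega_fabrega}.
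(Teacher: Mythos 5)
Your computation of $\operatorname{Re}V_f(z)=\langle(I-T_z^*T_z)K_zf,\,K_zf\rangle=\|K_zf\|^2-\|K_zf-f\|^2\ge 0$, the resulting bound $\|f\|_{H^\infty(\mathbb{B}_d)}\le M$, and the reproducing-kernel estimate $\|fK_z\|\le M\|K_z\|$ are all correct, and this is the right way to begin. Two small corrections to the framing: the equivalence of $\|f\|_{\Mult(H^2_d)}\le M$ with positivity of the Pick-type matrices is the elementary multiplier criterion (Proposition~\ref{prop:mult_crit}) and holds for every reproducing kernel Hilbert space; the complete Pick property plays no role in that reformulation. Also, the Pick condition unfolds to $\bigl\|\sum_i\lambda_i\overline{f(w_i)}K_{w_i}\bigr\|^2\le M^2\|h\|^2$, which is a bound on the candidate $M_f^*$; targeting $\|fh\|^2\le M^2\|h\|^2$ instead is a legitimate and equivalent sufficient condition, but it is not what the Pick condition ``unfolds to.''

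The genuine gap is the step you yourself label ``the crux.'' Passing from the one-variable hypothesis $\sup_z\operatorname{Re}V_f(z)\le M^2$ to the two-variable positivity $\sum_{i,j}\lambda_i\overline{\lambda_j}\bigl(M^2-\overline{f(w_i)}f(w_j)\bigr)K(w_j,w_i)\ge 0$ is the entire content of the theorem, and the proposal contains only an announced intention to do it, together with two tentative directions (a lift to $H^2_{nc}$, or the Ortega--F\`abrega condition of Theorem~\ref{thm:ortega_fabrega}) that are not developed even in outline. As you correctly note, the pointwise consequences you do establish are known by Fang and Xia to be insufficient. So what you have is a correct reduction, not a proof.

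For comparison: the paper does not prove the theorem here; it cites \cite[Corollary~4.6]{AHM+17c}. The argument there applies to all normalized complete Pick spaces (including $H^2_\infty$) and does not proceed by directly attacking the Pick matrices, nor via Ortega--F\`abrega (which is special to $H^2_d$ with $d<\infty$). It instead exploits the complete Pick structure through the Smirnov factorization: every $f$ in a normalized complete Pick space admits a representation $f=b/a$ with $a,b$ multipliers coming from a contractive column and with $a$ cyclic, and one shows that the uniform bound on $\operatorname{Re}V_f$ forces $a$ to be bounded below in a manner strong enough to conclude that $1/a$, and hence $f$, is a multiplier. This is structurally different from both of the routes you contemplate, so you should be prepared to do genuinely new work rather than hoping the remaining step falls out of the setup.
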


  The proof can be found in \cite[Corollary 4.6]{AHM+17c}.
  The function $V_f$ in the theorem is called the \emph{Sarason function} of $f$.
  If $d=1$, then $\Re V_f$ is the Poisson integral of $|f|^2$.
  In particular, $f \in H^\infty$ if and only if $\Re V_f$ is bounded if $d=1$.
  Fang and Xia showed that if $d \ge 2$, then boundedness of $\Re V_f$
  is not necessary for $f \in \Mult(H^2_d)$, see \cite{FX20} and also
  \cite[Proposition 8.1]{AHM+20a}.

  \subsection{Dilation and von Neumann's inequality}
  \label{ss:dilation}

  As alluded to earlier, the tuple $M_z = (M_{z_1},\ldots,M_{z_d})$
  on the Drury--Arveson space plays a key role in multivariable operator.
  We first recall the relevant one variable theory.
  More background material on dilation theory can be found for instance
  in \cite{Paulsen02,Pisier01,Shalit21}.
  Throughout, we assume that $\mathcal{H}$ is a complex Hilbert space.

  The following fundamental result due to von Neumann \cite{Neumann51}
  forms the basis of a rich interplay between operator theory and function theory.

  \begin{theorem}[von Neumann's inequality]
    Let $T \in B(\mathcal{H})$ with $\|T\| \le 1$. For every polynomial $p \in \mathbb{C}[z]$,
    \begin{equation*}
      \|p(T)\| \le \sup \{ |p(z)| : |z| \le 1 \}.
    \end{equation*}
  \end{theorem}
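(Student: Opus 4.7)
My plan is to prove von Neumann's inequality via Sz.-Nagy's dilation theorem: the strategy is to lift $T$ to a unitary operator $U$ on a larger Hilbert space, for which the spectral theorem gives a trivial bound, and then push the bound back down.

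More precisely, the first step is to establish that every contraction $T \in B(\mathcal{H})$ admits a \emph{power dilation} to a unitary: there exists a Hilbert space $\mathcal{K} \supseteq \mathcal{H}$ and a unitary $U \in B(\mathcal{K})$ such that
\begin{equation*}
  T^n = P_{\mathcal{H}} U^n \big|_{\mathcal{H}} \quad \text{for all } n \in \mathbb{N},
\end{equation*}
where $P_{\mathcal{H}}$ is the orthogonal projection of $\mathcal{K}$ onto $\mathcal{H}$. Granting this, taking linear combinations shows that $p(T) = P_{\mathcal{H}} p(U) \big|_{\mathcal{H}}$ for every polynomial $p \in \mathbb{C}[z]$, so that for any $h \in \mathcal{H}$,
\begin{equation*}
  \|p(T) h\| = \|P_{\mathcal{H}} p(U) h\| \le \|p(U) h\| \le \|p(U)\| \cdot \|h\|.
\end{equation*}
Since $U$ is unitary, its spectrum is contained in the unit circle, and the continuous functional calculus (or just the spectral theorem) gives $\|p(U)\| = \sup_{z \in \sigma(U)} |p(z)| \le \sup_{|z| = 1} |p(z)|$. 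By the maximum modulus principle, this supremum equals $\sup_{|z| \le 1} |p(z)|$, and the inequality follows.

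The main obstacle is the construction of the unitary dilation $U$. My plan is to do this in two stages. First, build an \emph{isometric} dilation of $T$ via the Sch\"affer construction: on $\mathcal{K}_+ = \bigoplus_{n=0}^\infty \mathcal{H}$ (with $\mathcal{H}$ embedded in the first summand), define
\begin{equation*}
  V =
  \begin{bmatrix}
    T & 0 & 0 & \cdots \\
    D_T & 0 & 0 & \cdots \\
    0 & I & 0 & \cdots \\
    0 & 0 & I & \cdots \\
    \vdots & & & \ddots
  \end{bmatrix},
\end{equation*}
where $D_T = (I - T^* T)^{1/2}$ is the defect operator. A direct calculation using the identity $T^* D_T^2 + T^* T = T^*$ (which follows from the definition of $D_T$) shows that $V^* V = I$ and that $P_{\mathcal{H}} V^n \big|_{\mathcal{H}} = T^n$ for all $n \ge 0$. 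Second, extend the isometry $V$ to a unitary $U$ on a further enlarged space; this can be achieved, for instance, by embedding $V$ into a bilateral shift-type construction on $\bigoplus_{n \in \mathbb{Z}} \mathcal{H}$, and it is a standard fact that any isometric dilation can be extended to a unitary dilation without destroying the power-dilation property for nonnegative powers.

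The delicate part of the argument is verifying that $V^* V = I$ in the Sch\"affer construction, which uses the operator identity $D_T^2 = I - T^* T$ in a nontrivial way, and then checking that passing from the isometric dilation $V$ to its unitary extension $U$ preserves $T^n = P_{\mathcal{H}} U^n\big|_{\mathcal{H}}$ for $n \ge 0$. Once this is done, the rest of the proof is the clean reduction to the spectral-theoretic estimate for unitaries sketched above. A notable feature of this approach is that it ties von Neumann's inequality to a structural result (dilation) rather than to complex-analytic manipulations, which is the perspective that generalizes to the multivariable setting treated in the Drury--Arveson theory developed in the subsequent sections.
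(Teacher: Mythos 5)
Your proposal follows exactly the route the paper sketches: dilate the contraction to a unitary (Sz.-Nagy's theorem, via the Sch\"affer construction the paper cites) and then reduce to the spectral theorem for unitaries, so the approach is essentially the same as the paper's. One small correction: the identity you quote, $T^* D_T^2 + T^*T = T^*$, is false in general and also unnecessary---the verification that $V^*V = I$ for your matrix only uses the defining relation $D_T^2 + T^*T = I$, since the first column of $V$ contributes $T^*T + D_T^* D_T = I$ and the remaining columns are obviously orthonormal and mutually orthogonal to it.
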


  There are now many different proofs of von Neumann's inequality,
  see for instance \cite[Chapter 1]{Pisier01}, \cite[Chapters 1 and 2]{Paulsen02} and \cite{Drury83}.
  A particularly short proof uses the following dilation theorem due to Sz.-Nagy \cite{Sz.-Nagy53}.

  \begin{theorem}[Sz.-Nagy dilation theorem]
    Let $T \in B(\mathcal{H})$ with $\|T\| \le 1$. Then there exists a Hilbert space $\mathcal{K} \supset \mathcal{H}$
    and a unitary $U \in B(\mathcal{K})$ with
    \begin{equation*}
      p(T) = P_H p(U) \big|_H \quad \text{ for all } p \in \mathbb{C}[z].
    \end{equation*}
  \end{theorem}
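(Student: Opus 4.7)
The plan is to reduce to the unitary case in two stages: first build an isometric dilation of $T$, then extend any isometry to a unitary, both times enlarging the Hilbert space by a simple direct sum construction.

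For the first stage, I would form the infinite direct sum $\mathcal{K}_+ = \mathcal{H} \oplus \mathcal{H} \oplus \mathcal{H} \oplus \cdots$, identify $\mathcal{H}$ with the first summand, and define $V \in B(\mathcal{K}_+)$ by
\begin{equation*}
V(h_0, h_1, h_2, \ldots) = (T h_0, \, D_T h_0, \, h_1, \, h_2, \ldots),
\end{equation*}
where $D_T = (I - T^* T)^{1/2}$ is the defect operator, which exists because $\|T\| \le 1$. The relation $T^* T + D_T^2 = I$ makes it immediate that $V$ is an isometry. Iterating the definition shows that for $h \in \mathcal{H}$,
\begin{equation*}
V^n h = (T^n h, \, D_T T^{n-1} h, \, D_T T^{n-2} h, \, \ldots, \, D_T h, \, 0, \, 0, \ldots),
\end{equation*}
so $P_\mathcal{H} V^n |_{\mathcal{H}} = T^n$ for every $n \ge 0$, and by linearity $p(T) = P_\mathcal{H} p(V) |_{\mathcal{H}}$ for every polynomial $p$.

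For the second stage, it is enough to extend $V$ to a unitary $U$ on a larger space $\mathcal{K} \supset \mathcal{K}_+$ so that $\mathcal{K}_+$ is $U$-invariant and $U|_{\mathcal{K}_+} = V$; then $U^n h = V^n h$ for $h \in \mathcal{H}$ and $n \ge 0$, and the dilation formula transfers from $V$ to $U$. I would let $\mathcal{L} = (V \mathcal{K}_+)^\perp$ be the defect space of $V$ inside $\mathcal{K}_+$, set $\mathcal{K} = \mathcal{K}_+ \oplus \bigoplus_{n \ge 1} \mathcal{L}$, and define
\begin{equation*}
U(k; \, l_1, l_2, l_3, \ldots) = (V k + l_1; \, l_2, l_3, \ldots).
\end{equation*}
That $U$ is isometric is immediate from $V k \perp \mathcal{L}$; surjectivity follows from the unique decomposition of any element of $\mathcal{K}_+$ as $V k' + l$ with $l \in \mathcal{L}$. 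Restricting $U$ to the first summand recovers $V$.

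Neither stage is genuinely difficult; the content of the argument is packed into the first one, where the appearance of $D_T$ is essentially forced by the requirement $V^* V = I$. The only thing one must be a little careful about is the second stage: the unitary extension must leave $\mathcal{K}_+$ invariant, for otherwise the identity $U^n|_{\mathcal{H}} = V^n|_{\mathcal{H}}$ for positive $n$ would break down and the polynomial calculus would no longer transfer.
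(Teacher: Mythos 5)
Your proof is correct, and it follows precisely the route the paper indicates. The paper gives no proof of this theorem itself --- it cites Schaeffer for an explicit construction and remarks that the first version follows from the second (the isometric co-extension) by extending the isometry to a unitary, pointing to Paulsen --- and your two-stage argument (the Schaeffer-style isometry $V$ on $\mathcal{H}\oplus\mathcal{H}\oplus\cdots$ built from the defect operator $D_T$, which is in fact a co-extension since $V^*|_{\mathcal{H}} = T^*$, followed by the standard unitary extension of $V$ obtained by appending backward shift coordinates in the defect space $\mathcal{L}$) is exactly that deduction written out in full.
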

  
  A dilation $U$ can be written down explicitly, see \cite{Schaeffer55}.

  Notice that Sz.-Nagy's dilation theorem reduces the proof of von Neumann's inequality
  to the case of unitary operators, which in turn easily follows from basic spectral
  theory as unitary operators are normal operators whose spectrum is contained in the unit circle.

  This is the general philosophy behind dilation theory: associate to a given operator on Hilbert space
  a better behaved operator on a larger Hilbert space.
  This idea had a profound impact on operator theory,
  see \cite{Paulsen02} and \cite{SFB+10}.

  There is a slight variant of the Sz.-Nagy dilation theorem,
  in which unitary operators are replaced by more general isometries,
  but the relationship between the original operator and its dilation becomes tighter.

  \begin{theorem}[Sz.-Nagy dilation theorem, second version]
    Let $T \in B(\mathcal{H})$ with $\|T\| \le 1$. Then there exists a Hilbert space $\mathcal{K} \supset \mathcal{H}$
    and an isometry $V \in B(\mathcal{K})$ such that $V^* \mathcal{H} \subset \mathcal{H}$ and
    \begin{equation*}
      T^* = V^* \big|_{\mathcal{H}}.
    \end{equation*}
  \end{theorem}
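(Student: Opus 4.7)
The plan is to build $\mathcal{K}$ and $V$ explicitly using the defect operator of $T$, in the spirit of the standard minimal isometric dilation. Set $D_T = (I - T^* T)^{1/2} \in B(\mathcal{H})$, which makes sense since $\|T\| \le 1$ forces $I - T^* T \ge 0$, and let $\mathcal{D}_T = \overline{\ran(D_T)} \subset \mathcal{H}$. The defining identity of $D_T$ is
\begin{equation*}
\|T h\|^2 + \|D_T h\|^2 = \|h\|^2 \quad \text{for all } h \in \mathcal{H},
\end{equation*}
which will supply the isometry property of the dilation in one line.

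Now I would define $\mathcal{K} = \mathcal{H} \oplus \bigoplus_{n=0}^\infty \mathcal{D}_T$, where $\mathcal{H}$ sits inside $\mathcal{K}$ as the first summand, and let
\begin{equation*}
V(h, d_0, d_1, d_2, \ldots) = (T h,\ D_T h,\ d_0,\ d_1,\ \ldots).
\end{equation*}
The identity displayed above gives
\begin{equation*}
\|V(h, d_0, d_1, \ldots)\|^2 = \|T h\|^2 + \|D_T h\|^2 + \sum_{n \ge 0} \|d_n\|^2 = \|h\|^2 + \sum_{n \ge 0} \|d_n\|^2,
\end{equation*}
so $V$ is an isometry on $\mathcal{K}$.

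To identify $V^*$ on $\mathcal{H}$, I would compute, for $h, k \in \mathcal{H}$ and $e_0, e_1, \ldots \in \mathcal{D}_T$,
\begin{equation*}
\langle V^*(h, 0, 0, \ldots),\ (k, e_0, e_1, \ldots)\rangle = \langle (h, 0, 0, \ldots),\ (T k, D_T k, e_0, \ldots)\rangle = \langle h, T k\rangle = \langle T^* h, k\rangle.
\end{equation*}
Thus $V^*(h, 0, 0, \ldots) = (T^* h, 0, 0, \ldots)$, which simultaneously shows that $V^* \mathcal{H} \subset \mathcal{H}$ and that $V^*\big|_{\mathcal{H}} = T^*$ after the identification of $\mathcal{H}$ with its image in $\mathcal{K}$.

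The argument is essentially forced once one writes down the defect space, so there is no real obstacle; the only subtle point is recognizing that the correct ambient space is an infinite direct sum of copies of $\mathcal{D}_T$ (rather than a single copy), which is needed to accommodate the orbit $V^n h$ for $h \in \mathcal{H}$ and is what makes the formula for $V$ consistent and isometric. Iterating the identity $V^*\big|_{\mathcal{H}} = T^*$ immediately gives $V^{*n}\big|_{\mathcal{H}} = T^{*n}$ for all $n \ge 0$, which is the invariance property that makes this dilation useful.
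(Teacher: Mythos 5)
The paper itself does not prove this theorem; it merely cites \cite[Chapter 1]{Paulsen02} for a proof, so there is no argument in the text to compare against. Your proof is correct, and it is precisely the standard explicit construction of the (minimal) isometric co-extension via the defect operator $D_T=(I-T^*T)^{1/2}$ that one would find in the cited reference: the space $\mathcal{K}=\mathcal{H}\oplus\bigoplus_{n\ge 0}\mathcal{D}_T$ and the map $V(h,d_0,d_1,\ldots)=(Th,D_Th,d_0,d_1,\ldots)$ are exactly the usual choices, the defect identity $\|Th\|^2+\|D_Th\|^2=\|h\|^2$ gives isometry in one line, and the adjoint computation correctly shows $V^*|_{\mathcal{H}}=T^*$. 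Your closing remark about needing infinitely many copies of $\mathcal{D}_T$ is also apt, since that is what makes $V$ a shift-like isometry accommodating the whole forward orbit. In short: correct proof, same approach the paper points the reader to.
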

  
  Note that in the setting of this result, we in particular find that $p(T) = P_{\mathcal{H}} p(U) \big|_{\mathcal{H}}$ for all $p \in \mathbb{C}[z]$.
  A proof can be found for instance in \cite[Chapter 1]{Paulsen02}.
  It is not difficult to deduce the two versions of Sz.-Nagy's dilation theorem from each other.
  For instance, the first version can be obtained from the second by extending the isometry
  $V$ to a unitary $U$ on a larger Hilbert space,
  see again \cite[Chapter 1]{Paulsen02}.

  The conclusion of the first version of Sz.-Nagy's dilation theorem
  is often summarized by saying that ``every contraction dilates to a unitary'',
  whereas the second version says that ``every contraction co-extends to an isometry''.
  In terms of operator matrices, the relationship
  between $T$, a unitary dilation $U$ and an isometric co-extension $V$ can be understood as
  \begin{equation*}
    U =
    \begin{bmatrix}
      * & 0 & 0 \\
      * & T & 0 \\
      * & * & *
    \end{bmatrix}
  \end{equation*}
  (this is a result of Sarason \cite[Lemma 0]{Sarason65}) and
  \begin{equation*}
    V =
    \begin{bmatrix}
      T & 0 \\
      * & *
    \end{bmatrix},
  \end{equation*}
  see for instance \cite[Chapter 10]{AM02}.

  Let us now move into the multivariable realm.
  Let $T = (T_1,\ldots, T_d)$ be a tuple of commuting operators in $B(\mathcal{H})$:
  \begin{equation*}
    T_i T_j = T_j T_i \quad (i,j=1,\ldots,d).
  \end{equation*}

  Just as there is more than one reasonable extension of the unit disc to higher dimensions,
  there is more than one reasonable contractivity condition in multivariable operator theory.
  For instance, one might impose the contractivity condition $\|T_j\| \le 1$ for all $j$,
  i.e.\ one considers tuples of commuting contractions.
  The operator theory of these tuples connects to function theory on the polydisc $\mathbb{D}^d$.
  There is a large body of literature on commuting contractions.
  Very briefly, this theory works well for $d=2$, thanks to an extension of Sz.-Nagy's dilation
  theorem due to And\^o \cite{Ando63}.
  However, both von Neumann's inequality and And\^o's theorem fail for $d \ge 3$,
  which is a significant obstacle in the study of three or more commuting contractions,
  see \cite[Chapter 5]{Paulsen02}.

  Here, we will consider a different contractivity condition.
  \begin{definition}
    An operator tuple $T = (T_1,\ldots,T_d) \in B(\mathcal{H})^d$ is said to be a \emph{row contraction}
    if the row operator
    \begin{equation*}
          \begin{bmatrix}
            T_1 & \cdots & T_d
          \end{bmatrix}: \mathcal{H}^d \to \mathcal{H}, \quad (x_i)_{i=1}^d \mapsto \sum_{i=1}^d T_i x_i,
    \end{equation*}
    is a contraction.
  \end{definition}
  This is equivalent to demanding that
  \begin{equation*}
    \sum_{j=1}^d T_j T_j^* \le I.
  \end{equation*}
  Note that a tuple of scalars is a row contraction if and only if it belongs to the closed
  (Euclidean) unit ball.
  Thus, one expects that operator theory of commuting row contractions
  connects to function theory in the unit ball.
  This is indeed the case, and we will shortly see that row contractions
  are intimately related to the Drury--Arveson space.

  \begin{definition}
    A \emph{spherical unitary} is a tuple $U = (U_1,\ldots,U_d)$ of commuting normal operators with $\sum_{i=1}^d U_i U_i^* = I$.
  \end{definition}
  Notice that spherical unitaries in dimension one are simply unitaries.
  Spherical unitaries are essentially well understood thanks to the multivariable spectral theorem.
  In particular, every spherical unitary is unitarily equivalent
  to a direct sum of operator tuples of the form $M_z$ on $L^2(\mu)$,
  where $\mu$ is a measure supported on $\partial \mathbb{B}_d$,
  see \cite[Appendix D]{AM02} and \cite[Section II.1]{Davidson96}.

  It is not true that every commuting row contraction dilates to a spherical unitary.
  Indeed, the tuple $M_z$ on $H^2_d$ does not, since the multiplier norm on the Drury--Arveson space
  is not dominated by the supremum norm on the ball by Proposition \ref{prop:stric_containment},
  but spherical unitaries $U$ satisfy $\|p(U)\| \le \|p\|_\infty$ for all polynomials $p$.
  Instead, we have the following dilation theorem, which is
  one of the central results in the theory of the Drury--Arveson space.
  It explains its special place in multivariable operator theory.

  \begin{theorem}[Dilation theorem for $H^2_d$]
    \label{thm:DA_dilation}
    Let $T = (T_1,\ldots,T_d)$ be a commuting row contraction on $\mathcal{H}$.
    Then $T$ co-extends to a tuple of the form $S \oplus U$, where
    $U$ is spherical unitary and $S$ is a direct sum of copies of $M_z$ on $H^2_d$.

  \end{theorem}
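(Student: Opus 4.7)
My plan is to build the co-extension explicitly: the ``pure'' part of $T$ will sit inside a copy of $M_z \otimes I$ on $H^2_d \otimes \mathcal{D}$ for a suitable defect space $\mathcal{D}$, and the ``residual'' part will be absorbed by a spherical unitary. Write $\Delta = (I - \sum_{i=1}^d T_i T_i^*)^{1/2} \ge 0$, $\mathcal{D} = \overline{\ran \Delta}$, and consider the positive operators
$$\sigma_n = \sum_{|\alpha| = n} \frac{n!}{\alpha!}\, T^\alpha T^{*\alpha} \qquad (n \ge 0),$$
where $T^\alpha = T_1^{\alpha_1} \cdots T_d^{\alpha_d}$. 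A short reindexing (using commutativity) yields $\sigma_{n+1} = \sum_i T_i \sigma_n T_i^*$ and the key telescoping identity
$$\sigma_n - \sigma_{n+1} = \sum_{|\alpha| = n} \frac{n!}{\alpha!}\, T^\alpha \Delta^2 T^{*\alpha} \ge 0.$$
Thus $(\sigma_n)$ decreases in SOT to some $\sigma_\infty \ge 0$ satisfying $\sum_i T_i \sigma_\infty T_i^* = \sigma_\infty$, and summing telescopically gives $\sum_{n \ge 0} (\sigma_n - \sigma_{n+1}) = I - \sigma_\infty$.

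For the pure piece, I define
$$V : \mathcal{H} \to H^2_d \otimes \mathcal{D}, \qquad V h = \sum_{\alpha \in \mathbb{N}^d} \frac{|\alpha|!}{\alpha!}\, z^\alpha \otimes \Delta T^{*\alpha} h.$$
The coefficient $|\alpha|!/\alpha!$ is forced by the requirement $V T_i^* = (M_{z_i}^* \otimes I) V$, which one checks directly on monomials using the formula for $M_{z_i}^*$ recorded in Example \ref{exa:z_i_mult}. Combining $\|z^\alpha\|_{H^2_d}^2 = \alpha!/|\alpha|!$ with the telescoping identity gives
$$\|V h\|^2 = \sum_{\alpha} \frac{|\alpha|!}{\alpha!}\, \|\Delta T^{*\alpha} h\|^2 = \sum_{n \ge 0} \langle (\sigma_n - \sigma_{n+1}) h, h \rangle = \|h\|^2 - \|\sigma_\infty^{1/2} h\|^2.$$
In particular, when $\sigma_\infty = 0$ (the \emph{pure} case), $V$ is already an isometric co-extension onto $M_z \otimes I_{\mathcal{D}}$ and no spherical unitary is needed.

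For the residual, the identity $\sum_i T_i \sigma_\infty T_i^* = \sigma_\infty$ allows me to define commuting operators $\widetilde S_i$ on $\mathcal{H}_\infty := \overline{\ran \sigma_\infty^{1/2}}$ via $\widetilde S_i^*(\sigma_\infty^{1/2} h) = \sigma_\infty^{1/2} T_i^* h$; well-definedness is automatic since the spherical-invariance identity forces $\|\sigma_\infty^{1/2} T_i^* h\|^2 \le \|\sigma_\infty^{1/2} h\|^2$. The tuple $\widetilde S = (\widetilde S_1, \ldots, \widetilde S_d)$ is then a commuting spherical co-isometry, i.e.\ $\sum_i \widetilde S_i \widetilde S_i^* = I_{\mathcal{H}_\infty}$, and the contraction $J h = \sigma_\infty^{1/2} h$ intertwines $T_i^*$ with $\widetilde S_i^*$ and satisfies $\|Jh\|^2 = \|\sigma_\infty^{1/2} h\|^2$. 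A standard spherical dilation result (due to Athavale) yields a spherical unitary $U$ on some $\mathcal{K}_U \supset \mathcal{H}_\infty$ with $U_i^*|_{\mathcal{H}_\infty} = \widetilde S_i^*$. Composing $J$ with the inclusion $\mathcal{H}_\infty \hookrightarrow \mathcal{K}_U$ to produce $W'$, and setting $W = V \oplus W'$, the identity $\|Wh\|^2 = \|Vh\|^2 + \|W'h\|^2 = \|h\|^2$ shows that $W$ is an isometry, and the componentwise intertwining gives $W T_i^* = ((M_{z_i} \otimes I) \oplus U_i)^* W$. Identifying $\mathcal{H}$ with $W \mathcal{H}$ yields the desired co-extension.

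The hard part is the last step: promoting the commuting spherical co-isometry $\widetilde S$ to commuting \emph{normal} operators $U$ with $\sum U_i U_i^* = I$. Unlike the one-variable Sz.-Nagy construction, one cannot simply attach a bilateral shift; one genuinely needs joint normality, which is what distinguishes spherical unitaries from ordinary isometries and which is what actually forces appearance of the boundary sphere. The pure part, in contrast, is essentially combinatorial: everything reduces to the identity for $\sigma_n - \sigma_{n+1}$ and the action of $M_{z_i}^*$ on monomials already present in Example \ref{exa:z_i_mult}.
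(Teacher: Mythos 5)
Your proof is correct, but it takes a genuinely different route from the one in the paper. The paper proves the non-pure case indirectly: it applies the pure dilation theorem to $rT$ for each $r<1$, assembles the resulting compressions into a unital completely positive map on the operator system $\mathcal{S} = \spa\{M_z^\alpha(M_z^\beta)^*\}$, passes to the limit $r \to 1$, and then invokes Arveson's extension theorem, Stinespring's dilation theorem, and the representation theory of the Toeplitz $C^*$-algebra $\mathcal{T}_d$ (via the short exact sequence of Theorem \ref{thm:Toeplitz_short_exact}) to decompose the resulting $*$-representation into a multiple of the identity plus a representation factoring through $C(\partial\mathbb{B}_d)$. Your argument is instead the direct, bare-hands construction of M\"uller and Vasilescu, which the paper explicitly flags as an alternative approach in the remark following its proof of Theorem \ref{thm:DA_dilation}: you identify the residual operator $\sigma_\infty = \lim_n \sigma_n$, show $\sum_i T_i \sigma_\infty T_i^* = \sigma_\infty$, use it to complete the isometry $V$ on the pure piece, build a commuting tuple $\widetilde{S}$ on $\overline{\ran \sigma_\infty^{1/2}}$ whose adjoint tuple is a spherical isometry, and invoke Athavale's extension theorem to obtain the spherical unitary summand. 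The paper's route buys a soft, structural argument that packages the ``limit as $r\to 1$'' cleanly via complete positivity, and it simultaneously explains why the boundary $C(\partial\mathbb{B}_d)$ appears; your route is more concrete and elementary, avoids the Toeplitz $C^*$-algebra and Arveson--Stinespring machinery entirely, and makes the defect/residual decomposition explicit, but it shifts the nontrivial input onto Athavale's subnormality theorem for spherical isometries, which is itself a substantial result. All the computational details you sketch (the reindexing identity $\sigma_{n+1} = \sum_i T_i \sigma_n T_i^*$, the telescoping formula, the intertwining of $V$ with $M_{z_i}^*\otimes I$, the well-definedness and spherical co-isometry property of $\widetilde{S}$) check out.
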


    Explicitly, the co-extension statement means that there exist Hilbert spaces $\mathcal{K}$ and $\mathcal{E}$,
    an isometry $V: \mathcal{H} \to (H^2_d \otimes \mathcal{E}) \oplus \mathcal{K}$ and
    a spherical unitary $U$ on $\mathcal{K}$ such that
    \begin{equation*}
      ((M_{z_i} \otimes I_{\mathcal{E}}) \oplus U_i)^* V = V T_i^*
    \end{equation*}
    for $i=1,\ldots,d$. In this case, identifying $\mathcal{H}$ with a subspace of
    $(H^2_d \otimes \mathcal{E}) \oplus \mathcal{K}$ via the isometry $V$, we see
    that $\mathcal{H}$ is invariant under $((M_{z_i} \otimes I_{\mathcal{E}}) \oplus U_i)^*$
    and that
    \begin{equation*}
      T_i^* = ((M_{z_i} \otimes I_{\mathcal{E}}) \oplus U_i)^* \big|_{\mathcal{H}}.
    \end{equation*}
    In particular,
    \begin{equation*}
      p(T) = P_{\mathcal{H}} p( (M_z \otimes I) \oplus U) \big|_{\mathcal{H}} \quad \text{ for all } p \in \mathbb{C}[z_1,\ldots,z_d].
    \end{equation*}

    Various versions of Theorem \ref{thm:DA_dilation} were proved by Drury \cite{Drury78},
  by M\"uller and Vasilescu \cite{MV93} and by Arveson \cite{Arveson98}.

  Notice that the case $d=1$ recovers the second version of Sz.-Nagy's dilation theorem.
  Indeed, in dimension one, every operator of the form $S \oplus U$ is clearly an isometry,
  and these are in fact all isometries by the Wold decomposition,
  see for example \cite[Theorem V.2.1]{Davidson96}.
  If $d \ge 2$, then there is no direct analogue of the first version of Sz.-Nagy's dilation theorem.
  This is closely related to the fact that the identity representation of the algebra
  generated by $M_z$ is a boundary representation in the sense of Arveson; see Lemma 7.13 and the discussion
  following it in \cite{Arveson98}.

  Just as in one variable, the dilation theorem implies an inequality of von Neumann type.
  \begin{corollary}[Drury's von Neumann inequality]
    Let $T = (T_1,\ldots,T_d)$ be a commuting row contraction. Then
    \begin{equation*}
      \|p(T)\| \le \|p\|_{\Mult(H^2_d)} \quad \text{ for all } p \in \mathbb{C}[z_1,\ldots,z_d].
    \end{equation*}
  \end{corollary}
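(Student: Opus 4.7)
The plan is to read the corollary directly off the dilation theorem (Theorem \ref{thm:DA_dilation}), reducing the inequality $\|p(T)\| \le \|p\|_{\Mult(H^2_d)}$ to two separate norm estimates on the two pieces of the co-extension.

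First, I would apply Theorem \ref{thm:DA_dilation} to obtain Hilbert spaces $\mathcal{E}$ and $\mathcal{K}$, a spherical unitary $U$ on $\mathcal{K}$, and an isometric embedding of $\mathcal{H}$ into $(H^2_d \otimes \mathcal{E}) \oplus \mathcal{K}$ that is co-invariant for the tuple $W := (M_z \otimes I_{\mathcal{E}}) \oplus U$. As noted right after the statement of Theorem \ref{thm:DA_dilation}, this yields
\begin{equation*}
  p(T) = P_{\mathcal{H}} \, p(W) \big|_{\mathcal{H}}
\end{equation*}
for every polynomial $p$, so in particular $\|p(T)\| \le \|p(W)\| = \max\{\|p(M_z) \otimes I_{\mathcal{E}}\|, \|p(U)\|\}$. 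Hence it suffices to bound each of the two summands by $\|p\|_{\Mult(H^2_d)}$.

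The first summand is immediate: $p(M_z) \otimes I_{\mathcal{E}} = M_{p} \otimes I_{\mathcal{E}}$ acting on $H^2_d \otimes \mathcal{E}$ has operator norm equal to $\|M_p\|_{B(H^2_d)} = \|p\|_{\Mult(H^2_d)}$, as tensoring with the identity on $\mathcal{E}$ preserves the norm. The second summand is where I expect the only real work, and here I would invoke the multivariable spectral theorem for the commuting normal tuple $U$: since $\sum U_i U_i^* = I$, the joint spectrum of $U$ is contained in $\partial \mathbb{B}_d$, whence the continuous functional calculus gives
\begin{equation*}
  \|p(U)\| \le \sup_{z \in \partial \mathbb{B}_d} |p(z)| = \|p\|_{H^\infty(\mathbb{B}_d)},
\end{equation*}
the last equality being the maximum modulus principle. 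Combining this with the contractive containment $\Mult(H^2_d) \hookrightarrow H^\infty(\mathbb{B}_d)$ from Proposition \ref{prop:stric_containment} gives $\|p(U)\| \le \|p\|_{\Mult(H^2_d)}$, completing the bound on both summands and hence the proof.

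The main obstacle, modest as it is, is the spherical unitary piece: Theorem \ref{thm:DA_dilation} forces one to carry along a normal tuple that has nothing directly to do with $H^2_d$, and one must appeal to the maximum principle plus Proposition \ref{prop:stric_containment} to absorb its norm into the multiplier norm. Everything else is formal bookkeeping about what ``co-extension'' gives.
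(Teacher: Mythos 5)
Your argument is correct, but it is precisely the route the paper mentions in passing and then deliberately sidesteps. Right before its proof, the paper notes that the corollary ``can be deduced from Theorem \ref{thm:DA_dilation} by using that spherical unitaries $U$ satisfy $\|p(U)\| \le \|p\|_\infty \le \|p\|_{\Mult(H^2_d)}$'' --- which is exactly your two-summand estimate --- and then announces ``a more direct argument.'' The paper's own proof uses only the pure case, Theorem \ref{thm:DA_dilation_pure}: for $0 \le r < 1$ the tuple $rT$ is automatically pure (since $\theta^n(I) \le r^{2n} I$), so the pure dilation theorem gives $\|p(rT)\| \le \|p(M_z)\| = \|p\|_{\Mult(H^2_d)}$ with no spherical unitary summand at all, and one lets $r \nearrow 1$. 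What that buys is substantial in the context of the paper's architecture: Theorem \ref{thm:DA_dilation_pure} has a short, explicit, self-contained proof given in the same subsection, whereas the full Theorem \ref{thm:DA_dilation} that you invoke is only proved later (Subsection \ref{ss:toeplitz}) via the representation theory of the Toeplitz $C^*$-algebra, Arveson's extension theorem and Stinespring dilation. Your proof is not circular --- the paper's proof of the full dilation theorem does not use Drury's inequality --- but it leans on much heavier machinery and additionally needs the multivariable spectral theorem for the spherical unitary plus the contractive containment of Proposition \ref{prop:stric_containment}, all of which the paper's $r \nearrow 1$ trick renders unnecessary. Your individual steps (compression does not increase norm, the norm of a direct sum is the maximum, $\|p(M_z) \otimes I\| = \|p\|_{\Mult(H^2_d)}$, joint spectrum of a spherical unitary in $\partial\mathbb{B}_d$, maximum modulus) are all sound.
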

  This inequality makes clear the importance of the multiplier norm on the Drury--Arveson space.
  Note that the inequality is sharp, since we obtain equality by taking $T =M_z$ on $H^2_d$.

  Drury's von Neumann inequality can be deduced from Theorem \ref{thm:DA_dilation} by using
  that spherical unitaries $U$ satisfy $\|p(U)\| \le \|p\|_\infty \le \|p\|_{\Mult(H^2_d)}$
  for all polynomials $p$. We will give a more direct argument.

  The special case of Theorem \ref{thm:DA_dilation} in which the spherical unitary summand is absent
  is important. The key concept is that of \emph{purity}.
  Let $T = (T_1,\ldots,T_d)$ be a row contraction.
  Let
  \begin{equation*}
    \theta: B(\mathcal{H}) \to B(\mathcal{H}), \quad A \mapsto \sum_{i=1}^d T_i A T_i^*.
  \end{equation*}
  Then $\theta(I) \le I$, and so $I \ge \theta(I) \ge \theta^2(I) \ge \ldots \ge 0$.
  We say that $T$ is \emph{pure} if $\lim_{n \to \infty} \theta^n(I) = 0$ in the strong operator topology.
  In the pure case, we obtain the following version of the dilation theorem.

  \begin{theorem}
    \label{thm:DA_dilation_pure}
    Every pure commuting row contraction co-extends to a direct sum of copies of $M_z$ on $H^2_d$.
  \end{theorem}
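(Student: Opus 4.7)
The plan is to construct an isometric co-extension $V \colon \mathcal{H} \to H^2_d \otimes \mathcal{E}$ explicitly via a power series formula, where $\mathcal{E}$ is the closure of the range of the defect operator $\Delta = \bigl(I - \sum_{i=1}^d T_i T_i^*\bigr)^{1/2}$. The central ingredient will be a positive operator identity on $\mathcal{H}$ obtained by iterating the completely positive map $\theta(A) = \sum_{i=1}^d T_i A T_i^*$ and invoking purity.

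First I would compute by induction on $n$ that $\theta^n(I) = \sum_{|\alpha| = n} \frac{n!}{\alpha!} T^\alpha T^{*\alpha}$; the multinomial coefficient counts the number of words $w \in F_d^+$ with $\alpha(w) = \alpha$, exactly as observed in Subsection \ref{ss:nc}. Since $\theta(I) = I - \Delta^2$, the telescoping $I - \theta^n(I) = \sum_{k=0}^{n-1} \theta^k(\Delta^2)$ combined with purity ($\theta^n(I) \to 0$ strongly) yields
\begin{equation*}
  I = \sum_{\alpha \in \mathbb{N}^d} \frac{|\alpha|!}{\alpha!} T^\alpha \Delta^2 T^{*\alpha}
\end{equation*}
with convergence in the strong operator topology.

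Next I would define $V \colon \mathcal{H} \to H^2_d \otimes \mathcal{E}$ by
\begin{equation*}
  V h = \sum_{\alpha \in \mathbb{N}^d} \frac{|\alpha|!}{\alpha!} z^\alpha \otimes \Delta T^{*\alpha} h.
\end{equation*}
Using the weighted $\ell^2$ norm on $H^2_d$ from Proposition \ref{prop:power_series_nec}, the identity above gives
\begin{equation*}
  \|V h\|^2 = \sum_{\alpha} \frac{|\alpha|!}{\alpha!} \|\Delta T^{*\alpha} h\|^2 = \langle h, h \rangle,
\end{equation*}
so $V$ is well-defined and isometric. The intertwining relation $(M_{z_i} \otimes I_{\mathcal{E}})^* V = V T_i^*$ follows by applying the formula $M_{z_i}^* z^\alpha = \frac{\alpha_i}{|\alpha|} z^{\alpha - e_i}$ from Example \ref{exa:z_i_mult} term by term and reindexing $\alpha = \beta + e_i$; the weights $\frac{|\alpha|!}{\alpha!}$ are precisely calibrated so that the factor $\frac{\alpha_i}{|\alpha|}$ produced by $M_{z_i}^*$ recombines to $\frac{|\beta|!}{\beta!}$ after reindexing. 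This exhibits $\mathcal{H}$ as a coinvariant subspace of $H^2_d \otimes \mathcal{E}$, which is unitarily equivalent to a direct sum of $\dim \mathcal{E}$ copies of $M_z$ on $H^2_d$, as required.

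The main obstacle I anticipate is the telescoping identity: one must carefully justify the strong-operator convergence of both $\theta^n(I) \to 0$ and the partial sums, and keep the multinomial bookkeeping straight. Once this identity is in hand, the isometry check and the intertwining are formal consequences of the power series description of $H^2_d$ and the action of $M_{z_i}^*$ on monomials.
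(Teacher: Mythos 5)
Your proposal is correct and follows essentially the same route as the paper: the same defect operator $\Delta$, the same explicit isometry $V h = \sum_\alpha \frac{|\alpha|!}{\alpha!}\, z^\alpha \otimes \Delta T^{*\alpha} h$, the same telescoping of $\theta^n(I)-\theta^{n+1}(I)$ together with purity to verify $\|Vh\| = \|h\|$, and the same use of the formula for $M_{z_i}^*$ on monomials to check the intertwining. Your explicit multinomial identity for $\theta^n(I)$ just makes transparent a step the paper leaves implicit.
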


  This result is sufficient for proving Drury's von Neumann inequality.
  \begin{proof}[Proof of Drury's von Neumann inequality]
  If $T$ is a row contraction, then $r T$ is a pure row contraction for $0 \le r < 1$.
  Indeed,
  \begin{equation*}
    \theta(I) \le r^2 I
  \end{equation*}
  and so
  \begin{equation*}
    \theta^n(I) \le r^{2 n} I,
  \end{equation*}
  which even converges to zero in norm as $n \to \infty$.
  
  Now, if $p \in \mathbb{C}[z]$, then Theorem \ref{thm:DA_dilation_pure} implies that
  \begin{equation*}
    \|p(rT)\| \le \|p(M_z)\| = \|p\|_{\Mult(H^2_d)}.
  \end{equation*}
  Now let $r \nearrow 1$.
  \end{proof}

  We now provide a proof of the dilation theorem in the pure case.

  \begin{proof}[Proof of Theorem \ref{thm:DA_dilation_pure}]
    Since $T$ is a row contraction, we may define
  \begin{equation*}
  \Delta = \Big(I - \sum_{i=1}^d T_i T_i^* \Big)^{1/2} = (I - \theta(I))^{1/2}.
  \end{equation*}
  Let
  \begin{equation*}
    V: \mathcal{H} \to H^2_d \otimes \mathcal{H},
    \quad V h = \sum_{\alpha \in \mathbb{N}^d} \frac{|\alpha|!}{\alpha!} z^\alpha \otimes \Delta (T^*)^\alpha h.
  \end{equation*}
  Using purity, we find that
  \begin{align*}
    \|V h\|^2 = \sum_{\alpha \in \mathbb{N}^d} \frac{|\alpha|!}{\alpha!} \langle T^\alpha \Delta^2 (T^*)^\alpha h, h \rangle 
    &= \lim_{N \to \infty} \sum_{n=0}^N \langle (\theta^n(I) - \theta^{n+1}(I)) h, h \rangle  \\
    &= \|h\|^2 - \lim_{N \to \infty} \langle \theta^{N+1}(I) h, h \rangle =  \|h\|^2.
  \end{align*}
  Hence $V$ is an isometry.

  Finally, the formula for $M_{z_i}^*$ in Example \ref{exa:z_i_mult} shows that
  \begin{equation*}
    M_{z_i}^* \frac{|\alpha|!}{\alpha!} z^\alpha = \frac{|\alpha - e_i|!}{(\alpha - e_i)!} z^{\alpha - e_i}
  \end{equation*}
  if $\alpha_i \neq 0$, so
  \begin{equation*}
    (M_{z_i}^* \otimes I) V = V T_i^*.
  \end{equation*}
  Thus, $\ran(V)$ is invariant under $M_{z_i}^* \otimes I$
  and $M_{z_i}^* \otimes I \big|_{\ran(V)}$ is unitarily equivalent to $T_i^*$.
  \end{proof}

  We will give a proof of the dilation theorem in the non-pure case in Subsection \ref{ss:toeplitz}.

  \begin{remark}
    Since the multiplier norm of $H^2_d$ and the supremum norm are not comparable
    if $d \ge 2$ (Proposition \ref{prop:stric_containment}), the right-hand side
    in Drury's von Neumann inequality cannot be replaced with a constant times the supremum norm.
    One might nonetheless ask if this is possible if one only considers row contractive matrix tuples
    of a fixed matrix size. It was shown by Richter, Shalit and the author that this can be done:
    For every $d,n \in \mathbb{N}$, there exists a constant $C(d,n)$ so that
    for every row contraction $T$ consisting of $d$ commuting $n \times n$ matrices and every polynomial
    $p$, the inequality
    \begin{equation*}
      \|p(T)\| \le C(d,n) \sup_{z \in \mathbb{B}_d} |p(z)|
    \end{equation*}
    holds. In fact, one can even take $C_{d,n}$ to be independent of $d$;
    see \cite{HRS21}.
  \end{remark}

  \subsection{The non-commutative approach to multipliers}
  \label{ss:nc_mult}

  The approach to the Drury--Arveson space using non-commutative functions outlined in Subsection \ref{ss:nc}
  is also very useful in the context of multipliers.
  Define
  \begin{equation*}
    H^\infty_{nc} = \Big\{\Phi \in H^2_{nc}: \sup_{X \in \mathbb{B}_d^{nc}}: \|\Phi(X)\| < \infty \Big\}.
  \end{equation*}
  Each $\Phi \in H^\infty_{nc}$ defines a left multiplication operator
  \begin{equation*}
    L_\Phi: H^2_{nc} \to H^2_{nc}, \quad F \mapsto \Phi F,
  \end{equation*}
  and $\|L_\Phi\| = \sup_{X \in \mathbb{B}_d^{nc}} \|\Phi(X)\|$.
  In fact, every left multiplication operator is of this form; see \cite[Section 3]{SSS18} for more details. 
  Thus, we think of $H^\infty_{nc}$ as a non-commutative analogue of $\Mult(H^2_d)$.

  Of particular importance are the left multiplication operators by the variables $L_{i} := L_{x_i}$.
  These are isometries with pairwise orthogonal ranges.
  The space $\{L_\Phi: \Phi \in H^\infty_{nc}\}$ turns out to be the WOT-closed algebra generated
  by $L_1,\ldots,L_d$. This algebra is known as the non-commutative analytic Toeplitz algebra $\mathcal{L}_d$
  and has been the subject of intense study, in particular by Davidson and Pitts \cite{DP98a,DP98,DP99},
  by Popescu \cite{Popescu91,Popescu95,Popescu06a} and by Arias and Popescu \cite{AP00} , to only name a few references. It is also one of the Hardy algebras
  of Muhly and Solel \cite{MS04}.

  There is a version of the dilation theorem for $H^2_d$ in the non-commutative context, due
  to Bunce \cite{Bunce84}, Frazho \cite{Frazho82} and Popescu \cite{Popescu89a,Popescu89}.

  \begin{theorem}
    Every (not necessarily commuting) row contraction $T$ co-extends to a tuple of isometries
    with pairwise orthogonal ranges. If $T$ is pure, then the co-extension
    can be chosen to be a direct sum of copies of the tuple $(L_1,\ldots,L_d)$ on $H^2_{nc}$.
  \end{theorem}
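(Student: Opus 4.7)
The plan is to imitate the proof of Theorem \ref{thm:DA_dilation_pure} word by word, replacing multi-indices $\alpha \in \mathbb{N}^d$ with words $w \in F_d^+$, the weights $\tfrac{|\alpha|!}{\alpha!}$ with trivial weights, and $M_{z_i}$ with $L_i$. The parallel is exact because the orthogonality of the basis $\{x^w\}_{w \in F_d^+}$ in $H^2_{nc}$ makes the counting easier than in the commutative case: whereas in the Drury--Arveson proof the weight $\tfrac{|\alpha|!}{\alpha!}$ had to absorb the multinomial count, in the non-commutative setting every word contributes exactly once.

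Concretely, for the pure case I would set $\Delta = (I - \sum_i T_i T_i^*)^{1/2} = (I - \theta(I))^{1/2}$ (with $\theta(A) = \sum_i T_i A T_i^*$), and, writing $T^w = T_{w_1} \cdots T_{w_r}$ for a word $w = w_1 \ldots w_r$, define
\begin{equation*}
  V \colon \mathcal{H} \to H^2_{nc} \otimes \mathcal{H}, \qquad V h = \sum_{w \in F_d^+} x^w \otimes \Delta (T^w)^* h.
\end{equation*}
Using that $\{x^w\}$ is orthonormal in $H^2_{nc}$ and that $\sum_{|w| = n} T^w \Delta^2 (T^w)^* = \theta^n(I) - \theta^{n+1}(I)$, the squared norm telescopes to
\begin{equation*}
  \|V h\|^2 = \sum_{n=0}^\infty \langle (\theta^n(I) - \theta^{n+1}(I)) h, h\rangle = \|h\|^2 - \lim_{n \to \infty} \langle \theta^n(I) h, h\rangle,
\end{equation*}
which equals $\|h\|^2$ in the pure case. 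For the intertwining, $L_i^* x^w = x^{w'}$ when $w = iw'$ and $0$ otherwise, so after reindexing one reads off $(L_i^* \otimes I) V = V T_i^*$. Hence $V\mathcal{H}$ is coinvariant under each $L_i \otimes I$ and $V$ realizes $T$ as a co-extension of a direct sum of copies of $(L_1,\ldots,L_d)$, which are isometries with pairwise orthogonal ranges (as $\sum_i L_i L_i^*$ is the projection onto the orthogonal complement of $\mathbb{C} 1 \subset H^2_{nc}$, and $L_i^* L_j = \delta_{ij} I$ by an immediate computation on basis words).

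The main obstacle is the non-pure case. Here the formula for $V$ above yields only a contraction with defect $\langle P_\infty h, h\rangle$, where $P_\infty := \lim_n \theta^n(I) \geq 0$ is a fixed point of $\theta$. To promote this to a full row-isometric co-extension one must adjoin a summand that absorbs this defect and on which the dilating operators form a row unitary (a Cuntz-type tuple satisfying $\sum W_i W_i^* = I$). The natural way is a Schäffer-style construction: enlarge $\mathcal{H}$ by a Fock-like space built from the defect $\mathcal{D} = \overline{\operatorname{ran}} (I - T^*T)^{1/2} \subseteq \mathcal{H}^d$ and define the $V_i$ by operator matrices that combine the original $T_i$ with shifts that absorb the defect in the new directions. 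Verifying that these $V_i$ are genuine isometries with pairwise orthogonal ranges reduces (after bookkeeping) to the identity $\sum_i T_i^* T_i + (I - T^* T) = I_{\mathcal{H}^d}$, i.e.\ to the definition of the defect; the orthogonality of ranges uses the block-shift structure. The pure statement then falls out as the special case where the Cuntz summand is absent, since purity forces $P_\infty = 0$ and the entire dilation lives inside $H^2_{nc} \otimes \mathcal{E}$ with $\mathcal{E}$ a copy of $\mathcal{D}$.
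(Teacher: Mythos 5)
The paper does not prove this theorem; it attributes it to Bunce, Frazho, and Popescu and simply cites their articles, so there is no in-paper argument to compare yours against.

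Your treatment of the pure case is correct and is exactly the standard argument, carried over word-for-word from the proof of Theorem \ref{thm:DA_dilation_pure}: with $\Delta = (I - \theta(I))^{1/2}$ and $Vh = \sum_{w} x^w \otimes \Delta(T^w)^*h$, the identity $\sum_{|w|=n} T^w\Delta^2 (T^w)^* = \theta^n(I) - \theta^{n+1}(I)$ makes the norm telescope, and $L_i^* x^{iw'} = x^{w'}$ gives $(L_i^*\otimes I)V = VT_i^*$. You are right that this is, if anything, cleaner than the commutative case because no multinomial weights appear.

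For the non-pure case your plan is the right one and is essentially Popescu's original construction, but the displayed identity $\sum_i T_i^*T_i + (I - T^*T) = I_{\mathcal{H}^d}$ is dimensionally inconsistent: $\sum_i T_i^*T_i$ acts on $\mathcal{H}$, whereas $I - T^*T$ and $I_{\mathcal{H}^d}$ act on $\mathcal{H}^d$. What is actually needed is just $D^2 + T^*T = I_{\mathcal{H}^d}$ with $D = (I_{\mathcal{H}^d} - T^*T)^{1/2}$, or entry-wise $(D^2)_{ij} = \delta_{ij}I_{\mathcal{H}} - T_i^*T_j$. Concretely, taking $\mathcal{K} = \mathcal{H} \oplus (\mathcal{F}(\mathbb{C}^d)\otimes\mathcal{D})$ with $\mathcal{D} = \overline{\ran}\,D$, writing $\iota_i:\mathcal{H}\to\mathcal{H}^d$ for the $i$-th coordinate inclusion, and setting $V_i(h\oplus\xi) = T_ih \oplus (D\iota_ih + e_i\otimes\xi)$, both the isometry property and the orthogonality of ranges reduce to $\langle D\iota_ih, D\iota_jh'\rangle = \delta_{ij}\langle h,h'\rangle - \langle T_ih, T_jh'\rangle$, which is exactly the $(i,j)$-entry of $D^2 = I - T^*T$. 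With that correction the sketch is sound, and your final remark deducing the pure statement is redundant since you already gave a direct and simpler construction in that case. It is worth noting that this direct Schäffer-type construction is a genuinely different (and here more elementary) route than the one the paper follows for the commutative analogue, Theorem \ref{thm:DA_dilation}, which proceeds via the Toeplitz $C^*$-algebra, Arveson's extension theorem, and Stinespring dilation; in the non-commutative setting the explicit construction works cleanly precisely because there is no commutativity constraint to preserve.
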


  The connection between $H^\infty_{nc}$ and $\Mult(H^2_d)$ is given by the following result
  of Davidson and Pitts; see Corollary 2.3 and Section 4 in \cite{DP98}.
  It is an analogue of the (much simpler) Hilbert space result Proposition \ref{prop:NC_DA}.

  \begin{theorem}
    \label{thm:NC_DA_mult}
    The map
    \begin{equation*}
      R: H^\infty_{nc} \to \Mult(H^2_d), \quad \Phi \mapsto \Phi \big|_{\mathbb{B}_d},
    \end{equation*}
    is a complete quotient map. If $\varphi \in \Mult(H^2_d)$, then there
    exists $\Phi \in H^\infty_{nc}$ with $\Phi \big|_{\mathbb{B}_d} = \varphi$ and $\|\Phi\|_{H^\infty_{nc}} = \|\varphi\|_{\Mult(H^2_d)}$.
  \end{theorem}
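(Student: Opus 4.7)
The plan is to realize multipliers of $H^2_d$ as compressions of left multiplications on $H^2_{nc}$ to the symmetric subspace, and then invoke Popescu's non-commutative commutant lifting theorem to produce norm-preserving lifts. Let $V: H^2_d \to H^2_{nc}$ be the isometry from the proof of Proposition~\ref{prop:NC_DA}, with image the symmetric subspace $\mathcal{S} = V(H^2_d)$; recall $V K_\lambda = k_\lambda$. First I would check that $\mathcal{S}$ is co-invariant under each $L_i$. On the orthogonal basis $S_\alpha := \sum_{w : \alpha(w) = \alpha} x^w$ of $\mathcal{S}$, using that $L_i^* x^w$ strips the leading letter if $w$ starts with $x_i$ and vanishes otherwise, a direct count yields $L_i^* S_\alpha = S_{\alpha - e_i}$ (zero when $\alpha_i = 0$). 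A WOT-limit argument, using the fact quoted in the excerpt that $\{L_\Phi : \Phi \in H^\infty_{nc}\}$ is the WOT-closed algebra generated by $L_1, \ldots, L_d$, then extends this to $L_\Phi^* \mathcal{S} \subset \mathcal{S}$ for every $\Phi \in H^\infty_{nc}$. Thus $T \mapsto V^* T V$ is a completely contractive WOT-continuous unital algebra homomorphism from $\{L_\Phi\}$ into $B(H^2_d)$.

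Next I would derive the kernel identity $L_\Phi^* k_\lambda = \overline{\Phi(\lambda)} k_\lambda$, which follows from $\langle L_\Phi F, k_\lambda \rangle_{H^2_{nc}} = (\Phi F)(\lambda) = \Phi(\lambda) F(\lambda)$. Combined with $V K_\lambda = k_\lambda$, this gives $(V^* L_\Phi V)^* K_\lambda = \overline{\Phi(\lambda)} K_\lambda$, and the kernel-eigenvector characterization of multipliers from the proof of Proposition~\ref{prop:multiplier_basic} identifies $V^* L_\Phi V = M_{R\Phi}$. In particular $R\Phi \in \Mult(H^2_d)$ with $\|R\Phi\|_{\Mult(H^2_d)} \le \|\Phi\|_{H^\infty_{nc}}$, and the same computation applied to matrix-valued symbols gives the complete contractivity of $R$.

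The main obstacle is producing a norm-preserving lift of a given $\varphi \in \Mult(H^2_d)$. Since $V^* L_i V = M_{z_i}$ (by the previous paragraph applied to $\Phi = x_i$), the operator $X := V M_\varphi V^*$ on $\mathcal{S}$ commutes with every compression $P_{\mathcal{S}} L_i|_{\mathcal{S}}$. Popescu's non-commutative commutant lifting theorem \cite{Popescu89,Popescu89a}, applied to $\mathcal{S}$ as a co-invariant subspace of the row isometry $(L_1, \ldots, L_d)$ on $H^2_{nc}$, produces an operator $\tilde X$ on $H^2_{nc}$ that co-extends $X$, satisfies $\|\tilde X\| = \|X\| = \|\varphi\|_{\Mult(H^2_d)}$, and lies in the non-commutative analytic Toeplitz algebra $\{L_\Phi : \Phi \in H^\infty_{nc}\}$. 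Hence $\tilde X = L_\Phi$ for some $\Phi \in H^\infty_{nc}$, and the co-extension property combined with the identity $V^* L_\Phi V = M_{R\Phi}$ forces $R\Phi = \varphi$. The complete quotient statement follows by running the same argument at the matrix level; the hardest ingredient, imported as a black box, is Popescu's commutant lifting theorem.
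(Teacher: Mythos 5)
The paper does not actually prove Theorem \ref{thm:NC_DA_mult}; it cites Davidson and Pitts \cite{DP98} and remarks that the norm-preserving lift ``can be deduced from the commutant lifting theorem for $H^2_{nc}$ of Frazho \cite{Frazho84} and Popescu \cite{Popescu89a}.'' Your proposal is a correct fleshing-out of exactly that route: the identification of the symmetric subspace $\mathcal{S}=\ran V$ as a co-invariant, cyclic subspace realizing $(L_1,\dots,L_d)$ as the minimal isometric dilation of $M_z$ (multiplicity one since $I-\sum_i M_{z_i}M_{z_i}^*$ has rank one), the eigenvector identity $L_\Phi^* k_\lambda=\overline{\Phi(\lambda)}k_\lambda$ giving $V^*L_\Phi V=M_{R\Phi}$ and hence complete contractivity of $R$, and the commutant lifting theorem for the converse are all sound.

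One technical point deserves attention in the lifting step. The commutant of the row isometry $(L_1,\dots,L_d)$ on $H^2_{nc}$ is not $\mathcal{L}_d=\{L_\Phi\}$ but the \emph{right} analytic Toeplitz algebra $\mathcal{R}_d$ generated by the right creation operators, so Popescu's theorem as you invoke it produces a right multiplier $\tilde X=R_\Psi$, not immediately some $L_\Phi$. This is harmless but needs a sentence: either run the dilation argument with respect to the right creation operators (whose compressions to $\mathcal{S}$ agree with those of the $L_i$, since $\mathcal{S}$ is co-invariant for both and $R_i^*S_\alpha=L_i^*S_\alpha=S_{\alpha-e_i}$), so that the lift lands in the commutant $\mathcal{R}_d'=\mathcal{L}_d$; or compose with the word-reversal anti-unitary, which interchanges $\mathcal{L}_d$ and $\mathcal{R}_d$ isometrically, fixes the symmetric subspace, and does not change the restriction to $\mathbb{B}_d$ because scalar variables commute. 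With that amendment, and the observation that the matrix-level statement follows from the operator-valued form of the commutant lifting theorem, the argument is complete and is the one the paper intends.
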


  Quotient map means that $R$ maps the open unit ball of $H^\infty_{nc}$ onto the open unit ball of $\Mult(H^2_d)$,
  and the modifier ``completely'' means that the same property holds
  for all induced maps $M_n(H^\infty_{nc}) \to M_n(\Mult(H^2_d))$, defined by applying $R$ entrywise.
  In particular, complete quotient maps are completely contractive.
  For background on these notions, see \cite{Paulsen02}.
  The fact that every multiplier has a norm preserving lift can be deduced from the commutant
  lifting theorem for $H^2_{nc}$ of Frazho \cite{Frazho84} and Popescu \cite{Popescu89a};
  see also \cite{DL10}.

  There are questions that turn out to be simpler in the non-commutative setting of $H^2_{nc}$ than in the commutative
  setting of $H^2_d$.
  For instance, one of the advantages in the non-commutative setting is the fact that the operators $L_i$ are isometries with pairwise orthogonal ranges.
  This leads to a powerful Beurling theorem, due to Arias and Popescu \cite[Theorem 2.3]{AP00} and Davidson and Pitts \cite[Theorem 2.1]{DP99}.
  Jury and Martin used this non-commutative Beurling theorem to answer open questions in the commutative setting;
  see \cite{JM18,JM18a}.
  The passage from the non-commutative to the commutative setting is made possible by Proposition \ref{prop:NC_DA}
  and Theorem \ref{thm:NC_DA_mult}.

  \subsection{The Toeplitz algebra}
  \label{ss:toeplitz}
  In his proof of the dilation theorem in the non-pure case, Arveson
  made use of the Toeplitz $C^*$-algebra $\mathcal{T}_d$, which is defined to be
  the unital $C^*$-subalgebra of $B(H^2_d)$ generated by $M_{z_1},\ldots,M_{z_d}$.

  A key result about $\mathcal{T}_d$ is the following theorem, again due to Arveson, which is \cite[Theorem 5.7]{Arveson02}.
  We let $\mathcal{K}$ denote the ideal of compact operators in $B(H^2_d)$.

  \begin{theorem}
    \label{thm:Toeplitz_short_exact}
    There is a short exact sequence of $C^*$-algebras
    \begin{equation*}
      0 \longrightarrow \mathcal{K} \longrightarrow \mathcal{T}_d \longrightarrow C(\partial \mathbb{B}_d) \longrightarrow 0,
    \end{equation*}
    where the first map is the inclusion and the second map sends $M_{z_i}$ to $z_i$.
  \end{theorem}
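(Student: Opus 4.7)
My plan is to first establish $\mathcal{K} \subset \mathcal{T}_d$, and then identify the quotient $\mathcal{T}_d/\mathcal{K}$ with $C(\partial \mathbb{B}_d)$ via Gelfand--Naimark. The starting point is the identity $I - \sum_{i=1}^d M_{z_i} M_{z_i}^* = P_0$ from Example \ref{exa:z_i_mult}, where $P_0$ is the rank-one projection onto $\mathbb{C} \cdot 1$, so $P_0 \in \mathcal{T}_d$. The operator $M_{z^\alpha} P_0 M_{z^\beta}^*$ then acts as $f \mapsto \langle f, z^\beta\rangle z^\alpha$, and as $\alpha,\beta$ range over $\mathbb{N}^d$ we obtain a spanning set of rank-one operators. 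Taking norm closures yields $\mathcal{K} \subset \mathcal{T}_d$.

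Next I would verify that each cross-commutator $[M_{z_i}, M_{z_j}^*]$ is compact, so that $\pi(M_{z_i})$ is normal in the Calkin image. Using the explicit formula for $M_{z_i}^*$ on monomials recorded in Example \ref{exa:z_i_mult}, a direct computation shows this commutator sends $z^\alpha$ to a scalar multiple of $z^{\alpha + e_j - e_i}$ whose coefficient decays like $1/|\alpha|$. Passing to the orthonormal monomial basis and estimating the norm of the tail beyond degree $N$ exhibits the commutator as a norm limit of finite-rank operators. Combined with the sphere relation $\sum \pi(M_{z_i}) \pi(M_{z_i})^* = 1$ inherited from the first step, this shows $\mathcal{T}_d/\mathcal{K}$ is a commutative unital C*-algebra generated by $d$ commuting normal elements, and Gelfand--Naimark identifies it with $C(\Sigma)$, where $\Sigma \subset \partial \mathbb{B}_d$ is the joint spectrum of $(\pi(M_{z_1}),\ldots,\pi(M_{z_d}))$.

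The main obstacle is to rule out $\Sigma \subsetneq \partial \mathbb{B}_d$. My preferred tactic exploits the $U(d)$-symmetry from Proposition \ref{prop:unit_invariance}: conjugation by the unitary composition operator $C_U$ is a $*$-automorphism of $B(H^2_d)$ that preserves both $\mathcal{T}_d$ and $\mathcal{K}$ and sends $M_{z_i}$ to $\sum_j U_{ij} M_{z_j}$. Descending to $C(\Sigma)$ and comparing the images of the generators, one finds that the induced homeomorphism of $\Sigma$ is the restriction of $U^{-1}$ acting on $\mathbb{C}^d$, so $\Sigma$ is $U(d)$-invariant. Since $M_{z_1}$ is not compact, $\Sigma$ is non-empty, and transitivity of the $U(d)$-action on $\partial \mathbb{B}_d$ forces $\Sigma = \partial \mathbb{B}_d$. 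If one prefers to avoid the symmetry argument, the normalized reproducing kernels $K_{r\zeta}/\|K_{r\zeta}\|$ tend weakly to zero as $r \nearrow 1$ and yield the lower bound $|p(\zeta)| \le \|p(M_z)\|_{\mathrm{ess}}$ for every $\zeta \in \partial \mathbb{B}_d$ and every polynomial $p$; combined with a peak-point argument using $g(z) = (1 + \langle z,\zeta\rangle)/2$, this reaches the same conclusion.

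With $\Sigma = \partial \mathbb{B}_d$ in hand, the Gelfand isomorphism $\mathcal{T}_d/\mathcal{K} \cong C(\partial \mathbb{B}_d)$ sends the coset of $M_{z_i}$ to the coordinate function $z_i|_{\partial \mathbb{B}_d}$, which exhibits the desired short exact sequence.
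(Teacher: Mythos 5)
Your proof follows the paper's argument essentially step for step: the identity $\sum_i M_{z_i}M_{z_i}^* = I - P_0$ places $\mathcal{K}$ inside $\mathcal{T}_d$, compactness of the commutators makes the Calkin image commutative, Gelfand--Naimark identifies the quotient with $C(\Sigma)$ for $\Sigma\subset\partial\mathbb{B}_d$ the joint essential spectrum, and $U(d)$-invariance together with non-emptiness forces $\Sigma=\partial\mathbb{B}_d$. The only cosmetic difference is that you compute all cross-commutators $[M_{z_i},M_{z_j}^*]$ directly rather than deducing commutativity from Fuglede's theorem, which the paper explicitly notes as an equivalent alternative; your second option for the last step, using the weak nullity of the normalized kernels $K_{r\zeta}/\|K_{r\zeta}\|$ and a peaking argument with $g(z)=(1+\langle z,\zeta\rangle)/2$, is a correct and slightly more hands-on replacement for the symmetry argument, but the symmetry route is what the paper uses.
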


  \begin{proof}
    As remarked in Example \ref{exa:z_i_mult}, we have
    \begin{equation}
      \label{eqn:rank_one}
      \sum_{i=1}^d M_{z_i} M_{z_i}^* = I - P_0,
    \end{equation}
    where $P_0$ denotes the orthogonal projection onto the constant functions.
    Hence $P_0 \in \mathcal{T}_d$. A little computation shows that if $p,q$ are polynomials
    in $d$ variables, then
    \begin{equation*}
      (M_p P_0 M_q^*)(f) = \langle f,q \rangle p \quad \text{ for all }  f \in H^2_d.
    \end{equation*}
    Hence $\mathcal{T}_d$ contains all rank one operators of the form $\langle \cdot,q \rangle p$
    for polynomials $p$ and $q$, and therefore $\mathcal{K} \subset \mathcal{T}_d$.
    
    It remains to identify the quotient $\mathcal{T}_d / \mathcal{K}$.
    The formula for the action $M_{z_i}^*$ on monomials in Example \ref{exa:z_i_mult} shows that
    \begin{equation*}
      (M_{z_i}^* M_{z_i} - M_{z_i} M_{z_i}^*) z^\alpha = \Big( \frac{\alpha_i+1}{|\alpha|+1} - \frac{\alpha_i}{|\alpha|} \Big) z^\alpha = \frac{|\alpha| - \alpha_i}{ |\alpha| (|\alpha| + 1)} z^\alpha.
    \end{equation*}
    for all $\alpha \in \mathbb{N}^d \setminus \{0\}$.
    Hence, with respect to the orthogonal basis of monomials, the operator $M_{z_i}^* M_{z_i} - M_{z_i} M_{z_i}^*$
    is a diagonal operator whose diagonal tends to zero, and so it is compact.
    Therefore, the quotient $C^*$-algebra $\mathcal{T}_d / \mathcal{K}$ is generated by the $d$ commuting
    normal elements $N_i = M_{z_i} + \mathcal{K}$ and so it is commutative by Fuglede's theorem; see
    \cite[Theorem 12.16]{Rudin91}. (Alternatively, one can compute directly that $M_{z_j}^* M_{z_i} - M_{z_i} M_{z_j}^*$ is compact for all $i,j = 1,\ldots,d$).
    
    By the Gelfand--Naimark theorem (see \cite[Section I.3]{Davidson96} or
    \cite[Section 2.2]{Arveson02}), the commutative $C^*$-algebra $\mathcal{T}_d/\mathcal{K}$
    is isomorphic to $C(K)$ for some compact Hausdorff space $K$. More precisely,
    if
    \begin{equation*}
      K = \{ (\chi(N_1), \ldots, \chi(N_d)): \chi \text{ character of } \mathcal{T}_d / \mathcal{K} \} \subset \mathbb{C}^d,
    \end{equation*}
    then the maximal ideal space of $\mathcal{T}_d / \mathcal{K}$ is identified with $K$,
    and modulo this identification, the Gelfand transform takes the form
    \begin{equation*}
      \mathcal{T}_d / \mathcal{K} \to C(K), \quad M_{z_i} + \mathcal{K} \mapsto z_i,
    \end{equation*}
    so this map is a $*$-isomorphism.

    Finally, from \eqref{eqn:rank_one}, it follows that $K$ is a non-empty subset of $\partial \mathbb{B}_d$.
    Unitary invariance of $H^2_d$ (Proposition \ref{prop:unit_invariance}) then implies
    that $K = \partial \mathbb{B}_d$, which gives the result.
  \end{proof}

  We now discuss how the dilation theorem in the pure case (Theorem \ref{thm:DA_dilation_pure})
  and Theorem \ref{thm:Toeplitz_short_exact} can be used to prove the dilation theorem in general (Theorem \ref{thm:DA_dilation}). The starting point is the observation that if $T$ is a row contraction, then $r T$ is a pure row contraction for all $0 \le r < 1$, see the proof of Drury's von Neumann inequality.
  Thus, the dilation theorem in the pure case yields for each $ r < 1$ a dilation of $r T$,
  and one would like to take some kind of limit as $r \nearrow 1$.
  Doing this directly is tricky, as the individual dilations are difficult to control.

  There is a different approach due to Arveson \cite{Arveson72}, which in general relies on Arveson's
  extension theorem \cite{Arveson69} and Stinespring's dilation theorem \cite{Stinespring55}.
  A comprehensive exposition of this technique can be found in \cite[Chapter 7]{Paulsen02} and \cite[Section 7]{Shalit21}. Briefly, Arveson showed that the existence of dilations is equivalent to certain maps
  being completely contractive or completely positive, and these behave well with respect to taking
  limits. For instance, we saw that Sz.-Nagy's dilation theorem implies von Neumann's inequality.
  Arveson's technique allows us to go back and deduce Sz.-Nagy's dilation theorem from von Neumann's inequality (for matrix polynomials).

  In our setting, the argument runs as follows.

  \begin{proof}[Proof of Theorem \ref{thm:DA_dilation}]
    Let $T$ be a commuting row contraction on $\mathcal{H}$ and let
    \begin{equation*}
      \mathcal{S} = \spa \{ M_z^\alpha (M_z^\beta)^*: \alpha, \beta \in \mathbb{N}^d \} \subset \mathcal{T}_d.
    \end{equation*}
    For each $0 \le r < 1$, the tuple $r T$ is a pure row contraction,
    so by Theorem \ref{thm:DA_dilation_pure}, there exists an isometry $V_r: \mathcal{H} \to H^2_d \otimes \mathcal{E}_r$ such that
    \begin{equation}
      \label{eqn:dilation_r}
      (M_{z_i} \otimes I)^* V_r = V_r r T_i^*
    \end{equation}
    for $i=1,\ldots,d$. Define
    \begin{equation*}
      \varphi_r: \mathcal{S} \to B(\mathcal{H}), \quad X \mapsto V_r^* X V_r.
    \end{equation*}
    Then $\varphi_r$ is a unital and completely positive, meaning
    that it maps positive operators to positive operators, and the same is true
    for all maps $M_n(\mathcal{S}) \to M_n(B(\mathcal{H}))$ defined by applying $\varphi_r$ entrywise.

    Moreover \eqref{eqn:dilation_r} implies that
    \begin{equation*}
      \varphi_r(M_z^\alpha (M_z^\beta)^*) = (r T)^\alpha ((r T)^\beta)^*
    \end{equation*}
    Hence, for each $X \in \mathcal{S}$, the limit
    $\varphi(X) := \lim_{r \to 1} \varphi_r(X)$ exists, and this defines a unital
    completely positive map $\varphi: \mathcal{S} \to B(\mathcal{H})$
    with
    \begin{equation*}
      \varphi( M_z^\alpha (M_z^\beta)^*) = T^\alpha (T^\beta)^* \quad \text{ for all } \alpha, \beta \in \mathbb{N}^d.
    \end{equation*}

    By Arveson's extension theorem and Stinespring's dilation theorem (see \cite[Theorem 7.5 and Theorem 4.1]{Paulsen02}), there exist a Hilbert space $\mathcal{L}$, an isometry $V: \mathcal{H} \to \mathcal{L}$ and a unital $*$-homomorphism $\pi: \mathcal{T}_d \to B(\mathcal{L})$ such that
    \begin{equation*}
      V^* \pi( M_z^\alpha (M_z^\beta)^*) V = T^\alpha (T^\beta)^* \quad \text{ for all } \alpha, \beta \in \mathbb{N}^d.
    \end{equation*}
    We claim that the range of $V$ is invariant under $\pi(M_{z_i})^*$ for each $i$.
    To see this, note that
    \begin{equation*}
      V^* \pi(M_{z_i} M_{z_i}^*) V = T_i T_i^* = V^* \pi(M_{z_i}) V V^* \pi(M_{z_i})^* V
    \end{equation*}
    and so
    \begin{align*}
      0 &= V^* \pi(M_{z_i}) (I - V V^*) \pi(M_{z_i})^* V \\
      &= ( (I - V V^*) \pi(M_{z_i})^* V)^*
      ( (I - V V^*) \pi(M_{z_i})^* V),
    \end{align*}
    hence $(I - V V^*) \pi(M_{z_i})^* V = 0$. This
    shows invariance of the range of $V$ under $\pi(M_{z_i})^*$. Thus,
    \begin{equation*}
      \pi(M_{z_i})^* V = V V^* \pi(M_{z_i})^* V = V T_i^*.
    \end{equation*}

    It remains to show that the tuple $\pi(M_z)$ is unitarily equivalent to $S \oplus U$,
    where $S$ is a direct sum of copies of $M_z$ and $U$ is spherical unitary.
    To this end, we use the structure of the Toeplitz $C^*$-algebra observed in Theorem \ref{thm:Toeplitz_short_exact}
    and basic representation theory of $C^*$-algebras (see the discussion preceding Theorem I.3.4 in
    \cite{Arveson76}) to conclude that $\pi: \mathcal{T}_d \to B(\mathcal{L})$
    splits as a direct sum $\pi = \pi_1 \oplus \pi_2$.
    Here, $\pi_1$ is induced by a representation of $\mathcal{K}$ and hence
    is unitarily equivalent to a multiple of the identity representation of $\mathcal{T}_d$ on $B(H^2_d)$,
    and $\pi_2$ factors through the quotient $\mathcal{T}_d / \mathcal{K} \cong C(\partial \mathbb{B}_d)$.
    Thus, $\pi_1(M_z)$ is unitarily equivalent to a direct sum of copies of $M_z$,
    and $\pi_2(M_z)$ is spherical unitary, as desired.
  \end{proof}

  \begin{remark}
    \begin{enumerate}[label=\normalfont{(\alph*)},wide]
      \item Examination of the proof of Theorem \ref{thm:Toeplitz_short_exact} shows that
        the operator system $\mathcal{S}$ used in the proof of Theorem \ref{thm:DA_dilation}
        is dense in the $C^*$-algebra $\mathcal{T}_d$.
        Thus, the use of Arveson's extension theorem  could be avoided.
        In turn, this leads to a uniqueness statement about co-extensions
        of the form $S \oplus U$;  see \cite[Theorem 8.5]{Arveson98} for details.
      \item
        It was remarked before the proof of Theorem \ref{thm:DA_dilation} that Arveson's technique
        makes it possible to deduce dilation theorems from von Neumann type inequalities.
        This can also be done here.
        Drury's von Neumann inequality shows that the map
        \begin{equation*}
          \mathcal{T}_d \supset \spa \{M_z^\alpha: \alpha \in \mathbb{N}^d\} \to B(\mathcal{H}), \quad M_z^\alpha \mapsto T^\alpha,
        \end{equation*}
        is contractive, and the same proof shows that it is in fact completely contractive.
        Applying Arveson's extension theorem and Stinespring's dilation theorem
        to this map and using Theorem \ref{thm:Toeplitz_short_exact}
        yields a dilation of $T$ of the form $S \oplus U$.
        The advantage of working with the larger operator system $\mathcal{S}$ in the proof is
        that we not only obtain a dilation, but a co-extension.
        This idea already appeared in work of Agler \cite{Agler82}.
        For the relationship between dilations and co-extensions in the context of reproducing
        kernel Hilbert spaces, the reader is also referred to \cite{CH18}.
      \item
  The proof of the dilation theorem in the non-pure case of M\"uller and Vasilescu, see \cite[Theorem 11]{MV93}, does not
  rely on the representation theory of the Toeplitz $C^*$-algebra.
  Instead, M\"uller and Vasilescu construct by hand an extension of $T^*$ of the form $(M_z \otimes I)^* \oplus V$,
  where $V$ is a spherical isometry, i.e.\ $\sum_{i=1}^d V_i^* V_i$.
  They then use an earlier result of Athavale \cite{Athavale90} to extend the spherical
  isometry to a spherical unitary.

  Another proof of the dilation theorem was given by Richter and Sundberg \cite{RS10}.
  Their proof uses Agler's theory of families and extremal operator tuples \cite{Agler88}.
    \end{enumerate}
  \end{remark}

  The short exact sequence in Theorem \ref{thm:Toeplitz_short_exact} encodes in particular the basic fact
  that the tuple $M_z$ on $H^2_d$ consists of essentially normal operators,
  i.e. that $M_{z_i}^* M_{z_i} - M_{z_i} M_{z_i}^*$ is a compact operator
  for each $1 \le i \le d$.

  Let $I \subset \mathbb{C}[z_1,\ldots,z_d]$ be a homogeneous ideal.
  To avoid certain trivialities, we assume $I$ to be of infinite co-dimension.
  By Hilbert's Nullstellensatz, this is equivalent to demanding that the vanishing
  locus
  \begin{equation*}
    V(I) = \{z \in \mathbb{C}^d: p(z) = 0 \text{ for all } p \in I \}
  \end{equation*}
  does not consist only of the origin.
  On $I^\bot = H^2_d \ominus I$, we define the operator tuple $S^I = (S_1,\ldots,S_d)$
  by
  \begin{equation*}
    S_i = P_{I^\bot} M_{z_i} \big|_{I^\bot}.
  \end{equation*}
  Since $I$ is invariant under $M_z$, the tuple $S^I$ is still
  a tuple of commuting operators.
  A famous conjecture of Arveson asserts that $S^I$ should also be essentially
  normal. In fact, Arveson made a stronger conjecture \cite{Arveson02a},
  which was further refined by Douglas \cite{Douglas06} in the following form.

  \begin{conjecture}[Arveson--Douglas essential normality conjecture]
    The commutators $S_j S_k^* - S_k^* S_j$ belong to the Schatten class
    $\mathcal{S}^p$ for all $p > \dim V(I)$ and all $1 \le j,k \le d$.
  \end{conjecture}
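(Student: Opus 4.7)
The plan is to exploit the homogeneous grading of $I$. Since $I$ is homogeneous, both $I$ and $I^\bot$ decompose as direct sums $I = \bigoplus_n I_n$ and $I^\bot = \bigoplus_n (I^\bot)_n$ of their homogeneous components. The operators $S_j$ shift the grading up by one and $S_j^*$ shift it down by one, so each commutator $[S_j, S_k^*] := S_j S_k^* - S_k^* S_j$ preserves every $(I^\bot)_n$ and its Schatten $p$-norm can be computed block by block in $n$.

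First, using the invariance of $I$ under $M_{z_k}$, which gives $P_{I^\bot} M_{z_k}^* = M_{z_k}^* P_{I^\bot}$, a direct calculation yields the decomposition
\begin{equation*}
  [S_j, S_k^*] = P_{I^\bot} [M_{z_j}, M_{z_k}^*] P_{I^\bot} + P_{I^\bot} M_{z_k}^* P_I M_{z_j} P_{I^\bot}.
\end{equation*}
The first term is the compression to $I^\bot$ of a commutator already understood on $H^2_d$: the computation in the proof of Theorem \ref{thm:Toeplitz_short_exact} shows that $[M_{z_j}, M_{z_k}^*]$ acts with eigenvalues of order $1/n$ on the homogeneous component of degree $n$, and the same order of decay holds off-diagonally. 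The second, ``interface'' term records how much $M_{z_j}$ pushes vectors from $(I^\bot)_n$ into $I_{n+1}$ and is the main analytic obstacle.

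Second, I would estimate the Schatten norm levelwise. By Hilbert's theorem on the Hilbert polynomial of a finitely generated graded module, the dimension of the degree-$n$ component of $\mathbb{C}[z]/I$, which matches $\dim (I^\bot)_n$, grows like a polynomial of degree $\dim V(I) - 1$ in $n$. Combined with eigenvalue decay of order $n^{-1}$, the compressed commutator contributes at most $\dim (I^\bot)_n \cdot n^{-p}$ to the $p$-th power of the Schatten norm, and
\begin{equation*}
  \sum_n n^{\dim V(I) - 1} \cdot n^{-p} < \infty \quad \text{iff} \quad p > \dim V(I),
\end{equation*}
which handles the first term cleanly. For the interface term one needs an estimate of comparable strength on $\| P_{I^\bot} M_{z_k}^* P_I M_{z_j} P_{I^\bot}\|_{\mathcal{S}^p}$ restricted to $(I^\bot)_n$.

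The hard part will be this interface term, and this is precisely where the conjecture remains open in full generality. A natural route is to use the minimal graded free resolution of $\mathbb{C}[z]/I$ to give an algebraic description of the ``spillover'' $P_I M_{z_j} P_{I^\bot}$ and to translate the graded Betti numbers into asymptotic operator-theoretic bounds. Known cases (monomial ideals by Douglas and collaborators, principal ideals by Guo--Wang and by Douglas--Wang, and various radical settings) all proceed by exploiting special algebraic structure to convert Hilbert-series information into Schatten-class estimates. A fully general attack would appear to require a quantitative uniform form of the Hilbert syzygy theorem tying the graded Betti numbers of an arbitrary homogeneous ideal to the Krull dimension of $V(I)$, together with enough control on the associated primes to handle the non-radical case; producing this link in sufficient generality is, to my knowledge, exactly what has resisted resolution so far.
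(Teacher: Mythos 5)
The statement you were asked about is an open conjecture, not a theorem: the paper explicitly says ``The conjecture remains open in general, but it has been verified in special cases,'' and gives no proof. So there is no argument of the paper's to compare yours against, and a complete ``proof'' cannot exist in the literature. The correct response to this prompt is to note that the statement is conjectural; a blind attempt to prove it should immediately flag this, and to your credit your write-up does precisely that.

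With that caveat, the sketch you give is a sound description of the standard line of attack. The commutator decomposition
\begin{equation*}
  [S_j, S_k^*] = P_{I^\bot}\,[M_{z_j}, M_{z_k}^*]\,P_{I^\bot} \;+\; P_{I^\bot} M_{z_k}^* P_I M_{z_j} P_{I^\bot}
\end{equation*}
is correct (using $M_{z_k}^* P_{I^\bot} = P_{I^\bot} M_{z_k}^* P_{I^\bot}$ since $I$ is $M_{z_k}$-invariant). Because $[M_{z_j}, M_{z_k}^*]$ is block-diagonal with respect to the homogeneous grading and its blocks have operator norm $O(1/n)$, while $\dim(I^\bot)_n = \dim(\mathbb{C}[z]/I)_n \sim n^{\dim V(I)-1}$ by the Hilbert polynomial, the first term does lie in $\mathcal{S}^p$ for every $p > \dim V(I)$ exactly as you say. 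Your identification of the interface term $P_{I^\bot} M_{z_k}^* P_I M_{z_j} P_{I^\bot}$ as the genuine obstacle is also accurate: its singular values are not controlled by the Hilbert function of $\mathbb{C}[z]/I$ alone, and translating graded-free-resolution data (Betti numbers, Castelnuovo--Mumford regularity, control on associated primes) into Schatten-norm bounds is exactly what has not been done uniformly; the verified cases (principal homogeneous ideals, $d \le 3$, radical ideals with $V(I)$ smooth away from $0$, monomial ideals) each exploit extra structure to bypass this. So you have not proved the conjecture, and neither has anyone else --- but you have correctly diagnosed where the difficulty lives.
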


  Arveson's initial motivation came from his work on the what is known as the curvature invariant
  \cite{Arveson02a}, but there are other reasons for considering this conjecture.
  For instance, if the conjecture is true, then letting $\mathcal{T}_I$ denote
  the $C^*$-algebra generated by $S$, one obtains a short exact sequence
  \begin{equation*}
    0 \longrightarrow \mathcal{K} \longrightarrow \mathcal{T}_I \longrightarrow C( V(I) \cap \partial \mathbb{B}_d) \longrightarrow 0,
  \end{equation*}
  see for instance \cite[Section 5]{GW08}.
  This would be analogous to the short exact sequence in Theorem \ref{thm:Toeplitz_short_exact}.
  The vision of Arveson and Douglas was, very roughly speaking, to connect operator theory
  of the tuple $S^I$ to algebraic geometry of the variety $V(I)$.

  The conjecture remains open in general, but it has been verified in special cases.
  In particular, Guo and Wang showed the following result;
  see Proposition 4.2 and Theorem 2.2 in \cite{GW08}, see also \cite[Theorem 2.3]{Eschmeier11} and 
  \cite[Theorem 4.2]{Shalit11}.

  \begin{theorem}
    The Arveson--Douglas conjecture holds for $S^I$ in each of the following cases:
    \begin{enumerate}[label=\normalfont{(\alph*)}]
      \item $d \le 3$;
      \item $I$ is a principal homogeneous ideal.
    \end{enumerate}
  \end{theorem}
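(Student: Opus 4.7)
The plan is to reduce the Schatten class membership of $[S_j, S_k^*]$ to a block-wise calculation controlled by the Hilbert function of $A = \mathbb{C}[z_1, \ldots, z_d]/I$. First, since $I$ is $M_z$-invariant, $P_{I^\perp} M_{z_j} P_I = 0$, and taking adjoints gives $M_{z_j}^* P_{I^\perp} = P_{I^\perp} M_{z_j}^* P_{I^\perp}$. A short bookkeeping calculation then yields $[S_j, S_k^*] = P_{I^\perp}[M_{z_j}, M_{z_k}^*]|_{I^\perp}$; that is, the commutator on the quotient is simply the compression of the ambient commutator on $H^2_d$.

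Since $I$ is homogeneous, $I^\perp$ inherits the orthogonal grading $I^\perp = \bigoplus_n (I^\perp \cap H_n)$, where $H_n$ denotes the space of degree-$n$ homogeneous polynomials. As $[M_{z_j}, M_{z_k}^*]$ preserves each $H_n$, the operator $[S_j, S_k^*]$ is block-diagonal, and $\|[S_j, S_k^*]\|_{\mathcal{S}^p}^p = \sum_n \|[S_j, S_k^*]|_{I^\perp \cap H_n}\|_{\mathcal{S}^p}^p$. Using the formula for $M_{z_i}^*$ on monomials from Example \ref{exa:z_i_mult}, a direct calculation in the orthonormal basis $\{z^\alpha/\|z^\alpha\|\}$ shows that $[M_{z_j}, M_{z_k}^*]|_{H_n}$ acts as a weighted partial shift whose nonzero entries have size $O(1/n)$; in particular its operator norm on $H_n$ is $O(1/n)$. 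On the other hand, the rank of the $n$-th block is at most $\dim(I^\perp \cap H_n) = h_A(n)$, and the Hilbert--Serre theorem gives $h_A(n) = O(n^{\dim V(I) - 1})$.

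Combining these inputs with the elementary bound $\|T\|_{\mathcal{S}^p}^p \le \operatorname{rank}(T) \cdot \|T\|^p$ produces the block estimate $\|[S_j, S_k^*]|_{I^\perp \cap H_n}\|_{\mathcal{S}^p}^p = O(n^{\dim V(I) - 1 - p})$, and summation yields a convergent $p$-series exactly when $p > \dim V(I)$. For case (b), the Hilbert function of $A = \mathbb{C}[z]/(p)$ is the explicit difference $\binom{n + d - 1}{d - 1} - \binom{n - \deg p + d - 1}{d - 1} \sim c\, n^{d-2}$, which matches $\dim V(I) = d - 1$ and makes the whole estimate transparent. For case (a), the dimension restriction $d \le 3$ forces $\dim V(I) \le 2$, and both the norm bound and the Hilbert-function asymptotic can be verified by elementary combinatorial means. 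The main obstacle to pushing this strategy further is securing sharp enough Hilbert--Serre asymptotics for arbitrary homogeneous ideals together with the correct cancellations inside each block; cases (a) and (b) are tractable precisely because this input is either explicit or can be handled by hand.
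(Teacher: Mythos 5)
The proposal contains a fundamental error at the very first step: the claimed identity
\[
[S_j, S_k^*] = P_{I^\perp}[M_{z_j}, M_{z_k}^*]\big|_{I^\perp}
\]
is false. Since $I$ is $M_z$-invariant, $M_{z_k}^*$ preserves $I^\perp$, so $S_k^* = M_{z_k}^*|_{I^\perp}$ is a genuine restriction. A careful bookkeeping calculation then gives
\[
[S_j, S_k^*] = P_{I^\perp}[M_{z_j}, M_{z_k}^*]\big|_{I^\perp} \;+\; \bigl(P_I M_{z_k} P_{I^\perp}\bigr)^{*} \bigl(P_I M_{z_j} P_{I^\perp}\bigr),
\]
and the second ``cross'' term, built from the defect operators $A_j := P_I M_{z_j}|_{I^\perp}$, does not vanish in general. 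It is precisely this term that carries the entire difficulty of the Arveson--Douglas conjecture: controlling the Schatten norms of the $A_j$ is equivalent to the conjecture, and if the cross term were absent, your rank-times-norm block estimate would immediately prove the conjecture for \emph{every} homogeneous ideal, which remains open. That is why hypotheses (a) and (b) must enter the argument at all.

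The pieces you do get right are genuine and standard: $[S_j, S_k^*]$ is block-diagonal with respect to the grading because $I$ is homogeneous; $\dim(I^\perp \cap H_n) = h_A(n) = O(n^{\dim V(I) - 1})$ by Hilbert--Serre; and $\| [M_{z_j}, M_{z_k}^*]|_{H_n} \| = O(1/n)$ follows from the explicit weighted-shift formula in Example \ref{exa:z_i_mult}. What is missing is any argument that the \emph{true} blocks $[S_j, S_k^*]|_{I^\perp \cap H_n}$ satisfy a comparable norm bound; this requires estimating $P_{I^\perp} M_{z_k}^* P_I M_{z_j}|_{I^\perp \cap H_n}$, and the cited proofs of Guo--Wang, Eschmeier, and Shalit are devoted almost entirely to exactly that point, using trace estimates and algebraic information about the quotient module in the low-dimensional and principal cases rather than a compression of the ambient commutator. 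As written, your argument proves Schatten membership only for the compressed commutator $P_{I^\perp}[M_{z_j}, M_{z_k}^*]|_{I^\perp}$, which is an easier and strictly weaker statement.
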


  We only mention one more result, which was independently
  shown by Engli\v{s} and Eschmeier \cite{EE15} and by Douglas, Tang and Yu \cite{DTY16}.

  \begin{theorem}
    If $I$ is a radical ideal (i.e. $I = \{p: p \big|_{V(I)} = 0 \}$) and $V(I)$ is smooth away from $0$,
    then the Arveson--Douglas conjecture holds for $S^I$.
  \end{theorem}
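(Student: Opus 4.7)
The plan is to reduce essential normality of $S^I$ to Schatten--class commutator estimates for Toeplitz-type operators on a weighted Bergman space attached to the variety $V = V(I) \cap \mathbb{B}_d$. Since $I$ is radical, by Hilbert's Nullstellensatz it coincides with the vanishing ideal of $V$, so the quotient $H^2_d \ominus I$ should morally be a space of holomorphic functions on $V$. Because $V$ is smooth away from $0$, boundary techniques from several complex variables should apply on $V \setminus \{0\}$, and the contribution of the singular point should only be a ``small'' (compact, or even finite-rank) perturbation that is harmless for $\mathcal{S}^p$ membership.

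First I would construct a candidate Hilbert space $\mathcal{B}(V)$ of holomorphic functions on the regular part $V_{\mathrm{reg}} = V \setminus \{0\}$, using the function theoretic description in Theorem \ref{thm:DA_FT}: fix $m$ with $2m - d > -1$ and take a weighted Bergman-type space with weight $(1 - |z|^2)^{2m - d}$ against the induced volume measure on $V_{\mathrm{reg}} \cap \mathbb{B}_d$, multiplied by a suitable Jacobian factor reflecting the geometry of $V$. The first key step is to show that the restriction map
\[
  r: H^2_d \to \mathcal{B}(V), \quad f \mapsto f\big|_{V_{\mathrm{reg}}}
\]
is well defined, bounded, has kernel equal to $I$, and that the induced map $I^\perp \to \mathcal{B}(V)$ is invertible modulo a compact operator. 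This realizes $I^\perp$ as unitarily equivalent, up to a compact perturbation, to $\mathcal{B}(V)$.

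Second, I would transport the compressions $S_i = P_{I^\perp} M_{z_i} |_{I^\perp}$ through $r$ to obtain Toeplitz-type operators $T^V_i$ on $\mathcal{B}(V)$ with symbol $z_i|_V$. The commutators $[T^V_j, (T^V_k)^*]$ are singular integral operators on $V_{\mathrm{reg}}$ whose Schwartz kernels are controlled by the boundary geometry near $V \cap \partial \mathbb{B}_d$, which is a smooth strictly pseudoconvex CR manifold of real dimension $2\dim V - 1$. Standard estimates in the spirit of the Helton--Howe trace formula and the theory of Toeplitz operators on smooth strictly pseudoconvex domains then yield
\[
  [T^V_j, (T^V_k)^*] \in \mathcal{S}^p \quad \text{for all } p > \dim V,
\]
since the Schatten exponent for a commutator is governed by the dimension of the boundary. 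Transferring back via $r$, and using that a compact perturbation of an $\mathcal{S}^p$ operator is again $\mathcal{S}^p$ after correcting the commutator bilinearly, gives the conjecture for $S^I$.

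The main obstacle is precisely the singular point at the origin: constructing $r$ and identifying $I^\perp$ with $\mathcal{B}(V)$ up to a Schatten perturbation requires careful analysis near $0$, because the reproducing kernel of $H^2_d$ is most concentrated there and $V$ is allowed to be genuinely singular at that point. One must ensure that, despite the singularity, the difference between the natural projection onto $I^\perp$ and the pullback of the Bergman projection on $\mathcal{B}(V)$ lies in $\mathcal{S}^p$ for $p > \dim V$. Heuristically, the singular locus is a single point of dimension $0$ and ought to contribute only a finite-rank error, but making this rigorous seems to require either a resolution-of-singularities argument, a microlocal parametrix construction on $V_{\mathrm{reg}}$ matched with an explicit kernel bound near $0$, or a Koszul-resolution computation in the spirit of Arveson that controls the Hilbert series contribution of the singularity. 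I expect this local analysis near $0$, not the boundary analysis on $V_{\mathrm{reg}}$, to be the heart of the argument.
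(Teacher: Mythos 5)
The paper does not prove this theorem; it states it and cites Engli\v{s}--Eschmeier \cite{EE15} and Douglas--Tang--Yu \cite{DTY16}, so there is no proof in the text to compare against. Your proposed strategy is, in broad strokes, the one those references actually pursue: identify the quotient module $I^\perp$ (up to controlled error) with a weighted Bergman-type space on the analytic set $V(I)\cap\mathbb{B}_d$, transport the compressions $S_i$ to Toeplitz operators there, and obtain the Schatten-class commutator estimates from pseudodifferential/singular-integral analysis on the smooth strictly pseudoconvex boundary $V(I)\cap\partial\mathbb{B}_d$. The Nullstellensatz step and the role of the restriction map are also correctly placed.

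Where your intuition is off is in your final paragraph. You predict that the local analysis near the singular point at the origin is ``the heart of the argument,'' but this is backwards. The origin is an \emph{interior} point of $\mathbb{B}_d$, and at interior points the reproducing kernel of $H^2_d$ is a perfectly regular bounded-analytic object; localizing near $0$ produces operators with smooth kernels, which contribute trace-class (in fact essentially finite-rank) errors that are harmless for any $\mathcal{S}^p$ with $p\geq 1$. The genuine analytic content of \cite{EE15} and \cite{DTY16} lies in the boundary estimates: one needs the smoothness of $V(I)$ near $\partial\mathbb{B}_d$ so that $V(I)\cap\partial\mathbb{B}_d$ is a compact smooth CR manifold of real dimension $2\dim V - 1$, and then Boutet de Monvel-type Toeplitz/pseudodifferential calculus delivers the Schatten exponent $p>\dim V$. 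The singularity at $0$ is allowed precisely because it sits in the interior, where nothing delicate happens; the theorem would be much harder if $V(I)$ were permitted to be singular somewhere on $\partial\mathbb{B}_d$. Your proposal also glosses over the nontrivial step of showing that the restriction operator $r$ is bounded below and identifies $I^\perp$ with the Bergman space up to compacts; this is where the specific weight exponents and the matching of the $H^2_d$-norm with the surface geometry of $V$ must be confronted. But as a conceptual roadmap toward the cited papers, the outline is sound, modulo the misdirected worry about the origin.
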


  For a thorough discussion of the Arveson--Douglas essential normality conjecture,
  see \cite[Section 10]{Shalit13} and \cite[Section 5]{FX19}.

\subsection{Functional calculus}

  If $T$ is a tuple of commuting operators, then there is no issue in making sense
  of $p(T)$ for a polynomial $p \in \mathbb{C}[z_1,\ldots,z_d]$.
  However, it is often desirable to extend the supply of functions $f$ for which one can make sense of $f(T)$;
  this is called a functional calculus for $T$.

  Let $\mathcal{A}_d$ be the multiplier norm closure of the polynomials in $\Mult(H^2_d)$.
  Then Drury's von Neumann inequality immediately implies that every commuting
  row contraction admits an $\mathcal{A}_d$-functional calculus. More precisely, we obtain:

  \begin{theorem}
    If $T$ is a commuting row contraction on $\mathcal{H}$,
    then there exists a unital completely contractive homomorphism
    \begin{equation*}
      \mathcal{A}_d \to B(\mathcal{H}), \quad p \mapsto p(T) \quad (p \in \mathbb{C}[z_1,\ldots,z_d]).
    \end{equation*}
  \end{theorem}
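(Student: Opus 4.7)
The strategy is to upgrade Drury's von Neumann inequality to a completely contractive estimate on polynomials, and then extend by continuity to the multiplier norm closure $\mathcal{A}_d$. Unitality, multiplicativity, and agreement with the polynomial calculus are automatic, so the only substantive point is the matrix-valued bound
\begin{equation*}
  \|P(T)\|_{M_n(B(\mathcal{H}))} \le \|P\|_{M_n(\Mult(H^2_d))} \qquad (P \in M_n(\mathbb{C}[z_1,\ldots,z_d]),\ n \ge 1).
\end{equation*}

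To obtain this, I would invoke the dilation theorem (Theorem \ref{thm:DA_dilation}) to produce a co-extension $\tilde T = (M_z \otimes I_{\mathcal{E}}) \oplus U$ of $T$, so that $p(T) = P_{\mathcal{H}} p(\tilde T) \big|_{\mathcal{H}}$ for every polynomial $p$. Applying this identity entrywise to a matrix polynomial $P$ gives $\|P(T)\| \le \|P(\tilde T)\| = \max\{\|P(M_z \otimes I_{\mathcal{E}})\|,\ \|P(U)\|\}$. The first term equals $\|[M_{p_{ij}}]\|_{B((H^2_d)^n)} = \|P\|_{M_n(\Mult(H^2_d))}$, since tensoring with the identity on $\mathcal{E}$ preserves norms. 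The second term is controlled by applying the multivariable spectral theorem to the commuting normal tuple $U$, whose joint spectrum lies in $\partial \mathbb{B}_d$, yielding $\|P(U)\| \le \sup_{z \in \partial \mathbb{B}_d} \|P(z)\|_{M_n}$; this is in turn dominated by the matrix multiplier norm via the matrix-valued analogue of the contractive embedding $\Mult(H^2_d) \hookrightarrow H^\infty(\mathbb{B}_d)$ (Proposition \ref{prop:stric_containment}), whose proof generalizes by applying the same kernel-eigenvector identity $M_P^*(K_w \otimes v) = K_w \otimes P(w)^* v$ for $v \in \mathbb{C}^n$.

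With complete contractivity on polynomials in hand, the extension to $\mathcal{A}_d$ is routine. For $f \in \mathcal{A}_d$, choose polynomials $p_k \to f$ in multiplier norm; then $\{p_k(T)\}$ is Cauchy in $B(\mathcal{H})$ by the contractive estimate, and I define $f(T)$ to be its limit. Standard verifications show this is independent of the approximating sequence, that the resulting map is a unital algebra homomorphism, and that complete contractivity persists on passing to the closure (using density of $M_n(\mathbb{C}[z_1,\ldots,z_d])$ in $M_n(\mathcal{A}_d)$). The only mildly delicate step is the matrix-valued refinement of Drury's inequality, but this requires no new ideas beyond those in the scalar case; consequently, no serious obstacle is anticipated.
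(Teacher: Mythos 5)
Your proof is correct. The paper itself disposes of this theorem in a single sentence, declaring that it follows ``immediately'' from Drury's von Neumann inequality, with the understanding that the completely contractive refinement follows by the same argument as the scalar case. You have filled in that sketch faithfully, and your handling of the matrix-valued bound (via the identity $M_P^*(K_w \otimes v) = K_w \otimes P(w)^* v$ and the joint spectrum of the spherical unitary lying in $\partial\mathbb{B}_d$) is exactly right, as is the routine extension by continuity to $\mathcal{A}_d$.

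The one genuine difference is \emph{which} dilation theorem you rest on. You invoke the full dilation theorem (Theorem \ref{thm:DA_dilation}), producing the co-extension $S \oplus U$ with a spherical unitary summand, and then bound $\|P(U)\|$ via the spectral theorem. The paper, in contrast, proves Drury's von Neumann inequality using only the pure-case dilation theorem (Theorem \ref{thm:DA_dilation_pure}) applied to $rT$ for $r < 1$, followed by $r \nearrow 1$; this is the ``more direct argument'' the paper flags when it remarks that Drury's inequality \emph{could} be deduced from the full dilation theorem but that it prefers not to. Your route is heavier, since the full dilation theorem in turn requires the Toeplitz $C^*$-algebra machinery (Theorem \ref{thm:Toeplitz_short_exact}), Arveson's extension theorem and Stinespring dilation, whereas the pure-case theorem has an entirely elementary proof. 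What your route buys is that the argument is more conceptual and avoids the $rT$ limiting device. Both are correct; if you wanted to match the paper's economy, you would run the same computation with the pure co-extension $M_z \otimes I_{\mathcal{E}}$ of $rT$, drop the spherical unitary term entirely, and let $r \to 1$.
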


  If $d=1$, then the functional calculus above is the disc algebra functional calculus of a contraction.
  Classically, the next step beyond the disc algebra is $H^\infty$.
  Since functions in $H^\infty$ only have boundary values almost everywhere on the unit circle,
  one has to impose an additional condition on the contraction.
  Indeed, the scalar $1$ will not admit a reasonable $H^\infty$-functional calculus.

  Sz.-Nagy and Foias showed that every completely non-unitary contraction (i.e.\ contraction
  without unitary direct summand) admits an $H^\infty$-functional calculus;
  see \cite[Theorem III.2.1]{SFB+10}.
  This is sufficient for essentially all applications, since
  every contraction $T$ decomposes as $T = T_{cnu} \oplus U$ for a completely non-unitary
  contraction $T_{cnu}$ and a unitary $U$, and the unitary part can be analyzed using the spectral theorem.
  The $H^\infty$-functional calculus has been used very successfully
  for instance in the search for invariant subspaces \cite{BCP79,BCP88}.

  We have now seen that for higher dimensions and commuting row contractions,
  the natural replacement for $H^\infty$ is the multiplier
  algebra of the Drury--Arveson space.
  We say that a commuting row contraction $T$ is completely non-unitary
  if it has no spherical unitary summand.
  Once again, one can show that every commuting row contraction decomposes as $T = T_{cnu} \oplus U$
  for a completely non-unitary tuple $T_{cnu}$ and a spherical unitary tuple $U$;
  see \cite[Theorem 4.1]{CD16a}.

  The following theorem, due to Clou\^atre and Davidson \cite{CD16a}, is then a complete generalization of the Sz.-Nagy--Foias $H^\infty$-functional
  calculus to higher dimensions.
  \begin{theorem}
    \label{thm:CD}
    If $T$ is a completely non-unitary commuting row contraction on $\mathcal{H}$, then $T$ admits a $\Mult(H^2_d)$-functional calculus,
    i.e.\ there exists a weak-$*$ continuous unital completely contractive homomorphism
    \begin{equation*}
      \Mult(H^2_d) \to B(\mathcal{H}), \quad p \mapsto p(T) \quad (p \in \mathbb{C}[z_1,\ldots,z_d]).
    \end{equation*}
  \end{theorem}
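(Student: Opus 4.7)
The plan is to extend the polynomial functional calculus, which is already completely contractive by Drury's von Neumann inequality, to all of $\Mult(H^2_d)$ in a weak-$*$ continuous way, using completely non-unitarity of $T$ precisely to guarantee that the extension is well-defined.

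As a warm-up and building block, I would first settle the pure case directly. If $T$ is pure, Theorem~\ref{thm:DA_dilation_pure} provides an isometry $V \colon \mathcal{H} \to H^2_d \otimes \mathcal{E}$ intertwining $T^*$ with $(M_z \otimes I)^*$, and I would set
\[
  \Phi(\varphi) := V^* (M_\varphi \otimes I_{\mathcal{E}}) V, \qquad \varphi \in \Mult(H^2_d).
\]
This is manifestly unital and completely contractive, and it is weak-$*$ continuous because $\varphi \mapsto M_\varphi \otimes I$ is WOT-continuous on bounded sets while $V^*\cdot V$ is bounded. To see that $\Phi$ is multiplicative I would argue that $\ran(V)$ is invariant under every $(M_\varphi \otimes I)^*$: this is built in for polynomials via the intertwining, and extends to all multipliers by weak-$*$ density of polynomials in $\Mult(H^2_d)$ together with Proposition~\ref{prop:weak-star}. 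Invariance yields the key identity $V^*(M_\varphi \otimes I) = \Phi(\varphi) V^*$, from which $\Phi(\varphi)\Phi(\psi) = \Phi(\varphi\psi)$ is immediate, and iterating gives $\Phi(p) = p(T)$ on polynomials.

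For a general completely non-unitary $T$, I would approximate by the pure tuples $rT$, $r \in [0,1)$, obtaining maps $\Phi_r \colon \Mult(H^2_d) \to B(\mathcal{H})$ exactly as above, with $\Phi_r(p) = p(rT)$ on polynomials. The family $(\Phi_r)$ sits in a point-weak-$*$ compact set of unital completely contractive maps, so Banach--Alaoglu yields a cluster point $\Phi$ as $r \nearrow 1$. This $\Phi$ is unital, completely contractive, weak-$*$ continuous, and agrees with the polynomial calculus (by norm continuity of $r \mapsto p(rT)$ for a fixed polynomial $p$); multiplicativity on all of $\Mult(H^2_d)$ would then follow from separate weak-$*$ continuity together with weak-$*$ density of polynomials, provided the cluster point is canonical.

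The main obstacle, and the step where the CNU hypothesis must enter essentially, is to prove that $\Phi_r(\varphi)$ actually converges in WOT as $r \nearrow 1$ for every $\varphi \in \Mult(H^2_d)$, not merely admits cluster points. Without CNU, the dilation of Theorem~\ref{thm:DA_dilation} carries a genuine spherical unitary summand $U$, and since $\Mult(H^2_d) \subsetneq H^\infty(\mathbb{B}_d)$ by Proposition~\ref{prop:stric_containment} and generic multipliers do not possess well-behaved radial boundary values against an arbitrary normal tuple on $\partial \mathbb{B}_d$, the operators $\varphi(rU)$ defined via the spectral theorem need not converge as $r \nearrow 1$. The CNU assumption, together with the structure of the Toeplitz $C^*$-algebra from Theorem~\ref{thm:Toeplitz_short_exact} that already underlies the proof of the full dilation theorem, should force the spherical unitary part of the minimal dilation of $T$ to have spectral measure absolutely continuous with respect to the surface measure $\sigma$ on $\partial \mathbb{B}_d$; this is exactly the regularity needed to make sense of $\varphi(U)$ via boundary values and thus to secure the WOT-limit. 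I would expect the bulk of the technical work to lie in establishing this absolute-continuity statement, after which uniqueness of the limit, multiplicativity of $\Phi$, and weak-$*$ continuity all fall out cleanly.
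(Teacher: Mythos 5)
Your warm-up paragraph for the pure case reproduces exactly the argument the paper gives in the remark following Theorem~\ref{thm:CD}, so that part is fine. But the paper does not prove the general CNU case at all; it cites \cite{CD16a} and \cite{BHM17}, and your proposal does not fill that gap either. You correctly identify the crux — one must make the cluster point of $(\Phi_r)$ canonical, which amounts to a regularity statement about the spherical unitary summand of the dilation — but you then declare that ``the bulk of the technical work'' lies in proving that statement and stop there, so the hard step is named but not carried out. Moreover, the target you name is not quite the right one. You ask for the spectral measure of $U$ to be absolutely continuous with respect to the surface measure $\sigma$, modelled on the Sz.-Nagy--Foias fact that the residual part of a CNU contraction is absolutely continuous with respect to Lebesgue measure on the circle. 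In one variable this works because, by the F.\ and M.\ Riesz theorem, ``absolutely continuous'' and ``Henkin'' coincide. On $\partial\mathbb{B}_d$ with $d \ge 2$ they do not: Aleksandrov's inner functions produce singular measures on $\partial\mathbb{B}_d$ that are nonetheless Henkin, and the relevant regularity notion for a $\Mult(H^2_d)$-functional calculus is a Henkin-type condition adapted to the weak-$*$ topology of $\Mult(H^2_d)$ (what Clou\^atre and Davidson call absolute continuity of the spherical unitary, defined via the predual of $\Mult(H^2_d)$), not absolute continuity with respect to $\sigma$. So even granting your outline, the key lemma as stated would be proving the wrong thing.

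Two smaller issues with the limiting argument itself. First, the isometry $V_r$ from Theorem~\ref{thm:DA_dilation_pure} depends on $r$, so ``$\Phi_r(\varphi)$ converges'' is not simply a question about $\varphi(rU)$ for a fixed $U$; you would have to control the whole family of dilations. Second, a point-weak-$*$ cluster point of unital completely contractive maps is again unital and completely contractive, but it is not automatically weak-$*$--weak-$*$ continuous; you invoke ``separate weak-$*$ continuity'' for multiplicativity, but that continuity is part of what has to be established, not a free consequence of compactness. These are precisely the points where the actual proofs in \cite{CD16a} (via Henkin-type measures and representation theory of the Toeplitz system) and \cite{BHM17} (via a different, more RKHS-theoretic route) do substantive work.
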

   
  A different proof, along with a generalization to other reproducing kernel Hilbert spaces,
  was given by Bickel, \mcc\ and the author \cite{BHM17}.

  \begin{remark}
    Every pure row contraction is completely non-unitary, but the converse is false, even if $d=1$.
    For instance, the backwards shift on $\ell^2$ is completely non-unitary, but not pure.

    For pure row contractions, the $\Mult(H^2_d)$-functional calculus can be constructed directly
    with the help of the dilation theorem (Theorem \ref{thm:DA_dilation_pure}).
    Indeed, if $T$ is a pure commuting row contraction on $\mathcal{H}$, then
    by the dilation theorem, there exists an isometry $V: \mathcal{H} \to H^2_d \otimes \mathcal{E}$ with
    \begin{equation*}
      (M_{z_i}^* \otimes I) V = V T_i^* \quad (i=1,\ldots,d).
    \end{equation*}
    Define
    \begin{equation*}
      \Phi: \Mult(H^2_d) \to B(\mathcal{H}), \quad \varphi \mapsto V^* ( M_\varphi \otimes I) V.
    \end{equation*}
    Using weak-$*$ density of the polynomials in $\Mult(H^2_d)$, it is straightforward
    to check that $\Phi$ satisfies the conclusion of Theorem \ref{thm:CD}.
  \end{remark}

 \section{Complete Pick spaces}
 \label{sec:Pick}

 \subsection{Pick's theorem and complete Pick spaces}

 In addition to its special role in multivariable operator theory, the Drury--Arveson space
 also plays a central role in the study of a particular class of reproducing kernel Hilbert spaces,
 called complete Pick spaces. For in-depth information on this topic, see \cite{AM02}.
 The definition of complete Pick spaces is motivated by the following classical interpolation
 theorem from complex analysis due to Pick \cite{Pick15} and Nevanlinna \cite{Nevanlinna19}.

\begin{theorem}
  \label{thm:Pick}
  Let $z_1,\ldots,z_n \in \mathbb{D}$ and $\lambda_1,\ldots,\lambda_n \in \mathbb{C}$.
  There exists $\varphi \in H^\infty$ with
    \begin{equation*}
      \varphi(z_i) = \lambda_i \text{ for } 1 \le i \le n \quad \text{ and } \quad \|\varphi\|_\infty \le 1
    \end{equation*}
  if and only if the matrix
  \begin{equation*}
    \Big[ \frac{1-\lambda_i \overline{\lambda_j}}{1 - z_i \overline{z_j}} \Big]_{i,j=1}^n
  \end{equation*}
  is positive semi-definite.
\end{theorem}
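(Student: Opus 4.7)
The plan is to treat the two implications separately via the reproducing kernel Hilbert space framework for $H^2$, which has the virtue of pointing the way toward generalizations to complete Pick spaces.

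For necessity, suppose $\varphi \in H^\infty$ interpolates the data with $\|\varphi\|_\infty \le 1$. Since $\Mult(H^2) = H^\infty$ isometrically, $M_\varphi$ is a contraction on $H^2$, so $I - M_\varphi M_\varphi^* \ge 0$. The identity $M_\varphi^* K_w = \overline{\varphi(w)} K_w$ from the proof of Proposition \ref{prop:multiplier_basic}, applied to the Szeg\H{o} kernel $K_{z_i}$ with $\varphi(z_i) = \lambda_i$, together with $\langle K_{z_j}, K_{z_i} \rangle = (1 - z_i \overline{z_j})^{-1}$, yields
\begin{equation*}
  \langle (I - M_\varphi M_\varphi^*) K_{z_j}, K_{z_i} \rangle = \frac{1 - \lambda_i \overline{\lambda_j}}{1 - z_i \overline{z_j}}.
\end{equation*}
Since $I - M_\varphi M_\varphi^* \ge 0$, the matrix of inner products on the left is positive semi-definite, and this is exactly the Pick matrix.

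For sufficiency, I would reverse this computation to build $\varphi$. Assuming the $z_i$ are distinct (the general case reduces to this), the kernels $K_{z_1}, \ldots, K_{z_n}$ are linearly independent, so I can define an operator $T$ on $M := \spa\{K_{z_1}, \ldots, K_{z_n}\}$ by $T^* K_{z_i} = \overline{\lambda_i} K_{z_i}$. The same Gram computation as above shows that $\|T^*\| \le 1$ is equivalent to the given Pick matrix being positive semi-definite. Moreover, $M$ is invariant under $M_z^*$ because $M_z^* K_{z_i} = \overline{z_i} K_{z_i}$, and $T^*$ commutes with $M_z^*\big|_M$ since both are simultaneously diagonalized by the basis $\{K_{z_i}\}$. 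Equivalently, $T$ commutes with the compressed shift $S_M = P_M M_z\big|_M$.

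The main obstacle, and the substantive content of the sufficiency direction, is now to lift $T$ to a multiplier on all of $H^2$. I would invoke Sarason's interpolation theorem (equivalently, the Sz.-Nagy--Foias commutant lifting theorem) to produce $\varphi \in H^\infty$ with $\|\varphi\|_\infty \le 1$ such that $M_\varphi^*\big|_M = T^*$. Then $M_\varphi^* K_{z_i} = \overline{\varphi(z_i)} K_{z_i} = \overline{\lambda_i} K_{z_i}$ forces $\varphi(z_i) = \lambda_i$, as required. If one prefers to avoid commutant lifting, one can note that $M^\perp = B H^2$ for $B$ the finite Blaschke product with zeros $z_1, \ldots, z_n$ (by Beurling's theorem), reducing the problem to a finite-dimensional model-space question; alternatively, one can give the elementary Schur-algorithm proof, peeling off one Blaschke factor and inducting on $n$.
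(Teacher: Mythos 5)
Your proof is correct. The necessity argument is exactly the paper's: it is the multiplier criterion of Proposition \ref{prop:mult_crit} specialized to $H^2$, using $M_\varphi^* K_{z_i} = \overline{\varphi(z_i)}K_{z_i}$ and positivity of $I - M_\varphi M_\varphi^*$. For sufficiency, however, you take a genuinely different route from the one the paper actually carries out. The paper mentions Sarason's commutant lifting approach but defers its proof of sufficiency to the realization formula of Theorem \ref{thm:BTV}: positivity of the Pick matrix on the finite node set gives a Kolmogorov factorization, the lurking isometry produces a unitary colligation, and the transfer function $D + C(I - Z(z)A)^{-1}Z(z)B$ is the desired interpolant, of multiplier norm at most one by running the argument backwards. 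Your route instead encodes the data as a contraction $T$ on the finite-dimensional co-invariant subspace $M = \spa\{K_{z_1},\dots,K_{z_n}\}$ commuting with the compressed shift, and lifts it via Sarason/commutant lifting, using Beurling's theorem to identify $M^\perp = BH^2$. The trade-off: your argument is the classical one for $H^2$ and makes the finite-dimensional model-space structure visible, but it leans on Beurling and commutant lifting, which are special to (or hardest to generalize from) the one-variable Hardy space; the paper's realization-formula proof works verbatim for $H^2_d$ and arbitrary (not just finite) node sets, which is why the paper organizes things that way. One minor point worth spelling out in your reduction to distinct nodes: if $z_i = z_j$ but $\lambda_i \neq \lambda_j$, the corresponding $2\times 2$ principal minor of the Pick matrix is negative (by the identity $|1-a\overline{b}|^2 - (1-|a|^2)(1-|b|^2) = |a-b|^2$), so positivity of the Pick matrix really does force consistent data, and the reduction is harmless.
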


The matrix appearing in Theorem \ref{thm:Pick} is nowadays usually called the \emph{Pick matrix}.

\begin{example}
  \phantomsection
  \label{exa:Pick_small}
  \begin{enumerate}[label=\normalfont{(\alph*)},wide]
    \item If $n=1$, i.e.\ we have a one point interpolation problem $z_1 \mapsto \lambda_1$,
      then the Pick matrix is positive semi-definite if and only if $|\lambda_1| \le 1$.
      This observation also shows that in general, positivity of the Pick matrix implies that $|\lambda_i| \le 1$
      for each $i$, which is obviously a necessary condition for interpolation.
    \item Let $n=2$ and consider the two point interpolation problem $z_i \mapsto \lambda_i$
      for $i=1,2$. Let us assume that $|\lambda_i| \le 1$ for each $i$.
      The Pick matrix takes the form
      \begin{equation*} P =
        \begin{bmatrix}
          \frac{1 - |\lambda_1|^2}{1 - |z_1|^2} & \frac{1 - \lambda_1 \overline{\lambda_2}}{1 - z_1 \overline{z_2}} \\
          \frac{1 - \lambda_2 \overline{\lambda_1}}{1 - z_2 \overline{z_1}} &
          \frac{1 - |\lambda_2|^2}{1 - |z_2|^2}
        \end{bmatrix}.
      \end{equation*}
      Notice that the two diagonal entries are non-negative.

      We distinguish two cases. If $|\lambda_1| = 1$, then the $(1,1)$ entry of $P$ is equal to zero,
      so $P$ is positive if and only if the off-diagonal entries are equal to zero.
      Since $|\lambda_1|=1$, this happens if and only if $\lambda_2 = \lambda_1$.
      Thus, if $|\lambda_1| = 1$, then
      the two point interpolation problem has a solution if and only if $\lambda_2 = \lambda_1$.
      This is nothing else than the maximum modulus principle for holomorphic functions on $\mathbb{D}$:
      If $\varphi: \mathbb{D} \to \overline{\mathbb{D}}$ is holomorphic and if there exists
      $z_1 \in \mathbb{D}$ with $|\varphi(z_1)| = 1$, then $\varphi$ is constant.
      The case $|\lambda_2| = 1$ is analogous.

      Next, suppose that $|\lambda_1|, |\lambda_2| < 1$. By the Hurwitz criterion for positivity,
      the Pick matrix is positive if and only if $\det(P) \ge 0$.
      Using the elementary (but very useful) identity
      \begin{equation*}
        1 - \frac{(1 - |a|^2)(1 - |b|^2)}{|1 - a \overline{b}|^2} = \Big| \frac{a - b}{1 - a \overline{b}} \Big|^2
        \quad (a,b \in \mathbb{D}),
      \end{equation*}
      we find that $\det(P) \ge 0$ if and only if
      \begin{equation*}
        \Big| \frac{\lambda_1 - \lambda_2}{1 - \lambda_1 \overline{\lambda_2}} \Big|
        \le \Big| \frac{z_1 - z_2}{1 - z_1 \overline{z_2}} \Big|.
      \end{equation*}
      These quantities have geometric meaning. Define
      \begin{equation*}
        d_{ph}(z,w) = \Big| \frac{z - w}{1 - z \overline{w}} \Big| \quad (z,w \in \mathbb{D}).
      \end{equation*}
      Then $d_{ph}$ is called the \emph{pseudohyperbolic metric} on $\mathbb{D}$; see \cite[Section I.1]{Garnett07} for background. Therefore, the two point interpolation problem $z_i \mapsto \lambda_i$ for $i=1,2$ has a solution
      if and only if $d_{ph}(\lambda_1,\lambda_2) \le d_{ph}(z_1,z_2)$.
      This statement is usually called the Schwarz--Pick lemma.
  \end{enumerate}
\end{example}

Pick's interpolation theorem (Theorem \ref{thm:Pick}) was proved before Hilbert spaces were abstractly formalized.
However, reproducing kernel Hilbert spaces provide significant insight into Pick's theorem.
The basis for this is the following basic multiplier criterion.
As usual, if $A$ is a Hermitian matrix, we write $A \ge 0$ to mean that $A$ is positive semi-definite.

\begin{proposition}
  \label{prop:mult_crit}
      Let $\mathcal{H}$ be an RKHS on $X$ with kernel $k$.
      Then
      $\varphi \in \Mult(\mathcal{H})$ with $\|\varphi\|_{\Mult(\mathcal{H})} \le 1$ if and only if for all
      finite $F = \{z_1,\ldots,z_n\} \subset X$,
      \begin{equation}
        \label{eqn:Pick_kernel}
        \big[ k(z_i,z_j) (1 - \varphi(z_i) \overline{\varphi(z_j)}) \big]_{i,j=1}^n \ge 0.
      \end{equation}
\end{proposition}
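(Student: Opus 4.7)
The plan is to exploit the identity $M_\varphi^* k_w = \overline{\varphi(w)} k_w$ from Remark~\ref{rem:multipliers}(a) in both directions; the positivity condition \eqref{eqn:Pick_kernel} is essentially the statement that $I - M_\varphi M_\varphi^* \ge 0$ tested against linear combinations of kernel functions, and since such combinations are dense in $\mathcal{H}$, this positivity is equivalent to $\|M_\varphi\| \le 1$.

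For the ``only if'' direction, I would rewrite $\|M_\varphi\| \le 1$ as $I - M_\varphi M_\varphi^* \ge 0$ and evaluate the associated quadratic form on an arbitrary vector $\sum_i c_i k_{z_i}$. Using $M_\varphi^* k_{z_i} = \overline{\varphi(z_i)} k_{z_i}$ and $\langle k_{z_i}, k_{z_j} \rangle = k(z_j, z_i)$, the inner product $\langle (I - M_\varphi M_\varphi^*) \sum_i c_i k_{z_i}, \sum_j c_j k_{z_j}\rangle$ unfolds directly into the quadratic form associated with \eqref{eqn:Pick_kernel}. This is a short unfolding of definitions.

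For the ``if'' direction, the task is to manufacture the operator $M_\varphi^*$ from the positivity hypothesis. For each finite $F = \{z_1,\ldots,z_n\} \subset X$, I would attempt to define a linear map $T_F$ on the finite-dimensional space $\mathcal{H}_F := \spa\{k_{z_i} : 1 \le i \le n\}$ by the formula $T_F\bigl(\sum_i c_i k_{z_i}\bigr) := \sum_i c_i \overline{\varphi(z_i)} k_{z_i}$. Running the computation from the first direction in reverse, positivity of the matrix in \eqref{eqn:Pick_kernel} translates precisely into the norm inequality $\|T_F f\| \le \|f\|$ for every $f \in \mathcal{H}_F$. Applied to any linear relation $\sum_i c_i k_{z_i} = 0$, this inequality forces $\sum_i c_i \overline{\varphi(z_i)} k_{z_i} = 0$ as well, which simultaneously shows that $T_F$ is well-defined and that it is a contraction. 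The maps $T_F$ are manifestly compatible on nested finite sets, so they assemble into a contraction $T$ on $\bigcup_F \mathcal{H}_F$; this subspace is dense because $f \perp k_w$ for all $w$ implies $f \equiv 0$, so $T$ extends by continuity to a contraction on $\mathcal{H}$.

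To finish, I would identify $T^* = M_\varphi$. By construction $T k_w = \overline{\varphi(w)} k_w$, so for every $f \in \mathcal{H}$ and $w \in X$,
\begin{equation*}
  (T^* f)(w) = \langle T^* f, k_w \rangle = \langle f, T k_w \rangle = \varphi(w) f(w),
\end{equation*}
exactly as in the proof of Proposition~\ref{prop:multiplier_basic}. Hence $\varphi \in \Mult(\mathcal{H})$ with $\|M_\varphi\| = \|T\| \le 1$. The only genuinely delicate step is the well-definedness of $T_F$; every other step is bookkeeping. (The possibility that some kernel function vanishes causes no problem: at such $w$ every $f \in \mathcal{H}$ satisfies $f(w) = 0$, so the value $\varphi(w)$ is irrelevant to the identity $M_\varphi = T^*$.)
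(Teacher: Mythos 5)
Your proof is correct and follows essentially the same route as the paper's: expand $\|M_\varphi^*\sum a_i k_{z_i}\|^2 \le \|\sum a_i k_{z_i}\|^2$ to get the Pick matrix positivity, and conversely use that positivity to build a contraction $T$ with $T k_z = \overline{\varphi(z)}k_z$ on the dense span of kernel functions, then identify $T^* = M_\varphi$. You spell out the well-definedness of $T$ on linear relations among kernel functions more explicitly than the paper does, but the underlying argument is the same.
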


\begin{proof}
  The proof rests on the basic identity $M_\varphi^* k_z = \overline{\varphi(z)} k_z$
  for $\varphi \in \Mult(\mathcal{H})$ and $z \in X$; see Remark \ref{rem:multipliers}.
  Let $\varphi$ be a multiplier of norm at most one. Then $\|M_\varphi^*\| \le 1$
  and so for all $z_1,\ldots,z_n \in X$ and $a_1,\ldots,a_n \in \mathbb{C}$,
  we have
  \begin{equation*}
    \Big\| \sum_{i=1}^n \overline{\varphi(z_i)} a_i k_{z_i} \Big\|^2
    = \Big\| M_\varphi^* \Big( \sum_{i=1}^n a_i k_{z_i} \Big) \Big\|^2
      \le \Big\| \sum_{i=1}^n a_i k_{z_i} \Big\|^2.
  \end{equation*}
  Expanding both sides as a scalar product and rearranging, it follows that
  \begin{equation*}
    \sum_{i,j=1}^n k(z_i,z_j) (1 - \varphi(z_i) \overline{\varphi(z_j)}) a_j \overline{a_i} \ge 0,
  \end{equation*}
  which says that \eqref{eqn:Pick_kernel} holds for $F = \{z_1,\ldots,z_n\}$

  Conversely, suppose that \eqref{eqn:Pick_kernel} holds for all finite subsets of $X$.
  Since linear combinations of kernel functions form a dense subspace of $\mathcal{H}$,
  the computation above shows that there exists a contractive linear operator
  \begin{equation*}
    T: \mathcal{H} \to \mathcal{H}, \quad k_{z} \mapsto \overline{\varphi(z)} k_z.
  \end{equation*}
  As in the proof of Proposition \ref{prop:multiplier_basic}, one shows
  that $T^* = M_\varphi$, so $\varphi \in \Mult(\mathcal{H})$ with $\|\varphi\|_{\Mult(\mathcal{H})} \le 1$.
\end{proof}

Taking $\mathcal{H} = H^2$ and recalling that $\Mult(H^2) = H^\infty$,
it follows that a function $\varphi: \mathbb{D} \to \mathbb{C}$ belongs
to the unit ball of $H^\infty$ if and only if for every finite set $F = \{z_1,\ldots,z_n\} \subset \mathbb{D}$,
\begin{equation*}
  \Big[ \frac{1-\varphi(z_i) \overline{\varphi(z_j)}}{1 - z_i \overline{z_j}} \Big]_{i,j=1}^n \ge 0.
\end{equation*}
This proves necessity in Theorem \ref{thm:Pick}, since the matrix in Theorem \ref{thm:Pick}
corresponds to the special case when $F$ is the set of interpolation nodes.
Conversely, one can use operator theoretic arguments, more specifically commutant lifting,
to prove sufficiency in Theorem \ref{thm:Pick}. This approach to the Nevanlinna--Pick interpolation problem was pioneered by
Sarason \cite{Sarason67}.
We will provide a proof of sufficiency following Theorem \ref{thm:BTV} below.

Viewing Theorem \ref{thm:Pick} as a statement about the reproducing kernel Hilbert space $H^2$
raises an obvious question: For which reproducing kernel Hilbert spaces is Pick's theorem true?

\begin{definition}
  An RKHS $\mathcal{H}$ with kernel $k$ is said to be a \emph{Pick space}
  if whenever $z_1,\ldots,z_n \in X$ and $\lambda_1,\ldots,\lambda_n \in \mathbb{C}$ with
  \begin{equation*}
    [k(z_i,z_j) ( 1 -\lambda_i \overline{\lambda_j} ) ]_{i,j=1}^n \ge 0,
  \end{equation*}
  then there exists $\varphi \in \Mult(\mathcal{H})$ with
  \begin{equation*}
    \varphi(z_i) = \lambda_i \text{ for } 1 \le i \le n \quad \text{ and } \quad \|\varphi\|_{\Mult(\mathcal{H})} \le 1.
  \end{equation*}
\end{definition}

\begin{example}
  \label{exa:Bergman_not_Pick}
  The Hardy space $H^2$ is a Pick space by Pick's interpolation theorem \ref{thm:Pick}.

  The Bergman space
    \begin{equation*}
      L^2_a = L^2_a(\mathbb{D}) = \Big\{ f \in \mathcal{O}(\mathbb{D}): \|f\|^2_{L^2_a} = \int_{\mathbb{D}} |f|^2 d A < \infty \Big\},
    \end{equation*}
    where $A$ is the normalized area measure on $\mathbb{D}$, is not a Pick space.
  To see this, observe that $\Mult(L^2_a) = H^\infty$, and that the multiplier norm
  is the supremum norm. Moreover, the reproducing kernel of $L^2_a$ is given by
  \begin{equation*}
    k(z,w) = \frac{1}{(1 - z \overline{w})^2} \quad (z,w \in \mathbb{D});
  \end{equation*}
  see, for instance, \cite[Section 1.2]{DS04}.
  Thus, the reproducing kernel of $L^2_a$ is the square of the reproducing kernel of $H^2$,
  but the multiplier algebras agree.
  However, it is a simple matter to find points in $\mathbb{D}$ whose Pick matrix
  with respect to the kernel of $L^2_a$ is positive, but whose Pick matrix with respect to the kernel of $H^2$ is not positive. For instance,
  let $z_1 = 0, z_2 = r \in (0,1)$ and $\lambda_1 = 0, \lambda_2 = t \in (0,1)$.
  Then the Pick matrix with respect to $L^2_a$ is
  \begin{equation*}
    \begin{bmatrix}
      1 & 1 \\
      1 & \frac{1 - t^2}{(1 - r^2)^2}
    \end{bmatrix},
  \end{equation*}
  which is positive if and only if
  \begin{equation*}
    t^2 \le r^2 (2 - r^2).
  \end{equation*}
  Choosing $t$ so that equality holds, we see that $t > r$,
  hence the Pick matrix with respect to $H^2$ is not positive.
  (Alternatively, the two point interpolation problem cannot have a solution by the Schwarz lemma.)
\end{example}

\begin{remark}
  One can show that a Pick space is uniquely determined by its multiplier algebra; see for instance \cite[Corollary 3.2]{Hartz17a} for a precise statement.
  Thus, there exists at most one Pick space with a given multiplier algebra. This generalizes the argument
  for the Bergman space in Example \ref{exa:Bergman_not_Pick}.
\end{remark}

It turns out that one obtains a significantly cleaner theory by not only considering interpolation
with scalar targets, but also with matrix targets.
This can be regarded as part of a more general principle in functional analysis,
namely that demanding that certain properties hold at all matrix levels often has powerful consequences;
see for instance \cite{Arveson69,Paulsen02}.

\begin{definition}
  An RKHS $\mathcal{H}$ with kernel $k$ is said to be a \emph{complete Pick space}
  if whenever $r \in \mathbb{N}, r \ge 1$ and $z_1,\ldots,z_n \in X$ and $\Lambda_1,\ldots,\Lambda_N \in M_r(\mathbb{C})$ with
  \begin{equation*}
    [k(z_i,z_j) (I - \Lambda_i \Lambda_j^*)]_{i,j=1}^n \ge 0,
  \end{equation*}
then there exists $\Phi \in M_r(\Mult(\mathcal{H}))$ with
  \begin{equation*}
    \Phi(z_i) = \lambda_i \text{ for } 1 \le i \le n \quad \text{ and } \quad \|\Phi\|_{M_r(\Mult(\mathcal{H}))} \le 1.
  \end{equation*}
\end{definition}

Instead of saying that $\mathcal{H}$ is a (complete) Pick space, we will also say that the reproducing
kernel $k$ is a (complete) Pick kernel.

It may not be immediately clear how useful it is to turn Pick's interpolation theorem into a definition.
However, we will see that the complete Pick property of a space $\mathcal{H}$ has very powerful consequences
for the function theory and operator theory associated with $\mathcal{H}$.
Moreover, the complete Pick property is satisfied by many familiar RKHS.

Before turning to examples of complete Pick spaces and to consequences of the complete
Pick property, we consider a reformulation.
Let $\mathcal{H}$ be an RKHS on $X$ with kernel $k$. For $Y \subset X$, let
\begin{equation*}
  \mathcal{H} \big|_Y = \{ f \big|_Y : f \in \mathcal{H}\}
\end{equation*}
with the quotient norm $\|g\|_{\mathcal{H} |_Y} = \inf \{\|f\|_{\mathcal{H}}: f \big|_Y = g\}$.
This is an RKHS on $Y$ with kernel $k \big|_{Y \times Y}$; see for instance \cite[Corollary 5.8]{PR16}.
Thus, by definition,
\begin{equation*}
  R: \mathcal{H} \to \mathcal{H} \big|_Y, \quad f \mapsto f \big|_Y,
\end{equation*}
is a quotient mapping, meaning that it maps the open unit ball onto the open unit ball.
Note that $\ker(R)^\bot = \bigvee\{k_y: y \in Y\} =: \mathcal{H}_Y$, so $R$ induces a unitary operator
between $\mathcal{H}_Y$ and $\mathcal{H} \big|_Y$.

On the level of multipliers, we always obtain a complete contraction
\begin{equation*}
  \Mult(\mathcal{H}) \to \Mult(\mathcal{H} \big|_Y), \quad \varphi \mapsto \varphi \big|_Y.
\end{equation*}

The Pick property precisely says that one also obtains a quotient mapping on the level of multipliers.

\begin{proposition}
  \label{prop:Pick_reform}
  Let $\mathcal{H}$ be an RKHS on $X$ with kernel $k$.
  The following assertions are equivalent:
  \begin{enumerate}[label=\normalfont{(\roman*)}]
    \item $\mathcal{H}$ is a (complete) Pick space;
    \item for each finite set $F \subset X$, the map $\Mult(\mathcal{H}) \to \Mult(\mathcal{H} \big|_F),
      \varphi \mapsto \varphi \big|_F$, is a (complete) quotient map;
    \item for each finite $F \subset Y$ and all $\psi \in \Mult(\mathcal{H} \big|_F)$
      there exists $\varphi \in \Mult(\mathcal{H})$
      with $\varphi \big|_F = \psi$ and $\|\varphi\|_{\Mult(\mathcal{H})} = \|\psi\|_{\Mult(\mathcal{H} \big|_F)}$
      (respectively the same property for all $r \in \mathbb{N}, r \ge 1$ and all $\psi \in M_r(\Mult(\mathcal{H} \big|_F)$).
  \end{enumerate}
\end{proposition}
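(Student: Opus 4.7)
The plan is to establish (i)$\Leftrightarrow$(iii) directly via rescaling applied to Proposition \ref{prop:mult_crit}, and then to obtain (ii)$\Leftrightarrow$(iii) with (iii)$\Rightarrow$(ii) trivial and (ii)$\Rightarrow$(iii) handled by a weak-$*$ compactness argument.

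First, I would note that for finite $F = \{z_1,\ldots,z_n\} \subset X$, the space $\mathcal{H}\big|_F$ is finite-dimensional with reproducing kernel $k\big|_{F \times F}$, so that every function $\psi \colon F \to \mathbb{C}$ is automatically a multiplier and, by Proposition \ref{prop:mult_crit} applied to $\mathcal{H}\big|_F$, $\|\psi\|_{\Mult(\mathcal{H}|_F)} \le 1$ if and only if the Pick matrix $[k(z_i,z_j)(1 - \psi(z_i)\overline{\psi(z_j)})]_{i,j=1}^n$ is positive semi-definite. Reading this dictionary in both directions yields (i)$\Leftrightarrow$(iii): if (i) holds, then for any nonzero $\psi \in \Mult(\mathcal{H}\big|_F)$ of norm $c$, the rescaled function $\psi/c$ has positive Pick matrix, so (i) gives a contractive extension that one scales back by $c$; conversely, (iii) applied to the data $z_i \mapsto \lambda_i$ on $F$ immediately recovers (i).

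The implication (iii)$\Rightarrow$(ii) is immediate, since a norm-preserving lift a fortiori witnesses the quotient map property. For the converse (ii)$\Rightarrow$(iii), given $\psi \in \Mult(\mathcal{H}\big|_F)$ of norm $c > 0$, I would use the quotient property for each $t \in (0,1)$ to produce $\varphi_t \in \Mult(\mathcal{H})$ with $\|\varphi_t\|_{\Mult(\mathcal{H})} < c$ and $\varphi_t\big|_F = t \psi$. The key step is then to extract a weak-$*$ cluster point $\varphi$ of the net $(\varphi_t)_{t \to 1}$ in the closed ball of radius $c$ in $\Mult(\mathcal{H})$, which is weak-$*$ compact by Banach--Alaoglu. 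By Proposition \ref{prop:weak-star}, the weak-$*$ topology on bounded sets coincides with the topology of pointwise convergence, so $\varphi(z_i) = \lim t \psi(z_i) = \psi(z_i)$ for each $z_i \in F$, and then $\|\varphi\| = c$ follows from contractivity of restriction.

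Finally, the complete (matrix-valued) versions run in exact parallel: Proposition \ref{prop:mult_crit} has a routine matrix-level analogue in which $\|\Psi\|_{M_r(\Mult(\mathcal{H}|_F))} \le 1$ corresponds to positivity of $[k(z_i,z_j)(I_r - \Lambda_i \Lambda_j^*)]_{i,j=1}^n$, and the same rescaling and weak-$*$ compactness arguments go through at each matrix level $r$. The only genuine (though mild) obstacle is the upgrade from the open-ball quotient property in (ii) to the attained norm in (iii); this is precisely where weak-$*$ compactness of the multiplier ball, together with the pointwise description of its weak-$*$ topology, comes in.
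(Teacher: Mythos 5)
Your proof is correct and follows essentially the same route as the paper: (i)$\Leftrightarrow$(iii) via Proposition \ref{prop:mult_crit} applied to $\mathcal{H}\big|_F$, (iii)$\Rightarrow$(ii) trivially, and (ii)$\Rightarrow$(iii) by weak-$*$ compactness of the multiplier ball. The only cosmetic difference is in the last step, where you extract a weak-$*$ cluster point of lifts of $t\psi$ as $t\to 1$ and use Proposition \ref{prop:weak-star}, whereas the paper argues that the image of the closed unit ball is weak-$*$ compact, hence norm closed, and therefore contains the closed unit ball; both are the same compactness argument.
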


\begin{proof}
  (i) $\Leftrightarrow$ (iii)
  Let $F = \{z_1,\ldots,z_n\}$ be a finite subset of $X$
  and let
  \begin{equation*}
    \psi: F \to \mathbb{C}, \quad z_i \mapsto \lambda_i \quad (i=1,\ldots,n).
  \end{equation*}
  Then Proposition \ref{prop:mult_crit}, applied to the space $\mathcal{H} |_F$, shows
  that $\|\psi\|_{\Mult(\mathcal{H} \big|_F)} \le 1$ if and only if the Pick matrix
  for the interpolation problem $z_i \mapsto \lambda_i$ is positive.
  Thus, $\mathcal{H}$ is a Pick space if and only if (iii) holds, and a similar argument
  applies in the matrix valued setting.

  (iii) $\Rightarrow$ (ii) is trivial.

  (ii) $\Rightarrow$ (iii)
  By assumption, the restriction map $\Mult(\mathcal{H}) \to \Mult(\mathcal{H} \big|_F)$ maps the open
  unit ball onto the open unit ball. We have to show that the image of the closed unit ball
  contains the closed unit ball. This follows from a weak-$*$ compactness argument.

  Recall from the discussion preceding Proposition \ref{prop:weak-star}
  that $\Mult(\mathcal{H})$ and $\Mult(\mathcal{H} \big|_F)$ carry natural weak-$*$ topologies.
  With respect to these, the restriction map is weak-$*$--weak-$*$ continuous,
  since modulo the identification $\mathcal{H} \big|_F \cong \mathcal{H}_F$ explained
  before the proposition, the restriction map is given by $M_\varphi \mapsto P_{\mathcal{H}_F} M_\varphi \big|_{\mathcal{H}_F}$.
  By Alaoglu's theorem, the closed unit ball of $\Mult(\mathcal{H})$ is weak-$*$ compact,
  hence so is its image under the restriction map.
  In particular, the image is norm closed and hence contains the closed unit ball.
\end{proof}

\subsection{Characterizing complete Pick spaces}
Complete Pick spaces turn out to have a clean characterization, thanks to a theorem of McCullough
\cite{McCullough92,McCullough94} and Quiggin \cite{Quiggin93}.
We will use a version of the result due to Agler and \mcc\ \cite{AM00}.
Let us say that a function $F: X \times X \to \mathbb{C}$ is positive if $[F(x_i,x_j)]_{i,j=1}^n$ is positive semi-definite for any $x_1,\ldots,x_n \in X$. We also write $F \ge 0$.

  \begin{theorem}
    \label{thm:mc_q}
    Let $\mathcal{H}$ be an RKHS on $X$ with kernel $k$. Assume
    that $k(x,y) \neq 0$ for all $x,y \in X$.
    Then $\mathcal{H}$ is a complete Pick space if and only if for some
    $z \in X$,
    \begin{equation*}
      F_z: X \times X \to \mathbb{C}, \quad (x,y) \mapsto 1 - \frac{k(x,z) k(z,y)}{k(z,z) k(x,y)},
    \end{equation*}
    is positive. In this case, $F_z$ is positive for all $z \in X$.
  \end{theorem}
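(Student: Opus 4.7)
The plan is to fix $z \in X$ and first renormalize: setting $\tilde{k}(x,y) := k(x,y) k(z,z)/(k(x,z) k(z,y))$, the map $f \mapsto f \cdot k(\cdot,z)/k(z,z)^{1/2}$ is a unitary from the RKHS $\tilde{\mathcal{H}}$ of $\tilde{k}$ onto $\mathcal{H}$ that preserves the multiplier algebra; so $\mathcal{H}$ is complete Pick iff $\tilde{\mathcal{H}}$ is. Moreover $\tilde{k}(x,z) = \tilde{k}(z,y) = 1$ and $F_z(x,y) = 1 - 1/\tilde{k}(x,y)$, so positivity of $F_z$ is preserved as well. This reduces everything to the normalized case $k(x,z) \equiv 1$, in which I must show that $k$ is complete Pick if and only if $F_z = 1 - 1/k \geq 0$. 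Since the renormalization makes no special use of the choice of $z$, establishing this for some $z$ establishes it for every $z$.

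For the direction $F_z \geq 0 \Rightarrow$ complete Pick I would use the lurking isometry argument. Apply Kolmogorov's theorem to write $F_z(x,y) = \langle b(x), b(y)\rangle_{\mathcal{K}}$; then $k(x,y)(1 - \langle b(x), b(y)\rangle) = 1$, and this identity rearranges the positivity of a Pick matrix $[k(x_i,x_j)(I_r - \Lambda_i \Lambda_j^*)] \geq 0$ into the statement that
\[
  \sum_i k_{x_i} \otimes \Lambda_i^* \xi_i \;\longmapsto\; \Big(\sum_i \xi_i\Big) \oplus \sum_i k_{x_i} \otimes b(x_i) \otimes \xi_i
\]
defines a contraction from a subspace of $\mathcal{H} \otimes \mathbb{C}^r$ into $\mathbb{C}^r \oplus (\mathcal{H} \otimes \mathcal{K} \otimes \mathbb{C}^r)$. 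Extending this contraction to an isometry on the whole space and reading off an appropriate block matrix entry produces a contractive matrix multiplier $\Phi$ with $\Phi(x_i) = \Lambda_i$; the details are standard, see \cite[Chapter 8]{AM02}.

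For the harder direction, complete Pick $\Rightarrow F_z \geq 0$, I would induct on $n$ to show $[F_z(x_i,x_j)]_{i,j=1}^n \geq 0$ for every finite $x_1,\ldots,x_n \in X$. The base case $n=1$ follows because positivity of the $2 \times 2$ kernel matrix on $\{z,x_1\}$ forces $k(x_1,x_1) \geq 1$. For the inductive step, factor $[F_z(x_i,x_j)]_{i,j<n} = AA^*$ with $A$ having rows $a_i \in \mathbb{C}^d$, and consider the interpolation problem at $\{z,x_1,\ldots,x_{n-1}\}$ with row-vector targets $\{0,a_1,\ldots,a_{n-1}\}$ (embedded as $d \times d$ matrices by zero-padding, so that the complete Pick hypothesis at matrix size $d$ applies). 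The inductive hypothesis gives $a_i a_j^* = F_z(x_i,x_j)$, and the identity $k(1-F_z) = 1$ then reduces the associated scalar Pick matrix to the rank-one all-ones matrix $\mathbf{1}\mathbf{1}^T$, which is positive. By the complete Pick property, this data extends to a contractive row-vector multiplier $\Phi: X \to \mathbb{C}^d$; set $\Lambda_n := \Phi(x_n)$.

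By Proposition \ref{prop:mult_crit}, the Pick matrix at $\{z,x_1,\ldots,x_n\}$ with targets $\{0,a_1,\ldots,a_{n-1},\Lambda_n\}$ is then positive. Its upper $n \times n$ sub-block is the rank-one all-ones matrix, so the last column must lie in its range; this forces $a_i \Lambda_n^* = F_z(x_i,x_n)$ for each $i < n$, while the Schur complement with respect to this rank-one block yields $\|\Lambda_n\|^2 \leq F_z(x_n,x_n)$. Consequently
\[
[F_z(x_i,x_j)]_{i,j \leq n} = \begin{bmatrix} A \\ \Lambda_n \end{bmatrix}\begin{bmatrix} A \\ \Lambda_n \end{bmatrix}^* + \begin{bmatrix} 0 & 0 \\ 0 & F_z(x_n,x_n) - \|\Lambda_n\|^2 \end{bmatrix}
\]
is a sum of positive matrices, hence positive, completing the induction. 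The main obstacle is the careful Pick matrix analysis in the inductive step: one must choose the interpolation problem so that the complete Pick hypothesis supplies exactly the datum $\Lambda_n$ needed to extend the Kolmogorov factorization of $F_z$ by one point.
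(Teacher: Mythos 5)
Your proposal is correct in outline, but it is worth noting that the paper does not actually prove Theorem \ref{thm:mc_q}: it cites \cite[Chapter 7]{AM02} and \cite{Knese19a}, and only later establishes the sufficiency direction, and only for $H^2_d$, via the realization formula of Theorem \ref{thm:BTV}. Your reduction to the normalized case is exactly the rescaling the paper describes before Corollary \ref{cor:mc_q_normalized}. Your sufficiency direction is the same lurking-isometry/realization idea as in the paper's proof of Theorem \ref{thm:BTV}, transplanted to a general kernel of the form $k(x,y)=(1-\langle b(x),b(y)\rangle)^{-1}$; this is the Agler--\mcc /Ball--Trent--Vinnikov route, and deferring the block-matrix bookkeeping to \cite[Chapter 8]{AM02} is reasonable. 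Your necessity direction --- extending a Kolmogorov factorization of $F_z$ one point at a time by solving an auxiliary row-valued Pick problem whose scalar Pick matrix collapses to the all-ones matrix --- is essentially the Agler--\mcc\ induction, and the paper offers no argument for this direction at all. The induction is carried out correctly: including $z$ with target $0$ is what pins the common value of the bordering column to $1$ (positivity forces that column into $\operatorname{ran}(\mathbf{1}\mathbf{1}^{T})=\spa\{\mathbf{1}\}$), and the Schur complement bound $k(x_n,x_n)(1-\|\Lambda_n\|^2)\ge 1$ gives exactly $\|\Lambda_n\|^2\le F_z(x_n,x_n)$, so the factorization extends. The use of the complete Pick hypothesis at matrix size $d\le n-1$ via zero-padding is legitimate under the paper's definition.

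One orientation slip in the sufficiency sketch should be fixed when the details are written out: positivity of the Pick matrix together with $k(1-\langle b,b\rangle)=1$ yields the inequality $\bigl\|\sum_i k_{x_i}\otimes\Lambda_i^*\xi_i\bigr\|^2\le\bigl\|\sum_i\xi_i\bigr\|^2+\bigl\|\sum_i k_{x_i}\otimes \overline{b(x_i)}\otimes\xi_i\bigr\|^2$, so the well-defined contraction goes from the span of the vectors on the right-hand side of your display back to $\mathcal{H}\otimes\mathbb{C}^r$, not in the direction you wrote (as written, the map is norm-increasing and need not even be well defined). This is harmless --- extend that contraction, take adjoints, and the realization formula with a merely contractive colligation still produces a contractive matrix multiplier interpolating the $\Lambda_i$ --- but the arrow must be reversed.
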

  There are now several proofs of this result, see for instance \cite[Chapter 7]{AM02}.
  A simple proof of necessity was recently obtained by Knese \cite{Knese19a}.
  Just as in the classical case of $H^2$, it is possible to deduce sufficiency
  from a suitable commutant lifting theorem; see \cite[Section 5]{BTV01} and \cite{AT02}.
  In this context, the non-commutative approach mentioned in Subsections \ref{ss:nc} and \ref{ss:nc_mult}
  is also useful; see \cite[Section 4]{DP98}, \cite{AP00}, \cite[Section 5]{DL10} and \cite{DH11}.
  We will shortly discuss sufficiency in a bit more detail, but let us first consider some examples.

  Theorem \ref{thm:mc_q} becomes even easier to state if one adds a normalization hypothesis.
  A kernel $k:X \times X \to \mathbb{C}$ is said to be \emph{normalized} at $x_0 \in X$ if $k(x,x_0) = 1$ for all $x \in X$. In this case, we say that $k$ (or $\mathcal{H}$) is \emph{normalized}.
  Many kernels of interest are normalized at a point. For instance, the Drury--Arveson kernel
  is normalized at the origin. In the abstract setting, we think of the normalization point
  as playing the role of the origin.
  Any non-vanishing kernel can be normalized by considering
  \begin{equation*}
    \widetilde{k}(x,y) = \frac{k(x,y) k(x_0,x_0)}{k(x,x_0) k(y,x_0)}
  \end{equation*}
  for some $x_0 \in X$.
  This normalization neither changes the multiplier algebra nor positivity of Pick matrices,
  and so it can usually be assumed without loss of generality.
  See also \cite[Section 2.6]{AM02} for more discussion.

  If we set $z$ to be the normalization point in Theorem \ref{thm:mc_q}, then we obtain the following
  result.
  \begin{corollary}
    \label{cor:mc_q_normalized}
  If $k$ is normalized, then $k$ is a complete Pick kernel
  if and only if $1 - \frac{1}{k}$ is positive.
  \end{corollary}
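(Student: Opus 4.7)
The plan is to derive the corollary as a direct specialization of Theorem \ref{thm:mc_q}, using the normalization to collapse the kernel expression $F_z$ into the claimed form.

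First, I would record the key consequence of normalization. If $k$ is normalized at some point $x_0 \in X$, then $k(x,x_0) = 1$ for all $x \in X$, and by the Hermitian symmetry of reproducing kernels we also have $k(x_0, y) = \overline{k(y,x_0)} = 1$ for all $y \in X$. In particular $k(x_0,x_0) = 1$. Since the corollary presumes the expression $1 - 1/k$ to make sense, I may assume $k(x,y) \neq 0$ everywhere, so that the nonvanishing hypothesis of Theorem \ref{thm:mc_q} is satisfied.

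Next, I substitute $z = x_0$ into the function $F_z$ appearing in Theorem \ref{thm:mc_q}:
\begin{equation*}
F_{x_0}(x,y) = 1 - \frac{k(x,x_0)\, k(x_0,y)}{k(x_0,x_0)\, k(x,y)} = 1 - \frac{1}{k(x,y)}.
\end{equation*}
Thus the auxiliary function $F_{x_0}$ is precisely $1 - 1/k$.

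Finally, I invoke Theorem \ref{thm:mc_q}: the space $\mathcal{H}$ is a complete Pick space if and only if $F_z$ is positive for some (equivalently, every) $z \in X$. Specializing to $z = x_0$, this reads: $\mathcal{H}$ is complete Pick if and only if $1 - 1/k$ is positive, which is exactly the claim.

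There is no real obstacle here; the proof is a one-line computation once the normalization is used, so the only thing to be careful about is explicitly invoking Hermitian symmetry to obtain $k(x_0,y) = 1$ (normalization as stated only gives $k(x,x_0) = 1$) and noting that nonvanishing of $k$ is implicit in the formulation.
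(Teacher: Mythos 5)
Your proof is correct and takes exactly the same route as the paper, which simply states that the corollary follows by taking $z$ to be the normalization point in Theorem \ref{thm:mc_q}; you have merely spelled out the short computation and the use of Hermitian symmetry.
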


  The last result can be used to quickly give examples of complete Pick spaces.

  \begin{corollary}
    \label{cor:DA_complete_Pick}
    The Drury--Arveson space is a complete Pick space.
  \end{corollary}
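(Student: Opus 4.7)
The plan is to apply Corollary \ref{cor:mc_q_normalized} directly. First observe that the Drury--Arveson kernel
\begin{equation*}
  K(z,w) = \frac{1}{1 - \langle z,w \rangle_{\mathbb{C}^d}}
\end{equation*}
is nowhere vanishing on $\mathbb{B}_d \times \mathbb{B}_d$ (since $|\langle z,w\rangle| < 1$ by Cauchy--Schwarz) and is normalized at the origin, because $K(z,0) = 1$ for every $z \in \mathbb{B}_d$. So the corollary reduces the task to verifying that the auxiliary function
\begin{equation*}
  1 - \frac{1}{K(z,w)} = \langle z, w \rangle_{\mathbb{C}^d}
\end{equation*}
is positive as a function on $\mathbb{B}_d \times \mathbb{B}_d$.

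For this, given any finite collection $w_1, \ldots, w_n \in \mathbb{B}_d \subset \mathbb{C}^d$, the matrix $[\langle w_i, w_j \rangle_{\mathbb{C}^d}]_{i,j=1}^n$ is precisely the Gram matrix of the vectors $w_1, \ldots, w_n$ in $\mathbb{C}^d$, and Gram matrices are positive semi-definite. This finishes the verification.

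There is no real obstacle here; the entire content of the corollary is that the characterization in Corollary \ref{cor:mc_q_normalized} applies transparently to the simplest imaginable candidate $\langle z, w \rangle$. One could alternatively appeal to Theorem \ref{thm:mc_q} at a general basepoint $z_0 \in \mathbb{B}_d$, in which case the corresponding $F_{z_0}$ would no longer be literally $\langle z, w \rangle$, and positivity would follow either from a direct computation or from the unitary invariance of $H^2_d$ combined with a M\"obius-type change of variables; but normalization at the origin makes this unnecessary.
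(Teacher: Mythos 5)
Your proof is correct and follows the same route as the paper: apply Corollary \ref{cor:mc_q_normalized} and observe that $1 - 1/K(z,w) = \langle z,w\rangle$ is a positive kernel. The paper leaves positivity of $\langle z,w\rangle$ unjustified (it is standard), whereas you supply the Gram-matrix argument and also note the normalization at $0$ and non-vanishing of $K$; these are reasonable small elaborations rather than a different proof.
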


  \begin{proof}
    The function $1 - \frac{1}{k(z,w)} = \langle z,w \rangle$ is positive.
  \end{proof}

  For $a >0$, let $\mathcal{H}_a$ be the RKHS on $\mathbb{D}$ with kernel $\frac{1}{(1 -z  \overline{w})^a}$.
  We already encountered these spaces in Subsection \ref{ss:scale}.
  \begin{corollary}
    \label{cor:weighted_dirichlet}
    $\mathcal{H}_a$ is a complete Pick space if and only if $0 <a \le 1$.
  \end{corollary}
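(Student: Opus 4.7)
The plan is to reduce the question to Corollary \ref{cor:mc_q_normalized}. Since $k(z,0) = (1 - z\cdot 0)^{-a} = 1$, the kernel of $\mathcal{H}_a$ is already normalized at the origin, so $\mathcal{H}_a$ is a complete Pick space if and only if
\[
G_a(z,w) := 1 - \frac{1}{k(z,w)} = 1 - (1 - z\overline{w})^a
\]
is a positive function on $\mathbb{D}\times\mathbb{D}$. The generalized binomial series expresses this cleanly as
\[
G_a(z,w) = \sum_{n=1}^\infty c_n\,(z\overline{w})^n, \qquad c_n = (-1)^{n+1}\binom{a}{n},
\]
with polynomial decay of $|c_n|$, so the series converges absolutely on every compact subset of $\mathbb{D}\times\mathbb{D}$.

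The problem now reduces to deciding when all coefficients $c_n$ are non-negative. Sufficiency is immediate: when $c_n \ge 0$, the function $G_a$ is a convergent sum of the positive kernels $c_n(z\overline{w})^n$. The next step is to show the converse: a power-series kernel $\sum_{n\ge 1} c_n(z\overline{w})^n$ on $\mathbb{D}$ is positive only if every coefficient $c_n$ is non-negative. I would prove this by extracting coefficients via a Fourier argument. Fix $m \ge 1$, pick $N > m$, let $\omega = e^{2\pi i/N}$, and apply the positive semi-definiteness of the Pick matrix at $z_j = r\omega^j$ ($0 \le j < N$, $0 < r < 1$) to the test vector $v_j = \omega^{mj}$. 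The orthogonality of characters on $\mathbb{Z}/N\mathbb{Z}$ collapses the quadratic form to $N\sum_{k \ge 0} c_{m+kN}\,r^{2(m+kN)}$; dividing by $r^{2m}$ and letting $r \to 0$ isolates the bound $c_m \ge 0$. I expect this extraction step to be the main (minor) obstacle, but it is standard.

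With that lemma in hand, it remains to determine the values of $a > 0$ for which
\[
c_n = (-1)^{n+1}\,\frac{a(a-1)(a-2)\cdots(a-n+1)}{n!} \ge 0 \quad \text{for every } n \ge 1.
\]
For $0 < a \le 1$, the $n-1$ factors $a-1, a-2, \dots, a-(n-1)$ are all non-positive, so the product has sign $(-1)^{n-1}$, which cancels the prefactor $(-1)^{n+1}$; hence every $c_n$ is non-negative and $\mathcal{H}_a$ is a complete Pick space. For $a > 1$, a direct computation yields $c_2 = -\tfrac{1}{2}a(a-1) < 0$, so $G_a$ fails to be positive and $\mathcal{H}_a$ is not a complete Pick space. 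This settles both directions.
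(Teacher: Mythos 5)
Your proof is correct and takes essentially the same route as the paper: normalize, apply Corollary~\ref{cor:mc_q_normalized}, expand $1-(1-z\overline w)^a$ as a power series, and observe that positivity is equivalent to non-negativity of the coefficients $(-1)^{n-1}\binom{a}{n}$, which holds precisely for $0<a\le 1$. The only difference is that where the paper cites the positivity-of-coefficients lemma (Theorem~7.33 of \cite{AM02} or \cite[Corollary~6.3]{Hartz17a}), you sketch a proof of it via evaluation at roots of unity; your sketch is sound, though the passage $r\to 0$ inside the sum needs the absolute convergence of $\sum |c_n|r^{2n}$, which you correctly flag and which holds here because $|\binom{a}{n}|$ decays polynomially.
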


  \begin{proof}
    We have
    \begin{equation*}
      1 - (1 - z \overline{w})^a = \sum_{n=1}^\infty (-1)^{n-1} \binom{a}{n} (z \overline{w})^n,
    \end{equation*}
    where
    $\binom{a}{n} = \frac{a (a-1) \ldots (a-n+1)}{n!}$.
    A function of this type is positive if and only if all coefficients in the sum are non-negative;
    see for instance the proof of Theorem 7.33 in \cite{AM02} or \cite[Corollary 6.3]{Hartz17a}.

    If $0 < a \le 1$, then $(-1)^{n-1} \binom{a}{n} \ge 0$ for all $n$,
    so $1- \frac{1}{k}$ is positive.
    If $a > 1$, then $(-1)^{2-1} \binom{a}{2} < 0$, hence $1-\frac{1}{k}$ is not positive.
  \end{proof}

  The spaces $\mathcal{H}_a$ for $a \in (0,1]$ are weighted Dirichlet spaces.
  Recall that the classical Dirichlet space is
  \begin{equation*}
    \mathcal{D} = \Big\{ f \in \mathcal{O}(\mathbb{D}): \int_{\mathbb{D}} |f'|^2 d A < \infty \Big\},
  \end{equation*}
  where $dA$ denotes integration with respect to normalized area measure.
  We equip $\mathcal{D}$ with the norm
  \begin{equation*}
    \|f\|_{\mathcal{D}}^2 = \|f\|_{H^2}^2 + \int_{\mathbb{D}} |f'|^2 d A.
  \end{equation*}
  Once again, more information on the Dirichlet space can be found in \cite{ARS+19,EKM+14}.
  The following important result of Agler \cite{Agler88a} in fact predates Theorem \ref{thm:mc_q}.
  It initiated the study of complete Pick spaces.
  
  \begin{theorem}
    The classical Dirichlet space is a complete Pick space.
  \end{theorem}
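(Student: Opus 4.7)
The plan is to compute the reproducing kernel of $\mathcal{D}$ and then apply the McCullough--Quiggin criterion (Corollary \ref{cor:mc_q_normalized}). Using $\|z^{n-1}\|_{L^2_a(\mathbb{D})}^2 = 1/n$ and orthogonality of the monomials, one finds $\|z^n\|_{\mathcal{D}}^2 = 1 + n$, so the reproducing kernel of $\mathcal{D}$ is
\begin{equation*}
  k(z,w) = \sum_{n=0}^\infty \frac{(z \overline{w})^n}{n+1} = \frac{1}{z \overline{w}} \log \frac{1}{1 - z \overline{w}}
\end{equation*}
(with $k(z,0) = 1$ at the removable singularity). Since $k(z,0) = 1$, the kernel is normalized at the origin, so Corollary \ref{cor:mc_q_normalized} reduces the task to showing that $1 - 1/k$ is a positive kernel on $\mathbb{D}$.

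Since $k$ depends only on the scalar $z \overline{w}$, so does $1 - 1/k$, and, as in the proof of Corollary \ref{cor:weighted_dirichlet}, positivity of such a kernel is equivalent to non-negativity of its Taylor coefficients in $z \overline{w}$. Setting $g(x) = \sum_{n \ge 0} a_n x^n$ with $a_n = 1/(n+1)$ and writing $1/g(x) = \sum_{n \ge 0} b_n x^n$, the complete Pick property of $\mathcal{D}$ therefore reduces to the claim that $b_n \le 0$ for every $n \ge 1$. This is the content of a classical theorem of Kaluza (1928): if $(a_n)_{n \ge 0}$ is positive and log-convex with $a_0 = 1$, then $1/\sum a_n x^n$ has non-positive Taylor coefficients in all degrees $n \ge 1$. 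Log-convexity of $a_n = 1/(n+1)$ is immediate from
\begin{equation*}
  a_n^2 = \frac{1}{(n+1)^2} \le \frac{1}{n(n+2)} = a_{n-1} a_{n+1} \qquad (n \ge 1),
\end{equation*}
so Kaluza's theorem applies and yields the desired conclusion.

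The main obstacle is Kaluza's theorem itself. A direct induction on the convolution identity $b_n = -\sum_{k=1}^{n} a_k b_{n-k}$ is inconclusive: it pits the negative contribution $-a_n$ against non-negative contributions $-a_k b_{n-k}$ for $k \ge 1$. Kaluza's route is to exploit log-convexity to construct a non-negative \emph{renewal} sequence $(p_k)_{k \ge 1}$ satisfying $a_n = \sum_{k=1}^n p_k a_{n-k}$ for all $n \ge 1$; the resulting factorization $g(x) = 1/(1 - \sum_k p_k x^k)$ then makes the sign of $1/g$ manifest. This renewal step is the only non-formal ingredient in the argument and is where the hypothesis of log-convexity is genuinely used.
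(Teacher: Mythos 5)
Your argument is correct and follows essentially the same route as the paper: compute the reproducing kernel of $\mathcal{D}$, observe it is normalized at the origin, reduce to non-negativity of the Taylor coefficients of $1-1/k$ via Corollary \ref{cor:mc_q_normalized}, and deduce this from Kaluza's lemma on log-convex sequences, with the same verification that $1/(n+1)$ is log-convex. The only difference is that the paper simply cites Kaluza's lemma, whereas you additionally sketch its proof via the renewal-sequence factorization --- a welcome addition, but not a different approach.
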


  \begin{proof}
    The reproducing kernel of $\mathcal{D}$ is given by
    \begin{equation*}
      k(z,w) = \frac{1}{z \overline{w}} \log \Big( \frac{1}{1 - z \overline{w}} \Big)
      = \sum_{n=0}^\infty \frac{1}{n+1} (z \overline{w})^n,
    \end{equation*}
    see for instance \cite[Theorem 1.2.3]{EKM+14}.
    This kernel is normalized at $0$.
    The coefficients of $1 - 1/k$ are more difficult to compute than those
    in Corollary \ref{cor:weighted_dirichlet}. Instead, the standard
    approach is to use a Lemma of Kaluza (see \cite[Lemma 7.38]{AM02}),
    which says that log-convexity of the sequence $\frac{1}{n+1}$ implies that
    all coefficients of $1-1/k$ are non-negative; whence Corollary \ref{cor:mc_q_normalized} applies.
    (A sequence $(a_n)$ of positive numbers is log-convex if $a_n^2 \le a_{n-1} a_{n+1}$ for all $n \ge 1$.)
    For an explicit computation of the coefficients of $1-1/k$, see for instance \cite[Section 5]{Hartz22}.
  \end{proof}

  \begin{remark}
    The complete Pick property is an isometric property and generally not stable under passing
    to an equivalent norm. For instance, a frequently used equivalent norm on the Dirichlet space is given by
    \begin{equation*}
      \|f\|_{\widetilde{\mathcal{D}}}^2 = |f(0)|^2 + \int_{\mathbb{D}} |f'|^2 \,d A.
    \end{equation*}
    The reproducing kernel with respect to this norm is
    \begin{equation*}
      \widetilde{k}(z,w) = 1 + \log\Big( \frac{1}{1 - z \overline{w}} \Big)
      = 1 + \sum_{n=1}^\infty \frac{1}{n} (z \overline{w})^n,
    \end{equation*}
    and one computes that the coefficient of $(z \overline{w})^2$ in $1-1/\widetilde{k}$
    is $-\frac{1}{2} < 0$, so the Dirichlet space is not a complete Pick space in the norm $\|\cdot\|_{\widetilde{\mathcal{D}}}$.
  \end{remark}

  Let us briefly discuss sufficiency in Theorem \ref{thm:mc_q}.
  For simplicity, we only consider the case of the Drury--Arveson space;
  in other words, Corollary \ref{cor:DA_complete_Pick}.

  The approach to proving the complete Pick property of the Drury--Arveson space we discuss here
  is to first establish a realization formula for multipliers; see for instance \cite[Chapter 8]{AM02}
  and \cite[Chapter 2]{AMY20} for background on this topic.
  The following theorem was obtained by Ball, Trent and Vinnikov (see Theorem 2.1 and Theorem 4.1 in \cite{BTV01}),
  and by Eschmeier and Putinar (see Proposition 1.2 in \cite{EP02}).

  \begin{theorem}
    \label{thm:BTV}
    Let $Y \subset \mathbb{B}_d$ and let $\varphi: Y \to \mathbb{C}$.
    The following statements are equivalent:
    \begin{enumerate}[label=\normalfont{(\roman*)}]
      \item $\|\varphi\|_{\Mult(H^2_d |_Y)} \le 1$.
      \item There exists a Hilbert space $\mathcal{E}$ and a unitary
        \begin{equation*}
          U =
          \begin{bmatrix}
            A & B \\ C & D
          \end{bmatrix}:
          \mathcal{E} \oplus \mathbb{C} \to \mathcal{E}^d \oplus \mathbb{C}
        \end{equation*}
        with
        \begin{equation*}
          \varphi(z) = D + C (I_{\mathcal{E}} - Z(z) A)^{-1} Z(z) B,
        \end{equation*}
        where $Z(z) =
        \begin{bmatrix}
          z_1 & \ldots & z_d
        \end{bmatrix}: \mathcal{E}^d \to \mathcal{E}$.
    \end{enumerate}
  \end{theorem}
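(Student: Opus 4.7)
The plan is to prove the two implications separately. (ii) $\Rightarrow$ (i) follows from a direct algebraic identity exploiting the unitarity of $U$, while (i) $\Rightarrow$ (ii) is the substantive content and is handled by the classical \emph{lurking isometry} construction.

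For (ii) $\Rightarrow$ (i), I would exploit the block relations coming from $U^* U = I$ (equivalently $A^* A + C^* C = I_{\mathcal{E}}$, $A^* B + C^* D = 0$ and $B^* B + |D|^2 = 1$) together with the resolvent identity for $(I - Z(z) A)^{-1}$ to produce a factorization of the form
\begin{equation*}
  1 - \varphi(z)\overline{\varphi(w)} \,=\, (1 - \langle z, w\rangle_{\mathbb{C}^d})\, \bigl\langle h(z), h(w)\bigr\rangle_{\mathcal{E}}
\end{equation*}
for an auxiliary vector $h(w) \in \mathcal{E}$ built from the blocks of $U$ and the resolvent at $Z(w)$. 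After dividing by $1 - \langle z, w\rangle_{\mathbb{C}^d}$, this exhibits $K(z, w)(1 - \varphi(z)\overline{\varphi(w)})$ as a positive kernel on $Y$, and Proposition \ref{prop:mult_crit} applied to the RKHS $H^2_d\big|_Y$ (whose reproducing kernel is the restriction of the Szeg\H{o}-type kernel $K$) then yields $\|\varphi\|_{\Mult(H^2_d\big|_Y)} \le 1$.

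For (i) $\Rightarrow$ (ii), Proposition \ref{prop:mult_crit} converts the hypothesis into positivity of the Pick kernel on $Y$. Equivalently,
\begin{equation*}
  F(z, w) \,:=\, \frac{1 - \varphi(z)\overline{\varphi(w)}}{1 - \langle z, w\rangle_{\mathbb{C}^d}}
\end{equation*}
is a positive kernel on $Y$, and by Moore's theorem it factors as $F(z, w) = \langle g(z), g(w)\rangle_{\mathcal{E}}$ for some auxiliary Hilbert space $\mathcal{E}$ and map $g : Y \to \mathcal{E}$. Setting $T(w) g(w) := (w_1 g(w), \ldots, w_d g(w)) \in \mathcal{E}^d$, one checks that $\langle T(z) g(z), T(w) g(w)\rangle_{\mathcal{E}^d} = \langle z, w\rangle_{\mathbb{C}^d} F(z, w)$, so the kernel identity rearranges to show that the two families $(T(w) g(w), 1) \in \mathcal{E}^d \oplus \mathbb{C}$ and $(g(w), \varphi(w)) \in \mathcal{E} \oplus \mathbb{C}$ indexed by $w \in Y$ have the same Gram matrix. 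Consequently, the prescription $(T(w) g(w), 1) \mapsto (g(w), \varphi(w))$ extends to a linear isometry $V_0$ from a subspace of $\mathcal{E}^d \oplus \mathbb{C}$ into $\mathcal{E} \oplus \mathbb{C}$.

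After enlarging $\mathcal{E}$ if necessary (e.g.\ by replacing it with $\mathcal{E} \oplus \ell^2$) so that $\mathcal{E} \oplus \mathbb{C}$ and $\mathcal{E}^d \oplus \mathbb{C}$ become isomorphic as Hilbert spaces, $V_0$ extends to a unitary $W: \mathcal{E}^d \oplus \mathbb{C} \to \mathcal{E} \oplus \mathbb{C}$, and then $U := W^*$ is a unitary of the correct shape $\mathcal{E} \oplus \mathbb{C} \to \mathcal{E}^d \oplus \mathbb{C}$. Writing $U = \bigl(\begin{smallmatrix} A & B \\ C & D\end{smallmatrix}\bigr)$ and unpacking $W(T(w) g(w), 1) = (g(w), \varphi(w))$ in block form yields two scalar/vector equations which, after invoking the resolvent identity and the relation $T(w)^* = Z(\bar w)$, rearrange to the desired realization $\varphi(z) = D + C(I_{\mathcal{E}} - Z(z) A)^{-1} Z(z) B$ on $Y$. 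The main obstacle is exactly this last algebraic step: the raw block equations naturally involve the ``column'' operator $T(w)$ rather than the ``row'' operator $Z(w)$ of the statement, so one must both choose the direction of the lurking isometry carefully (as above, rather than the opposite direction, which would instead produce block equations containing a spurious factor $\|w\|^2$ after applying $Z(w)$) and then invoke the adjoint relationship between $U$ and $W$ to convert the derived formula into the stated form; these manipulations are carried out in detail in \cite{BTV01,EP02}.
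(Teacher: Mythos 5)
Your skeleton is the same as the paper's: Proposition \ref{prop:mult_crit} converts (i) into positivity of the Pick kernel, a Kolmogorov factorization plus a lurking isometry produce a unitary colligation, and (ii) $\Rightarrow$ (i) is obtained by running the algebra backwards. That direction of the argument is fine. The problem is the last step of (i) $\Rightarrow$ (ii), which you correctly single out as the main obstacle but then dispose of with a description that does not actually work. With your conventions --- $F(z,w)=\langle g(z),g(w)\rangle_{\mathcal{E}}$ and the isometry $(T(w)g(w),1)\mapsto(g(w),\varphi(w))$, where $W=U^*$ has blocks $A^*,C^*,B^*,D^*$ --- the block equations read $A^*T(w)g(w)+C^*=g(w)$ and $B^*T(w)g(w)+\overline{D}=\varphi(w)$, hence $\varphi(w)=\overline{D}+B^*T(w)(I-A^*T(w))^{-1}C^*$. ``Invoking the adjoint relationship between $U$ and $W$'' here means taking adjoints of this identity, and since $T(w)^*=Z(\overline{w})$ that yields $\overline{\varphi(w)}=D+C(I-Z(\overline{w})A)^{-1}Z(\overline{w})B$: a realization of the function $z\mapsto\overline{\varphi(\overline{z})}$, not of $\varphi$. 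Converting your column realization into the stated row realization genuinely requires a transpose rather than an adjoint, i.e.\ a passage to the conjugate Hilbert space $\overline{\mathcal{E}}$, and that step is not in your write-up.

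The clean repair --- and this is exactly what the paper does --- is to set up the two choices the other way around from the start: factor the Pick kernel as $\langle h(w),h(z)\rangle_{\mathcal{E}}$ (second variable in the first slot) and let the lurking isometry send $(Z(z)^*h(z),1)=(\overline{z_1}h(z),\ldots,\overline{z_d}h(z),1)$ to $(h(z),\overline{\varphi(z)})$. The Gram identity needed is the same rearrangement of $(1-\langle z,w\rangle)F=1-\varphi\overline{\varphi}$ that you use, but now the block equations become $A^*Z(z)^*h(z)+C^*=h(z)$ and $B^*Z(z)^*h(z)+D^*=\overline{\varphi(z)}$, so that a single adjoint at the very end produces precisely $\varphi(z)=D+C(I_{\mathcal{E}}-Z(z)A)^{-1}Z(z)B$. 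This is a small fix, but as written your final conversion is not yet a proof.
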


  \begin{proof}
    (i) $\Rightarrow$ (ii)
    The argument is known as a lurking isometry argument. It works as follows.
    Suppose that $\|\varphi\|_{\Mult(H^2_d |_Y)} \le 1$.
    Proposition \ref{prop:mult_crit} shows that
    \begin{equation}
      \label{eqn:mult_pos}
      (z,w) \mapsto \frac{1 - \varphi(z) \overline{\varphi(w)}}{1 - \langle z,w \rangle }
    \end{equation}
    is positive on $Y \times Y$. Therefore, this function admits a Kolmogorov decomposition,
    meaning that there is a Hilbert space $\mathcal{E}$ and a map $h: Y \to \mathcal{E}$ so that
    \begin{equation*}
      \frac{1 - \varphi(z) \overline{\varphi(w)}}{1 - \langle z,w \rangle } = \langle h(w),h(z) \rangle_{\mathcal{E}}
      \quad \text{ for all } z,w \in Y.
    \end{equation*}
    (One can take $\mathcal{E}$ to be the RKHS on $Y$ with reproducing kernel \eqref{eqn:mult_pos}, and $h(z)$ to be the kernel at $z$ in $\mathcal{E}$;
    see \cite[Theorem 2.14]{PR16} or the first proof of Theorem 2.53 in \cite{AM02}.)
    Rearranging and collecting positive terms, we see that
    \begin{equation*}
      \langle z,w \rangle \langle h(w), h(z) \rangle  + 1 = \langle h(w), h(z) \rangle  + \varphi(z) \overline{\varphi(w)}.
    \end{equation*}
    Now comes the key observation. The last equation says that
    \begin{equation*}
      \left \langle
        \begin{pmatrix}
          \overline{w_1} h(w) \\ \vdots \\ \overline{w_d} h(w) \\ 1
        \end{pmatrix},
        \begin{pmatrix}
          \overline{z_1} h(z) \\ \vdots \\ \overline{z_d} z(w) \\ 1
        \end{pmatrix}
      \right \rangle_{\mathcal{E}^d \oplus \mathbb{C}}
      = \left \langle
      \begin{pmatrix}
        h(w) \\ \overline{\varphi(w)}
      \end{pmatrix},
      \begin{pmatrix}
        h(z) \\ \overline{\varphi(z)}
      \end{pmatrix}
    \right \rangle_{\mathcal{E} \oplus \mathbb{C}}
    \end{equation*}
    for all $z,w \in Y$.
    Therefore, we may define a linear isometry $V$ by
    \begin{equation*}
      V:
      \begin{pmatrix}
        \overline{z_1} h(z) \\ \vdots \\ \overline{z_d} h(z) \\ 1
      \end{pmatrix} \mapsto
      \begin{pmatrix}
        h(z) \\ \overline{\varphi(z)}
      \end{pmatrix},
    \end{equation*}
    initially mapping the closed linear span of vectors appearing on the left onto the closed linear span
    of vectors appearing on the right. By enlarging $\mathcal{E}$ if necessary, we may extend $V$ to a unitary
    $\mathcal{E}^d \oplus \mathbb{C} \to \mathcal{E} \oplus \mathbb{C}$. Let $U = V^*$.
    Decomposing
    \begin{equation*}
      U =
      \begin{bmatrix}
        A & B \\ C & D
      \end{bmatrix}: \mathcal{E} \oplus \mathbb{C} \to \mathcal{E}^d \oplus \mathbb{C},
    \end{equation*}
    we find from the definition of $V$ that
    \begin{align*}
      A^* Z(z)^* h(z) + C^* &= h(z) \quad \text{ and } \\
      B^* Z(z)^* h(z) + D^* &= \overline{\varphi(z)}.
    \end{align*}
    Solving the first equation for $h(z)$, we obtain
    \begin{equation*}
      h(z) = (I_{\mathcal{E}} - A^* Z(z)^*)^{-1} C^*;
    \end{equation*}
    observe that $\|A^*\| \le 1$ since $U$ is unitary and hence $\|A^* Z(z)^*\| < 1$ for all $z \in \mathbb{B}_d$,
    so $I - A^* Z(z)^*$ is indeed invertible. Substituting the result into the second equation, we conclude that
    \begin{equation*}
    \overline{\varphi(z)} = D^* + B^* Z(z)^* (I_{\mathcal{E}} - A^* Z(z)^*)^{-1} C^*,
    \end{equation*}
    which gives (ii) after taking adjoints.

    (ii) $\Rightarrow$ (i) Suppose that (ii) holds and define
    \begin{equation*}
      h(z) = (I_{\mathcal{E}} - A^* Z(z)^*)^{-1} C^* \in \mathcal{E}.
    \end{equation*}
    Reversing the steps in the proof of (i) $\Rightarrow$ (ii), it follows that
    \begin{equation*}
      \frac{1 - \varphi(z) \overline{\varphi(w)}}{1 - \langle z,w \rangle } = \langle h(w), h(z) \rangle_{\mathcal{E}} \quad \text{ for all } z,w \in Y,
    \end{equation*}
    hence $\varphi$ is a multiplier of $\Mult(H^2_d \big|_Y)$ of norm at most one by Proposition \ref{prop:mult_crit}.
  \end{proof}

  Using Theorem \ref{thm:BTV}, it is a simple matter to show that the Drury--Arveson space
  satisfies the Pick property.
  In fact, Theorem \ref{thm:BTV} holds in the vector-valued setting,
  which gives the complete Pick property of $H^2_d$.
  For simplicity, we restrict our attention to the scalar case.

  \begin{proof}[Proof of Pick property of $H^2_d$]
    Let $Y \subset \mathbb{B}_d$ and suppose that $\varphi: Y \to \mathbb{C}$ satisfies
    $\|\varphi\|_{\Mult(H^2_d|_Y)} \le 1$.
    Applying (i) $\Rightarrow$ (ii) of Theorem \ref{thm:BTV}, we obtain a realization
  \begin{equation*}
    \varphi(z) = D + C(I_{\mathcal{E}} - Z(z) A)^{-1} Z(z) B \quad (z \in Y).
  \end{equation*}
  The same formula defines an extension $\Phi$ of $\varphi$ to $\mathbb{B}_d$;
  note that $\|Z(z) A\| < 1$ for $z \in \mathbb{B}_d$, so the inverse exists.
  By (ii) $\Rightarrow$ (i) of Theorem \ref{thm:BTV},
  $\|\Phi\|_{\Mult(H^2_d)} \le 1$. Thus, $H^2_d$ is a Pick space; see Proposition \ref{prop:Pick_reform}.
  \end{proof}

\begin{remark}
  \label{rem:arbitrary_extension}
  The proof above shows that we may extend multipliers defined on arbitrary subsets of $\mathbb{B}_d$,
  not only finite ones. This can also be directly seen from the definition of the (complete) Pick property
  by means of a weak-$*$ compactness argument.
\end{remark}

\subsection{Universality of the Drury--Arveson space}

We saw in the last subsection that the Drury--Arveson space is a complete Pick space.
Remarkably, it is much more than an example. If one is willing to include the case $d=\infty$,
then the Drury--Arveson space is a universal complete Pick space.

To explain this, recall from Corollary \ref{cor:mc_q_normalized} that a normalized kernel
$k$ is a complete Pick kernel if and only if $1-\frac{1}{k}$ is positive.
(We again only consider normalized kernels for the sake of simplicity.)
Applying the Kolmogorov factorization to the function $1-\frac{1}{k}$, we obtain
a Hilbert space $\mathcal{E}$ and $b: X \to \mathcal{E}$ satisfying $\|b(x)\| < 1$ for all $x \in X$ and
\begin{equation*}
  1 - \frac{1}{k(x,y)} = \langle b(x), b(y) \rangle.
\end{equation*}
(This is the same procedure as in the proof of Theorem \ref{thm:BTV}, with the exception that the roles of $x$ and $y$ on the right-hand side are reversed.
To achieve this, one may pass from the Hilbert space $\mathcal{E}$ to the conjugate Hilbert space $\overline{\mathcal{E}}$; see also the first proof of Theorem 2.53 in \cite{AM02}.)
Conversely, if $1-\frac{1}{k}$ has the form as above, then it is positive.
Thus, we obtain the following theorem, due to Agler and \mcc\ \cite{AM00}.

\begin{theorem}
  \label{thm:AM}
  An RKHS $\mathcal{H}$ with normalized kernel $k$ is a complete Pick space if and only if there
  exists a Hilbert space $\mathcal{E}$ and $b: X \to \mathcal{E}$ such that $\|b(x)\| < 1$ for all $x \in X$ and
  \begin{equation*}
    k(x,y) = \frac{1}{1 - \langle b(x), b(y) \rangle }.
  \end{equation*}
\end{theorem}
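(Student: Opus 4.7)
The plan is to deduce Theorem \ref{thm:AM} directly from the McCullough--Quiggin criterion as formulated in Corollary \ref{cor:mc_q_normalized}, together with the Kolmogorov factorization of positive kernels. The theorem has two directions, and both are short once the right positivity statement is in hand; the paragraph immediately preceding the theorem statement already lays out the main idea.

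For the forward direction, suppose $\mathcal{H}$ is a complete Pick space with normalized kernel $k$. Corollary \ref{cor:mc_q_normalized} tells me that $1 - 1/k$ is positive on $X \times X$. I then apply the Kolmogorov factorization (the same tool used in the proof of Theorem \ref{thm:BTV}, and cited there as \cite[Theorem 2.14]{PR16}) to obtain a Hilbert space $\mathcal{E}$ and a map $b: X \to \mathcal{E}$ with
\begin{equation*}
  1 - \frac{1}{k(x,y)} = \langle b(x), b(y) \rangle_{\mathcal{E}} \quad (x,y \in X).
\end{equation*}
Rearranging gives exactly the desired formula for $k$. It remains to verify the strict inequality $\|b(x)\| < 1$. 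Here I would use that $k$ is normalized at some $x_0$, so $k(x, x_0) = 1$ for all $x$, and the Cauchy--Schwarz inequality in $\mathcal{H}$ gives
\begin{equation*}
  1 = |k(x,x_0)|^2 = |\langle k_x, k_{x_0} \rangle|^2 \le k(x,x) k(x_0,x_0) = k(x,x).
\end{equation*}
Consequently $k(x,x) \ge 1$, and hence $\|b(x)\|^2 = 1 - 1/k(x,x) < 1$.

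For the converse, assume $k$ has the stated form. Then
\begin{equation*}
  1 - \frac{1}{k(x,y)} = \langle b(x), b(y) \rangle_{\mathcal{E}}
\end{equation*}
is positive on $X \times X$, as it is a Gram-type function. Applying Corollary \ref{cor:mc_q_normalized} in the reverse direction then shows that $k$ is a complete Pick kernel. As a sanity check on the setup, the geometric series expansion
\begin{equation*}
  k(x,y) = \sum_{n=0}^\infty \langle b(x), b(y) \rangle^n
\end{equation*}
converges thanks to $\|b(x)\|, \|b(y)\| < 1$, and exhibits $k$ itself as a positive kernel (a sum of Schur products of positive kernels), confirming that $k$ is indeed the reproducing kernel of a genuine RKHS.

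There is no real obstacle here; the main work has already been done in Corollary \ref{cor:mc_q_normalized}, and the theorem is essentially a rephrasing of that criterion using the Kolmogorov factorization applied to $1 - 1/k$ rather than to $k$ itself. The only minor subtlety is the contractivity condition $\|b(x)\| < 1$, which I handle via normalization as above.
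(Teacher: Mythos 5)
Your proposal is correct and takes essentially the same route the paper does: invoke Corollary \ref{cor:mc_q_normalized} to reduce the complete Pick property to positivity of $1-1/k$, and then apply the Kolmogorov factorization in one direction and observe that a Gram kernel is positive in the other. The verification that $\|b(x)\|<1$ via Cauchy--Schwarz and $k(x,x)<\infty$ is a detail the paper leaves implicit but is a worthwhile addition; the only point you gloss over is the bookkeeping about the order of arguments in the Kolmogorov factorization (the paper notes one may need to pass to the conjugate Hilbert space), but that is a harmless convention issue, not a substantive gap.
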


Before continuing, we make two more simplifying assumptions:
\begin{enumerate}[label=\normalfont{(\arabic*)}]
  \item $\mathcal{H}$ separates the points of $X$, and
  \item $\mathcal{H}$ is separable.
\end{enumerate}

The first condition implies that $k(\cdot,y_1) \neq k(\cdot,y_2)$ whenever $y_1,y_2 \in X$
with $y_1 \neq y_2$, hence $b$ is injective.
The second condition implies that the space $\mathcal{E}$ can be taken to be separable
as well, so we may assume that $\mathcal{E} = \mathbb{C}^d$ or $\mathcal{E} = \ell^2$.
(This follows from the fact that an RKHS is separable if and only if it admits a countable
set of uniqueness, and the RKHS with kernel $\langle b(x), b(y) \rangle$ is contained in $\mathcal{H}$.)

Notice that if $\mathcal{E} = \mathbb{C}^d$, then Theorem \ref{thm:AM} expresses the kernel
$k$ as a composition of the Drury--Arveson kernel $\frac{1}{1 - \langle z,w \rangle }$ and the embedding $b$.
If $d=\infty$, we need to extend our definition of the Drury--Arveson space.

\begin{definition}
  \label{def:DA_infinite}
  Let $\mathbb{B}_\infty = \{z \in \ell^2: \|z\|_2 < \infty\}$. The Drury--Arveson space $H^2_\infty$
  is the RKHS on $\mathbb{B}_\infty$ with reproducing kernel 
  \begin{equation*}
    K(z,w) = \frac{1}{1 - \langle z,w \rangle_{\ell^2}}.
  \end{equation*}
\end{definition}

Corollary \ref{cor:mc_q_normalized} shows that $H^2_\infty$ is also a complete Pick space.
On the level of function spaces, Theorem \ref{thm:AM}
means that there exist $d \in \mathbb{N} \cup \{\infty\}$ and an isometry
\begin{equation*}
  V: \mathcal{H} \to H^2_d, \quad k(\cdot,y) \mapsto \frac{1}{1 - \langle \cdot, b(y) \rangle }.
\end{equation*}
Thus, we obtain an embedding of $\mathcal{H}$ into $H^2_d$ that sends kernel functions to kernel functions.
Taking the adjoint of $V$, we obtain a composition operator that is co-isometric
(which is the same as a quotient map in the Hilbert space setting).
Thanks to the complete Pick property, this carries over to multiplier algebras,
which leads to the following result, again due to Agler and \mcc\ \cite{AM00}.
It shows the special role that the Drury--Arveson space plays in the theory of complete Pick spaces.

\begin{theorem}[Universality of the Drury--Arveson space]
  \label{thm:AM_2}
  If $\mathcal{H}$ is a normalized complete Pick space on $X$, then there exist $1 \le d \le \infty$
  and a map $b: X \to \mathbb{B}_d$ such that
  \begin{equation*}
    H^2_d \to \mathcal{H}, \quad f \mapsto f \circ b,
  \end{equation*}
  is a co-isometry, and
  \begin{equation*}
    \Mult(H^2_d) \to \Mult(\mathcal{H}), \quad \varphi \mapsto \varphi \circ b,
  \end{equation*}
    is a complete quotient map.
    In fact, if $\varphi \in \Mult(\mathcal{H})$, then
  there exists $\Phi \in \Mult(H^2_d)$ with $\varphi = \Phi \circ b$
  and $\|\Phi\|_{\Mult(H^2_d)} = \|\varphi\|_{\Mult(\mathcal{H})}$.
\end{theorem}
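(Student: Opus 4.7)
The plan is to bootstrap from Theorem \ref{thm:AM}, which (combined with the simplifying assumptions that $\mathcal{H}$ is separable and separates points) produces the data $d \in \mathbb{N} \cup \{\infty\}$ and an injection $b: X \to \mathbb{B}_d$ satisfying $k(x,y) = K(b(x), b(y))$, where $K$ denotes the Drury--Arveson kernel. Because kernel functions have matching Gramians under this correspondence, there is a well-defined linear isometry
\begin{equation*}
V: \mathcal{H} \to H^2_d, \quad V k_y = K_{b(y)},
\end{equation*}
and taking adjoints gives a co-isometry $W := V^* : H^2_d \to \mathcal{H}$. A one-line computation $(Wf)(y) = \langle Wf, k_y \rangle = \langle f, V k_y \rangle = \langle f, K_{b(y)} \rangle = f(b(y))$ identifies $W$ with the composition operator $f \mapsto f \circ b$. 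This yields the first bullet.

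Next I would show that $\Phi \mapsto \Phi \circ b$ is a well-defined and completely contractive map $\Mult(H^2_d) \to \Mult(\mathcal{H})$. Using the standard eigenvector identity $M_\Phi^* K_\lambda = \overline{\Phi(\lambda)} K_\lambda$ from Remark \ref{rem:multipliers}, together with $V^* V = I$, I compute
\begin{equation*}
(W M_\Phi W^*)^* k_y = W M_\Phi^* K_{b(y)} = \overline{\Phi(b(y))}\, W K_{b(y)} = \overline{\Phi(b(y))}\, k_y.
\end{equation*}
By the reflexivity criterion established in the proof of Proposition \ref{prop:multiplier_basic}, the operator $W M_\Phi W^*$ is thus a multiplication operator whose symbol is $\Phi \circ b$. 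Contractivity is immediate since $\|W M_\Phi W^*\| \le \|M_\Phi\|$, and applying the same argument entrywise to matrices over $\Mult(H^2_d)$ gives complete contractivity.

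The main obstacle, and the place where the complete Pick hypothesis on $\mathcal{H}$ is used in a non-trivial way, is the final statement on norm-preserving lifts. Given $\varphi \in \Mult(\mathcal{H})$ with $\|\varphi\|_{\Mult(\mathcal{H})} \le 1$, injectivity of $b$ lets me define $\psi : b(X) \to \mathbb{C}$ by $\psi(b(x)) = \varphi(x)$. The key observation is that the Pick matrix of $\varphi$ on $\{x_1, \ldots, x_n\}$ with respect to $k$ \emph{equals} the Pick matrix of $\psi$ on $\{b(x_1), \ldots, b(x_n)\}$ with respect to $K$, because $k(x_i, x_j) = K(b(x_i), b(x_j))$. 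Proposition \ref{prop:mult_crit} therefore gives $\psi \in \Mult(H^2_d \big|_{b(X)})$ with norm at most one. Since $H^2_d$ is a complete Pick space by Corollary \ref{cor:DA_complete_Pick}, Remark \ref{rem:arbitrary_extension} (that is, Proposition \ref{prop:Pick_reform} combined with a weak-$*$ compactness argument to pass from finite subsets to the arbitrary subset $b(X)$) produces $\Phi \in \Mult(H^2_d)$ extending $\psi$ with $\|\Phi\|_{\Mult(H^2_d)} \le 1$, and then $\Phi \circ b = \varphi$ by construction. The matrix-valued version is verbatim the same, using the \emph{complete} Pick property of $H^2_d$ to lift $M_r(\Mult(\mathcal{H}))$-norm-one elements, which upgrades the contractive map from the second paragraph to a complete quotient map.
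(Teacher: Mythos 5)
Your proof is correct and follows essentially the same route as the paper: you supply the details for the co-isometry $f \mapsto f \circ b$ (which the paper attributes to the discussion preceding the theorem), and the norm-preserving lift of multipliers via the complete Pick property of $H^2_d$ together with Proposition~\ref{prop:mult_crit} and Remark~\ref{rem:arbitrary_extension} is exactly the paper's argument. The computation showing $W M_\Phi W^* = M_{\Phi \circ b}$ via the eigenvector identity is a clean way to make contractivity of $\Phi \mapsto \Phi \circ b$ explicit, though this direction is the easy one and the paper leaves it implicit.
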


\begin{proof}
  The first statement was already explained. The second statement follows from the complete Pick property
  of $H^2_d$. Indeed, let $Y = b(X)$. If $\varphi \in \Mult(\mathcal{H})$, define
  $\psi: Y \to \mathbb{C}$ by $\psi(b(x)) = \varphi(x)$.
  The relation $k(x,y) = \frac{1}{1 - \langle b(x), b(y) \rangle }$ implies
  that $\psi \in \Mult(H^2_d \big|_Y)$ with $\|\psi\|_{\Mult(H^2_d|_Y)} = \|\varphi\|_{\Mult(\mathcal{H})}$,
  for instance by Proposition \ref{prop:mult_crit}.
  By the complete Pick property of $H^2_d$, we may extend $\psi$ to a multiplier of $H^2_d$
  of the same norm, see also Remark \ref{rem:arbitrary_extension}.
  The same proof works for matrix valued multipliers.
\end{proof}

The Agler-\mcc\ universality theorem is sometimes summarized as saying that
``every complete Pick space is a quotient of the Drury--Arveson space''.

\begin{remark}
  For many spaces of interest, including the classical Dirichlet space
  and the weighted Dirichlet spaces $\mathcal{H}_a$ for $a \in (0,1)$ (see Corollary \ref{cor:weighted_dirichlet}),
  one has to take $d=\infty$ in Theorem \ref{thm:AM_2}.
  This can be seen by determining the rank of the kernel $1-\frac{1}{k}$; see Proposition 11.8 and Corollary 11.9 in \cite{Hartz17a} and also \cite{Rochberg16} for a geometric approach. In the case of the classical Dirichlet space,
  $d=\infty$ is necessary even if one weakens the conclusions in Theorem \ref{thm:AM_2} in certain ways;
  see \cite{Hartz22} for a precise statement.
\end{remark}

It is possible to push this line of reasoning even further, which was done by Davidson, Ramsey and Shalit
\cite{DRS15}.
First, we note that Theorem \ref{thm:AM_2} can be further reformulated to say that defining $V = b(X)$, we obtain a unitary
\begin{equation*}
  H^2_d \big|_V \to \mathcal{H}, \quad f \mapsto f \circ b.
\end{equation*}

Which subsets $V$ of $\mathbb{B}_d$ are relevant?

\begin{definition}
    If $S \subset H^2_d$, let  
    \begin{equation*}
      V(S) = \{z \in \mathbb{B}_d: f(z) = 0 \text{ for all } f \in S\}.
    \end{equation*}
    If $W \subset \mathbb{B}_d$, let 
    \begin{equation*}
      I(W) = \{ f \in H^2_d: f \big|_W = 0 \}.
    \end{equation*}
    We say that $V \subset \mathbb{B}_d$ is a \emph{variety} if $V = V(S)$ for some $S \subset H^2_d$.
\end{definition}

Note that if $d=1$, then the varieties are precisely the Blaschke sequences, along with the entire unit disc.
If $1 \le d < \infty$, then every algebraic variety is a variety in the sense above, as $H^2_d$ contains
all polynomials. Conversely, every variety in the sense above is an analytic variety in $\mathbb{B}_d$, as $H^2_d$ consists
of holomorphic functions.

If $W \subset \mathbb{B}_d$ is arbitrary, let $V = V(I(W)) \supset W$.
Clearly, $V$ is the smallest variety containing $W$.
We think of $V$ as an analogue of the Zariski closure in algebraic geometry.
It is immediate from the definitions that
\begin{equation*}
  H^2_d \big|_V \to H^2_d \big|_W, \quad f \mapsto f|_W,
\end{equation*}
is unitary. In particular, every function $H^2_d \big|_W$ has a unique extension to a function in $H^2_d \big|_V$.

Thus, we obtain the following refined version of Theorem \ref{thm:AM_2}, due to Davidson, Ramsey and Shalit \cite{DRS11}.

  \begin{theorem}
    If $\mathcal{H}$ is a normalized complete Pick space on $X$, then
    there exists a variety $V \subset \mathbb{B}_d$ for some $1 \le d \le \infty$
    and a map $b: X \to V$ such that
    \begin{equation*}
      C_b: H^2_d \big|_V \to \mathcal{H}, \quad f \mapsto f \circ b,
    \end{equation*}
    is unitary.
  In this case,
  \begin{equation*}
    \Mult(H^2_d \big|_V) \to \mathcal{H}, \quad \varphi \mapsto \varphi \circ b,
  \end{equation*}
  is a completely isometric isomorphism.
  \end{theorem}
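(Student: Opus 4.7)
The plan is to refine Theorem~\ref{thm:AM_2} by replacing the ball $\mathbb{B}_d$ with the smallest variety $V \subseteq \mathbb{B}_d$ containing $b(X)$, and then to transfer the resulting unitary identification of function spaces to an identification of multiplier algebras. First, I invoke Theorem~\ref{thm:AM_2} to obtain $1 \le d \le \infty$ and a map $b: X \to \mathbb{B}_d$ such that the reproducing kernel of $\mathcal{H}$ factors as $k(x,y) = (1 - \langle b(x), b(y) \rangle)^{-1}$. Under the standing hypothesis that $\mathcal{H}$ separates points of $X$, the map $b$ is injective. Setting $W := b(X)$ and $V := V(I(W))$, the containment $W \subseteq V$ and the variety property hold by construction.

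Next, I check that the map
\[
H^2_d \big|_W \to \mathcal{H}, \quad f \mapsto f \circ b,
\]
is unitary: it sends the reproducing kernel of $H^2_d\big|_W$ at $b(y)$ to $k_y$, the factorization of $k$ shows that inner products between these kernel functions agree on both sides, and kernel functions span dense subspaces of each space. Composing with the unitary $H^2_d\big|_V \to H^2_d\big|_W$, $f \mapsto f|_W$, recorded immediately before the theorem, yields the desired unitary $C_b: H^2_d\big|_V \to \mathcal{H}$, $f \mapsto f \circ b$.

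For the multiplier statement, I observe that $C_b$ intertwines multiplication operators: for $\varphi \in \Mult(H^2_d\big|_V)$ and $f \in H^2_d\big|_V$,
\[
C_b(M_\varphi f) = (\varphi f) \circ b = (\varphi \circ b)(f \circ b) = M_{\varphi \circ b}(C_b f),
\]
whence $\varphi \circ b \in \Mult(\mathcal{H})$ with $\|\varphi \circ b\|_{\Mult(\mathcal{H})} = \|\varphi\|_{\Mult(H^2_d|_V)}$. The identical argument applied at every matrix level yields complete isometry. For surjectivity, given $\psi \in \Mult(\mathcal{H})$ I define $\tilde\psi$ on $W$ by $\tilde\psi(b(y)) := \psi(y)$; the same intertwining (now for the unitary $H^2_d\big|_W \to \mathcal{H}$) places $\tilde\psi$ in $\Mult(H^2_d\big|_W)$ with the same norm. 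By the complete Pick property of $H^2_d$ (Corollary~\ref{cor:DA_complete_Pick}) together with Remark~\ref{rem:arbitrary_extension}, $\tilde\psi$ extends isometrically to some $\tilde\Psi \in \Mult(H^2_d)$, and $\varphi := \tilde\Psi|_V \in \Mult(H^2_d\big|_V)$ evidently satisfies $\varphi \circ b = \psi$. The essential content of the result lies in Theorem~\ref{thm:AM_2}; what remains is bookkeeping with the definitions of $V$, $W$, and the restriction unitaries, so I do not anticipate any substantive obstacle.
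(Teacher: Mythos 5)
Your argument is correct and follows essentially the route the paper sketches: use Theorem~\ref{thm:AM_2} (really the kernel factorization $k(x,y) = (1 - \langle b(x), b(y)\rangle)^{-1}$ from Theorem~\ref{thm:AM}) to build the map $b$, pass to $V = V(I(b(X)))$, compose with the restriction unitary $H^2_d|_V \to H^2_d|_{b(X)}$, and transfer to multipliers by unitary conjugation. One small slip: the kernel factorization is the content of Theorem~\ref{thm:AM}, not Theorem~\ref{thm:AM_2}, though the latter's proof relies on it. Your treatment of surjectivity is actually slightly more explicit than the paper's: the relation $C_b M_\varphi C_b^* = M_{\varphi \circ b}$ quoted after the theorem statement only shows that the map $\varphi \mapsto \varphi \circ b$ is a well-defined complete isometry into $\Mult(\mathcal{H})$; to establish surjectivity, the paper implicitly appeals to the observation $\Mult(H^2_d|_V) = \Mult(H^2_d)|_V$ combined with the lifting clause of Theorem~\ref{thm:AM_2}. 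Your extension argument via Remark~\ref{rem:arbitrary_extension} correctly produces the needed preimage $\tilde\Psi|_V \in \Mult(H^2_d|_V)$ without having to verify the eigenvector condition for $M_{(\cdot)}^*$ at kernel functions $K_w|_V$ with $w \in V \setminus b(X)$, which would require an extra step if one tried to argue directly that $C_b^* M_\psi C_b$ is a multiplication operator.
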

  Note that if $\varphi \in \Mult(H^2_d \big|_V)$, then $C_b M_\varphi C_{b}^* = M_{\varphi \circ b}$,
  so the completely isometric isomorphism if given by conjugation with the unitary $C_b$.

  By the complete Pick property of $H^2_d$,
  \begin{equation*}
    \mathcal{M}_V : =\Mult(H^2_d \big|_V) = \Mult(H^2_d) \big|_V.
  \end{equation*}

  This raises the following natural question.

  \begin{question}
    What is the relationship between the operator algebra structure of $\mathcal{M}_V$
    and the geometry of $V$?
  \end{question}

  The study of this question was initiated by Davidson, Ramsey and Shalit in \cite{DRS11,DRS15}.
  At the Focus Program, Orr Shalit provided a survey of results in this area.
  For more details, the reader is referred to the survey article \cite{SS14}.
  Other references are \cite{DHS15,Hartz12,Hartz17a,HL18,KMS13}.

\section{Selected topics}

This last section contains a few selected topics regarding the Drury--Arveson space
and more generally complete Pick spaces.
The selection was made based on which other topics were covered at the Focus Program,
but it is admittedly also heavily influenced by the author's taste.
For other selections and perspectives, we once again refer the reader to \cite{Shalit13} and
\cite{FX19}.

\subsection{Maximal ideal space and corona theorem}
\label{ss:corona}

We assume that $d< \infty$.
The multiplier algebra of the Drury--Arveson space $H^2_d$ is a unital commutative Banach algebra.
Hence Gelfand theory strongly suggests that one should study its maximal ideal space
\begin{equation*}
  \mathcal{M} (\Mult(H^2_d)) = \{ \chi: \Mult(H^2_d) \to \mathbb{C}: \chi \text{ is linear, multiplicative, $\neq 0$}\},
\end{equation*}
which is a compact Hausdorff space in the weak-$*$ topology.
Naturally, we obtain a topological embedding 
\begin{equation*}
  \mathbb{B}_d \hookrightarrow \mathcal{M}(\Mult(H^2_d))
\end{equation*}
by sending $w \in \mathbb{B}_d$ to the character of point evaluation at $w$.
In the reverse direction, we obtain a continuous surjection
\begin{equation*}
  \pi: \mathcal{M}(\Mult(H^2_d)) \to \overline{\mathbb{B}_d}, \quad \chi \mapsto (\chi(z_1),\ldots,\chi(z_d)).
\end{equation*}
(The fact that $\pi$ takes values in $\overline{\mathbb{B}_d}$ follows from the fact
that the coordinate functions form a row contraction and that characters are completely contractive, see \cite[Proposition 3.8]{Paulsen02}, or from computation of the spectrum of the tuple of coordinate functions.)

The following result was shown by  Gleason, Richter and Sundberg, see \cite[Corollary 4.2]{GRS05}.
It shows that Gleason's problem can be solved in the multiplier algebra of $H^2_d$.
For background on Gleason's problem, see \cite[Section 6.6]{Rudin08}.

\begin{theorem}
  \label{thm:Gleason}
  Let $w \in \mathbb{B}_d$ and let $\varphi \in \Mult(H^2_d)$.
  Then there exist $\varphi_1,\ldots,\varphi_d \in \Mult(H^2_d)$ with
  \begin{equation*}
    \varphi(z) - \varphi(w) = \sum_{i=1}^d (z_i - w_i) \varphi_i(z) \quad \text{ for all } z \in \mathbb{B}_d.
  \end{equation*}
  Therefore, $\pi^{-1}(w)$ is the singleton consisting of the character of point evaluation at $w$.
\end{theorem}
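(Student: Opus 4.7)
The second conclusion is immediate from the first: if $\chi \in \pi^{-1}(w)$, then $\chi(z_i - w_i) = 0$ for each $i$, so applying $\chi$ to the identity $\varphi - \varphi(w) = \sum_i (z_i - w_i) \varphi_i$ and using multiplicativity yields $\chi(\varphi) = \varphi(w)$ for every $\varphi \in \Mult(H^2_d)$. Hence I focus on the Gleason decomposition.

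For the base case $w = 0$, I would use the non-commutative picture of Subsection \ref{ss:nc_mult}. Lift $\varphi$ via Theorem \ref{thm:NC_DA_mult} to $\Phi \in H^\infty_{nc}$ with $\Phi|_{\mathbb{B}_d} = \varphi$, and split every non-empty word by its first letter to write $\Phi = \Phi(0) + \sum_{i=1}^d x_i \Phi_i$, equivalently $\Phi_i = L_{x_i}^*(\Phi - \Phi(0)) \in H^2_{nc}$. Because the $L_{x_i}$ are isometries with pairwise orthogonal ranges, a short computation shows that $L_{\Phi_i} = L_{x_i}^*(L_\Phi - \Phi(0) I)$ as operators on $H^2_{nc}$; this is manifestly bounded, so $\Phi_i \in H^\infty_{nc}$. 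Restricting via Theorem \ref{thm:NC_DA_mult} then produces $\varphi_i \in \Mult(H^2_d)$ satisfying $\varphi - \varphi(0) = \sum_i z_i \varphi_i$.

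For general $w \in \mathbb{B}_d$ the nc trick breaks down, because left multiplication by $x_i - w_i$ on $H^2_{nc}$ is no longer an isometry. Instead I would invoke the Ball--Trent--Vinnikov realization (Theorem \ref{thm:BTV}). After scaling we may assume $\|\varphi\|_{\Mult(H^2_d)} \le 1$ and obtain a unitary colligation with $\varphi(z) = D + C(I - Z(z)A)^{-1} Z(z) B$. Setting $F(z) = (I - Z(z)A)^{-1}$ and combining the resolvent identity $F(z) - F(w) = F(z)(Z(z) - Z(w)) A F(w)$ with $I + AF(w) Z(w) = (I - AZ(w))^{-1}$, a short algebraic manipulation yields
\begin{equation*}
\varphi(z) - \varphi(w) = C F(z) \bigl(Z(z) - Z(w)\bigr) (I - AZ(w))^{-1} B = \sum_{i=1}^d (z_i - w_i)\,\varphi_i(z),
\end{equation*}
where $\varphi_i(z) = CF(z) u_i$ and $u_i \in \mathcal{E}$ is the $i$-th block of the fixed vector $(I - AZ(w))^{-1} B \in \mathcal{E}^d$.

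The hard step will be to verify that each $\varphi_i$ is a genuine multiplier rather than merely an element of $\mathcal{O}(\mathbb{B}_d)$. Using $F(z) = I + F(z) Z(z) A$ one can rewrite $\varphi_i(z) = C u_i + CF(z) Z(z)(A u_i)$, exhibiting $\varphi_i$ as the transfer function of the colligation $\begin{pmatrix} A \\ C \end{pmatrix} \begin{pmatrix} I & u_i \end{pmatrix}$, whose operator norm is at most $\sqrt{1 + \|u_i\|^2}$ since $\begin{pmatrix} A \\ C \end{pmatrix}$ is an isometry (by unitarity of the original colligation, i.e.\ $A^*A + C^*C = I$). This colligation is in general not contractive, and the transfer function does not scale linearly with it, so the implication (ii) $\Rightarrow$ (i) of Theorem \ref{thm:BTV} does not apply verbatim. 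Absorbing the data into a genuinely contractive colligation on a larger state space, and then invoking that implication to deduce $\varphi_i \in \Mult(H^2_d)$ with a norm bound depending only on $\|u_i\|$, is where the real work sits; this is the step where the Drury--Arveson specific structure (the complete Pick property) is decisive.
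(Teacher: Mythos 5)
Your reduction of the second conclusion to the first is exactly right, and your $w=0$ argument is a correct and complete proof of that case: splitting the non-commutative lift $\Phi$ by first letters and using that the $L_{x_i}$ are isometries with orthogonal ranges (so that $L_{\Phi_j}$ agrees with the bounded operator $L_{x_j}^*(L_\Phi-\Phi(0)I)$ on polynomials) does yield $\Phi_j\in H^\infty_{nc}$, and Theorem \ref{thm:NC_DA_mult} then gives the commutative decomposition. This is precisely the alternative route the paper itself points to in the remark following its proof (citing \cite[Proposition 3.2]{DRS15} and \cite[Corollary 2.6]{DP98a}); the paper's own argument instead applies Leech's theorem (Theorem \ref{thm:Leech}) with $\Phi=\begin{bmatrix} z_1&\cdots&z_d\end{bmatrix}$ and $\Theta=\varphi$, using that $K(z,w)\bigl(\langle z,w\rangle-\varphi(z)\overline{\varphi(w)}\bigr)$ is positive when $\varphi(0)=0$ and $\|\varphi\|_{\Mult(H^2_d)}\le 1$.

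The general-$w$ case, however, contains a genuine gap, and it sits exactly where you say it does. Your resolvent algebra is correct and does produce $\varphi(z)-\varphi(w)=\sum_{i}(z_i-w_i)\,CF(z)u_i$, but nothing in the argument shows that $\varphi_i=CF(\cdot)u_i$ is a multiplier, and for $d\ge 2$ that is the entire content of the theorem. Note that $CF(z)F(w)^*C^*=K(z,w)(1-\varphi(z)\overline{\varphi(w)})$, so with the canonical colligation from the lurking isometry argument the functions $CF(\cdot)v$, $v\in\mathcal{E}$, are exactly the elements of the RKHS with kernel $K(1-\varphi\overline{\varphi})$; this kernel is dominated by $K$, so these functions lie in $H^2_d$, but they need not be multipliers --- already for $d=1$ and $\varphi$ inner this space is the model space $H^2\ominus\varphi H^2$, which contains unbounded functions. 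So the specific vectors $u_i=\bigl((I-AZ(w))^{-1}B\bigr)_i$ would have to carry special structure, and the proposed repair --- absorbing the non-contractive colligation $\left[\begin{smallmatrix} A & Au_i\\ C & Cu_i\end{smallmatrix}\right]$ into a contractive one --- does not obviously work, precisely because, as you observe, the transfer function is not homogeneous in the colligation. The paper closes this step differently: Leech's theorem is the device that converts the relevant kernel positivity into genuine \emph{multiplier} solutions (with a contractive column), and the passage from $w=0$ to general $w$ is done by composing with automorphisms of the ball (citing \cite[Corollary 4.2]{GRS05}). That reduction is also available to you and would let your correct $w=0$ argument carry the whole proof.
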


The last sentence of Theorem \ref{thm:Gleason} is false for $d=\infty$; see \cite{DHS15a}.

Already in the case $d=1$, the fibers of $\pi$ over points in the boundary are known to be extremely complicated;
see \cite[Chapter 10]{Hoffman62} and \cite[Chapter X]{Garnett07}.
For general $d \ge 1$, the theory of interpolating sequences (specifically Theorem \ref{thm:is_subsequence} below)
implies that $\pi^{-1}(w)$ contains a copy of the Stone-\v{C}hech remainder $\beta \mathbb{N} \setminus \mathbb{N}$
for each $w \in \partial \mathbb{B}_d$; cf.\ the argument on p.\ 184 of \cite{Garnett07}. In particular, $\mathcal{M}(\Mult(H^2_d))$ is not metrizable
and has cardinality $2^{2^{\aleph_0}}$.

In summary, $\mathcal{M}(\Mult(H^2_d))$ contains a well understood part,
namely the open ball $\mathbb{B}_d$,
and a much more complicated part, namely $\pi^{-1}(\partial \mathbb{B}_d)$, i.e.\ characters in fibers over the boundary.
If $d=1$, then Carleson's corona theorem (\cite{Carleson62}, see also \cite[Chapter VIII]{Garnett07}) shows that the unit disc (i.e.\ the well understood part)
is dense in the maximal ideal space of $H^\infty = \Mult(H^2_1)$.
A deep result of Costea, Sawyer and Wick \cite{CSW11} shows that this remains true for higher $d$.

\begin{theorem}[Corona theorem for $H^2_d$]
  $\mathbb{B}_d$ is dense in $\mathcal{M}(\Mult(H^2_d))$.
\end{theorem}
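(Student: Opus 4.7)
The plan is to first reduce the density statement to a concrete \emph{analytic corona theorem} and then attack the latter by a $\bar\partial$/Koszul complex construction, with the work concentrated in showing that the resulting solutions actually lie in $\Mult(H^2_d)$.

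For the reduction, by standard Gelfand theory the closure of $\mathbb{B}_d$ in $\mathcal{M}(\Mult(H^2_d))$ is the hull of the ideal
\begin{equation*}
  I_0 = \{\varphi \in \Mult(H^2_d) : \varphi \equiv 0 \text{ on } \mathbb{B}_d\} = \{0\},
\end{equation*}
so $\mathbb{B}_d$ fails to be dense precisely when there exists a character $\chi$ and a finite tuple $\varphi_1,\ldots,\varphi_n \in \Mult(H^2_d)$ with $\chi(\varphi_i) = 0$ for all $i$ yet $\inf_{w \in \mathbb{B}_d} \sum_{i=1}^n |\varphi_i(w)|^2 \ge \delta^2 > 0$. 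The strategy is therefore to prove: whenever $\varphi_1,\ldots,\varphi_n \in \Mult(H^2_d)$ satisfy $\sum|\varphi_i|^2 \ge \delta^2$ on $\mathbb{B}_d$, there exist $\psi_1,\ldots,\psi_n \in \Mult(H^2_d)$ with $\sum_{i=1}^n \psi_i \varphi_i = 1$. Applying $\chi$ to this identity yields $1 = 0$, a contradiction.

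For the analytic corona statement, I would follow the classical Hörmander--Koszul scheme adapted to the ball. First set $g_i = \overline{\varphi_i}/\sum_j |\varphi_j|^2$, which are smooth and satisfy $\sum g_i \varphi_i = 1$ but fail to be holomorphic. To correct them, solve the $\bar\partial$-equations
\begin{equation*}
  \bar\partial u_{ij} = g_i \bar\partial g_j - g_j \bar\partial g_i \qquad (1 \le i,j \le n),
\end{equation*}
and define $\psi_i = g_i + \sum_{j} (u_{ji} - u_{ij}) \varphi_j$ (with appropriate normalization). A straightforward calculation shows these $\psi_i$ are holomorphic and still satisfy $\sum \psi_i \varphi_i = 1$. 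The whole question is whether the $u_{ij}$ can be chosen so that $\psi_i \in \Mult(H^2_d)$.

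The main obstacle, and the heart of the Costea--Sawyer--Wick argument, is controlling the multiplier norm of $\psi_i$. Using Theorem \ref{thm:ortega_fabrega}, this amounts to verifying that $|R^m \psi_i|^2 (1-|z|^2)^{2m-d}\,dV$ is a Carleson measure for $H^2_d$ and that $\psi_i \in H^\infty(\mathbb{B}_d)$. The $H^\infty$-bound is standard once one uses the correct weighted $\bar\partial$-theory on the ball. The Carleson-measure bound is what is genuinely hard: one needs $\bar\partial$-solutions $u_{ij}$ whose higher radial derivatives satisfy delicate Carleson-type estimates, and these must be matched against the geometric characterization of Carleson measures for $H^2_d$ due to Arcozzi--Rochberg--Sawyer. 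The original proof proceeds by passing to a dyadic/tree model on the ball, reducing the estimate to a $T(1)$-type testing condition for a positive dyadic operator, and verifying this testing condition by an intricate induction exploiting the structure of Bergman trees. It is precisely here that the function-theoretic description of $H^2_d$ as a Besov--Sobolev space (Theorem \ref{thm:DA_FT}) and its heuristic proximity to the Dirichlet space, rather than to the Hardy space, become decisive.
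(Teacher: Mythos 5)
Your Gelfand-theory reduction to the Bezout identity $\sum\psi_i\varphi_i=1$ in $\Mult(H^2_d)$ is exactly Proposition \ref{prop:corona_gelfand}, and the Koszul-complex setup is the right skeleton. But you then propose to estimate the \emph{multiplier} norms of the corona solutions $\psi_i$ directly, by verifying the Ortega--F\`abrega criterion for $\psi_i$. This is not how the Costea--Sawyer--Wick argument runs, and it is a real gap: there is no known way to carry out the $\bar\partial$-scheme so that it produces solutions whose radial derivatives satisfy the Carleson measure condition characterizing $\Mult(H^2_d)$ directly.

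The step you are missing is the \emph{Toeplitz corona theorem} (Theorem \ref{thm:Toeplitz_corona}), which is where the complete Pick property of $H^2_d$ enters and which the paper singles out as the crucial first move. It converts the multiplier Bezout problem into a purely Hilbert-space statement: one only needs to show that the pointwise lower bound
\begin{equation*}
  \langle M_\Phi M_\Phi^* K_z, K_z\rangle \ge \varepsilon^2 \|K_z\|^2 \quad (z\in\mathbb{B}_d)
\end{equation*}
implies the operator lower bound $M_\Phi M_\Phi^*\ge\delta^2 I$, equivalently that the row $M_\Phi$ is surjective on $H^2_d$ with control. This is the so-called baby corona problem: given $h\in H^2_d$, solve $\sum\varphi_i f_i=h$ with $f_i\in H^2_d$ and $\sum\|f_i\|^2\lesssim\|h\|^2$. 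It is here, and only here, that the $\bar\partial$/Koszul construction and the Besov--Sobolev function-theory of $H^2_d$ are deployed: one needs $\bar\partial$-solutions with $H^2_d$-norm bounds (a testing/Carleson estimate in the Hilbert space), not multiplier-norm bounds. The Leech factorization (Theorem \ref{thm:Leech}), a complete-Pick phenomenon, then upgrades the Hilbert-space solution to genuine multipliers $\psi_i$. Skipping this reduction leaves you trying to prove the strictly harder multiplier estimate head-on, which is precisely what the complete Pick machinery is designed to avoid and which, for $d\ge 2$ where the multiplier norm and the sup norm are inequivalent, is not a feasible route.
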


In their proof, Costea, Sawyer and Wick use the complete Pick property of $H^2_d$
to reduce the original problem involving $\Mult(H^2_d)$ to a problem about $H^2_d$.
The point is that problems in $H^2_d$ are likely more tractable because one can exploit the
Hilbert space structure. Costea, Sawyer and Wick then manage to solve the (still extremely difficult)
Hilbert space problem using the function theory description of $H^2_d$, thus proving their corona theorem.
This result and its proof showcase again how different perspectives on the Drury--Arveson space can be helpful.

The translation of the corona problem into a Hilbert space problem, as well
as the proof of Theorem \ref{thm:Gleason}, rely on a factorization result for multipliers complete Pick spaces,
which has proved to be extremely useful for a variety of problems.
In the case of $H^2$, it was shown by Leech \cite{Leech14} (see also \cite{KR14} for a historical discussion).
References for the Drury--Arveson space and general complete Pick spaces
are \cite[Theorem 1.6]{EP02}, \cite[Theorem 8.57]{AM02} and \cite{BTV01}.
We write $M_{n,m}(\Mult(\mathcal{H}))$ for the space of all $n \times m$ matrices
with entries in $\Mult(\mathcal{H})$, which we regard as operators from $\mathcal{H}^m$ into $\mathcal{H}^n$.
Extending an earlier definition, we say that a function $L: X \times X \to M_{r}(\mathbb{C})$ is positive
if the block matrix $[L(x_i,x_j)]_{i,j=1}^n$ is positive semi-definite for any $x_1,\ldots,x_n \in X$.

\begin{theorem}
  \label{thm:Leech}
  Let $\mathcal{H}$ be a normalized complete Pick space with kernel $k$ and let
  $\Phi \in M_{r,n}(\Mult(\mathcal{H}))$ and $\Theta \in M_{r,m}(\Mult(\mathcal{H}))$ be given.
  The following assertions are equivalent:
  \begin{enumerate}[label=\normalfont{(\roman*)}]
    \item there exists $\Psi \in M_{n,m}(\Mult(\mathcal{H}))$ of norm at most one such that
     $\Phi \Psi = \Theta$;
   \item the function $(z,w) \mapsto k(z,w) (\Phi(z) \Phi(w)^* - \Theta(z) \Theta(w)^*)$ is positive;
   \item the operator inequality $M_\Phi M_\Phi^* \ge M_\Theta M_\Theta^*$ holds.
  \end{enumerate}
\end{theorem}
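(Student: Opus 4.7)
The implication (i) $\Rightarrow$ (iii) is immediate: if $\Phi \Psi = \Theta$ with $\|M_\Psi\| \le 1$, then $M_\Theta = M_\Phi M_\Psi$, so $M_\Theta M_\Theta^* = M_\Phi M_\Psi M_\Psi^* M_\Phi^* \le M_\Phi M_\Phi^*$. For the equivalence (ii) $\Leftrightarrow$ (iii), I would use the matrix-valued analogue of the identity from Remark \ref{rem:multipliers}, namely that for $\Phi \in M_{r,n}(\Mult(\mathcal{H}))$, $e \in \mathbb{C}^r$, and $w \in X$, one has $M_\Phi^*(k_w \otimes e) = k_w \otimes \Phi(w)^* e$ in $\mathcal{H}^n$. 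A direct computation then yields
\[
\langle (M_\Phi M_\Phi^* - M_\Theta M_\Theta^*)(k_w \otimes e),\, k_z \otimes f\rangle_{\mathcal{H}^r}
= k(z,w)\, \langle (\Phi(z)\Phi(w)^* - \Theta(z)\Theta(w)^*) e,\, f\rangle_{\mathbb{C}^r}.
\]
Since the linear span of $\{k_w \otimes e : w \in X,\ e \in \mathbb{C}^r\}$ is dense in $\mathcal{H}^r$, the operator inequality (iii) is equivalent to the positivity of all the Gram matrices appearing on the right, which is (ii).

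The substantive step is (ii) $\Rightarrow$ (i), for which I would use a matrix-valued lurking isometry argument patterned on the proof of Theorem \ref{thm:BTV}. By Theorem \ref{thm:AM}, since $\mathcal{H}$ is a normalized complete Pick space, there exists $b \colon X \to \mathbb{B}_d$ (with $1 \le d \le \infty$) such that $k(z,w) = (1 - \langle b(z), b(w)\rangle)^{-1}$. Condition (ii) rearranges to the statement that
\[
G(z,w) := \frac{\Phi(z)\Phi(w)^* - \Theta(z)\Theta(w)^*}{1 - \langle b(z), b(w)\rangle}
\]
is a positive $M_r(\mathbb{C})$-valued kernel on $X$. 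A Kolmogorov factorization produces a Hilbert space $\mathcal{E}$ and a map $H \colon X \to B(\mathbb{C}^r, \mathcal{E})$ with $G(z,w) = H(z)^* H(w)$, and substituting back and collecting positive terms gives the Gram identity
\[
\langle \Phi(w)^* e,\, \Phi(z)^* f\rangle_{\mathbb{C}^n}
+ \langle b(z), b(w)\rangle \langle H(w) e,\, H(z) f\rangle_{\mathcal{E}}
=
\langle \Theta(w)^* e,\, \Theta(z)^* f\rangle_{\mathbb{C}^m}
+ \langle H(w) e,\, H(z) f\rangle_{\mathcal{E}}
\]
for all $z,w \in X$ and $e,f \in \mathbb{C}^r$.

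This identity says exactly that the assignment
\[
\Phi(w)^* e \oplus (\overline{b(w)} \otimes H(w) e) \;\longmapsto\; \Theta(w)^* e \oplus H(w) e
\]
extends to a well defined linear isometry on the closed linear span of its domain inside $\mathbb{C}^n \oplus (\mathbb{C}^d \otimes \mathcal{E})$, with image in $\mathbb{C}^m \oplus \mathcal{E}$. After enlarging $\mathcal{E}$ if necessary, extend this isometry to a unitary $V = \bigl(\begin{smallmatrix} A & B \\ C & D \end{smallmatrix}\bigr)$ from $\mathbb{C}^n \oplus (\mathbb{C}^d \otimes \mathcal{E})$ onto $\mathbb{C}^m \oplus \mathcal{E}$. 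Reading off the two coordinates of the defining relation and solving for $H(z)$ gives a transfer function formula of the form
\[
\Psi(z) = A + B\, (I_{\mathcal{E}} \otimes b(z))\, (I - D (I_{\mathcal{E}} \otimes b(z)))^{-1} C,
\]
defined on all of $X$ (with the inverse existing since $\|b(z)\| < 1$). By construction $\Phi(z) \Psi(z) = \Theta(z)$ pointwise, and the same rearrangement of the unitary relation that produced $V$, read in reverse, shows that $k(z,w)(I - \Psi(z)\Psi(w)^*)$ is positive, i.e.\ $\|M_\Psi\| \le 1$ by the matrix-valued form of Proposition \ref{prop:mult_crit}.

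The main obstacle is the lurking-isometry bookkeeping: one must keep careful track of the tensor factors $\mathbb{C}^d$, $\mathcal{E}$ and the target/source spaces $\mathbb{C}^n, \mathbb{C}^m, \mathbb{C}^r$, and verify that the transfer function one writes down really is a multiplier of $\mathcal{H}$ (not just of $H^2_d$). This last point is precisely where the complete Pick property enters: because the kernel of $\mathcal{H}$ factors through $b$, the positivity of $k(z,w)(I - \Psi(z)\Psi(w)^*)$ can be read off directly from the unitarity of $V$, so $\Psi$ lies in $M_{n,m}(\Mult(\mathcal{H}))$ with $\|M_\Psi\| \le 1$ without any separate extension argument.
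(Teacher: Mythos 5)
Your proposal is correct and follows essentially the same route as the paper: (i) $\Rightarrow$ (iii) by the trivial factorization, (ii) $\Leftrightarrow$ (iii) by testing the operator inequality on linear combinations of kernel vectors, and the substantive implication (ii) $\Rightarrow$ (i) via a lurking isometry/transfer-function argument modeled on Theorem \ref{thm:BTV}, with the complete Pick property entering through the factorization $k(z,w) = (1-\langle b(z),b(w)\rangle)^{-1}$ of Theorem \ref{thm:AM}. The paper only sketches this and defers the lurking-isometry details to \cite[Theorem 8.57]{AM02}; your write-up supplies those details (modulo the adjoint/block bookkeeping you flag), and they are sound.
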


\begin{proof}[Sketch of proof]
  The equivalence of (ii) and (iii) follows by testing the inequality on finite linear combinations
  of vectors in $\mathcal{H}^r$ whose entries are reproducing kernels.

  If (i) holds, then
  \begin{equation*}
    M_{\Theta} M_{\Theta}^* = M_{\Phi} M_{\Psi} M_{\Psi}^* M_{\Phi}^* \le M_{\Phi} M_{\Phi}^*,
  \end{equation*}
  so (iii) holds.
  
  The main work occurs in the proof of (ii) $\Rightarrow$ (i); it is here where the complete Pick property is used.
  This implication can be shown by using a lurking a lurking isometry argument,
  similar to the proof of Theorem \ref{thm:BTV};
  see \cite[Theorem 8.57]{AM02} for the details.
\end{proof}

\begin{remark}
  \begin{enumerate}[label=\normalfont{(\alph*)},wide]
    \item Taking $n=r=m=1$ and $\Phi=1$ in Theorem \ref{thm:Leech}, we essentially recover the basic multiplier criterion (Proposition \ref{prop:mult_crit}).
    \item In the setting of (iii) of Theorem \ref{thm:Leech}, the Douglas factorization lemma \cite{Douglas66}
      always yields a contraction $T: \mathcal{H}^m \to \mathcal{H}^n$ so that $M_{\Phi} T = M_{\Theta}$;
      this has nothing to do with multiplication operators or the complete Pick property.
      The crucial point in Theorem \ref{thm:Leech} is that $T$ can be chosen to be a multiplication operator again.
      Thus, Theorem \ref{thm:Leech} can be regarded as a version of the Douglas factorization lemma
      for the multiplier algebra of a complete Pick space.
      Under suitable assumptions, the complete Pick property is necessary for the validity
      of such a Douglas factorization lemma; see \cite{MT12} for a precise statement.
    \item There is a version of Theorem \ref{thm:Leech} for infinite matrices; see \cite[Theorem 8.57]{AM02} for the precise statement.
  \end{enumerate}
\end{remark}

Let us see how Leech's Theorem can be used to show Theorem \ref{thm:Gleason}.

\begin{proof}[Sketch of proof of Theorem \ref{thm:Gleason}]
  Let $w = 0$. We have to show that if $\varphi \in \Mult(H^2_d)$ with $\varphi(0) = 0$,
  then there exist $\varphi_1,\ldots,\varphi_d \in \Mult(H^2_d)$ with $\varphi = \sum_{i=1}^d z_i \varphi_i$.
  We may assume that $\|\varphi\|_{\Mult(H^2_d)} \le 1$.
  Since $\varphi(0) = 0$, the function $\varphi$ is a contractive multiplier
  from $H^2_d$ into $\{f \in H^2_d: f(0) = 0 \}$.
  The reproducing kernel for this last space is $K-1$, where $K$ denotes the reproducing kernel of $H^2_d$.
  A small modification of the basic multiplier criterion (Proposition \ref{prop:mult_crit},
  see for instance \cite[Theorem 5.21]{PR16} for the general statement) then shows that
  \begin{equation*}
    (K(z,w) - 1) - K(z,w) \varphi(z) \overline{\varphi(w)} =
    \frac{1}{1 - \langle z,w \rangle } ( \langle z,w \rangle  - \varphi(z) \varphi(w))
  \end{equation*}
  is positive as a function of $(z,w)$.
  In this setting, we may apply (ii) $\Rightarrow$ (i) of Theorem \ref{thm:Gleason} with
  $n=d$, $r=m=1$,
  $\Phi(z) =
  \begin{bmatrix}
    z_1 & \cdots & z_d
  \end{bmatrix}$ and $\Theta = \varphi$ to find a multiplier $\Psi \in M_{d,1}(\Mult(H^2_d))$
  with $\varphi = \Phi \Psi$. Writing $\Psi$ in terms of its components, the result in the case $w=0$ follows.

  The general case of $w \in \mathbb{B}_d$ can be deduced from the case $w=0$ by using biholomorphic automorphisms
  of the ball; see \cite[Corollary 4.2]{GRS05}. Alternatively, the proof above can be extended
  to arbitrary $w \in \mathbb{B}_d$ with the help of some multivariable spectral theory;
  see \cite[Proposition 8.5]{Hartz17a}.

  The additional statement follows from the first part. Indeed, let $\chi \in \mathcal{M}(\Mult(H^2_d))$
  with $w = \pi(\chi) \in \mathbb{B}_d$. Given $\varphi \in \Mult(H^2_d)$, write
  \begin{equation*}
    \varphi(z) - \varphi(w) = \sum_{i=1}^d (z_i - w_i) \varphi_i(z)
  \end{equation*}
  as in the first part and apply $\chi$ to both sides. Since $\chi(z_i) = w_i$ for $1 \le i \le d$
  and $\chi(1) = 1$, it follows that $\chi(\varphi) = \varphi(w)$.
  Thus, $\chi$ is the character of evaluation at $w$.
\end{proof}

\begin{remark}
  Theorem \ref{thm:Gleason} can also be proved by using the non-commutative approach to multipliers explained
  in Subsection \ref{ss:nc_mult}; see \cite[Proposition 3.2]{DRS15} and \cite[Corollary 2.6]{DP98a}.
\end{remark}

Next, we discuss how Leech's Theorem can be used in the translation of the corona problem
into a Hilbert space problem.
First, it is a routine exercise in Gelfand theory to translate the corona problem into a more concrete
question about multipliers that are jointly bounded below.

\begin{proposition}
  \label{prop:corona_gelfand}
    The following are equivalent:
    \begin{enumerate}[label=\normalfont{(\roman*)}]
      \item $\mathbb{B}_d$ is dense in $\mathcal{M}(\Mult(H^2_d))$;
      \item whenever $\varphi_1,\ldots,\varphi_n \in \Mult(H^2_d)$ with $\sum_{i=1}^n |\varphi_i|^2 \ge \varepsilon^2 > 0$, there exist $\psi_1,\ldots,\psi_n \in \Mult(H^2_d)$ with $\sum_{i=1}^n \varphi_i \psi_i = 1$.
    \end{enumerate}
\end{proposition}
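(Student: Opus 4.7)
The plan is to prove both implications by a standard Gelfand-theoretic argument, exploiting that $\Mult(H^2_d)$ is a unital commutative Banach algebra, so its maximal ideals correspond bijectively to the characters in $\mathcal{M}(\Mult(H^2_d))$, and that the weak-$*$ topology on $\mathcal{M}(\Mult(H^2_d))$ is by definition the topology of pointwise convergence on $\Mult(H^2_d)$.

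For (i) $\Rightarrow$ (ii), suppose we are given $\varphi_1,\ldots,\varphi_n \in \Mult(H^2_d)$ with $\sum_{i=1}^n |\varphi_i(w)|^2 \ge \varepsilon^2 > 0$ for all $w \in \mathbb{B}_d$. I would show that the ideal $I = \sum_{i=1}^n \varphi_i \Mult(H^2_d)$ equals all of $\Mult(H^2_d)$, which immediately yields the desired $\psi_i$. If on the contrary $I$ were proper, it would be contained in some maximal ideal, hence in $\ker \chi$ for some $\chi \in \mathcal{M}(\Mult(H^2_d))$. By density (i), there is a net $w_\alpha \in \mathbb{B}_d$ whose point evaluation characters converge to $\chi$ in the weak-$*$ topology, so $\varphi_i(w_\alpha) \to \chi(\varphi_i) = 0$ for each $i$. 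This contradicts the uniform lower bound $\sum_i |\varphi_i(w_\alpha)|^2 \ge \varepsilon^2$.

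For (ii) $\Rightarrow$ (i), I would argue by contraposition. If $\mathbb{B}_d$ is not dense in $\mathcal{M}(\Mult(H^2_d))$, pick a character $\chi$ outside its closure. By the definition of the weak-$*$ topology, there exist $\varphi_1,\ldots,\varphi_n \in \Mult(H^2_d)$ and $\varepsilon > 0$ such that the basic open neighborhood
\begin{equation*}
  U = \{ \chi' : |\chi'(\varphi_i) - \chi(\varphi_i)| < \varepsilon \text{ for } i = 1,\ldots,n \}
\end{equation*}
is disjoint from (the image of) $\mathbb{B}_d$. Replacing $\varphi_i$ by $\tilde\varphi_i := \varphi_i - \chi(\varphi_i) \cdot 1 \in \Mult(H^2_d)$, this says precisely that for every $w \in \mathbb{B}_d$ there is some $i$ with $|\tilde\varphi_i(w)| \ge \varepsilon$, so $\sum_i |\tilde\varphi_i(w)|^2 \ge \varepsilon^2$ on $\mathbb{B}_d$. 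Applying (ii) to the $\tilde\varphi_i$ would yield multipliers $\psi_i$ with $\sum_i \tilde\varphi_i \psi_i = 1$; evaluating $\chi$ on both sides and using $\chi(\tilde\varphi_i) = 0$ gives $0 = 1$, the desired contradiction.

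Neither direction should present a genuine obstacle: both are essentially routine applications of Gelfand theory. The only subtleties to flag are that one must know that proper ideals in a unital commutative Banach algebra are contained in maximal ideals (which in turn correspond to nonzero continuous characters), and that convergence in $\mathcal{M}(\Mult(H^2_d))$ means pointwise convergence on all multipliers — so in particular a net $w_\alpha \in \mathbb{B}_d$ converges to $\chi$ iff $\varphi(w_\alpha) \to \chi(\varphi)$ for every $\varphi \in \Mult(H^2_d)$, which is exactly what powers both directions.
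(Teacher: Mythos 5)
Your proof is correct, and it is exactly the ``routine exercise in Gelfand theory'' that the paper alludes to just before the statement; the paper itself does not give a proof but instead cites Nikolski and Garnett, whose arguments are the same. Both of your implications are sound: for (i)\,$\Rightarrow$\,(ii), the ideal $\sum_i \varphi_i\Mult(H^2_d)$ is either everything or sits inside $\ker\chi$ for some character $\chi$, and density of $\mathbb{B}_d$ then forces $\varphi_i(w_\alpha)\to 0$ along a net, contradicting the joint lower bound; for (ii)\,$\Rightarrow$\,(i), translating by $\chi(\varphi_i)\cdot 1$ converts a basic weak-$*$ neighborhood avoiding $\mathbb{B}_d$ into a joint lower bound for the $\tilde\varphi_i$ on $\mathbb{B}_d$, and applying (ii) and then $\chi$ yields the contradiction $0=1$. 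The only background facts you invoke (proper ideals lie in maximal ideals, maximal ideals of a unital commutative Banach algebra are kernels of characters, and the Gelfand topology is the topology of pointwise convergence on the algebra) are all standard and correctly used.
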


For a proof, see \cite[Lemma B.9.2.6]{Nikolski02a} or \cite[Theorem V.1.8]{Garnett07}.

Leech's theorem makes it possible to translate condition (ii),
which involves multipliers, into a condition involving only functions in $H^2_d$.
This result is often called the Toeplitz corona theorem.
In the case of $H^2$, it was proved by Arveson \cite{Arveson75}.
In the case of the Drury--Arveson space, the result appears in \cite[Section 5]{BTV01} and in \cite[Theorem 2.2]{EP02}.

\begin{theorem}[Toeplitz corona theorem]
  \label{thm:Toeplitz_corona}
  Let $\mathcal{H}$ be a normalized complete Pick space with kernel $k$.
  Let $\varphi_1,\ldots,\varphi_n \in \Mult(\mathcal{H})$. The following are equivalent:
  \begin{enumerate}[label=\normalfont{(\roman*)}]
    \item there exist $\psi_1,\ldots,\psi_n \in \Mult(\mathcal{H})$ with $\sum_{i=1}^n \varphi_i \psi_i = 1$;
    \item there exists $\delta > 0$ such that
      \begin{equation*}
      (z,w) \mapsto k(z,w) \Big( \sum_{i=1}^n \varphi_i(z) \overline{\varphi_i(w)} - \delta^2 \Big)
      \end{equation*}
      is positive.
    \item the row operator $
      \begin{bmatrix}
        M_{\varphi_1} & \cdots & M_{\varphi_n}
      \end{bmatrix}: \mathcal{H}^n \to \mathcal{H}$ is surjective.
  \end{enumerate}
\end{theorem}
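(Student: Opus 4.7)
The plan is to deduce all three equivalences from Leech's theorem (Theorem \ref{thm:Leech}) applied with the right choice of data. Throughout, let $\Phi = \begin{bmatrix} \varphi_1 & \cdots & \varphi_n \end{bmatrix} \in M_{1,n}(\Mult(\mathcal{H}))$, so that the row operator in (iii) is precisely $M_\Phi : \mathcal{H}^n \to \mathcal{H}$, and $M_\Phi M_\Phi^* = \sum_{i=1}^n M_{\varphi_i} M_{\varphi_i}^*$. The implication (i) $\Rightarrow$ (iii) requires no Pick-theoretic input: if $\sum \varphi_i \psi_i = 1$, then for every $f \in \mathcal{H}$ the vector $(\psi_1 f, \ldots, \psi_n f) \in \mathcal{H}^n$ is a preimage of $f$ under $M_\Phi$, so $M_\Phi$ is surjective.

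For (ii) $\Leftrightarrow$ (iii), I would apply the equivalence of conditions (ii) and (iii) of Leech's theorem with the above $\Phi$ and with $\Theta = \delta \in M_{1,1}(\Mult(\mathcal{H}))$ a positive constant. Leech's kernel condition then reads
\begin{equation*}
(z,w) \mapsto k(z,w)\Bigl(\sum_{i=1}^n \varphi_i(z)\overline{\varphi_i(w)} - \delta^2\Bigr) \geq 0,
\end{equation*}
which is precisely condition (ii) of the Toeplitz corona theorem, while the operator inequality $M_\Phi M_\Phi^* \geq \delta^2 I$ is the statement that $M_\Phi M_\Phi^*$ is bounded below. A standard Hilbert-space fact (Douglas' range inclusion theorem, or direct spectral calculation) says $M_\Phi$ is surjective if and only if $M_\Phi M_\Phi^* \geq \delta^2 I$ for some $\delta > 0$, giving the equivalence with (iii). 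Quantifying existentially over $\delta > 0$ on both sides completes this step.

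The substantive direction is (ii) (equivalently (iii)) $\Rightarrow$ (i); this is where the complete Pick hypothesis is genuinely used, via the nontrivial implication (ii) $\Rightarrow$ (i) of Leech's theorem. Given (ii), apply Leech's theorem with $\Phi$ as above, $\Theta = \delta$, $r=m=1$, and the given row length $n$. Leech's theorem produces a column multiplier $\Psi = (\widetilde\psi_1, \ldots, \widetilde\psi_n)^T \in M_{n,1}(\Mult(\mathcal{H}))$ of norm at most one satisfying $\Phi \Psi = \Theta$, that is, $\sum_{i=1}^n \varphi_i \widetilde\psi_i = \delta$. Setting $\psi_i = \delta^{-1} \widetilde\psi_i$ yields the required $\psi_1, \ldots, \psi_n \in \Mult(\mathcal{H})$ with $\sum \varphi_i \psi_i = 1$, and incidentally with a multiplier norm bound $\|\psi_i\|_{\Mult(\mathcal{H})} \leq 1/\delta$.

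The main obstacle in the argument, namely the production of $\Psi$ from the positivity condition (ii), is entirely hidden inside Leech's theorem, whose proof uses a lurking isometry argument depending on the complete Pick property; once Leech is in hand, the Toeplitz corona theorem falls out by bookkeeping. The only nontrivial checks are the routine translations between the row/scalar formulation of the Toeplitz corona statement and the general rectangular setup of Leech's theorem, and the standard equivalence between surjectivity of $M_\Phi$ and a lower bound on $M_\Phi M_\Phi^*$.
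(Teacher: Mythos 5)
Your proposal is correct and follows essentially the same route as the paper: apply Leech's theorem with $\Theta = \delta$ to link the positivity condition (ii) with the factorization in (i), and use the standard equivalence between surjectivity of $M_\Phi$ and the lower bound $M_\Phi M_\Phi^* \ge \delta^2 I$ to bring in (iii). The only cosmetic difference is that you route (i)~$\Rightarrow$~(ii) through the direct observation that the column of the $\psi_i$ is a right inverse of $M_\Phi$, which the paper mentions only parenthetically.
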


\begin{proof}
  We may normalize so that the row $\Phi :=
  \begin{bmatrix}
    \varphi_1 & \ldots & \varphi_n
  \end{bmatrix}$ has multiplier norm at most one.
  Then Theorem \ref{thm:Leech}, applied with with $r=m=1$ and $\Theta = \delta$, shows
  that there exists a contractive column multiplier $\Psi \in M_{n,1}(\Mult(\mathcal{H}))$
  so that $\Phi \Psi = \delta$ if and only if the mapping in (ii) is positive.
  This shows the equivalence of (i) and (ii).
  
  The equivalence with (iii) then follows from Theorem \ref{thm:Leech} and the basic operator theory fact that
  $M_\Phi$ is surjective if and only if $M_{\Phi} M_{\Phi}^* \ge \delta^2 I$ for some $\delta > 0$,
  which is a consequence of the open mapping theorem; see \cite{Douglas66} or \cite[Theorem 4.13]{Rudin91}
  for a more general Banach space result.
  (Note that the implication (i) $\Rightarrow$ (iii) also follows directly from the observation
  that the column of the $\psi_i$ is a right inverse of the row of the $\varphi_i$.)
\end{proof}

\begin{remark}
  \label{rem:corona_norm}
    The proof of Theorem \ref{thm:Toeplitz_corona} gives quantitative information
      on the norm of the $\psi_i$.
      If we normalize so the row of the $\varphi_i$ has norm one and $\delta$ is as in (ii),
      then we may achieve that the norm of the column of the $\psi_i$ is at most $\frac{1}{\delta}$,
      and this is best possible.
\end{remark}

In order to prove the corona theorem for $H^2_d$, one therefore has to show that
if $\varphi_1,\ldots,\varphi_n \in \Mult(H^2_d)$ with 
\begin{equation}
  \label{eqn:cor_1}
  \sum_{i=1}^n |\varphi_i|^2 \ge \varepsilon^2 > 0,
\end{equation}
then there exists $\delta > 0$ such that
\begin{equation}
  \label{eqn:cor_2}
  (z,w) \mapsto \frac{\sum_{i=1}^n \varphi_i(z) \overline{\varphi_i(w)} - \delta^2}{1 - \langle z,w \rangle }
\end{equation}
is positive. This was achieved by Costea, Sawyer and Wick \cite{CSW11}.
Note that testing \eqref{eqn:cor_2} on the diagonal $\{z=w\}$, one recovers \eqref{eqn:cor_1} with $\varepsilon = \delta$.
Letting $\Phi =
\begin{bmatrix}
  \varphi_1 & \cdots & \varphi_n
\end{bmatrix}$,
the passage from \eqref{eqn:cor_1} to \eqref{eqn:cor_2} is equivalent to proving that
\begin{equation*}
  \langle M_\Phi M_\Phi^* K_z,K_z \rangle \ge \varepsilon^2 \|K_z\|^2 \quad \text{ for all } z \in \mathbb{B}_d
\end{equation*}
implies that there exists $\delta > 0$ such that
\begin{equation*}
  \langle M_\Phi M_\Phi^* f,f \rangle \ge \delta^2 \|f\|^2 \quad \text{ for all } f \in H^2_d.
\end{equation*}
The philosophy that in order to show certain properties of operators, it should suffice
to test the property on kernel functions, is sometimes called
the \emph{reproducing kernel thesis}; see for instance \cite[Chapter 4]{ARS+19} and \cite[Section A.5.8]{Nikolski02a}.

\begin{remark}
  It is natural to ask for the dependence of $\delta$ in \eqref{eqn:cor_2} on $\varepsilon$ in \eqref{eqn:cor_1}.
  By Remark \ref{rem:corona_norm}, this leads to bounds on the norm of the corona solutions
  in terms of the corona data. For a discussion of bounds in the classical case $d=1$, see \cite[Appendix 3]{Nikolskiui86} and \cite{Treil02}.
\end{remark}

Given the Toeplitz corona theorem for complete Pick spaces and the Costea--Sawyer--Wick corona theorem for $H^2_d$, one may wonder if the
corona theorem holds for all complete Pick spaces. Without further assumptions, this is not the case,
which was shown by Aleman, \mcc, Richter and the author; see \cite[Theorem 5.5]{AHM+17a}.

  \begin{theorem}
    There exists a normalized complete Pick space $\mathcal{H}$ with the following properties:
    \begin{enumerate}[label=\normalfont{(\alph*)}]
      \item $\mathcal{H}$ consists of holomorphic functions on $\mathbb{D}$,
      \item $\mathcal{H}$ contains all functions holomorphic in a neighborhood of $\overline{\mathbb{D}}$, and
      \item $\mathbb{D}$ is \emph{not dense} in
    $\mathcal{M}(\Mult(\mathcal{H}))$.
    \end{enumerate}
  \end{theorem}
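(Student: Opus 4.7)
The plan is to invoke the Agler--McCarthy universality theorem (Theorem \ref{thm:AM}) and search for $\mathcal{H}$ among complete Pick spaces on $\mathbb{D}$ with kernel
\[
k(z,w) = \frac{1}{1 - \langle b(z), b(w)\rangle_{\ell^2}}
\]
for a holomorphic map $b\colon \mathbb{D} \to \mathbb{B}_\infty$. Such a kernel automatically satisfies $1 - 1/k = \langle b(\cdot),b(\cdot)\rangle \ge 0$, so by Corollary \ref{cor:mc_q_normalized} the resulting space $\mathcal{H}$ is a normalized complete Pick space consisting of holomorphic functions on $\mathbb{D}$; property (a) is then automatic.

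For (b), I would arrange that the coordinate functions $b_n$ of $b$ extend holomorphically to a slightly larger disc $r\mathbb{D}$, $r > 1$, with $\sup_{|z|\le r}\sum_n |b_n(z)|^2 < 1$. Then $k$ extends to a bounded positive kernel on $r\overline{\mathbb{D}}\times r\overline{\mathbb{D}}$, and a routine power-series estimate together with the resulting uniform bound on $\|z^n\|_\mathcal{H}$ places every function holomorphic on a neighborhood of $\overline{\mathbb{D}}$ inside $\mathcal{H}$.

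For (c), by Proposition \ref{prop:corona_gelfand} it suffices to exhibit multipliers $\varphi_1,\ldots,\varphi_n \in \Mult(\mathcal{H})$ with $\sum_i |\varphi_i|^2 \ge \varepsilon^2 > 0$ on $\mathbb{D}$ that admit no corona solution in $\Mult(\mathcal{H})$. By the Toeplitz corona theorem (Theorem \ref{thm:Toeplitz_corona}), this reduces to arranging that the kernel
\[
(z,w)\ \mapsto\ k(z,w)\Bigl(\textstyle\sum_i \varphi_i(z)\overline{\varphi_i(w)} - \delta^2\Bigr)
\]
fails to be positive for every $\delta > 0$. The idea is to engineer the embedding $b$ so that, while polynomials and all functions holomorphic near $\overline{\mathbb{D}}$ are multipliers, the operator-theoretic structure encoded by the identification $\Mult(\mathcal{H}) = \Mult(H^2_\infty)\big|_V$, where $V = b(\mathbb{D})$ (Theorem \ref{thm:AM_2}), still obstructs the above Pick-matrix positivity for some explicit scalar corona pair.

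The main obstacle is the evident tension between (b) and (c): property (b) forces the kernel to behave tamely near $\partial\mathbb{D}$, which a priori makes $\mathcal{M}(\Mult(\mathcal{H}))$ well behaved. In particular, Gleason's problem (cf.\ Theorem \ref{thm:Gleason}, whose argument carries over) shows that every character whose image under $\pi$ lies in $\mathbb{D}$ is a point evaluation, so producing a character outside the closure of $\mathbb{D}$ must exploit the infinite-dimensional nature of $b$. The failure of corona must therefore be encoded in the geometry of $V \subset \mathbb{B}_\infty$---for instance, by arranging $V$ to accumulate on $\partial\mathbb{B}_\infty$ in a controlled but subtle way---rather than in any pointwise pathology of the multipliers. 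Executing such a construction, which is the content of \cite[Theorem 5.5]{AHM+17a}, is where the essential work lies.
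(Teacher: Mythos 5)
Your proposal contains a step that fails outright, and the core of the construction is deferred rather than supplied. The failing step is your route to (b). If the embedding $b$ extends holomorphically to $r\overline{\mathbb{D}}$ for some $r>1$ with $\sup_{|z|\le r}\sum_n|b_n(z)|^2<1$, then $k$ extends to a bounded, sesqui-holomorphic positive kernel on $r\mathbb{D}\times r\mathbb{D}$, and $\mathcal{H}$ is the restriction to $\mathbb{D}$ of the RKHS $\widetilde{\mathcal{H}}$ on $r\mathbb{D}$ with kernel $k$. Since $|g(w)|\le\|g\|\,k(w,w)^{1/2}$ for $g\in\widetilde{\mathcal{H}}$ and $w$ in any compact subset of $r\mathbb{D}$, every element of $\mathcal{H}$ extends holomorphically to $r\mathbb{D}$; in particular $1/(z-\rho)$ with $1<\rho<r$ is holomorphic in a neighborhood of $\overline{\mathbb{D}}$ but cannot belong to $\mathcal{H}$. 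So your hypothesis on $b$ destroys (b) rather than securing it---it reproduces exactly the ``restriction of a space on a larger disc'' phenomenon that condition (b) is designed to rule out. What one actually needs is $\limsup_n\|z^n\|_{\mathcal{H}}^{1/n}\le 1$ while the kernel does \emph{not} continue past $\overline{\mathbb{D}}$; in the construction the paper cites this is achieved with a radial kernel $k(z,w)=\sum_n a_n(z\overline{w})^n$ whose coefficients satisfy $a_n/a_{n+1}\to 1$, the complete Pick property being arranged through conditions on the sequence $(a_n)$ in the spirit of Corollary \ref{cor:weighted_dirichlet} and Kaluza's lemma, not through an extension of the embedding.

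The second, larger issue is (c). Your reductions via Proposition \ref{prop:corona_gelfand} and Theorem \ref{thm:Toeplitz_corona} are correct and are indeed the right framework, but ``engineer the embedding $b$ so that the Pick-matrix positivity fails'' is a restatement of the problem, not an argument, and you acknowledge as much. The cited result resolves it by choosing the weight sequence $(a_n)$ following a construction of Salas so that, simultaneously, $a_n/a_{n+1}\to 1$ (giving (b)), the kernel is a complete Pick kernel (giving (a) and normalization), and condition (ii) of Proposition \ref{prop:corona_gelfand} fails---already for very few corona data. That simultaneous tuning of a single coefficient sequence is the entire content of the theorem and is absent from your proposal. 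Your closing observation about the tension between (b) and (c), and the remark that any non-point-evaluation character must sit in a fiber over $\partial\mathbb{D}$ by the Gleason-problem argument of Theorem \ref{thm:Gleason}, are both sound, but they describe the difficulty rather than overcome it.
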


  The proof uses a construction of Salas \cite{Salas81}, so the space $\mathcal{H}$ was called the \emph{Salas space} 
  in \cite{AHM+17a}.
  The Salas space is in fact contained in the disc algebra.
  Observe that condition (b) rules out trivial constructions such as taking the restriction 
  of the Hardy space on $2 \mathbb{D}$ to $\mathbb{D}$.
  (In \cite[Theorem 5.5]{AHM+17a}, a condition that is slightly different from (b) is stated;
  but (b) follows from the fact that the reproducing kernel of $\mathcal{H}$ is given
  by a power series with coefficients $a_n$ satisfying $\frac{a_n}{a_{n+1}} \to 1$.)

 \subsection{Interpolating sequences}
 \label{ss:is}

 Next, we will discuss interpolating sequences.
 The classical definition in the disc is the following.

    \begin{definition}
      A sequence $(z_n)$ in $\mathbb{D}$ is an \emph{interpolating sequence (IS)} for $H^\infty$ if
      the map
      $H^\infty \to \ell^\infty, \varphi \mapsto (\varphi(z_n))$,
    is surjective.
    \end{definition}
    The study of interpolating sequences sheds considerable light on the structure of the maximal ideal space of $H^\infty$; see for instance \cite[Chapter X]{Garnett07}.
    They also made an appearance in the description of Douglas algebras,
    which are subalgebras of $L^\infty$ containing $H^\infty$.
    The description is due to Chang \cite{Chang76} and Marshall \cite{Marshall76a}; see also \cite[Chapter IX]{Garnett07}. For general background on interpolating sequences, see \cite[Chapter VII]{Garnett07},
    \cite{Seip04} and \cite[Chapter 9]{AM02}.

    At first, it is not even obvious that interpolating sequences in $H^\infty$ exist,
    but they do, and in fact they were characterized by Carleson \cite{Carleson58}.
    His characterization involves two more conditions:

    \begin{itemize}
      \item[(WS)] $(z_n)$ is \emph{weakly separated} if there exists $\varepsilon > 0$ so that
        \begin{equation*}
          \Big| \frac{z_n - z_m}{ 1 - \overline{z_m} z_n} \Big| \ge \varepsilon \quad \text{ whenever } n \neq m.
        \end{equation*}
\item[(C)] $(z_n)$ satisfies the \emph{Carleson measure condition} if there exists $C \ge 0$ so that
  \begin{equation*}
    \sum_{n} (1 - |z_n|^2) |f(z_n)|^2 \le C \|f\|^2_{H^2} \quad \text{ for all } f \in H^2.
  \end{equation*}
    \end{itemize}

    Note that the quantity appearing in condition (WS) is the pseudohyperbolic metric on $\mathbb{D}$,
    which we already encountered in Example \ref{exa:Pick_small}.
    The Carleson measure condition means that the measure $\sum_n (1 - |z_n|^2) \delta_{z_n}$
    is a Carleson measure for $H^2$.
    \begin{theorem}[Carleson's interpolation theorem]
      In $H^\infty$, (IS) $\Leftrightarrow$ (WS) + (C)
    \end{theorem}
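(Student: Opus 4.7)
The plan is to treat the two implications separately. For (IS) $\Rightarrow$ (WS), since the restriction map $R \colon H^\infty \to \ell^\infty$, $\varphi \mapsto (\varphi(z_n))$, is by hypothesis a bounded surjection between Banach spaces, the open mapping theorem provides $M > 0$ such that every $w \in \ell^\infty$ admits a preimage of norm at most $M \|w\|_\infty$. Applied to the $n$-th standard basis vector, this yields $\varphi_n \in H^\infty$ with $\|\varphi_n\|_\infty \le M$, $\varphi_n(z_n) = 1$, and $\varphi_n(z_m) = 0$ for $m \neq n$. The Schwarz--Pick lemma applied to $\varphi_n / M$ at the pair $(z_n, z_m)$ then gives $d_{ph}(z_n, z_m) \ge 1/M$, which is (WS) with $\varepsilon = 1/M$.

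For (IS) $\Rightarrow$ (C), I would pass to the Hilbert space side using the complete Pick property of $H^2$. Interpolation in $H^\infty = \Mult(H^2)$ implies, via Proposition \ref{prop:mult_crit} and a standard duality argument, that the normalized reproducing kernels $\hat{k}_{z_n}(z) = (1 - |z_n|^2)^{1/2}/(1 - \overline{z_n} z)$ form a Bessel sequence in $H^2$:
\begin{equation*}
  \sum_n |\langle f, \hat{k}_{z_n}\rangle|^2 \lesssim \|f\|_{H^2}^2 \quad \text{for all } f \in H^2.
\end{equation*}
Since $\langle f, \hat{k}_{z_n}\rangle = (1 - |z_n|^2)^{1/2} f(z_n)$, this is precisely (C).

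The reverse implication (WS) + (C) $\Rightarrow$ (IS) is the substantive content of the theorem and proceeds by explicit construction. Let $b_a(z) = \frac{|a|}{a} \cdot \frac{a - z}{1 - \overline{a} z}$ be the elementary Blaschke factor at $a \neq 0$ (with $b_0(z) = z$), and set $B(z) = \prod_n b_{z_n}(z)$ and $B_n = B / b_{z_n}$. The Carleson condition implies $\sum_n (1 - |z_n|) < \infty$, so $B$ is a genuine Blaschke product, and (WS) yields $|B_n(z_n)| \ge \delta > 0$ uniformly in $n$. For a target $w \in \ell^\infty$, one seeks an interpolant of the form
\begin{equation*}
  \varphi(z) = \sum_n w_n \frac{B_n(z)}{B_n(z_n)} \left( \frac{1 - |z_n|^2}{1 - \overline{z_n} z} \right)^N
\end{equation*}
for a fixed large integer $N$. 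Each summand vanishes at every $z_m$ with $m \neq n$ and equals $w_n$ at $z_n$, so if the series converges in $H^\infty$ then $\varphi$ is the desired interpolant.

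The main obstacle is bounding the $L^\infty$-norm of this Cauchy-type sum using only the $L^2$-flavored hypothesis (C); pointwise estimates cannot succeed, and cancellation must be exploited. Carleson's solution was to pair the sum against $H^1$ functions on $\partial \mathbb{D}$ and invoke the Carleson embedding theorem for $H^1$, which (C) precisely supplies; alternative completions (Shapiro--Shields working in $H^2$ via duality, or Earl's more constructive approach) reorganize this step but cannot bypass its essential harmonic-analytic content. Once a uniform $H^\infty$-bound on $\varphi$ in terms of $\|w\|_\infty$, $\delta$, the Carleson constant, and $N$ is secured, the surjectivity of $R$ follows.
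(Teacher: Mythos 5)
The paper does not prove this theorem; it states it as a classical result and cites Carleson's 1958 paper, so there is no internal proof to compare against. Your sketches of the necessity implications are sound: the Schwarz--Pick argument for (IS) $\Rightarrow$ (WS) is clean and correct, and the route to (IS) $\Rightarrow$ (C) via the Bessel (upper Riesz) bound for the normalized kernels is exactly the Marshall--Sundberg necessity argument that the paper invokes (Theorem \ref{thm:MS_int}) and defers to \cite[Theorem 9.19]{AM02}.

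There is, however, a genuine error in your sketch of the sufficiency direction. You write that ``(WS) yields $|B_n(z_n)| \ge \delta > 0$ uniformly in $n$.'' This is false: weak separation gives a uniform lower bound $|b_{z_m}(z_n)| \ge \varepsilon$ on each factor, but the infinite product $|B_n(z_n)| = \prod_{m\neq n} |b_{z_m}(z_n)|$ can still tend to $0$. A weakly separated sequence that fails (C) --- for example, a dyadic-type configuration piling up at a single boundary point --- is weakly separated but not uniformly separated (if it were, it would be interpolating, which it is not). The correct implication, and a nontrivial step you need, is
\begin{equation*}
  \textup{(WS)} + \textup{(C)} \ \Longrightarrow\ \inf_n |B_n(z_n)| > 0.
\end{equation*}
The usual argument combines the elementary inequality $-\log t \le C_\varepsilon (1-t^2)$ for $\varepsilon \le t \le 1$ (this is where (WS) enters) with the identity $1 - d_{ph}(z_n,z_m)^2 = (1-|z_n|^2)(1-|z_m|^2)/|1-\overline{z_m} z_n|^2$, and then applies (C) to the unit vector $\hat{k}_{z_n} = k_{z_n}/\|k_{z_n}\|$ to bound
\begin{equation*}
  -\log |B_n(z_n)| \le C_\varepsilon \sum_{m\neq n}\bigl(1 - d_{ph}(z_n,z_m)^2\bigr) = C_\varepsilon \sum_{m\neq n}(1-|z_m|^2)\,|\hat{k}_{z_n}(z_m)|^2 \le C_\varepsilon C.
\end{equation*}
You should incorporate this lemma explicitly. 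Beyond that, you correctly identify that the substantive content is the uniform $H^\infty$ bound on the explicit interpolant, and correctly flag that it requires $H^1$ duality or a Carleson-embedding argument rather than pointwise estimates; as a sketch pointing to Carleson, Shapiro--Shields, or Earl, that is acceptable, but you have not actually carried out the estimate, so the proposal is an outline rather than a proof.
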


    The definition of an interpolating sequence, as well as the conditions
    involved in Carleson's theorem, naturally carry over to the Drury--Arveson space,
    and in fact to any complete Pick space.

\begin{definition}
Let $\mathcal{H}$ be a complete Pick space on $X$. A sequence $(z_n)$ in $X$ is said to
\begin{itemize}
\item[(IS)] be
  an \emph{interpolating sequence} (for $\Mult(\mathcal{H})$) if the map
\begin{equation*}
  \Mult(\mathcal{H}) \to \ell^\infty, \varphi \mapsto (\varphi(z_n)),
\end{equation*}
is surjective;
\item[(WS)] be \emph{weakly separated} if there exists $\varepsilon > 0$ so that 
  \begin{equation*}
    1 - \frac{ |k(z_n,z_m)|^2}{k(z_n,z_n) k(z_m,z_m)} \ge \varepsilon \quad \text{ whenever } n \neq m.
  \end{equation*}
\item[(C)]
satisfy the \emph{Carleson measure condition} if there exists $C \ge 0$ so that
\begin{equation*}
\sum_{n} \frac{|f(z_n)|^2}{k(z_n,z_n)} \le C \|f\|^2_{\mathcal{H}}
\quad \text{ for all } f \in \mathcal{H}.
\end{equation*}
\end{itemize}
\end{definition}

The analogue of Carleson's theorem in the Dirichlet space
was established independently by Bishop \cite{Bishop94} and by Marshall and Sundberg \cite{MS94a}.
The proof of Marshall and Sundberg crucially used the complete Pick property of the Dirichlet space.
They established the following theorem, see \cite[Corollary 7]{MS94a} and \cite[Theorem 9.19]{AM02}.
Recall that a sequence $(x_n)$ of vectors in a Hilbert space is said to be a \emph{Riesz sequence}
if there exist constants $C_1,C_2 > 0$ with
\begin{equation*}
  C_1 \Big( \sum_n |\alpha_n|^2 \Big) \le \Big\| \sum_n \alpha_n x_n \Big\|^2 \le C_2 \Big( \sum_n |\alpha_n|^2 \Big)
\end{equation*}
for all sequences $(\alpha_n)$ of scalars.
If the linear span of the $x_n$ is dense, the Riesz sequence is called a \emph{Riesz basis}.

\begin{theorem}
  \label{thm:MS_int}
  Let $\mathcal{H}$ be a normalized complete Pick space on $X$ with kernel $k$. Then a sequence $(z_n)$ in $X$
  is an interpolating sequence if and only if the normalized reproducing kernels $\{ k_{z_n} / \|k_{z_n}\|: n \in \mathbb{N}\}$ form a Riesz sequence in $\mathcal{H}$.
\end{theorem}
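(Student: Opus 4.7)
The plan is to reformulate both conditions in terms of the Gram matrix $G = [\langle e_j, e_i\rangle]_{i,j}$ of the normalized kernels $e_n := k_{z_n}/\|k_{z_n}\|$. Being a Riesz sequence is equivalent to $cI \le G \le CI$ on $\ell^2$ for some $c,C>0$; while the multiplier criterion (Proposition \ref{prop:mult_crit}) translates the existence of an interpolating multiplier of norm at most $M$ for data $(\lambda_n)$ on a finite subset into the operator inequality $M^2 G \ge D_\lambda G D_\lambda^*$, where $D_\lambda = \diag(\lambda_n)$ (after conjugating the Pick matrix $[k(z_i,z_j)(M^2 - \lambda_i\overline{\lambda_j})]$ by the commuting diagonal $\diag(1/\|k_{z_n}\|)$).

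For the direction $(\Leftarrow)$, if $cI \le G \le CI$ then $D_\lambda G D_\lambda^* \le CI \le (C/c)G$ for every diagonal contraction $D_\lambda$; the Pick property applied on finite truncations of a bounded sequence $\lambda$, combined with a weak-$*$ compactness argument as in Remark \ref{rem:arbitrary_extension}, then produces an interpolating multiplier of norm at most $\sqrt{C/c}$.

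For $(\Rightarrow)$, assume IS with some constant $M$ (which exists by the open mapping theorem). The key step is to upgrade scalar IS to a Hilbert-space-valued version with the same constant: the \emph{complete} Pick property, via the matrix form of Proposition \ref{prop:Pick_reform}, lifts scalar interpolation on any finite subset to matrix interpolation of the same norm, and a weak-$*$ compactness argument then produces, for every uniformly bounded sequence $(\Lambda_n)$ of operators with $\sup_n \|\Lambda_n\| \le 1$, an operator-valued multiplier $\Phi$ of norm at most $M$ with $\Phi(z_n) = \Lambda_n$. Apply this with two choices. First, the column target $\Phi^{\mathrm{col}}(z_n) = \delta_n \in \ell^2$ yields $M_{\Phi^{\mathrm{col}}}\colon \mathcal{H} \to \mathcal{H}\otimes\ell^2$ with norm $\le M$, and the eigenrelation $M_\varphi^* k_w = \overline{\varphi(w)} k_w$ from Remark \ref{rem:multipliers} gives $M_{\Phi^{\mathrm{col}}}^*(e_n \otimes \delta_n) = e_n$. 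Introducing the natural isometry $V\colon \ell^2 \to \mathcal{H}\otimes\ell^2$, $\alpha \mapsto \sum \alpha_n e_n \otimes \delta_n$, one obtains $M_{\Phi^{\mathrm{col}}}^* V\alpha = \sum \alpha_n e_n$, and taking norms yields the upper Riesz bound $\|\sum\alpha_n e_n\| \le M\|\alpha\|_2$. Second, the row target $\Phi^{\mathrm{row}}(z_n) = \delta_n^*$ (the functional extracting the $n$-th coordinate) yields $M_{\Phi^{\mathrm{row}}}\colon \mathcal{H}\otimes\ell^2 \to \mathcal{H}$ with norm $\le M$, and a dual computation gives $M_{\Phi^{\mathrm{row}}}^* e_n = e_n \otimes \delta_n = V\delta_n$, so $\|\alpha\|_2 = \|V\alpha\| = \bigl\|M_{\Phi^{\mathrm{row}}}^* \sum \alpha_n e_n\bigr\| \le M\|\sum\alpha_n e_n\|$, giving the lower Riesz bound.

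The main obstacle lies in the upgrade from scalar to Hilbert-space-valued IS without loss in the constant; this is precisely where the \emph{complete} Pick property (as opposed to merely the Pick property) is essential. Once that upgrade is in hand, both Riesz bounds emerge symmetrically from the two natural dual choices of Hilbert-space-valued interpolation target.
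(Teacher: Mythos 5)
Your sufficiency direction $(\Leftarrow)$ is essentially correct and runs parallel to the paper's argument: both extract a bounded interpolating multiplier from the two-sided Gram bound and then invoke the complete Pick property to extend from $Z$ (or a finite subset) to $X$; you route through finite subsets plus weak-$*$ compactness, while the paper works directly on $\mathcal{H}|_Z$ using the Riesz basis, but the content is the same. The gap is in $(\Rightarrow)$, specifically in the sentence asserting that ``the complete Pick property, via the matrix form of Proposition~\ref{prop:Pick_reform}, lifts scalar interpolation on any finite subset to matrix interpolation of the same norm.'' What Proposition~\ref{prop:Pick_reform}(iii) gives in matrix form is this: if $\Psi \in M_r(\Mult(\mathcal{H}|_F))$ has $\|\Psi\|_{M_r(\Mult(\mathcal{H}|_F))} \le M$, then it lifts to $M_r(\Mult(\mathcal{H}))$ with the same norm. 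It does \emph{not} say that scalar IS with constant $M$ (equivalently, $\|w\|_{\Mult(\mathcal{H}|_F)} \le M\|w\|_{\ell^\infty}$ for scalar data $w$) forces the corresponding bound $\|\Lambda\|_{M_r(\Mult(\mathcal{H}|_F))} \le M\sup_n\|\Lambda_n\|$ at the matrix level. That is a statement that a certain map from $\ell^\infty$ into an operator algebra, once bounded, is automatically \emph{completely} bounded with the same constant; it is false for general operator spaces, and when it is true in the complete Pick setting it is a theorem (essentially the Marshall--Sundberg upgrade from scalar to Hilbert-space-valued interpolating sequences) whose proof is nontrivial and, in the standard treatments, actually passes \emph{through} the very Riesz-sequence characterization you are trying to prove. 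As written, your argument is therefore either missing a genuinely hard lemma or is circular.

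In fact the paper points at a cleaner route for necessity: IS $\Rightarrow$ Riesz holds in \emph{any} RKHS, with no appeal to the complete Pick property at all (the reference is \cite[Theorem 9.19]{AM02}, a Shapiro--Shields style argument). By contrast, your computations once the operator-valued IS is granted are fine; the $V$, $M_{\Phi^{\mathrm{col}}}^*$, and $M_{\Phi^{\mathrm{row}}}^*$ manipulations correctly produce the two Riesz bounds. So the issue is not the overall architecture of the two-target trick, but that you have imported as a one-liner precisely the step (scalar IS $\Rightarrow$ operator-valued IS) that carries the real difficulty, and the justification offered for it does not hold up.
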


\begin{proof}
  We prove sufficiency.
  Suppose that the normalized reproducing kernels form a Riesz sequence in $\mathcal{H}$.
  Let $Z = \{z_n: n \in \mathbb{N}\}$ and consider the space $\mathcal{H} \big|_Z$.
  Then the restrictions to $Z$ of the normalized reproducing kernels form a Riesz basis
  of $\mathcal{H} \big|_Z$. Thus, for every $(w_n) \in \ell^\infty$, we obtain a bounded linear map
  \begin{equation*}
    T_w: \mathcal{H} \big|_Z \to \mathcal{H} \big|_Z, \quad k_{z_n} \big|_Z \mapsto \overline{w_n} k_{z_n} \big|_Z.
  \end{equation*}
  The adjoint $T_w^*$ is multiplication operator on $\mathcal{H} \big|_Z$ with symbol
  $\varphi$ given by $\varphi(z_n) = w_n$.
  This shows that
  \begin{equation*}
    \Mult(\mathcal{H} \big|_Z) \to \ell^\infty, \quad \varphi \mapsto (\varphi(z_n)),
  \end{equation*}
  is surjective. By the complete Pick property of $X$, the restriction map $\Mult(\mathcal{H}) \to \Mult(\mathcal{H} \big|_Z)$ is surjective, so $Z$ is an interpolating sequence for $\Mult(\mathcal{H})$.

  Necessity is true independently of the complete Pick property; see for instance \cite[Theorem 9.19]{AM02} for a proof.
\end{proof}

Theorem \ref{thm:MS_int} is another instance where the complete Pick property of a space $\mathcal{H}$ can be used to translate
a problem about $\Mult(\mathcal{H})$ into a more tractable Hilbert space problem in $\mathcal{H}$.

Theorem \ref{thm:MS_int} has the following consequence regarding the existence of interpolating sequences.

\begin{theorem}
  \label{thm:is_subsequence}
  Let $\mathcal{H}$ be a normalized complete Pick space on $X$ with kernel $k$.
  If $(z_n)$ is a sequence in $X$ such that $\lim_{n \to \infty} k(z_n,z_n) = \infty$,
  then $(z_n)$ has subsequence that is interpolating.
\end{theorem}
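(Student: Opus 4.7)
By Theorem~\ref{thm:MS_int}, it suffices to extract a subsequence $(z_{n_j})$ whose normalized reproducing kernels $\hat k_j := k_{z_{n_j}}/\|k_{z_{n_j}}\|$ form a Riesz sequence in $\mathcal{H}$. My plan has three steps: first show that the normalized kernels tend to zero weakly; then use this to extract a subsequence so that the off-diagonal entries of the Gram matrix decay geometrically; and finally apply Schur's test to conclude that the Gram matrix is a small perturbation of the identity.

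\emph{Step 1 (weak convergence of $\hat k_{z_n}$).} I would invoke Theorem~\ref{thm:AM} to realize $k$ in the form $k(x,y) = (1 - \langle b(x), b(y)\rangle)^{-1}$ for some $b: X \to \mathcal{E}$ with $\|b(x)\| < 1$. The hypothesis $k(z_n,z_n) \to \infty$ translates to $\|b(z_n)\| \to 1$. For any fixed $y \in X$, a direct computation gives
\[
  \langle k_y, \hat k_{z_n}\rangle = \frac{k(z_n,y)}{\|k_{z_n}\|} = \frac{(1 - \|b(z_n)\|^2)^{1/2}}{1 - \langle b(z_n), b(y)\rangle} \longrightarrow 0,
\]
since the numerator tends to $0$ while the denominator is bounded below by $1 - \|b(y)\| > 0$ uniformly in $n$. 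Since the linear span of the kernel functions is dense in $\mathcal{H}$ and $\|\hat k_{z_n}\| = 1$, this yields $\hat k_{z_n} \rightharpoonup 0$.

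\emph{Step 2 (diagonal extraction).} Inductively pick $n_1 < n_2 < \cdots$ so that
\[
  |\langle \hat k_{z_{n_j}}, \hat k_{z_{n_i}}\rangle| < 2^{-j-1} \quad \text{ for every } i < j.
\]
At stage $j$ this involves only finitely many conditions, each achievable by Step~1 applied to the reference vectors $\hat k_{z_{n_i}}$, $i < j$.

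\emph{Step 3 (Riesz property via Schur's test).} Let $G$ be the Gram matrix with entries $G_{ij} = \langle \hat k_{z_{n_j}}, \hat k_{z_{n_i}}\rangle$, so that $G_{ii} = 1$ and $G$ is Hermitian. Using Hermitian symmetry together with the bounds of Step~2 (the entry $G_{ij}$ for $j < i$ was controlled when $n_i$ was chosen), I would estimate each off-diagonal row sum by
\[
  \sum_{j \neq i} |G_{ij}| \le (i-1) \cdot 2^{-i-1} + \sum_{j > i} 2^{-j-1} = i \cdot 2^{-i-1} \le \tfrac{1}{4},
\]
and hence $\|G - I\|_{B(\ell^2)} \le 1/4$ by Schur's test. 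Thus $G$ is a bounded, positive, and invertible operator on $\ell^2$, which is exactly the condition that $\{\hat k_{z_{n_j}}\}$ be a Riesz sequence. Theorem~\ref{thm:MS_int} then implies that $(z_{n_j})$ is interpolating.

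The main obstacle is Step~1: the weak convergence relies essentially on the Agler--McCarthy structure theorem (Theorem~\ref{thm:AM}), and hence on the complete Pick property. In an arbitrary reproducing kernel Hilbert space the condition $k(z_n,z_n) \to \infty$ does not by itself force $|f(z_n)|/\|k_{z_n}\| \to 0$ for all $f \in \mathcal{H}$; it is the explicit factorization of $1 - 1/k$ through the embedding $b$ that makes the pointwise estimate tractable. The remaining steps are a standard almost-orthogonal subsequence construction.
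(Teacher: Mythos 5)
Your argument is correct and follows essentially the same route as the paper: use the Agler--M\textsuperscript{c}Carthy realization $k(x,y)=(1-\langle b(x),b(y)\rangle)^{-1}$ to show the normalized kernels tend weakly to zero, extract an almost-orthogonal Riesz subsequence, and invoke Theorem~\ref{thm:MS_int}. The only difference is that you write out the extraction and Schur-test estimate explicitly (correctly, with $\|G-I\|\le 1/4$), where the paper simply cites a standard reference for the fact that a weakly null sequence of unit vectors has a Riesz subsequence.
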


\begin{proof}
  The assumption implies that $\lim_{n \to \infty} \|k_{z_n}\| = \infty$.
  Using the particular form of the kernel $k(z,w) = \frac{1}{1 - \langle b(z), b(w) \rangle }$
  given by Theorem \ref{thm:AM}, we see that $\lim_{n \to \infty} \frac{k_{z_n}(w)}{\|k_{z_n}\|} = 0$
  for all $w \in X$; hence $(k_{z_n} / \|k_{z_n}\|)$ is a sequence of unit vectors in $\mathcal{H}$ that
  tends to zero weakly.
  From this, one can recursively extract a subsequence of $(k_{z_n} / \|k_{z_n}\|)$ that is a Riesz sequence
  (see for example \cite[Proposition 2.1.3]{AK06} for a more general statement), so the result follows from Theorem \ref{thm:MS_int}.
\end{proof}

The proof above is taken from \cite[Proposition 5.1]{AHM+17}. For a direct proof using Pick matrices,
see \cite[Proposition 9.1]{DHS15} and also \cite[Lemma 4.6]{Serra04}.

Theorem \ref{thm:is_subsequence} in particular yields the existence of interpolating sequences
in the Drury--Arveson space and in the Dirichlet space.
The existence of interpolating sequences in the Dirichlet space was proved earlier by Axler \cite{Axler92}.

After the theorem of Bishop and of Marshall and Sundberg, the scope of the characterization of interpolating sequences was further extended by B\o e \cite{Boe05} (see also \cite{Boee02}),
but this still left open the case of the Drury--Arveson space.
Using work of Agler and \mcc\ \cite[Chapter 9]{AM02},
Carleson's characterization was extended to all complete Pick spaces, including the Drury--Arveson space,
by Aleman, Richter, \mcc\ and the author \cite{AHM+17}.

\begin{theorem}
  \label{thm:IS}
  In any normalized complete Pick space, (IS) $\Leftrightarrow$ (WS) + (C).
\end{theorem}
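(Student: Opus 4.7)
The plan is to recast the three conditions (IS), (WS), (C) in terms of the normalized reproducing kernels $\hat{k}_{z_n} := k_{z_n}/\|k_{z_n}\|$ and then apply Theorem \ref{thm:MS_int}. Let $G$ denote the Gram matrix with entries $G_{nm} = \frac{k(z_m,z_n)}{\sqrt{k(z_n,z_n) k(z_m,z_m)}}$. By Theorem \ref{thm:MS_int}, (IS) is equivalent to $(\hat{k}_{z_n})$ being a Riesz sequence, i.e.\ to $G$ acting on $\ell^2$ as an operator that is both bounded and bounded below.

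A duality computation shows that the upper Riesz bound coincides with (C): the synthesis map $T:\ell^2 \to \mathcal{H}$, $\alpha \mapsto \sum_n \alpha_n \hat{k}_{z_n}$, has adjoint $T^* f = (f(z_n)/\sqrt{k(z_n,z_n)})_n$, whose squared $\ell^2$-norm is precisely the left-hand side of the Carleson inequality; so $T$ is bounded iff (C) holds. Moreover, if $G \ge cI$, then restricting to any pair of indices gives a $2\times 2$ block with diagonal $1$ whose eigenvalues $1 \pm |G_{nm}|$ force $|G_{nm}|$ to be bounded away from $1$, which is exactly (WS). Combined with Theorem \ref{thm:MS_int}, this yields the direction (IS) $\Rightarrow$ (WS) + (C) immediately, and reduces the remaining direction to proving that (WS) + (C) implies the lower Riesz bound.

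The lower bound is the main obstacle. I would first invoke the Agler--\mcc\ universality theorem (Theorem \ref{thm:AM_2}) to embed the problem into the Drury--Arveson space via a map $b:X \to \mathbb{B}_d$, after which (WS) becomes a bounded pseudohyperbolic-style separation in $\mathbb{B}_d$ and (C) becomes a genuine Carleson measure estimate for $H^2_d$. The target is then to construct, for each $n$, a multiplier $\varphi_n \in \Mult(\mathcal{H})$ with $\varphi_n(z_m) = \delta_{nm}$ such that the column $(\varphi_n)$ is a bounded multiplier from $\mathcal{H}$ into $\mathcal{H} \otimes \ell^2$; such ``peak multipliers'' give a bounded right inverse to the sampling map and hence the lower Riesz bound. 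Following B\o{}e \cite{Boe05}, the candidates for $\varphi_n$ are built iteratively, starting from a normalized kernel supported near $z_n$ and repeatedly applying the complete Pick property to correct the values at other nodes. Here (WS) controls the size of each individual correction, while (C) is used to sum the corrections in $\ell^2$. The hardest technical point is controlling the aggregate size uniformly in $n$: one has to organize the points $z_m$ via a stopping-time decomposition in the natural metric coming from $k$ and invoke (C) at each generation to obtain geometric decay, ensuring that $\sum_n |\varphi_n(z)|^2$ stays uniformly bounded and that the induced column multiplier has finite norm. Once this is achieved, Theorem \ref{thm:MS_int} delivers (IS).
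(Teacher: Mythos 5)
Your reduction to the Riesz sequence criterion via Theorem \ref{thm:MS_int} is exactly right, and your analysis of the easy direction is correct: the synthesis/analysis operator duality gives the equivalence of (C) with the upper Gram bound, and passing to $2\times 2$ principal minors of a bounded-below Gram matrix yields (WS). This matches the paper's framework.

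However, there is a genuine gap in the hard direction, and it is located at precisely the point where the paper flags the difficulty. You propose to build peak multipliers ``following B\o{}e'' by an iterative correction scheme organized via a stopping-time decomposition. But the paper explicitly states that B\o{}e's method, which extended the Bishop/Marshall--Sundberg results to a range of Besov-type spaces, ``still left open the case of the Drury--Arveson space.'' B\o{}e's construction leans on geometric structure (essentially a dyadic or tree decomposition of the disc compatible with the kernel) that has no known analogue for $H^2_d$ or for a general complete Pick space after the Agler--\mcc\ embedding; the embedding dimension $d$ may be infinite and $b(X)$ may be a very irregular subset of $\mathbb{B}_d$, so there is no obvious stopping-time scaffold. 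The difficulty you describe in your last sentence (``controlling the aggregate size uniformly in $n$\ldots ensuring that $\sum_n |\varphi_n(z)|^2$ stays uniformly bounded'') is not a technical refinement of B\o{}e's argument; it is exactly the open problem that was resolved only by bringing in genuinely new machinery. The paper identifies the two known routes: (1) the Marcus--Spielman--Srivastava solution of the Kadison--Singer problem in the form of the Feichtinger conjecture, which together with Theorem \ref{thm:MS_int} shows that (C) alone already makes the sequence a finite union of interpolating sequences, after which (WS) is used to re-amalgamate; or (2) the column-row property of $\Mult(\mathcal{H})$ (known for $H^2_d$ with $d<\infty$ in \cite{AHM+18} and for all complete Pick spaces since \cite{Hartz20}), which allows one to pass from a bounded row of separating multipliers to a bounded column and hence to the lower Riesz bound. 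Your proposal does not invoke either of these and does not supply a substitute, so as written the implication (WS) $+$ (C) $\Rightarrow$ lower Riesz bound is unsupported.
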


The first proof of this result used the solution of the Kadison--Singer problem due to Marcus, Spielman
and Srivastava \cite{MSS15}. Explicitly, it used an equivalent formulation of the Kadison--Singer problem,
known as the Feichtinger conjecture. In the context of interpolating sequences,
the Feichtinger conjecture, combined with Theorem \ref{thm:MS_int}, implies that every sequence
satisfying the Carleson measure condition is a finite union of interpolating sequences.

In the case of $H^2_d$ for finite $d$, a proof avoiding the use of the Marcus--Spielman--Srivastava
theorem was given in \cite{AHM+17,AHM+18}.
The key property making this possible is the so-called column-row property of $H^2_d$.
To explain this property, recall that in $H^2_d$, we have
\begin{equation*}
      \big\|\begin{bmatrix}
      M_{z_1} & \cdots & M_{z_d}
    \end{bmatrix} \big\| = 1.
\end{equation*}
On the other hand, one may compute that
\begin{equation*}
    \Bigg\|
    \begin{bmatrix}
      M_{z_1} \\ \vdots \\ M_{z_d}
    \end{bmatrix} \Bigg\| = \sqrt{d}.
\end{equation*}
In particular, the column norm of a tuple of multiplication operators may exceed the row norm.
In fact, one can iterate this example to construct a sequence of multipliers that are bounded as a row,
but unbounded as a column; see \cite[Subsection 4.2]{AHM+18} or \cite[Lemma 4.8]{CH19}.
What about the converse?

\begin{definition}
  An RKHS $\mathcal{H}$ satisfies the column-row property with constant $c \ge 1$ if
  \begin{equation*}
    \big\|
    \begin{bmatrix}
      M_{\varphi_1} & M_{\varphi_2} & \cdots
    \end{bmatrix} \big\| \le c
    \Bigg\|
    \begin{bmatrix}
    M_{\varphi_1} \\ M_{\varphi_2} \\ \vdots
    \end{bmatrix} \Bigg\|
  \end{equation*}
  for all sequences $(\varphi_n)$ in $\Mult(\mathcal{H})$.
\end{definition}

It was shown in \cite[Remark 3.7]{AHM+17} that if $\mathcal{H}$ is a complete Pick space satisfying the column-row
property, then one obtains a direct proof of Theorem \ref{thm:IS} that does not rely on the Marcus--Spielman--Srivastava theorem. The column-row property also came up in other contexts, such as in the theory
of weak product spaces, see Subsection \ref{ss:wp} below.
The first non-trivial example of a complete Pick space satisfying the column-row property was the Dirichlet space,
a result due to Trent \cite{Trent04}, see also \cite{KT13}. Generalizing Trent's argument,
it was shown in \cite{AHM+18}, see also \cite{AHM+18a}, that if $d < \infty$, then $H^2_d$ satisfies the column-row property with some constant $c_d$. Thus, one obtains a direct proof of Theorem \ref{thm:IS} for $H^2_d$.
Finally, the column-row property was shown to be automatic for all complete Pick spaces in \cite{Hartz20}.

\begin{theorem}
  Every normalized complete Pick space satisfies the column-row property with constant $1$.
\end{theorem}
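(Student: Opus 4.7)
My plan is to reduce the problem, in two steps, to the non-commutative Hardy algebra $H^\infty_{nc}$ and then establish a strong operator inequality there. Given a column $(\varphi_n)$ in $\Mult(\mathcal{H})$ with column norm at most one, Theorem~\ref{thm:AM_2} identifies $\mathcal{H}$ with $H^2_d|_V$ for some variety $V \subset \mathbb{B}_d$, $1 \le d \le \infty$. The complete Pick property at finite matrix levels (Proposition~\ref{prop:Pick_reform}), extended to countable columns by a weak-$*$ compactness argument on bounded balls of $\Mult(H^2_d)$, produces a column lift $(\Phi_n)$ in $\Mult(H^2_d)$ with $\Phi_n|_V = \varphi_n$ and the same column norm. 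A short computation using invariance of the ideal $I(V)$ under each $M_{\Phi_n}$ shows that compression to the subspace corresponding to $V$ can only decrease the row norm. Next, Theorem~\ref{thm:NC_DA_mult} (Davidson--Pitts), similarly extended to countable columns via weak-$*$ compactness, lifts $(\Phi_n)$ to a column $(\tilde\Phi_n)$ in $H^\infty_{nc}$ with preserved column norm, while the completely contractive quotient $H^\infty_{nc} \to \Mult(H^2_d)$ dominates the row norm. Both reductions together show that it suffices to prove the column-row property for $H^\infty_{nc}$.

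The heart of the argument is the following claim about the left-multiplication operators on Fock space:
\begin{equation*}
  L_\Phi^* L_\Phi \ge L_\Phi L_\Phi^* \quad \text{for every } \Phi \in H^\infty_{nc},
\end{equation*}
that is, every $L_\Phi$ is hyponormal. If this holds, then for any column $(\tilde\Phi_n)$ the operator inequalities can be summed to yield
\begin{equation*}
  \sum_n L_{\tilde\Phi_n}^* L_{\tilde\Phi_n} \ge \sum_n L_{\tilde\Phi_n} L_{\tilde\Phi_n}^*,
\end{equation*}
and taking operator norms on both sides gives column norm squared $\ge$ row norm squared in $H^\infty_{nc}$. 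Descending through the two quotient maps constructed above then yields the column-row property with constant $1$ in $\Mult(\mathcal{H})$.

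The main obstacle is establishing hyponormality of every $L_\Phi$ in $H^\infty_{nc}$. For $d=1$ this is the classical fact that analytic Toeplitz operators on $H^2(\mathbb{D})$ are subnormal, proved via the embedding $H^2 \subset L^2(\mathbb{T})$ on which multiplication by $\varphi \in H^\infty$ is normal; subnormal implies hyponormal. For $d \ge 2$, no analogous $L^2$-embedding making $L_\Phi$ subnormal seems to exist (already $U_1 + U_2$ on a bilateral Fock extension fails to be normal), so a more intrinsic proof is required. My plan for this step is to appeal to the structural results on $\mathcal{L}_d = H^\infty_{nc}$ of Davidson--Pitts and Arias--Popescu (see Subsection~\ref{ss:nc_mult}): use their non-commutative Beurling theorem to decompose $L_\Phi = L_\Theta L_\Psi$ with $L_\Theta$ an inner (isometric) factor and $L_\Psi$ an outer factor with cyclic vector, and verify hyponormality separately for the two types, then combine them using the identity $L_\Theta^* L_\Theta = I$ and commutation with the right shifts $R_n$. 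A complementary route, which I would pursue in parallel, is to test hyponormality directly on the dense subspace of polynomial vectors in $H^2_{nc}$ by expanding $\langle L_\Phi^* L_\Phi F, F \rangle - \langle L_\Phi L_\Phi^* F, F \rangle$ in the monomial basis and recognising the resulting quadratic form as a sum of squares via the word-concatenation structure of $F_d^+$.
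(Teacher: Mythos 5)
Your two-step reduction from a general normalized complete Pick space through $\Mult(H^2_d)$ to $H^\infty_{nc}$, using Theorem~\ref{thm:AM_2} and Theorem~\ref{thm:NC_DA_mult} applied to columns, is reasonable in outline, and working in the free setting is a natural plan. But the key lemma you propose to reduce the theorem to---that every $L_\Phi$ with $\Phi \in H^\infty_{nc}$ is hyponormal---is \emph{false}, so the argument cannot be completed as outlined.

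Take $d=2$, $\Phi = x_1 + 2x_1x_2 \in H^\infty_{nc}$ (indeed $L_\Phi = L_1(I+2L_2)$ is bounded with norm $3$), and test on $F = \Phi \in H^2_{nc}$. The four monomials appearing in
\begin{equation*}
  L_\Phi F = \Phi F = x_1 x_1 + 2\,x_1 x_1 x_2 + 2\,x_1 x_2 x_1 + 4\,x_1 x_2 x_1 x_2
\end{equation*}
are pairwise distinct words in $F_2^+$ (in particular $x_1 x_1 x_2 \neq x_1 x_2 x_1$), so $\|L_\Phi F\|^2 = 1 + 4 + 4 + 16 = 25$. On the other hand $L_\Phi^* = L_1^* + 2 L_2^* L_1^*$; one computes $L_1^* F = 1 + 2x_2$ and then $L_\Phi^* F = (1 + 2x_2) + 2\cdot 2 = 5 + 2x_2$, so that $\|L_\Phi^* F\|^2 = 25 + 4 = 29 > 25$. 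Hence $L_\Phi^* L_\Phi \not\geq L_\Phi L_\Phi^*$. The failure is a genuinely free phenomenon: in one variable the two cross terms $z \cdot 2z^2$ and $2z^2 \cdot z$ coincide, the forward product becomes $z^2 + 4z^3 + 4z^4$ with norm squared $33 \ge 29$, and it is exactly this constructive interference in the forward product that disappears on Fock space.

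The same example already defeats the inner--outer route you sketch: $L_\Phi = L_{x_1} L_{1+2x_2}$ is an isometry times a hyponormal operator, yet $L_\Phi$ is not hyponormal, because the two left-multiplication factors do not commute with one another; the step where you propose to ``combine them using $L_\Theta^* L_\Theta = I$'' therefore has no valid implementation. More conceptually, for a \emph{single} multiplier the row and column norms are tautologically equal, so the column-row property cannot be captured by any operator inequality involving one $L_\Phi$ alone; a correct proof must exploit the joint structure of the whole family $(\Phi_n)$ at once, and in the non-commutative route it must use the interplay between the left multipliers and their commutant, the right multipliers. You will need a genuinely different key lemma.
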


 \subsection{Weak products and Hankel operators}
 \label{ss:wp}

The Hardy space $H^2$ is typically not studied as an isolated object, but
as a member of the scale of $H^p$ spaces.
Of particular importance are $H^1$ and $H^\infty$.
The analogue of $H^\infty$ in the context of the Drury--Arveson space
is generally considered to be the multiplier algebra $\Mult(H^2_d)$.
What is an appropriate analogue of $H^1$?

The classical function theory definition of $H^1$ is
\begin{equation*}
  H^1 = \Big\{ f \in \mathcal{O}(\mathbb{D}): \sup_{0 \le r < 1} \int_{0}^{2 \pi} |f (r e^{i t})| \, dt < \infty \Big\}.
\end{equation*}
However, this definition does not generalize well to the case of the Drury--Arveson space.
Instead, we may take inspiration from the classical fact that
\begin{equation*}
  H^1 = \{f \cdot g: f,g \in H^2\}.
\end{equation*}
When trying to define an $H^1$ space for more general RKHS in a similar way,
one encounters the following obstacle: It is not obvious that the collection
of products of two functions in an RKHS forms a vector space, let alone a Banach space.

Coifman, Rochberg and Weiss \cite{CRW76} circumvented this issue by allowing infinite sums of products.

  \begin{definition}
    Let $\mathcal{H}$ be an RKHS. The \emph{weak product space} of $\mathcal{H}$ is
  \begin{equation*}
    \mathcal{H} \odot \mathcal{H} = \Big\{ h = \sum_{n=1}^\infty f_n g_n: \sum_{n = 1}^\infty \|f_n\| \|g_n\| < \infty \Big\}.
  \end{equation*}
\end{definition}
Equipped with the norm
\begin{equation*}
  \|h\|_{\mathcal{H} \odot \mathcal{H}} = \inf \Big\{ \sum_{n=1}^\infty \|f_n\| \|g_n\| : h = \sum_{n=1}^\infty f_n g_n \Big\},
\end{equation*}
the weak product space $\mathcal{H} \odot \mathcal{H}$ is a Banach function space.
We think of $\mathcal{H} \odot \mathcal{H}$ as playing the role of $H^1$.

Classically, $H^1$ plays a key role in the study of Hankel operators on $H^2$.
Hankel operators (more precisely, little Hankel operators)
can also be defined on the Drury--Arveson space, and more generally on any normalized complete Pick space.

    \begin{definition}
      Let $\mathcal{H}$ be an RKHS that densely contains $\Mult(\mathcal{H})$.
      A Hankel form on $\mathcal{H}$ with symbol $b \in \mathcal{H}$ is a densely defined bilinear form
      \begin{equation*}
        B_b: \mathcal{H} \times \mathcal{H} \to \mathbb{C}, \quad (\varphi,f) \mapsto \langle \varphi f, b \rangle
        \quad (\varphi \in \Mult(\mathcal{H}), f \in \mathcal{H}).
      \end{equation*}
      Let $\Han(\mathcal{H}) = \{b \in \mathcal{H}: B_b \text{ is bounded}\}$.
    \end{definition}

    This definition makes sense in particular in the Drury--Arveson space, and more generally
    in any normalized complete Pick space, since the kernel functions in a normalized complete
    Pick space are multipliers.

    If $b \in \Han(\mathcal{H})$, then we obtain a bounded operator $H_b: \mathcal{H} \to \overline{\mathcal{H}}$ with
    \begin{equation*}
      \langle f, \overline{H_b \varphi} \rangle = \langle \varphi f, b \rangle.
    \end{equation*}
    Here, $\overline{\mathcal{H}}$ denotes the conjugate Hilbert space of $\mathcal{H}$.
    It follows that
    \begin{equation*}
      H_b M_\psi = M_{\overline{\psi}}^* H_b
    \end{equation*}
    for all $\psi \in \Mult(\mathcal{H})$, so we think of $H_b$ as a (little) Hankel operator.

    Nehari's theorem \cite{Nehari57} shows that the dual space of $H^1$ can be identified with the space $\Han(H^2)$
    of symbols of bounded Hankel operators.
    The dual of $H^1$, was then characterized in function theoretic
    terms by Fefferman as $\textup{BMOA}$, the space of functions in $H^2$ with bounded mean oscillation;
    see \cite{Fefferman71,FS72} and also \cite[Chapter VI]{Garnett07}.
    Thus, we think of $\Han(\mathcal{H})$ as an analogue of $\textup{BMOA}$.

    Nehari's theorem was extended to the Drury--Arveson space by Richter and Sundberg \cite{RS14},
    and then to all normalized complete Pick spaces in \cite{AHM+18,Hartz20}.

    \begin{theorem}
      \label{thm:Han_dual}
      Let $\mathcal{H}$ be a normalized complete Pick space. Then 
      \begin{equation*}
        (\mathcal{H} \odot \mathcal{H})^* \cong \Han(\mathcal{H}).
      \end{equation*}
    \end{theorem}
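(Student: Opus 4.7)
The proof plan is to build an explicit isometric isomorphism. For the forward direction $\Han(\mathcal{H}) \to (\mathcal{H} \odot \mathcal{H})^*$: since $\Mult(\mathcal{H})$ is dense in $\mathcal{H}$, the Hankel form of $b \in \Han(\mathcal{H})$ extends to a bounded bilinear form $B_b$ on $\mathcal{H} \times \mathcal{H}$. I would consider the dense subspace $\mathcal{D} \subset \mathcal{H} \odot \mathcal{H}$ of finite sums $h = \sum_{n=1}^N \varphi_n g_n$ with $\varphi_n \in \Mult(\mathcal{H})$ and $g_n \in \mathcal{H}$; each such $h$ already lies in $\mathcal{H}$, and I would define $L_b(h) = \langle h, b\rangle_\mathcal{H}$, which depends only on $h$ as a function. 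Using the identity $B_b(\varphi, g) = \langle \varphi g, b\rangle_\mathcal{H}$, this agrees with $\sum_n B_b(\varphi_n, g_n)$ on any multiplier decomposition, yielding $|L_b(h)| \leq \|B_b\| \sum_n \|\varphi_n\|_\mathcal{H}\|g_n\|_\mathcal{H}$. Taking the infimum over multiplier decompositions and using density of $\Mult(\mathcal{H})$ in $\mathcal{H}$ to equate this with $\|h\|_{\mathcal{H}\odot\mathcal{H}}$, I conclude $\|L_b\| \leq \|B_b\|$, and $L_b$ extends continuously to $(\mathcal{H}\odot\mathcal{H})^*$.

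For the reverse direction $(\mathcal{H}\odot\mathcal{H})^* \to \Han(\mathcal{H})$: normalization gives $1 \in \mathcal{H}$ with $\|1\|_\mathcal{H} = 1$, so the map $\iota: \mathcal{H} \to \mathcal{H}\odot\mathcal{H}$, $f \mapsto f \cdot 1$, is contractive. Applying the Riesz representation theorem to $L \circ \iota \in \mathcal{H}^*$ produces $b \in \mathcal{H}$ with $L(f) = \langle f, b\rangle_\mathcal{H}$ for all $f \in \mathcal{H}$ and $\|b\|_\mathcal{H} \leq \|L\|$. For $\varphi \in \Mult(\mathcal{H})$ and $f \in \mathcal{H}$, the product $\varphi f$ lies in both $\mathcal{H}$ and $\mathcal{H}\odot\mathcal{H}$, so
\[ |B_b(\varphi, f)| = |\langle \varphi f, b\rangle_\mathcal{H}| = |L(\varphi f)| \leq \|L\| \cdot \|\varphi f\|_{\mathcal{H}\odot\mathcal{H}} \leq \|L\| \cdot \|\varphi\|_{\Mult(\mathcal{H})} \|f\|_\mathcal{H}, \]
where I used $\|\varphi f\|_{\mathcal{H}\odot\mathcal{H}} \leq \|\varphi\|_\mathcal{H}\|f\|_\mathcal{H}$ and $\|\varphi\|_\mathcal{H} \leq \|\varphi\|_{\Mult(\mathcal{H})}$ (the latter from $\|\varphi\|_\mathcal{H} = \|M_\varphi 1\|_\mathcal{H} \leq \|M_\varphi\|$ in the normalized setting). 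Hence $b \in \Han(\mathcal{H})$ with $\|B_b\| \leq \|L\|$. Tracing definitions shows that the assignments $b \mapsto L_b$ and $L \mapsto b$ are mutually inverse and isometric.

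The main obstacle I anticipate is the well-definedness of $L_b$ on the full weak product space, since an element $h \in \mathcal{H}\odot\mathcal{H}$ admits many decompositions $h = \sum f_n g_n$ in which the $f_n$ need not be multipliers and the partial sums need not lie in $\mathcal{H}$; a priori, the expression $\sum B_b(f_n, g_n)$ could depend on the chosen decomposition. The plan sidesteps this difficulty by defining $L_b$ only on the dense subspace of multiplier-product sums, where the value is intrinsically the Hilbert space pairing $\langle h, b\rangle_\mathcal{H}$ and therefore depends only on $h$ as a function. Density of $\Mult(\mathcal{H})$ in $\mathcal{H}$, which is built into the definition of $\Han(\mathcal{H})$ and holds in every normalized complete Pick space, is precisely what allows this reduction and the subsequent extension by continuity in the $\odot$-norm.
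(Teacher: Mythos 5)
Your reverse direction $(\mathcal{H}\odot\mathcal{H})^* \to \Han(\mathcal{H})$ is essentially correct: normalization gives the contractive embedding $f\mapsto f\cdot 1$, Riesz produces the symbol $b$, and the one-term factorization $\|\varphi f\|_{\mathcal{H}\odot\mathcal{H}}\le\|\varphi\|_{\mathcal{H}}\|f\|_{\mathcal{H}}$ gives boundedness of $B_b$. (The final displayed bound should stop at $\|L\|\,\|\varphi\|_{\mathcal{H}}\,\|f\|_{\mathcal{H}}$; weakening to $\|\varphi\|_{\Mult(\mathcal{H})}$ is unnecessary and gives a worse estimate than the one you actually want.)

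The forward direction contains a genuine gap, which you partly flag in your last paragraph but do not close. After defining $L_b(h)=\langle h,b\rangle_{\mathcal{H}}$ on the dense subspace $\mathcal{D}$ of finite multiplier-product sums, your estimate bounds $|L_b(h)|$ by $\|B_b\|$ times the infimum of $\sum_n\|\varphi_n\|\,\|g_n\|$ over \emph{finite multiplier decompositions} of $h$. You then assert this infimum equals $\|h\|_{\mathcal{H}\odot\mathcal{H}}$ ``by density,'' but density of $\Mult(\mathcal{H})$ in $\mathcal{H}$ does not deliver this. The weak product norm is an infimum over \emph{all} decompositions, including infinite ones with non-multiplier factors; truncating and approximating the $f_n$ by multipliers changes $h$ by a remainder $d\in\mathcal{D}$ that is small in the $\odot$-norm but whose multiplier-decomposition cost is a priori uncontrolled. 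Iterating produces an infinite multiplier decomposition $h=\sum_k m_k$ of small total cost, but this series converges to $h$ only in the $\odot$-norm, not in $\mathcal{H}$; so passing from $\langle h,b\rangle=\lim_K\langle\sum_{k\le K}m_k,b\rangle$ would already require the $\odot$-continuity of $L_b$ that you are trying to prove -- the argument is circular. A symptom of the problem is that your proposal never uses the complete Pick hypothesis, whereas this Nehari-type duality is not expected to hold for a general RKHS with dense multipliers. The known proofs (Richter--Sundberg for $H^2_d$, and the general complete Pick case via the column-row property and weak product factorization, as in Theorem~\ref{thm:JM_WP} and the surrounding discussion) invoke the complete Pick structure precisely to control the passage from multiplier decompositions to arbitrary $\odot$-decompositions; that is the step your plan leaves open.
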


    At this stage, we have reasonable analogues of $H^1,H^2,H^\infty$ and $\textup{BMOA}$
    for the Drury--Arveson space, and in fact for any normalized complete Pick space $\mathcal{H}$,
    namely $\mathcal{H} \odot \mathcal{H}, \mathcal{H}, \Mult(\mathcal{H})$ and $\Han(\mathcal{H})$,
    respectively.
    One can then use complex interpolation between $\mathcal{H} \odot \mathcal{H}$
    and $\Han(\mathcal{H})$ to define an $\mathcal{H}^p$ scale, see \cite{AHM+20}.
    Theorem \ref{thm:Han_dual} can be used to show that one recovers the original Hilbert space $\mathcal{H}$
    in the middle. Moreover, the functions in $\mathcal{H}^p$ have the correct order of growth,
    but many questions remain open; see \cite{AHM+20}.

    Returning to the definition of the weak product space, it is natural to ask if the sum is actually
    required. Remarkably, the answer turns out to be no.
    This was proved by Jury and Martin \cite{JM18} for all complete Pick spaces satisfying
    the column-row property, which we now know to be all complete Pick spaces \cite{Hartz20}.

    \begin{theorem}
      \label{thm:JM_WP}
      Let $\mathcal{H}$ be a complete Pick space.
      Then each $h \in \mathcal{H} \odot \mathcal{H}$ factors as
      $h = f g$ for some $f,g \in \mathcal{H}$ with
      $\|h\|_{\mathcal{H} \odot \mathcal{H}} = \|f\| \|g\|$.
  \end{theorem}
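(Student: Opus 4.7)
The strategy is to leverage two universality-type reductions in order to reach a setting where an inner-outer style factorization works sharply, and then to use the column--row property to upgrade inequalities to equalities.

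First reduction: apply Theorem~\ref{thm:AM_2}. Any normalized complete Pick space $\mathcal{H}$ is, via an embedding $b: X \to \mathbb{B}_d$ for some $d \in \mathbb{N} \cup \{\infty\}$, realized so that composition with $b$ is a co-isometric quotient from $H^2_d$ onto $\mathcal{H}$ that respects pointwise products. Hence a single-product factorization $H = FG$ in $H^2_d$ with $\|F\|\,\|G\| = \|H\|_{H^2_d \odot H^2_d}$ pushes down to $h = (F \circ b)(G \circ b)$ in $\mathcal{H}$; a routine lifting argument for weak products transfers the corresponding norm equality, reducing the theorem to the Drury--Arveson space (including $d = \infty$).

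Second reduction: use Proposition~\ref{prop:NC_DA} and Theorem~\ref{thm:NC_DA_mult} to pass from $H^2_d$ to the non-commutative Hardy space $H^2_{nc}$. The resulting co-isometric quotient respects products after symmetrization, so weak products lift and factorizations push back down. The reward is that in $H^2_{nc}$ the left shifts $L_1, \ldots, L_d$ are isometries with pairwise orthogonal ranges, producing a Wold-type structure that is unavailable in the commutative setting.

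The core of the proof is then the factorization in $H^2_{nc}$. Given a representation $H = \sum_n F_n G_n$ (after rescaling so $\|F_n\| = \|G_n\|$), I would interpret the column $(F_n)$ as generating a left-$L$-invariant subspace and apply the non-commutative Beurling theorem of Arias--Popescu \cite{AP00} and Davidson--Pitts \cite{DP99} to extract an inner multiplier $\Theta$ with $F_n = \Theta F_n'$. The orthogonality of the ranges of the $L_i$ makes this extraction norm-sharp, and pairing against the row $(G_n)$ collapses the infinite sum to a single product $H = F'G'$ in $H^2_{nc}$ with $\|F'\|\,\|G'\| = \|H\|_{H^2_{nc} \odot H^2_{nc}}$.

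The hard part is bookkeeping norms exactly rather than up to a constant, and this is precisely where the column--row property with constant $1$ is indispensable: it is what controls how norms of columns and rows of multipliers compare under each of the two reductions. Jury and Martin originally imposed this property as a hypothesis; the universal validity of the column--row property with sharp constant $1$ for all normalized complete Pick spaces is the ingredient that lets the equality $\|h\|_{\mathcal{H} \odot \mathcal{H}} = \|f\|\,\|g\|$ be achieved, rather than a bound with a factor $c \ge 1$.
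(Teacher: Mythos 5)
The paper you are reading does not actually give a proof of this theorem; it only states the result and cites Jury--Martin \cite{JM18} together with \cite{Hartz20} for the universal column--row property. So your reconstruction has to be judged against the Jury--Martin argument itself. You have correctly identified the key external inputs (the non-commutative Beurling theorem of Arias--Popescu and Davidson--Pitts, and the column--row property with constant~$1$), but the architecture you propose has a genuine gap, and the central lemma is misstated.

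The gap is the non-commutativity obstruction. You propose to transport the whole problem to $H^2_{nc}$, factor there, and push down. But the crucial ``collapse'' of the infinite sum to a single product requires commutativity: one factors the \emph{column} $(f_n)_n$ as $(f_n)_n = \Phi\,f$, where $\Phi=(\varphi_n)_n$ is a contractive column multiplier and $f$ is a \emph{single} free outer (cyclic) function with $\|f\|=\|(f_n)\|_{\mathcal{H}\otimes\ell^2}$, so that $f_n = \varphi_n f$ for every $n$; then
\begin{equation*}
h=\sum_n f_n g_n=\sum_n \varphi_n f\,g_n = f\cdot\Big(\sum_n \varphi_n g_n\Big),
\end{equation*}
and the column--row property with constant $1$ guarantees that the row $[\varphi_1\ \varphi_2\ \cdots]$ is contractive, so $g':=\sum_n\varphi_n g_n$ lies in $\mathcal{H}$ with $\|g'\|\le\|(g_n)\|_{\mathcal{H}\otimes\ell^2}$. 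The equality $f\sum_n\varphi_n g_n=\sum_n\varphi_n f\,g_n$ uses that multiplication in $\mathcal{H}$ is pointwise and hence commutative. In $H^2_{nc}$, with $F_n=L_{\varphi_n}f$, the series $\sum_n L_{\varphi_n}f\cdot G_n$ does not equal $f\cdot\sum_n L_{\varphi_n}G_n$, so the collapse simply does not happen there. The role of the non-commutative Beurling theorem in Jury--Martin is therefore more circumscribed than in your sketch: it is used to \emph{prove the inner--outer factorization lemma for $\ell^2$-valued elements of $\mathcal{H}$} (via the Agler--McCarthy embedding, lifting to the Fock space, and the fact that a cyclic left-invariant subspace of $\mathcal{F}(E)\otimes\ell^2$ has a one-dimensional wandering subspace), not to factor a weak product inside $H^2_{nc}$.

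Your statement of the factorization, ``$F_n=\Theta F_n'$,'' is also not the right shape: if each $F_n$ gets its own $F_n'$, you still have an infinite sum afterwards. What you need is a single function $f$ on the right, $F_n=\varphi_n f$ with a common $f$ and a column $\Phi=(\varphi_n)$. That is what makes the sum collapse. Finally, the argument above only produces, for each $\varepsilon>0$, a factorization $h=f_\varepsilon g_\varepsilon$ with $\|f_\varepsilon\|\|g_\varepsilon\|\le\|h\|_{\mathcal{H}\odot\mathcal{H}}+\varepsilon$; to achieve equality one normalizes $\|f_\varepsilon\|=\|g_\varepsilon\|$, extracts weakly convergent subsequences, and uses that weak convergence in an RKHS implies pointwise convergence so that the limits still multiply to $h$, together with lower semicontinuity of the norm. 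Your proposal asserts the exact equality without addressing this limiting step.
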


  The proof of Jury and Martin uses the non-commutative approach
  to the Drury--Arveson space, and no commutative proof is currently known.
  This becomes even more remarkable when considering that before the work of Jury and Martin,
  Theorem \ref{thm:JM_WP} was not even known for the Dirichlet space, a space of one-variable
  holomorphic functions.

  It is a basic fact that the multiplier algebras of $H^2$ and of $H^1$ are both $H^\infty$,
  and in particular agree.
  As was discussed in Subsection \ref{ss:ft_mult}, the description of multipliers in the Drury--Arveson space is more involved,
  but one may still ask if $\Mult(\mathcal{H} \odot \mathcal{H}) = \Mult(\mathcal{H})$.
  A positive answer was established by Richter and Wick \cite{RW16} in the case of the Dirichlet space
  and of the Drury--Arveson space $H^2_d$ for $d \le 3$.
  Using operator space techniques and dilation theory, Clou\^atre and the author \cite{CH19}
  gave a positive answer for all complete Pick spaces satisfying the column-row property,
  which are now known to be all complete Pick spaces.

    \begin{theorem}
      Let $\mathcal{H}$ be a complete Pick space. Then
      $\Mult(\mathcal{H} \odot \mathcal{H}) = \Mult(\mathcal{H})$.
    \end{theorem}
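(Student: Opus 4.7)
The inclusion $\Mult(\mathcal{H}) \subseteq \Mult(\mathcal{H} \odot \mathcal{H})$ is the easy direction and proceeds by direct estimate. For $\varphi \in \Mult(\mathcal{H})$ and $h = \sum_n f_n g_n \in \mathcal{H} \odot \mathcal{H}$, the representation $\varphi h = \sum_n (\varphi f_n) g_n$ yields
\[ \sum_n \|\varphi f_n\|_{\mathcal{H}} \|g_n\|_{\mathcal{H}} \leq \|\varphi\|_{\Mult(\mathcal{H})} \sum_n \|f_n\|_{\mathcal{H}} \|g_n\|_{\mathcal{H}}, \]
and taking the infimum over weak-product representations gives the contractive bound $\|M_\varphi\|_{\mathcal{H} \odot \mathcal{H}} \leq \|\varphi\|_{\Mult(\mathcal{H})}$.

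For the reverse inclusion, let $\varphi \in \Mult(\mathcal{H} \odot \mathcal{H})$. The plan is to pass to duals. By Theorem \ref{thm:Han_dual}, $(\mathcal{H} \odot \mathcal{H})^* \cong \Han(\mathcal{H})$, so $M_\varphi^*$ is a bounded operator on $\Han(\mathcal{H})$. Each kernel function $k_w$ lies in $\Han(\mathcal{H})$, since the Hankel form $B_{k_w}(\psi,f) = \psi(w) f(w)$ is obviously bounded on $\Mult(\mathcal{H}) \times \mathcal{H}$. Under the identification of the pairing as $\langle h, k_w\rangle = h(w)$ for $h \in \mathcal{H} \odot \mathcal{H}$, a one-line calculation gives $M_\varphi^* k_w = \overline{\varphi(w)} k_w$. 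Thus on the dual side, $\varphi$ already acts as a pointwise multiplier on the kernel functions, just as it would if it were a genuine multiplier of $\mathcal{H}$.

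The main obstacle is that the Hankel norm and the Hilbert norm are fundamentally different on kernel functions, roughly $\|k_w\|_{\Han(\mathcal{H})} = k(w,w)$ versus $\|k_w\|_{\mathcal{H}} = k(w,w)^{1/2}$, so the adjoint estimate on $\Han(\mathcal{H})$ does not directly yield the Pick matrix inequality required by Proposition \ref{prop:mult_crit}. To bridge this gap I would combine two structural inputs available for all complete Pick spaces: the Jury--Martin strong factorization (Theorem \ref{thm:JM_WP}), which lets me write any $h \in \mathcal{H} \odot \mathcal{H}$ as a single product $fg$ with $\|h\| = \|f\|\|g\|$, and the column-row property, which compares column and row norms of infinite multiplier tuples with constant one. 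The former lets me interpret $M_\varphi$ as $(f,g) \mapsto (\varphi f) g$; varying $g$ over kernel functions probes the full Hilbert-space geometry of $\mathcal{H}$, and the column-row inequality then converts the resulting row-type boundedness into the column-type boundedness that encodes membership in $\Mult(\mathcal{H})$.

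The hardest step is this bootstrap: naively, uniform control of $(\varphi f) g$ in $\mathcal{H} \odot \mathcal{H}$ as $g$ ranges over a family is much weaker than control of $\varphi f$ in $\mathcal{H}$, because weak-product membership is weaker than Hilbert-space membership. Packaging the passage correctly requires an operator-space dilation of $M_\varphi$ on $\mathcal{H} \odot \mathcal{H}$ to a structured tuple for which the column-row property can be applied, and then verifying the same estimate entrywise at every matrix level so that the conclusion $\Mult(\mathcal{H} \odot \mathcal{H}) = \Mult(\mathcal{H})$ holds completely isometrically rather than merely isomorphically. An additional routine check, using that the pairing identifies $M_\varphi^*$ as acting by $\overline{\varphi(w)}$ on $k_w$, shows that the isometric identification is the identity map $\varphi \mapsto \varphi$.
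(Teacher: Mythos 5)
Your easy direction is fine. The reverse direction, however, is not a proof; it is an annotated list of the tools you expect to matter, with the decisive step left as an aspiration. You correctly identify the central difficulty — boundedness of $M_\varphi^*$ on $\Han(\mathcal{H})$ gives $M_\varphi^* k_w = \overline{\varphi(w)} k_w$ with respect to the Hankel norm, which scales like $k(w,w)$, not $k(w,w)^{1/2}$, so this does not directly yield the Pick-matrix criterion from Proposition \ref{prop:mult_crit}. But you then ``bridge the gap'' only with a promise: an unspecified ``operator-space dilation of $M_\varphi$ on $\mathcal{H}\odot\mathcal{H}$ to a structured tuple,'' followed by ``the column-row inequality then converts the resulting row-type boundedness into the column-type boundedness.'' Which row? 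Which column? Which tuple? As written, this is a plausibility argument, not an argument. The Jury--Martin factorization is also invoked without a definite role — you say ``varying $g$ over kernel functions probes the full Hilbert-space geometry,'' but it does not follow from $\varphi f k_w\in\mathcal{H}\odot\mathcal{H}$ (with a factorization $f'_w g'_w$) that $\varphi f\in\mathcal{H}$, and nothing you wrote explains how to get there.

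This matters because the hard content of the theorem is exactly the part you leave blank. The paper's proof (Clou\^atre--Hartz, \cite{CH19}) does use dilation theory and operator-space estimates built around the column-row property, so you have located the correct ingredients; but the delicate part is the specific dilation/decomposition that turns an operator on the Banach space $\mathcal{H}\odot\mathcal{H}$ into a controllable tuple on Hilbert space. Without constructing that object and verifying the norm inequalities, the argument does not close. In its present form this is a research plan, not a proof, and the gap is precisely at the step you yourself describe as ``the hardest.''
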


\bibliographystyle{amsplain}
\bibliography{da_invitation_literature}

\providecommand{\bysame}{\leavevmode\hbox to3em{\hrulefill}\thinspace}
\providecommand{\MR}{\relax\ifhmode\unskip\space\fi MR }
\providecommand{\MRhref}[2]{%
  \href{http://www.ams.org/mathscinet-getitem?mr=#1}{#2}
}
\providecommand{\href}[2]{#2}
\begin{thebibliography}{100}

\bibitem{Agler82}
Jim Agler, \emph{The {A}rveson extension theorem and coanalytic models},
  Integral Equations Operator Theory \textbf{5} (1982), no.~5, 608--631.
  \MR{697007 (84g:47011)}

\bibitem{Agler88}
\bysame, \emph{An abstract approach to model theory}, Surveys of some recent
  results in operator theory, {V}ol.\ {II}, Pitman Res. Notes Math. Ser., vol.
  192, Longman Sci. Tech., Harlow, 1988, pp.~1--23. \MR{976842 (90a:47016)}

\bibitem{Agler88a}
Jim Agler, \emph{Some interpolation theorems of {N}evanlinna-{P}ick type},
  Preprint, 1988.

\bibitem{AM00}
Jim Agler and John~E. McCarthy, \emph{Complete {N}evanlinna-{P}ick kernels}, J.
  Funct. Anal. \textbf{175} (2000), no.~1, 111--124. \MR{1774853 (2001h:47019)}

\bibitem{AM00a}
\bysame, \emph{Nevanlinna-{P}ick kernels and localization}, Operator
  theoretical methods ({T}imi\c soara, 1998), Theta Found., Bucharest, 2000,
  pp.~1--20. \MR{1770310 (2001i:47019)}

\bibitem{AM02}
\bysame, \emph{Pick interpolation and {H}ilbert function spaces}, Graduate
  Studies in Mathematics, vol.~44, American Mathematical Society, Providence,
  RI, 2002. \MR{1882259 (2003b:47001)}

\bibitem{AMY20}
Jim Agler, John~E. McCarthy, and Nicholas Young, \emph{Operator analysis:
  Hilbert space methods in complex analysis}, Cambridge Tracts in Mathematics,
  Cambridge University Press, 2020.

\bibitem{AK06}
Fernando Albiac and Nigel~J. Kalton, \emph{Topics in {B}anach space theory},
  Graduate Texts in Mathematics, vol. 233, Springer, New York, 2006.
  \MR{2192298 (2006h:46005)}

\bibitem{Aleksandrov82}
A.~B. Aleksandrov, \emph{The existence of inner functions in a ball}, Mat. Sb.
  (N.S.) \textbf{118(160)} (1982), no.~2, 147--163, 287. \MR{658785}

\bibitem{AHM+17a}
Alexandru Aleman, Michael Hartz, John~E. McCarthy, and Stefan Richter,
  \emph{The {S}mirnov class for spaces with the complete {P}ick property}, J.
  Lond. Math. Soc. (2) \textbf{96} (2017), no.~1, 228--242.

\bibitem{AHM+17c}
\bysame, \emph{Factorizations induced by complete {N}evanlinna-{P}ick factors},
  Adv. Math. \textbf{335} (2018), 372--404.

\bibitem{AHM+17}
\bysame, \emph{Interpolating sequences in spaces with the complete {P}ick
  property}, Int. Math. Res. Not. IMRN (2019), no.~12, 3832--3854.

\bibitem{AHM+18a}
\bysame, \emph{Radially weighted {B}esov spaces and the {P}ick property},
  Analysis of Operators on Function Spaces, Springer International Publishing,
  2019, pp.~29--61.

\bibitem{AHM+20a}
\bysame, \emph{Multiplier tests and subhomogeneity of multiplier algebras},
  arXiv:2008.00981 (2020).

\bibitem{AHM+20}
\bysame, \emph{An {$H^p$} scale for complete {P}ick spaces}, Studia Math.
  \textbf{258} (2021), no.~3, 343--359. \MR{4228305}

\bibitem{AHM+18}
\bysame, \emph{Weak products of complete {P}ick spaces}, Indiana Univ. Math. J.
  \textbf{70} (2021), no.~1, 325--352.

\bibitem{AHM+ar}
\bysame, \emph{The common range of co-analytic toeplitz operators on the
  {D}rury--{A}rveson space}, J. Anal. Math (to appear), arXiv:2105.01110.

\bibitem{AT02}
C{\u{a}}lin-Grigore Ambrozie and Dan Timotin, \emph{On an intertwining lifting
  theorem for certain reproducing kernel {H}ilbert spaces}, Integral Equations
  Operator Theory \textbf{42} (2002), no.~4, 373--384. \MR{1885440
  (2002m:47012)}

\bibitem{Ando63}
T.~And{\^o}, \emph{On a pair of commutative contractions}, Acta Sci. Math.
  (Szeged) \textbf{24} (1963), 88--90. \MR{0155193 (27 \#5132)}

\bibitem{ARS08}
N.~Arcozzi, R.~Rochberg, and E.~Sawyer, \emph{Carleson measures for the
  {D}rury-{A}rveson {H}ardy space and other {B}esov-{S}obolev spaces on complex
  balls}, Adv. Math. \textbf{218} (2008), no.~4, 1107--1180. \MR{2419381
  (2009j:46062)}

\bibitem{AMP+19}
Nicola Arcozzi, Alessandro Monguzzi, Marco~M. Peloso, and Maura Salvatori,
  \emph{Paley-{W}iener theorems on the {S}iegel upper half-space}, J. Fourier
  Anal. Appl. \textbf{25} (2019), no.~4, 1958--1986. \MR{3977143}

\bibitem{ARS+19}
Nicola Arcozzi, Richard Rochberg, Eric~T. Sawyer, and Brett~D. Wick, \emph{The
  {D}irichlet space and related function spaces}, Mathematical Surveys and
  Monographs, vol. 239, American Mathematical Society, Providence, RI, 2019.
  \MR{3969961}

\bibitem{AP00}
Alvaro Arias and Gelu Popescu, \emph{Noncommutative interpolation and {P}oisson
  transforms}, Israel J. Math. \textbf{115} (2000), 205--234. \MR{1749679}

\bibitem{Aronszajn50}
N.~Aronszajn, \emph{Theory of reproducing kernels}, Trans. Amer. Math. Soc.
  \textbf{68} (1950), 337--404. \MR{0051437 (14,479c)}

\bibitem{Arveson69}
William Arveson, \emph{Subalgebras of {$C\sp{\ast} $}-algebras}, Acta Math.
  \textbf{123} (1969), 141--224. \MR{0253059 (40 \#6274)}

\bibitem{Arveson72}
\bysame, \emph{Subalgebras of {$C^{\ast} $}-algebras. {II}}, Acta Math.
  \textbf{128} (1972), no.~3-4, 271--308. \MR{0394232 (52 \#15035)}

\bibitem{Arveson75}
\bysame, \emph{Interpolation problems in nest algebras}, J. Functional Analysis
  \textbf{20} (1975), no.~3, 208--233. \MR{0383098 (52 \#3979)}

\bibitem{Arveson76}
\bysame, \emph{An invitation to {$C\sp*$}-algebras}, Springer-Verlag, New
  York-Heidelberg, 1976, Graduate Texts in Mathematics, No. 39. \MR{0512360 (58
  \#23621)}

\bibitem{Arveson98}
\bysame, \emph{Subalgebras of {$C\sp *$}-algebras. {III}. {M}ultivariable
  operator theory}, Acta Math. \textbf{181} (1998), no.~2, 159--228.
  \MR{1668582 (2000e:47013)}

\bibitem{Arveson02a}
\bysame, \emph{The {D}irac operator of a commuting {$d$}-tuple}, J. Funct.
  Anal. \textbf{189} (2002), no.~1, 53--79. \MR{1887629 (2003a:47014)}

\bibitem{Arveson02}
\bysame, \emph{A short course on spectral theory}, Graduate Texts in
  Mathematics, vol. 209, Springer-Verlag, New York, 2002. \MR{1865513
  (2002j:47001)}

\bibitem{Athavale90}
Ameer Athavale, \emph{On the intertwining of joint isometries}, J. Operator
  Theory \textbf{23} (1990), no.~2, 339--350. \MR{1066811}

\bibitem{Axler92}
Sheldon Axler, \emph{Interpolation by multipliers of the {D}irichlet space},
  Quart. J. Math. Oxford Ser. (2) \textbf{43} (1992), no.~172, 409--419.
  \MR{1188383}

\bibitem{BMV16}
Joseph~A. Ball, Gregory Marx, and Victor Vinnikov, \emph{Noncommutative
  reproducing kernel {H}ilbert spaces}, J. Funct. Anal. \textbf{271} (2016),
  no.~7, 1844--1920. \MR{3535321}

\bibitem{BMV18}
\bysame, \emph{Interpolation and transfer-function realization for the
  noncommutative {S}chur-{A}gler class}, Operator theory in different settings
  and related applications, Oper. Theory Adv. Appl., vol. 262,
  Birkh\"{a}user/Springer, Cham, 2018, pp.~23--116. \MR{3792241}

\bibitem{BTV01}
Joseph~A. Ball, Tavan~T. Trent, and Victor Vinnikov, \emph{Interpolation and
  commutant lifting for multipliers on reproducing kernel {H}ilbert spaces},
  Operator theory and analysis ({A}msterdam, 1997), Oper. Theory Adv. Appl.,
  vol. 122, Birkh\"auser, Basel, 2001, pp.~89--138. \MR{1846055 (2002f:47028)}

\bibitem{BHM17}
Kelly Bickel, Michael Hartz, and John~E. McCarthy, \emph{A multiplier algebra
  functional calculus}, Trans. Amer. Math. Soc. \textbf{370} (2018), no.~12,
  8467--8482.

\bibitem{Bishop94}
C.~Bishop, \emph{Interpolating sequences for the {Dirichlet} space and its
  multipliers}, Preprint, 1994.

\bibitem{Boee02}
Bjarte B{\o}e, \emph{Interpolating sequences for {B}esov spaces}, J. Funct.
  Anal. \textbf{192} (2002), no.~2, 319--341. \MR{1923404}

\bibitem{Boe05}
\bysame, \emph{An interpolation theorem for {H}ilbert spaces with
  {N}evanlinna-{P}ick kernel}, Proc. Amer. Math. Soc. \textbf{133} (2005),
  no.~7, 2077--2081. \MR{2137874}

\bibitem{BCP79}
Scott~W. Brown, Bernard Chevreau, and Carl~M. Pearcy, \emph{Contractions with
  rich spectrum have invariant subspaces}, J. Operator Theory \textbf{1}
  (1979), no.~1, 123--136.

\bibitem{BCP88}
\bysame, \emph{On the structure of contraction operators {II}}, J. Funct. Anal.
  \textbf{76} (1988), no.~1, 30--55.

\bibitem{Bunce84}
John~W. Bunce, \emph{Models for {$n$}-tuples of noncommuting operators}, J.
  Funct. Anal. \textbf{57} (1984), no.~1, 21--30. \MR{744917 (85k:47019)}

\bibitem{Carleson58}
Lennart Carleson, \emph{An interpolation problem for bounded analytic
  functions}, Amer. J. Math. \textbf{80} (1958), 921--930. \MR{0117349 (22
  \#8129)}

\bibitem{Carleson62}
\bysame, \emph{Interpolations by bounded analytic functions and the corona
  problem}, Ann. of Math. (2) \textbf{76} (1962), 547--559. \MR{0141789}

\bibitem{CFO10}
Carme Cascante, Joan F\`abrega, and Joaqu\'{\i}n~M. Ortega, \emph{On weighted
  {T}oeplitz, big {H}ankel operators and {C}arleson measures}, Integral
  Equations Operator Theory \textbf{66} (2010), no.~4, 495--528. \MR{2609237}

\bibitem{Chang76}
Sun Yung~A. Chang, \emph{A characterization of {D}ouglas subalgebras}, Acta
  Math. \textbf{137} (1976), no.~2, 82--89. \MR{428044}

\bibitem{CG03}
Xiaoman Chen and Kunyu Guo, \emph{Analytic {H}ilbert modules}, Chapman \&
  Hall/CRC Research Notes in Mathematics, vol. 433, Chapman \& Hall/CRC, Boca
  Raton, FL, 2003. \MR{1988884 (2004d:47024)}

\bibitem{CD16a}
Rapha\"el Clou\^atre and Kenneth~R. Davidson, \emph{Absolute continuity for
  commuting row contractions}, J. Funct. Anal. \textbf{271} (2016), no.~3,
  620--641. \MR{3506960}

\bibitem{CH18}
Rapha\"{e}l Clou\^{a}tre and Michael Hartz, \emph{Multiplier algebras of
  complete {N}evanlinna--{P}ick spaces: {D}ilations, boundary representations
  and hyperrigidity}, J. Funct. Anal. \textbf{274} (2018), no.~6, 1690--1738.
  \MR{3758546}

\bibitem{CH19}
\bysame, \emph{Multipliers and operator space structure of weak product
  spaces}, Anal. PDE \textbf{14} (2021), no.~6, 1905--1924.

\bibitem{CRW76}
R.~R. Coifman, R.~Rochberg, and Guido Weiss, \emph{Factorization theorems for
  {H}ardy spaces in several variables}, Ann. of Math. (2) \textbf{103} (1976),
  no.~3, 611--635. \MR{0412721}

\bibitem{CSW11}
{\c{S}}erban Costea, Eric~T. Sawyer, and Brett~D. Wick, \emph{The corona
  theorem for the {D}rury-{A}rveson {H}ardy space and other holomorphic
  {B}esov-{S}obolev spaces on the unit ball in {$\mathbb C^n$}}, Anal. PDE
  \textbf{4} (2011), no.~4, 499--550. \MR{3077143}

\bibitem{Davidson96}
Kenneth~R. Davidson, \emph{{$C\sp *$}-algebras by example}, Fields Institute
  Monographs, vol.~6, American Mathematical Society, Providence, RI, 1996.
  \MR{1402012 (97i:46095)}

\bibitem{DH11}
Kenneth~R. Davidson and Ryan Hamilton, \emph{Nevanlinna-{P}ick interpolation
  and factorization of linear functionals}, Integral Equations Operator Theory
  \textbf{70} (2011), no.~1, 125--149. \MR{2786738 (2012e:47045)}

\bibitem{DHS15a}
Kenneth~R. Davidson, Michael Hartz, and Orr~Moshe Shalit, \emph{Erratum to:
  Multipliers of embedded discs}, Complex Analysis and Operator Theory
  \textbf{9} (2015), no.~2, 323--327 (English).

\bibitem{DHS15}
\bysame, \emph{Multipliers of embedded discs}, Complex Analysis and Operator
  Theory \textbf{9} (2015), no.~2, 287--321, Erratum, ibid., p 323--327.

\bibitem{DL10}
Kenneth~R. Davidson and Trieu Le, \emph{Commutant lifting for commuting row
  contractions}, Bull. Lond. Math. Soc. \textbf{42} (2010), no.~3, 506--516.
  \MR{2651946}

\bibitem{DP98a}
Kenneth~R. Davidson and David~R. Pitts, \emph{The algebraic structure of
  non-commutative analytic {T}oeplitz algebras}, Math. Ann. \textbf{311}
  (1998), no.~2, 275--303, Erratum, Math. Ann. 361 (2015), no. 3-- 4, 1123 --
  1124. \MR{1625750 (2001c:47082)}

\bibitem{DP98}
\bysame, \emph{Nevanlinna-{P}ick interpolation for non-commutative analytic
  {T}oeplitz algebras}, Integral Equations Operator Theory \textbf{31} (1998),
  no.~3, 321--337. \MR{1627901 (2000g:47016)}

\bibitem{DP99}
\bysame, \emph{Invariant subspaces and hyper-reflexivity for free semigroup
  algebras}, Proc. London Math. Soc. (3) \textbf{78} (1999), no.~2, 401--430.
  \MR{1665248 (2000k:47005)}

\bibitem{DRS11}
Kenneth~R. Davidson, Christopher Ramsey, and Orr~Moshe Shalit, \emph{The
  isomorphism problem for some universal operator algebras}, Adv. Math.
  \textbf{228} (2011), no.~1, 167--218. \MR{2822231 (2012j:46083)}

\bibitem{DRS15}
\bysame, \emph{Operator algebras for analytic varieties}, Trans. Amer. Math.
  Soc. \textbf{367} (2015), no.~2, 1121--1150. \MR{3280039}

\bibitem{Douglas66}
R.~G. Douglas, \emph{On majorization, factorization, and range inclusion of
  operators on {H}ilbert space}, Proc. Amer. Math. Soc. \textbf{17} (1966),
  413--415. \MR{0203464 (34 \#3315)}

\bibitem{Douglas06}
Ronald~G. Douglas, \emph{A new kind of index theorem}, Analysis, geometry and
  topology of elliptic operators, World Sci. Publ., Hackensack, NJ, 2006,
  pp.~369--382. \MR{2246775 (2009j:47074)}

\bibitem{DTY16}
Ronald~G. Douglas, Xiang Tang, and Guoliang Yu, \emph{An analytic
  {G}rothendieck {R}iemann {R}och theorem}, Adv. Math. \textbf{294} (2016),
  307--331. \MR{3479565}

\bibitem{Drury78}
S.~W. Drury, \emph{A generalization of von {N}eumann's inequality to the
  complex ball}, Proc. Amer. Math. Soc. \textbf{68} (1978), no.~3, 300--304.
  \MR{480362 (80c:47010)}

\bibitem{Drury83}
\bysame, \emph{Remarks on von {N}eumann's inequality}, Banach spaces, harmonic
  analysis, and probability theory ({S}torrs, {C}onn., 1980/1981), Lecture
  Notes in Math., vol. 995, Springer, Berlin, 1983, pp.~14--32. \MR{717226}

\bibitem{DS04}
Peter Duren and Alexander Schuster, \emph{Bergman spaces}, Mathematical Surveys
  and Monographs, vol. 100, American Mathematical Society, Providence, RI,
  2004. \MR{2033762}

\bibitem{Duren70}
Peter~L. Duren, \emph{Theory of {$H^{p}$} spaces}, Pure and Applied
  Mathematics, Vol. 38, Academic Press, New York-London, 1970. \MR{0268655 (42
  \#3552)}

\bibitem{EKM+14}
Omar El-Fallah, Karim Kellay, Javad Mashreghi, and Thomas Ransford, \emph{A
  primer on the {D}irichlet space}, Cambridge Tracts in Mathematics, vol. 203,
  Cambridge University Press, Cambridge, 2014. \MR{3185375}

\bibitem{EE15}
Miroslav Engli{\v{s}} and J{\"o}rg Eschmeier, \emph{Geometric
  {A}rveson-{D}ouglas conjecture}, Adv. Math. \textbf{274} (2015), 606--630.
  \MR{3318162}

\bibitem{Eschmeier11}
J{\"o}rg Eschmeier, \emph{Essential normality of homogeneous submodules},
  Integral Equations Operator Theory \textbf{69} (2011), no.~2, 171--182.
  \MR{2765583 (2012c:47019)}

\bibitem{EP02}
J{\"o}rg Eschmeier and Mihai Putinar, \emph{Spherical contractions and
  interpolation problems on the unit ball}, J. Reine Angew. Math. \textbf{542}
  (2002), 219--236. \MR{1880832 (2002k:47019)}

\bibitem{FX13}
Quanlei Fang and Jingbo Xia, \emph{Corrigendum to ``{M}ultipliers and essential
  norm on the {D}rury-{A}rveson space''}, Proc. Amer. Math. Soc. \textbf{141}
  (2013), no.~1, 363--368. \MR{2988737}

\bibitem{FX15}
\bysame, \emph{On the problem of characterizing multipliers for the
  {D}rury-{A}rveson space}, Indiana Univ. Math. J. \textbf{64} (2015), no.~3,
  663--696. \MR{3361283}

\bibitem{FX19}
\bysame, \emph{Analytical aspects of the {D}rury-{A}rveson space}, Handbook of
  analytic operator theory, CRC Press/Chapman Hall Handb. Math. Ser., CRC
  Press, Boca Raton, FL, 2019, pp.~203--221. \MR{3966619}

\bibitem{FX20}
\bysame, \emph{A closer look at a {P}oisson-like condition on the
  {D}rury-{A}rveson space}, Proc. Amer. Math. Soc. \textbf{148} (2020), no.~6,
  2497--2507. \MR{4080892}

\bibitem{FS72}
C.~Fefferman and E.~M. Stein, \emph{{$H^{p}$} spaces of several variables},
  Acta Math. \textbf{129} (1972), no.~3-4, 137--193. \MR{447953}

\bibitem{Fefferman71}
Charles Fefferman, \emph{Characterizations of bounded mean oscillation}, Bull.
  Amer. Math. Soc. \textbf{77} (1971), 587--588. \MR{280994}

\bibitem{Frazho82}
Arthur~E. Frazho, \emph{Models for noncommuting operators}, J. Funct. Anal.
  \textbf{48} (1982), no.~1, 1--11. \MR{671311 (84h:47010)}

\bibitem{Frazho84}
\bysame, \emph{Complements to models for noncommuting operators}, J. Funct.
  Anal. \textbf{59} (1984), no.~3, 445--461. \MR{769375 (86h:47010)}

\bibitem{Garnett07}
John~B. Garnett, \emph{Bounded analytic functions}, first ed., Graduate Texts
  in Mathematics, vol. 236, Springer, New York, 2007. \MR{2261424
  (2007e:30049)}

\bibitem{GR06}
Jim Gleason and Stefan Richter, \emph{{$m$}-isometric commuting tuples of
  operators on a {H}ilbert space}, Integral Equations Operator Theory
  \textbf{56} (2006), no.~2, 181--196. \MR{2264515}

\bibitem{GRS05}
Jim Gleason, Stefan Richter, and Carl Sundberg, \emph{On the index of invariant
  subspaces in spaces of analytic functions of several complex variables}, J.
  Reine Angew. Math. \textbf{587} (2005), 49--76. \MR{2186975 (2006i:47013)}

\bibitem{GW08}
Kunyu Guo and Kai Wang, \emph{Essentially normal {H}ilbert modules and
  {$K$}-homology}, Math. Ann. \textbf{340} (2008), no.~4, 907--934. \MR{2372744
  (2009c:47006)}

\bibitem{Halmos71}
P.~R. Halmos, \emph{Reflexive lattices of subspaces}, J. London Math. Soc. (2)
  \textbf{4} (1971), 257--263. \MR{288612}

\bibitem{Hartz12}
Michael Hartz, \emph{Topological isomorphisms for some universal operator
  algebras}, J. Funct. Anal. \textbf{263} (2012), no.~11, 3564--3587.

\bibitem{Hartz17a}
\bysame, \emph{On the isomorphism problem for multiplier algebras of
  {N}evanlinna-{P}ick spaces}, Canad. J. Math. \textbf{69} (2017), no.~1,
  54--106. \MR{3589854}

\bibitem{Hartz20}
\bysame, \emph{Every complete {P}ick space satisfies the column-row property},
  arXiv:2005.09614 (2020).

\bibitem{Hartz22}
\bysame, \emph{Embedding dimension of the {D}irichlet space}, New York J. Math.
  \textbf{28} (2022), 154--174.

\bibitem{HL18}
Michael Hartz and Martino Lupini, \emph{The classification problem for operator
  algebraic varieties and their multiplier algebras}, Trans. Amer. Math. Soc.
  \textbf{370} (2018), no.~3, 2161--2180. \MR{3739205}

\bibitem{HRS21}
Michael Hartz, Stefan Richter, and Orr Shalit, \emph{von {N}eumann's inequality
  for row contractive matrix tuples}, arXiv:2109.08550 (202).

\bibitem{Hoffman62}
Kenneth Hoffman, \emph{Banach spaces of analytic functions}, Prentice-Hall
  Series in Modern Analysis, Prentice-Hall Inc., Englewood Cliffs, N. J., 1962.
  \MR{0133008 (24 \#A2844)}

\bibitem{JM18}
Michael~T. Jury and Robert T.~W. Martin, \emph{Factorization in weak products
  of complete {P}ick spaces}, Bull. Lond. Math. Soc. \textbf{51} (2019), no.~2,
  223--229. \MR{3937583}

\bibitem{JM18a}
\bysame, \emph{The {S}mirnov classes for the {F}ock space and complete {P}ick
  spaces}, Indiana Univ. Math. J. \textbf{70} (2021), no.~1, 269--284.
  \MR{4226655}

\bibitem{KR14}
M.~A. Kaashoek and J.~Rovnyak, \emph{On the preceding paper by {R}. {B}.
  {L}eech}, Integral Equations Operator Theory \textbf{78} (2014), no.~1,
  75--77. \MR{3147403}

\bibitem{KV14}
Dmitry~S. Kaliuzhnyi-Verbovetskyi and Victor Vinnikov, \emph{Foundations of
  free noncommutative function theory}, Mathematical Surveys and Monographs,
  vol. 199, American Mathematical Society, Providence, RI, 2014. \MR{3244229}

\bibitem{KMS13}
Matt Kerr, John~E. McCarthy, and Orr Shalit, \emph{On the isomorphism question
  for complete pick multiplier algebras}, Integral Equations Operator Theory
  \textbf{76} (2013), no.~1, 39--53.

\bibitem{KT13}
B.~Kidane and T.~T. Trent, \emph{The corona theorem for multiplier algebras on
  weighted {D}irichlet spaces}, Rocky Mountain J. Math. \textbf{43} (2013),
  no.~1, 241--271. \MR{3065464}

\bibitem{Knese19a}
Greg Knese, \emph{A simple proof of necessity in the {M}c{C}ullough-{Q}uiggin
  theorem}, Proc. Amer. Math. Soc. \textbf{148} (2020), no.~8, 3453--3456.
  \MR{4108851}

\bibitem{Koosis98}
Paul Koosis, \emph{Introduction to {$H_p$} spaces}, second ed., Cambridge
  Tracts in Mathematics, vol. 115, Cambridge University Press, Cambridge, 1998,
  With two appendices by V. P. Havin [Viktor Petrovich Khavin]. \MR{1669574}

\bibitem{Leech14}
Robert~B. Leech, \emph{Factorization of analytic functions and operator
  inequalities}, Integral Equations Operator Theory \textbf{78} (2014), no.~1,
  71--73. \MR{3147402}

\bibitem{Lubin76}
Arthur Lubin, \emph{Models for commuting contractions}, Michigan Math. J.
  \textbf{23} (1976), no.~2, 161--165. \MR{0412850 (54 \#971)}

\bibitem{Lubin77}
\bysame, \emph{Weighted shifts and products of subnormal operators}, Indiana
  Univ. Math. J. \textbf{26} (1977), no.~5, 839--845. \MR{0448139}

\bibitem{MSS15}
Adam~W. Marcus, Daniel~A. Spielman, and Nikhil Srivastava, \emph{Interlacing
  families {II}: {M}ixed characteristic polynomials and the {K}adison-{S}inger
  problem}, Ann. of Math. (2) \textbf{182} (2015), no.~1, 327--350.
  \MR{3374963}

\bibitem{MS94a}
D.~Marshall and C.~Sundberg, \emph{Interpolating sequences for the multipliers
  of the {Dirichlet} space}, Preprint,
  \url{https://citeseerx.ist.psu.edu/viewdoc/download?doi=10.1.1.559.4031&rep=rep1&type=pdf},
  1994.

\bibitem{Marshall76a}
Donald~E. Marshall, \emph{Subalgebras of {$L^{\infty }$} containing {$H^{\infty
  }$}}, Acta Math. \textbf{137} (1976), no.~2, 91--98. \MR{428045}

\bibitem{MT12}
S.~McCullough and T.~T. Trent, \emph{The {D}ouglas property for multiplier
  algebras of operators}, J. Math. Anal. Appl. \textbf{389} (2012), no.~1,
  130--137. \MR{2876487 (2012k:47034)}

\bibitem{McCullough92}
Scott McCullough, \emph{Carath\'eodory interpolation kernels}, Integral
  Equations Operator Theory \textbf{15} (1992), no.~1, 43--71. \MR{1134687}

\bibitem{McCullough94}
\bysame, \emph{The local de {B}ranges-{R}ovnyak construction and complete
  {N}evan\-linna-{P}ick kernels}, Algebraic methods in operator theory,
  Birkh\"auser Boston, Boston, MA, 1994, pp.~15--24. \MR{1284929}

\bibitem{MS04}
Paul~S. Muhly and Baruch Solel, \emph{Hardy algebras,
  {$W^\ast$}-correspondences and interpolation theory}, Math. Ann. \textbf{330}
  (2004), no.~2, 353--415. \MR{2089431}

\bibitem{MV93}
V.~M{\"u}ller and F.-H. Vasilescu, \emph{Standard models for some commuting
  multioperators}, Proc. Amer. Math. Soc. \textbf{117} (1993), no.~4, 979--989.
  \MR{1112498 (93e:47016)}

\bibitem{Nehari57}
Zeev Nehari, \emph{On bounded bilinear forms}, Ann. of Math. (2) \textbf{65}
  (1957), 153--162. \MR{82945}

\bibitem{Nevanlinna19}
Rolf Nevanlinna, \emph{{\"U}ber beschr\"ankte {F}unktionen, die in gegebenen
  {P}unkten vor\-ge\-schrie\-bene {W}erte annehmen}, Ann. Acad. Sci. Fenn. Ser.
  A \textbf{13} (1919), no.~1.

\bibitem{Nikolski19}
Nikola\"{\i} Nikolski, \emph{Hardy spaces}, Cambridge Studies in Advanced
  Mathematics, vol. 179, Cambridge University Press, Cambridge, 2019.
  \MR{3890074}

\bibitem{Nikolski02a}
Nikolai~K. Nikolski, \emph{Operators, functions, and systems: an easy reading.
  {V}ol. 1}, Mathematical Surveys and Monographs, vol.~92, American
  Mathematical Society, Providence, RI, 2002, Hardy, Hankel, and Toeplitz,
  Translated from the French by Andreas Hartmann. \MR{1864396}

\bibitem{Nikolskiui86}
N.~K. Nikol'ski{\u\i}, \emph{Treatise on the shift operator}, Grundlehren der
  Mathematischen Wissenschaften [Fundamental Principles of Mathematical
  Sciences], vol. 273, Springer-Verlag, Berlin, 1986, Spectral function theory,
  With an appendix by S. V. Hru{\v{s}}{\v{c}}ev [S. V. Khrushch{\"e}v] and V.
  V. Peller, Translated from the Russian by Jaak Peetre. \MR{827223
  (87i:47042)}

\bibitem{OF00}
Joaqu\'\i n~M. Ortega and Joan F\`abrega, \emph{Pointwise multipliers and
  decomposition theorems in analytic {B}esov spaces}, Math. Z. \textbf{235}
  (2000), no.~1, 53--81. \MR{1785071}

\bibitem{Paulsen02}
Vern Paulsen, \emph{Completely bounded maps and operator algebras}, Cambridge
  Studies in Advanced Mathematics, vol.~78, Cambridge University Press,
  Cambridge, 2002. \MR{1976867 (2004c:46118)}

\bibitem{PR16}
Vern~I. Paulsen and Mrinal Raghupathi, \emph{An introduction to the theory of
  reproducing kernel {H}ilbert spaces}, Cambridge Studies in Advanced
  Mathematics, vol. 152, Cambridge University Press, 2016.

\bibitem{Pick15}
Georg Pick, \emph{\"{U}ber die {B}eschr\"ankungen analytischer {F}unktionen,
  welche durch vor\-ge\-ge\-be\-ne {F}unktionswerte bewirkt werden}, Math. Ann.
  \textbf{77} (1915), no.~1, 7--23. \MR{1511844}

\bibitem{Pisier01}
Gilles Pisier, \emph{Similarity problems and completely bounded maps}, expanded
  ed., Lecture Notes in Mathematics, vol. 1618, Springer-Verlag, Berlin, 2001.
  \MR{1818047 (2001m:47002)}

\bibitem{Popescu89a}
Gelu Popescu, \emph{Isometric dilations for infinite sequences of noncommuting
  operators}, Trans. Amer. Math. Soc. \textbf{316} (1989), no.~2, 523--536.
  \MR{972704 (90c:47006)}

\bibitem{Popescu89}
\bysame, \emph{Models for infinite sequences of noncommuting operators}, Acta
  Sci. Math. (Szeged) \textbf{53} (1989), no.~3-4, 355--368. \MR{1033608
  (91b:47025)}

\bibitem{Popescu91}
\bysame, \emph{von {N}eumann inequality for {$(B({\mathcal H})\sp n)\sb 1$}},
  Math. Scand. \textbf{68} (1991), no.~2, 292--304. \MR{1129595 (92k:47073)}

\bibitem{Popescu95}
\bysame, \emph{Functional calculus for noncommuting operators}, Michigan Math.
  J. \textbf{42} (1995), no.~2, 345--356. \MR{1342494}

\bibitem{Popescu06a}
\bysame, \emph{Free holomorphic functions on the unit ball of {$B(\mathcal
  H)^n$}}, J. Funct. Anal. \textbf{241} (2006), no.~1, 268--333. \MR{2264252}

\bibitem{Quiggin93}
Peter Quiggin, \emph{For which reproducing kernel {H}ilbert spaces is {P}ick's
  theorem true?}, Integral Equations Operator Theory \textbf{16} (1993), no.~2,
  244--266. \MR{1205001 (94a:47026)}

\bibitem{RS10}
Stefan Richter and Carl Sundberg, \emph{Joint extensions in families of
  contractive commuting operator tuples}, J. Funct. Anal. \textbf{258} (2010),
  no.~10, 3319--3346. \MR{2601618 (2011a:47019)}

\bibitem{RS14}
\bysame, \emph{Weak products of {D}irichlet functions}, J. Funct. Anal.
  \textbf{266} (2014), no.~8, 5270--5299. \MR{3177336}

\bibitem{RS16}
Stefan Richter and James Sunkes, \emph{{Hankel} operators, invariant subspaces,
  and cyclic vectors in the {D}rury-{A}rveson space}, Proc. Amer. Math. Soc.
  \textbf{144} (2016), no.~6, 2575--2586. \MR{3477074}

\bibitem{RW16}
Stefan Richter and Brett~D. Wick, \emph{A remark on the multipliers on spaces
  of weak products of functions}, Concr. Oper. \textbf{3} (2016), no.~1,
  25--28. \MR{3491880}

\bibitem{Rochberg16}
Richard Rochberg, \emph{Is the {D}irichlet space a quotient of {$DA_n$}?},
  Functional analysis, harmonic analysis, and image processing: a collection of
  papers in honor of {B}j\"{o}rn {J}awerth, Contemp. Math., vol. 693, Amer.
  Math. Soc., Providence, RI, 2017, pp.~301--307. \MR{3682616}

\bibitem{Rudin91}
Walter Rudin, \emph{Functional analysis}, second ed., International Series in
  Pure and Applied Mathematics, McGraw-Hill Inc., New York, 1991. \MR{1157815
  (92k:46001)}

\bibitem{Rudin08}
\bysame, \emph{Function theory in the unit ball of {$\mathbb C\sp n$}},
  Classics in Mathematics, Springer-Verlag, Berlin, 2008, Reprint of the 1980
  edition. \MR{2446682 (2009g:32001)}

\bibitem{Salas81}
Hector~N. Salas, \emph{A note on strictly cyclic weighted shifts}, Proc. Amer.
  Math. Soc. \textbf{83} (1981), no.~3, 555--556. \MR{627690 (82h:47026)}

\bibitem{SSS18}
Guy Salomon, Orr~M. Shalit, and Eli Shamovich, \emph{Algebras of bounded
  noncommutative analytic functions on subvarieties of the noncommutative unit
  ball}, Trans. Amer. Math. Soc. \textbf{370} (2018), no.~12, 8639--8690.
  \MR{3864390}

\bibitem{SS14}
Guy Salomon and Orr~Moshe Shalit, \emph{The isomorphism problem for complete
  {P}ick algebras: A survey}, Operator Theory, Function Spaces, and
  Applications: International Workshop on Operator Theory and Applications,
  Amsterdam, July 2014, Springer International Publishing, 2016, pp.~167--198.

\bibitem{Sarason65}
Donald Sarason, \emph{On spectral sets having connected complement}, Acta Sci.
  Math. (Szeged) \textbf{26} (1965), 289--299. \MR{0188797}

\bibitem{Sarason67}
\bysame, \emph{Generalized interpolation in {$H^{\infty }$}}, Trans. Amer.
  Math. Soc. \textbf{127} (1967), 179--203. \MR{0208383}

\bibitem{Schaeffer55}
J.~J. Sch\"{a}ffer, \emph{On unitary dilations of contractions}, Proc. Amer.
  Math. Soc. \textbf{6} (1955), 322. \MR{68740}

\bibitem{Seip04}
Kristian Seip, \emph{Interpolation and sampling in spaces of analytic
  functions}, University Lecture Series, vol.~33, American Mathematical
  Society, Providence, RI, 2004. \MR{2040080}

\bibitem{Serra04}
Ant\'{o}nio Serra, \emph{Multipliers of minimal norm on {D}irichlet type
  spaces}, J. Operator Theory \textbf{52} (2004), no.~2, 293--302. \MR{2119271}

\bibitem{Shalit13}
Orr Shalit, \emph{Operator theory and function theory in {D}rury-{A}rveson
  space and its quotients}, Operator Theory (Daniel Alpay, ed.), Springer,
  2015, pp.~1125--1180.

\bibitem{Shalit11}
Orr~Moshe Shalit, \emph{Stable polynomial division and essential normality of
  graded {H}ilbert modules}, J. Lond. Math. Soc. (2) \textbf{83} (2011), no.~2,
  273--289. \MR{2776637 (2012e:46120)}

\bibitem{Shalit21}
\bysame, \emph{Dilation theory: a guided tour}, Operator theory, functional
  analysis and applications, Oper. Theory Adv. Appl., vol. 282,
  Birkh\"{a}user/Springer, Cham, [2021] \copyright 2021, pp.~551--623.
  \MR{4248036}

\bibitem{Stinespring55}
W.~Forrest Stinespring, \emph{Positive functions on {$C^*$}-algebras}, Proc.
  Amer. Math. Soc. \textbf{6} (1955), 211--216. \MR{69403}

\bibitem{Sz.-Nagy53}
B{\'e}la Sz.-Nagy, \emph{Sur les contractions de l'espace de {H}ilbert}, Acta
  Sci. Math. Szeged \textbf{15} (1953), 87--92. \MR{0058128 (15,326d)}

\bibitem{SFB+10}
B{\'e}la Sz.-Nagy, Ciprian Foias, Hari Bercovici, and L{\'a}szl{\'o}
  K{\'e}rchy, \emph{Harmonic analysis of operators on {H}ilbert space}, second
  ed., Universitext, Springer, New York, 2010. \MR{2760647 (2012b:47001)}

\bibitem{Taylor72}
Joseph~L. Taylor, \emph{A general framework for a multi-operator functional
  calculus}, Advances in Math. \textbf{9} (1972), 183--252. \MR{328625}

\bibitem{Taylor73}
\bysame, \emph{Functions of several noncommuting variables}, Bull. Amer. Math.
  Soc. \textbf{79} (1973), 1--34. \MR{315446}

\bibitem{Treil02}
S.~Treil, \emph{Estimates in the corona theorem and ideals of {$H^\infty$}: a
  problem of {T}.\ {W}olff}, J. Anal. Math. \textbf{87} (2002), 481--495,
  Dedicated to the memory of Thomas H. Wolff. \MR{1945294 (2003k:30077)}

\bibitem{Trent04}
Tavan~T. Trent, \emph{A corona theorem for multipliers on {D}irichlet space},
  Integral Equations Operator Theory \textbf{49} (2004), no.~1, 123--139.
  \MR{2057771}

\bibitem{Neumann51}
Johann von Neumann, \emph{Eine {S}pektraltheorie f\"ur allgemeine {O}peratoren
  eines unit\"aren {R}aumes}, Math. Nachr. \textbf{4} (1951), 258--281.
  \MR{0043386 (13,254a)}

\bibitem{ZZ08}
Ruhan Zhao and Kehe Zhu, \emph{Theory of {B}ergman spaces in the unit ball of
  {$\mathbb C^n$}}, M\'{e}m. Soc. Math. Fr. (N.S.) (2008), no.~115, vi+103 pp.
  (2009). \MR{2537698}

\bibitem{Zhu05}
Kehe Zhu, \emph{Spaces of holomorphic functions in the unit ball}, Graduate
  Texts in Mathematics, vol. 226, Springer-Verlag, New York, 2005. \MR{2115155
  (2006d:46035)}

\end{thebibliography}

\end{document}